\newcommand{\C}{{\mathbb C}}
\newcommand{\K}{{\mathbb K}}
\newcommand{\N}{{\mathbb N}}
\newcommand{\R}{{\mathbb R}}
\newcommand{\abs}[2][\empty]{\ifx#1\empty\left|#2\right|%
\else#1\vert #2 #1\vert\fi}
\newcommand{\Alg}{{\mathcal A}}
\newcommand{\cLin}{\mathop{\mathcal L}\nolimits}
\newcommand{\Cnt}[1][]{{\cal C}^{#1}}
\newcommand{\co}[1]{{#1}^{c}}
\newcommand{\comp}{\circ}
\newcommand{\Completion}[1]{\widehat{#1}}
\newcommand{\csub}{\subset\subset}
\newcommand{\defstyle}[1]{\emph{#1}}
\newcommand{\eps}{\varepsilon}
\newcommand{\functspec}{\mathop{\mathrm{fsp}}\nolimits}
\newcommand{\Hom}{\mathop{\mathrm{Hom}}\nolimits}
\newcommand{\id}{\mathop{\mathrm{id}}}
\newcommand{\ideal}{\unlhd}
\newcommand{\idealproper}{\lhd}
\renewcommand{\iff}{\Leftrightarrow}
\renewcommand{\Im}{\mathop{\mathrm{Im}}}
\renewcommand{\implies}{\Rightarrow}
\newcommand{\inner}[3][\empty]{\ifx#1\empty\left\langle #2,#3\right\rangle\else#1\langle #2,#3 #1\rangle\fi}
\newcommand{\interl}{\mathop{\mathrm{interl}}\nolimits}
\newcommand{\interior}[1]{{#1}^\circ}
\newcommand{\intspec}{\mathop{\mathrm{intsp}}\nolimits}
\newcommand{\inv}[1]{{#1}^{-1}}
\newcommand{\Ker}{\mathop{\mathrm{Ker}}}
\renewcommand{\Re}{\mathop{\mathrm{Re}}}
\newcommand{\Smlf}[1]{\Hom(#1,\GenC)}
\newcommand{\norm}[2][\empty]{\ifx#1\empty\left\Vert#2\right\Vert%
\else#1\Vert #2 #1\Vert\fi}
\newcommand{\pseudonorm}[2][\empty]{\ifx#2.{\mathcal P}\else\ifx#1\empty{\mathcal P}(#2)%
\else{\mathcal P}#1( #2 #1)\fi\fi}
\newcommand{\pseudonormprime}[2][\empty]{\ifx#2.{\mathcal P}'\else\ifx#1\empty{\mathcal P}'(#2)%
\else{\mathcal P}'#1( #2 #1)\fi\fi}
\newcommand{\resolv}{\rho}
\newcommand{\spec}{\mathop{\mathrm{sp}}\nolimits}
\newcommand{\specrad}{\mathop{r}\nolimits}
\newcommand{\val}{\ensuremath{\mathrm v}}
\newcommand{\wt}{\widetilde}
\newcommand{\Gen}{{\mathcal G}}
\newcommand{\GenC}{\widetilde\C}
\newcommand{\GenK}{\widetilde\K}
\newcommand{\GenR}{\widetilde\R}
\newcommand{\EMod}{{{\mathcal E}_M}}
\newcommand{\Mod}{{\mathcal M}}
\newcommand{\Null}{{\mathcal N}}
\newcommand{\sharpnorm}[2][\empty]{\abs[#1]{#2}_{\mathrm e}}
\newcommand{\caninf}{\alpha}
\newcommand{\Gencnt}[1]{\Gen_{\Cnt(#1)}}
\newcommand{\Gensharp}[1]{\Gen^\#_{\Cnt(#1)}}
\newcommand{\tGen}{\widetilde\Gen}
\newtheorem{thm}{Theorem}[section]
\newtheorem{prop}[thm]{Proposition}
\newtheorem{lemma}[thm]{Lemma}
\newtheorem{df}[thm]{Definition}
\newtheorem{cor}[thm]{Corollary}
\newtheorem{ex}[thm]{Example}
\theoremstyle{remark}
\newtheorem*{rem}{Remark}
\begin{document}
\title{Banach $\GenC$-algebras}
\author{Hans Vernaeve\footnote{Supported by FWF (Austria), grants M949-N18 and  Y237-N13.}\\Dept.\ of Pure Mathematics and Computer Algebra\\Ghent University\\
{\tt hvernaev@cage.ugent.be}}
\date{}
\maketitle

\begin{abstract}
We study Banach $\GenC$-algebras, i.e., complete ultra-pseudo-normed algebras over the ring $\GenC$ of Colombeau generalized complex numbers. We develop a spectral theory in such algebras. We show by explicit examples that important parts of classical Banach algebra theory do not hold for general Banach $\GenC$-algebras and indicate a particular class of Banach $\GenC$-algebras that overcomes these limitations to a large extent. We also investigate $C^*$-algebras over $\GenC$.
\end{abstract}

\emph{Key words}: algebras of generalized functions, Banach algebras, $C^*$-algebras.

\emph{2000 Mathematics subject classification}: Primary 46F30; Secondary 46S10.

\section{Introduction}
Over the past thirty years, nonlinear theories of generalized functions have been developed by many authors \cite{AR91,GKOS,NPS98} mainly inspired by the work of J.\ F.\ Colombeau \cite{Colombeau84,Colombeau85}. They have proved to be a valuable tool for treating partial differential equations with singular data or coefficients \cite{GGO03,HdH01,HOP05,O92}. In recent research on the subject, a variety of algebras of generalized functions have been introduced in addition to the original construction by Colombeau, culminating in the definition of the Colombeau space $\Gen_E$ constructed on an arbitrary locally convex topological vector space $E$ (see \cite{Garetto2005} and the references therein). These spaces carry a natural structure as modules or algebras over the ring $\GenC$ of generalized complex numbers. In particular, if $E$ is a Banach algebra, then $\Gen_E$ turns out to be a Banach $\GenC$-algebra, i.e., a complete ultra-pseudo-normed $\GenC$-algebra. Thus several classes of generalized functions and generalized linear operators carry the structure of a Banach $\GenC$-algebra.\\
This paper is devoted to the study of Banach $\GenC$-algebras and Colombeau $C^*$-algebras (i.e., the analogues of $C^*$-algebras over $\GenC$) and their spectral theory. In order to ensure some of the most basic properties of the spectrum of an element of such an algebra, such as the facts that spectrum is bounded and that the spectrum of $0$ is $\{0\}$, the spectrum is not defined as the complement of the resolvent set, but by means of a slightly more restrictive property (definition \ref{df_spectrum}, theorem \ref{thm_resolv_determines_spec}). This also allows for analogues of the classical interpretations of the spectrum in the algebra of continuous functions on a compact set and in matrix algebras (propositions \ref{prop_spec_Gencbd} and \ref{prop_spec_matrix}).\\
Although restrictive versions of the Gelfand-Mazur theorem (\S \ref{section_GelfandMazur}) and the spectral mapping theorem (\S \ref{section_spectralmapping}) hold for general Banach $\GenC$-algebras, the strength of the classical results is not reached, and we give explicit counterexamples, showing, e.g., that the spectrum of an element can be empty (example \ref{ex_empty_spectrum_wrt_S}). Since most of the algebras of generalized functions and operators arise as subalgebras of $\Gen_B$, where $B$ is an algebra over $\C$, we restrict our attention to such algebras. Still, e.g., the spectrum of a self-adjoint element in a Colombeau $C^*$-subalgebra of $\Gen_B$ (with $B$ a $C^*$-algebra) can contain nonreal generalized numbers (example \ref{ex_spec_Cstar_subalgebra}).\\
This leads us to consider a particular class of Banach $\GenC$-algebras and Colombeau $C^*$-algebras, called strictly internal algebras (and more generally, algebras obtained by set-theoretic constructions on strictly internal algebras), in which a large part of the classical Banach algebra theory holds true. E.g., we obtain a stronger version of the spectral mapping theorem (theorem \ref{thm_poly_spec_calc_internal}), a spectral radius formula (theorem \ref{thm_specrad_formula}, using the theory of generalized holomorphic functions), conditions under which the spectrum of an element does not depend on the algebra to which the element belongs (proposition \ref{prop_stable_spec_subset_GenR} and theorem \ref{thm_spec_of_Cstar_subalgebra}) and an analogue of the Gelfand-Neumark theorem (proposition \ref{prop_GNS}). By means of the example $\Gensharp X$ of sharply continuous generalized functions on a compact metric space $X$ (\S \ref{section_Gensharp}), we indicate that this class of Banach $\GenC$-algebras contains algebras that arise naturally in the study of nonlinear generalized functions.

\section{Preliminaries}\label{section_prelims}
In a topological space $X$, $K\csub X$ means that $K$ is a compact subset of $X$. The topological closure of $A\subset X$ is denoted by $\overline A$, the topological interior by $\interior A$. We denote by $\Cnt(X)$ the ring of continuous $\C$-valued functions on $X$. We denote $\R^+:= \{x\in\R: x> 0\}$.

\subsection{Algebras}
In this paper, a $\GenC$-algebra will always be an \emph{associative} algebra over $\GenC$ \emph{with $1$}. Commutativity is not assumed, unless explicitly stated.\\
Let $\Alg$ be a $\GenC$-algebra. By an ideal, we mean a two-sided ideal; otherwise, the adjective `one-sided' (or `left', resp.\ `right') will be added. We write $I\idealproper \Alg$ iff $I$ is a proper ideal of $\Alg$ (i.e., an ideal different from $\Alg$ itself). A not necessarily proper ideal is denoted by $I\ideal\Alg$.
By an inverse of $u\in\Alg$, we mean a two-sided inverse, i.e., $v\in\Alg$ such that $uv=vu=1$; in that case, $u$ is called invertible and we denote $v=\inv{u}$ (which is uniquely determined).\\
$\Alg$ is called faithful iff for each $\lambda\in \GenC\setminus\{0\}$, $\lambda 1\ne 0$ in $\Alg$.
More generally, a $\GenC$-module $\Gen$ is faithful iff for each $\lambda\in\GenC\setminus\{0\}$, there exists $u\in\Gen$ such that $\lambda u \ne 0$.

In a faithful $\GenC$-algebra, we identify the subring $\{\lambda 1: \lambda\in\GenC\}$ with $\GenC$.\\
We denote by $\Smlf\Alg$ the set of all $\GenC$-algebra morphisms $\Alg\to\GenC$ preserving $1$, i.e., the set of all surjective multiplicative $\GenC$-linear functionals on $\Alg$.

\subsection{Generalized function algebras, generalized numbers}
Let $E$ be a normed vector space over $\C$. Then the Colombeau space $\Gen_E:= \Mod_E/\Null_E$ \cite{Garetto2005}, where
\begin{align*}
\Mod_E &= \{(u_\eps)_\eps\in\ E^{(0,1)}: (\exists N\in\N) (\norm{u_\eps}\le \eps^{-N}, \text{ for small }\eps)\}\\
\Null_E &= \{(u_\eps)_\eps\in\ E^{(0,1)}: (\forall m\in\N) (\norm{u_\eps}\le \eps^{m}, \text{ for small }\eps)\}.
\end{align*}
Elements of $\Mod_{E}$ are called moderate, elements of $\Null_{E}$ negligible.
The element of $\Gen_E$ with $(u_\eps)_\eps$ as a representative is denoted by $[(u_\eps)_\eps]$. $\GenR:=\Gen_{\R}$ and $\GenC:=\Gen_\C$ are the so-called Colombeau generalized numbers. With the componentwise operations, $\Gen_E$ is a $\GenC$-module. If $E$ is a normed $\C$-algebra, $\Gen_E$ is a $\GenC$-algebra with the componentwise multiplication.\\
Let $S\subseteq(0,1)$. Then $e_S:=[(\chi_S(\eps))_\eps]\in\GenR$, where $\chi_S$ is the characteristic function on $S$. An element $e\in\GenC$ is idempotent (i.e., $e^2 = e$ holds in $\GenC$) iff there exists $S\subseteq (0,1)$ such that $e = e_S$ \cite{AJOS2006}. 
The complement of $S\subseteq(0,1)$ is denoted by $\co S$. By $\caninf\in\GenR$ we denote the element with $(\eps)_\eps$ as a representative.\\
For $(z_\eps)_\eps\in\C^{(0,1)}$, the valuation $\val(z_\eps):= \sup\{b\in\R: \abs{z_\eps}\le \eps^b$, for small $\eps \}$ and the so-called sharp norm $\sharpnorm{z_\eps}:= e^{- \val(z_\eps)}$. For $\tilde z=[(z_\eps)_\eps]\in\GenC$, $\val(\tilde z):= \val(z_\eps)\in (-\infty, \infty]$ and $\sharpnorm{\tilde z}:= \sharpnorm{z_\eps}\in [0,+\infty)$ are defined independent of the representative of $\tilde z$. For $a,b\in\GenR$, we write $a\gg b$ (eq., $b\ll a$) if $a-b\ge \caninf^m$, for some $m\in\N$ (i.e., if $a\ge b$ and $a-b$ is invertible in $\GenR$).\\
An ultra-pseudo-seminorm on a $\GenC$-module $\Gen$ is a map $\pseudonorm{.}$: $\Gen\to [0,+\infty)\subset \R$ satisfying \cite{Garetto2005,GV_Hilbert} $\pseudonorm{u+v}\le \max(\pseudonorm u,\pseudonorm v)$ and $\pseudonorm{\lambda u}\le \sharpnorm{\lambda}\pseudonorm{u}$, for each $u,v\in\Gen$ and $\lambda\in\GenC$. It follows that $\pseudonorm{\caninf^r u} = \sharpnorm{\caninf^r}\pseudonorm u = e^{-r}\pseudonorm u$, $\forall u\in\Gen$, $\forall r\in\R$. An ultra-pseudo-norm $\pseudonorm{.}$ on a $\GenC$-module $\Gen$ is an ultra-pseudo-seminorm that satisfies $\pseudonorm u\ne 0$, $\forall u\in\Gen\setminus\{0\}$. A Banach $\GenC$-module \cite{Garetto2005} is a complete pseudonormed $\GenC$-module. Typical examples of Banach $\GenC$-modules are $\Gen_E$ ($E$ a normed $\C$-vector space) with $\pseudonorm{.}$: $\Gen_E\to\GenC$: $\pseudonorm u = \sharpnorm[\big]{\norm{u_\eps}}$ \cite[Prop.\ 3.4]{Garetto2005}. For $u = [(u_{\eps})_{\eps}]\in\Gen_{E}$, we will denote $\norm u:= [(\norm{u_\eps})_{\eps}]\in\GenR$. Similarly, for $\tilde z=[(z_\eps)_\eps]\in\GenC$, we will denote $\abs{\tilde z}:= [(\abs{z_\eps})_\eps]\in\GenR$.\\
If $(\Gen,\pseudonorm{.})$ is an ultra-pseudo-normed $\GenC$-module, then it is in particular an ultrametric space with the translation invariant distance $d(u,v)=\pseudonorm{u-v}$. Hence $\Gen$ has a completion $\Completion\Gen$ \cite[\S 4]{Koethe69}. It is easy to check that also $\Completion\Gen$ is a $\GenC$-module and that the distance on $\Completion\Gen$ is translation invariant and induced by an ultra-pseudo-norm that extends $\pseudonorm{.}$. Hence $\Completion\Gen$ is a Banach $\GenC$-module \cite{Garetto2005}. For ultra-pseudo-normed $\GenC$-modules $\Gen_1$, $\Gen_2$, we denote by $\cLin(\Gen_1,\Gen_2)$ the set of all continuous $\GenC$-linear maps $\Gen_1\to\Gen_2$. Further, $\cLin(\Gen_1):= \cLin(\Gen_1, \Gen_1)$.\\
Let $A_\eps\subseteq E$, $\forall \eps\in (0,1)$. Then the set
\[
[(A_\eps)_\eps] := \{u\in \Gen_E: (\exists \text{ repres.\ $(u_\eps)_\eps$ of } u) (u_\eps\in A_\eps, \text{ for small }\eps)\}
\]
is called the \defstyle{internal subset} of $\Gen_E$ with representative $(A_\eps)_\eps$ \cite{OV_internal}. For $A\subseteq E$, we denote $\widetilde A:=[(A)_\eps]$. The interleaved closure \cite{OV_internal} of a subset $A$ of $\Gen_{E}$ is the set
\[
\interl(A) := \Big\{\sum_{j=1}^m e_{S_j} a_{j}: m\in\N, \{S_{1},\dots, S_{m}\} \text{ a partition of }(0,1), a_{j}\in A\Big\}.
\]
$A\subseteq \Gen_E$ is called closed under interleaving if $A=\interl(A)$, i.e., if $\forall \lambda,\mu\in A$ and $T\subseteq (0,1)$, also $e_T\lambda+e_{\co T}\mu\in A$. Internal sets are closed under interleaving.\\
A subset $A$ of $\Gen_E$ is called sharply bounded if $\sup_{u\in A}\pseudonorm u<+\infty$. An internal set $A$ is sharply bounded iff $A$ has a sharply bounded representative, i.e., a representative $[(A_\eps)_\eps]$ for which there exists $M\in\N$ such that $\sup_{u\in A_\eps}\norm{u}\le \eps^{-M}$, for small $\eps$ \cite[Lemma 2.4]{OV_internal}.

\subsection{Invertibility w.r.t.\ $S\subseteq (0,1)$}
The following concept was introduced in \cite{HV_isom}:
\begin{df}
Let $\Alg$ be a $\GenC$-algebra. Let $u\in\Alg$ and $S\subseteq (0,1)$ with $e_S 1\ne 0$ (in $\Alg$). Then $u$ is called \defstyle{invertible w.r.t.\ $S$} iff
\[(\exists v\in \Alg)(uv=vu=e_S1).\]
Such $v$ is called an inverse of $u$ w.r.t.\ $S$. Similarly, $l$ is a left inverse of $u$ w.r.t.\ $S$ iff $lu=e_S1$ and $r$ is a right inverse of $u$ w.r.t.\ $S$ iff $ur=e_S 1$.
\end{df}

\begin{lemma}\label{lemma_invertible_wrt_S}
Let $\Alg$ be a $\GenC$-algebra. Let $S\subseteq (0,1)$ with $e_S 1\ne 0$. 
\begin{enumerate}
\item Let $u\in\Alg$ have a left and a right inverse w.r.t.\ $S$. Then for every left inverse $l$ of $u$ w.r.t.\ $S$ and every right inverse $r$ of $u$ w.r.t.\ $S$, $e_S l = e_S r$ is a two-sided inverse of $u$ w.r.t.\ $S$.
\item $u\in\Alg$ is invertible w.r.t.\ $S$ iff $e_S u + e_{\co S}1$ is invertible.\\
In particular, $u$ is invertible w.r.t.\ $S$ iff $e_S u$ is invertible w.r.t.\ $S$.
\item Let $u_1, u_2\in\Alg$. Then $u_1u_2$ and $u_2 u_1$ are both invertible w.r.t.\ $S$ iff $u_1$ and $u_2$ are both invertible w.r.t.\ $S$.
\item Let $\Alg$ be faithful and $c = [(c_{\eps})_{\eps}]\in\GenC$. Then $c$ is invertible w.r.t.\ $S$ in $\GenC$ iff $c$ is invertible w.r.t.\ $S$ in $\Alg$ iff there exists $m\in\N$ such that $\abs{c_{\eps}}\ge \eps^m$, for small $\eps\in S$.
\end{enumerate}
\end{lemma}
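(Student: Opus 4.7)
All four parts hinge on the fact that $e_S, e_{\co S} \in \GenC$ are central idempotents in any $\GenC$-algebra satisfying $e_S e_{\co S} = 0$ and $e_S + e_{\co S} = 1$, so arithmetic splits naturally into pieces ``supported on'' $S$ and on $\co S$. For part (i) I would use associativity to write $e_S l = l(ur) = (lu)r = e_S r$; setting $v := e_S l = e_S r$, direct computation then yields $uv = e_S(e_S 1) = e_S 1$ and $vu = e_S 1$. For part (ii), given a two-sided inverse $v$ of $u$ w.r.t.\ $S$, expanding $(e_S u + e_{\co S} 1)(e_S v + e_{\co S} 1)$ shows the cross terms vanish and the diagonal terms give $e_S \cdot e_S 1 + e_{\co S}^2 1 = e_S + e_{\co S} = 1$, so $e_S v + e_{\co S} 1$ inverts $e_S u + e_{\co S} 1$. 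Conversely, if $w$ inverts $e_S u + e_{\co S} 1$ in $\Alg$, then multiplying the defining equations by $e_S$ isolates $(e_S w) u = u(e_S w) = e_S 1$. The ``in particular'' clause follows by applying the first equivalence to both $u$ and $e_S u$ and noting $e_S(e_S u) + e_{\co S} 1 = e_S u + e_{\co S} 1$.

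For part (iii), the ``if'' direction is immediate: if $v_i$ inverts $u_i$ w.r.t.\ $S$ ($i = 1,2$), then $v_2 v_1$ inverts $u_1 u_2$ w.r.t.\ $S$ using centrality of $e_S$, and symmetrically for $u_2 u_1$. Conversely, from $(u_1 u_2) w_1 = e_S 1$ one reads off that $u_2 w_1$ is a right inverse of $u_1$ w.r.t.\ $S$, and from $w_2 (u_2 u_1) = e_S 1$ that $w_2 u_2$ is a left inverse; part (i) then gives invertibility of $u_1$, and symmetrically of $u_2$. For part (iv), the equivalence between invertibility of $c$ in $\GenC$ w.r.t.\ $S$ and the explicit bound $\abs{c_\eps} \geq \eps^m$ on $S$ is by direct manipulation of representatives: from the bound, set $v_\eps := \chi_S(\eps)/c_\eps$ whenever $c_\eps \neq 0$ to obtain a moderate inverse; conversely, $cv = e_S 1$ forces $\abs{c_\eps v_\eps} \geq 1/2$ on $S$ for small $\eps$, which combined with moderateness of $v$ yields the bound. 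Invertibility in $\GenC$ trivially implies invertibility in $\Alg$.

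The main (and only nontrivial) obstacle is the reverse implication in part (iv): invertibility w.r.t.\ $S$ in $\Alg$ implies invertibility w.r.t.\ $S$ in $\GenC$. By part (ii), this reduces to showing that if $d := e_S c + e_{\co S} 1 \in \GenC$ is invertible in $\Alg$, then $d$ is invertible in $\GenC$. Assuming it is not, the negation of the representative bound applied to $d$ lets me extract a strictly decreasing null sequence $(\eps_m)_m \subseteq S$ with $\abs{d_{\eps_m}} < \eps_m^m$. Setting $T := \{\eps_m : m \in \N\}$, the idempotent $e_T \in \GenC$ is nonzero because $T$ accumulates at $0$, while $d \cdot e_T = 0$ in $\GenC$ by construction (for $\eps \in T$ below $\eps_k$ one has $\eps = \eps_j$ with $j > k$, so $\abs{d_\eps} < \eps^j \leq \eps^k$). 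Faithfulness of $\Alg$ gives $e_T 1 \neq 0$ in $\Alg$, yet multiplying $d \, e_T = 0$ by the ambient inverse of $d$ in $\Alg$ forces $e_T 1 = 0$, a contradiction. Hence $d$, and therefore $c$, is invertible in $\GenC$.
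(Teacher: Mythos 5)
Your argument is correct. Parts (1)--(3) coincide with the paper's proof (the same associativity computation $e_S l = l(ur) = (lu)r = e_S r$, the same expansion of $(e_S u + e_{\co S}1)(e_S v + e_{\co S}1)$, and the same reading-off of one-sided inverses of $u_1$, $u_2$ from inverses of $u_1u_2$ and $u_2u_1$ followed by part (1)). The only genuine divergence is in part (4): the paper simply quotes an external structure result (Lemma 4.1 of the paper on ideals in $\GenC$) both for the equivalence of invertibility w.r.t.\ $S$ in $\GenC$ with the bound $\abs{c_\eps}\ge\eps^m$ on $S$, and for the existence, when $c$ is not invertible w.r.t.\ $S$, of an idempotent $e_T\ne 0$ with $T\subseteq S$ and $c\,e_T=0$; it then gets the contradiction from $e_T=e_Te_S=(e_Tc)u=0$ and faithfulness. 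You instead re-derive both ingredients from scratch at the level of representatives: the bound characterization by writing down $v_\eps=\chi_S(\eps)/c_\eps$, and the annihilating idempotent by extracting a null sequence $(\eps_m)_m$ with $\abs{d_{\eps_m}}<\eps_m^m$ for $d=e_Sc+e_{\co S}$ after reducing to plain invertibility via part (2). Both routes are sound; the paper's is shorter because it leans on already-established facts about idempotents and ideals in $\GenC$, while yours is self-contained, at the cost of redoing that structure theory inline. One cosmetic point: to have the extracted points lie in $S$ you should work with the canonical representative $d_\eps=\chi_S(\eps)c_\eps+\chi_{\co S}(\eps)$ (for which $\abs{d_\eps}=1$ off $S$), though in fact your contradiction $e_T1=0$ does not need $T\subseteq S$ at all.
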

\begin{proof}
(1) If $lu=ur=e_S 1$, then $e_S l=l(ur) = (lu)r= e_S r$. Hence also $(e_S l)u = e_S 1 = u(e_S r)$, so $e_S l=e_S r$ is an inverse of $u$ w.r.t.\ $S$.\\
(2) If $uv=e_S1$, for some $v\in \Alg$, then $(e_S u+e_{\co S}1)(e_S v + e_{\co S}1)= e_Suv + e_{\co S}1=1$.\\
Conversely, if $(e_Su+e_{\co S}1)v=1$, for some $v\in\Alg$, then multiplying by $e_S$ shows that $u (e_S v) = e_S 1$, and similarly for left inverses.\\
(3) If $(u_1 u_2) v_1 = v_2 (u_2 u_1) = e_S 1$, then $u_1$ has a left and right inverse w.r.t.\ $S$.\\
Conversely, if $u_1 v_1= e_S1$ and $u_2 v_2=e_S1$, then $u_1 u_2 v_2 v_1 = e_S u_1 v_1 = e_S 1$.\\
(4) Clearly, if $d\in\GenC$ and $cd=e_S$ holds in $\GenC$, then it also holds in $\Alg$. Conversely, if $c$ is not invertible w.r.t.\ $S$ in $\GenC$, then there exists $T\subseteq S$ with $e_T\ne 0$ such that $c e_T = 0$ \cite[Lemma 4.1]{HV_ideals}. So if $u\in\Alg $ and $c u = e_S$ holds in $\Alg$, then $e_T = e_T e_S = c e_T u = 0 $ (in $\Alg$, hence also in $\GenC$, by faithfulness of $\Alg$), a contradiction. The second equivalence follows by \cite[Lemma 4.1]{HV_ideals}.
\end{proof}
We also recall that for $c\in\GenC$, $c = 0$ iff $c$ is not invertible w.r.t.\ $S$, for each $S\subseteq (0,1)$ with $e_{S}\ne 0$ \cite[Lemma 4.1]{HV_ideals}.

\section{Banach $\GenC$-algebras}
\begin{df}
An ultra-pseudo-seminorm $\pseudonorm{.}$ on a $\GenC$-algebra $\Alg$ is called \defstyle{submultiplicative} if $\pseudonorm{uv}\le \pseudonorm{u}\pseudonorm{v}$, $\forall u,v\in\Alg$.\\
An \defstyle{ultra-pseudo-normed $\GenC$-algebra} $\Alg$ is a $\GenC$-algebra that is provided with a submultiplicative ultra-pseudo-norm $\pseudonorm{.}$ satisfying $\pseudonorm{1}=1$. We denote by the pair $(\Alg,\pseudonorm{.})$ the algebra $\Alg$ provided with the ultra-pseudo-norm $\pseudonorm{.}$.\\
A \defstyle{Banach $\GenC$-algebra} $\Alg$ is a complete ultra-pseudo-normed $\GenC$-algebra (in particular, $\Alg$ is a Banach $\GenC$-module \cite{Garetto2005}).
\end{df}
Typical examples of faithful Banach $\GenC$-algebras are $\Alg=\Gen_B$, where $B$ is a Banach $\C$-algebra.

We recall that two ultra-pseudo-seminorms $\pseudonorm{.}$ and $\pseudonormprime{.}$ on a $\GenC$-module $\Gen$ are \defstyle{equivalent} if there exists $C\in\R$ such that $\pseudonorm{u}\le C\pseudonormprime{u}$ and $\pseudonormprime{u}\le C\pseudonorm{u}$, $\forall u\in\Gen$ \cite[Def.~4.7]{Garetto2007}.

\begin{lemma}\label{lemma_seminorm_image_of_1}
Let $\pseudonorm{.}\ne 0$ be an ultra-pseudo-seminorm on a $\GenC$-algebra $\Alg$ for which there exists a constant $C\in\R$ such that $\pseudonorm{uv}\le C\pseudonorm{u}\pseudonorm{v}$, $\forall u,v\in\Alg$. Then there exists an equivalent submultiplicative ultra-pseudo-seminorm $\mathcal P'$ on $\Alg$ with ${\mathcal P}'(1)=1$.
\end{lemma}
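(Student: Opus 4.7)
The plan is to mimic the classical Banach algebra renormalisation: realise $\Alg$ concretely as an algebra of left multiplications on itself and equip it with the associated operator ultra-pseudo-seminorm. Concretely, I set
\[
\pseudonormprime{u} := \sup\Bigl\{\frac{\pseudonorm{uv}}{\pseudonorm{v}} : v \in \Alg,\ \pseudonorm{v} > 0\Bigr\}.
\]
The $C$-submultiplicative hypothesis gives $\pseudonorm{uv}/\pseudonorm{v}\le C\pseudonorm{u}$, so $\pseudonormprime{u}\le C\pseudonorm{u}<\infty$. Before anything else I would observe that $\pseudonorm{1}\neq 0$: otherwise $\pseudonorm{u}=\pseudonorm{u\cdot 1}\le C\pseudonorm{u}\pseudonorm{1}=0$ for every $u$, contradicting $\pseudonorm{.}\neq 0$. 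In particular the set $\{v:\pseudonorm{v}>0\}$ is nonempty, so the supremum is genuinely taken over a nonempty set.

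The ultra-pseudo-seminorm axioms for $\pseudonormprime{.}$ are then immediate: the ultrametric inequality follows termwise from $\pseudonorm{(u+w)v}\le\max(\pseudonorm{uv},\pseudonorm{wv})$, and the $\GenC$-scaling $\pseudonormprime{\lambda u}\le\sharpnorm{\lambda}\pseudonormprime{u}$ inherits from the corresponding property of $\pseudonorm{.}$. The normalisation $\pseudonormprime{1}=1$ holds because the quotient $\pseudonorm{1\cdot v}/\pseudonorm{v}$ is constantly $1$ on the range of the supremum. Equivalence with $\pseudonorm{.}$ uses the bound $\pseudonormprime{u}\le C\pseudonorm{u}$ just noted, together with the test vector $v=1$ (legal since $\pseudonorm{1}>0$), which yields $\pseudonormprime{u}\ge\pseudonorm{u}/\pseudonorm{1}$, hence $\pseudonorm{u}\le\pseudonorm{1}\pseudonormprime{u}$; a single equivalence constant $\max(C,\pseudonorm{1})$ works.

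The one delicate point is submultiplicativity, precisely because $\pseudonorm{.}$ is only a seminorm and can annihilate nonzero elements. The key observation is that the $C$-submultiplicative hypothesis automatically upgrades the defining inequality to $\pseudonorm{uz}\le\pseudonormprime{u}\pseudonorm{z}$ for \emph{every} $z\in\Alg$: if $\pseudonorm{z}=0$ then $\pseudonorm{uz}\le C\pseudonorm{u}\pseudonorm{z}=0$, and otherwise the inequality is immediate from the definition of $\pseudonormprime{.}$. Armed with this,
\[
\pseudonorm{uwv}=\pseudonorm{u(wv)}\le\pseudonormprime{u}\pseudonorm{wv}\le\pseudonormprime{u}\pseudonormprime{w}\pseudonorm{v},
\]
and dividing by $\pseudonorm{v}>0$ and taking the supremum yields $\pseudonormprime{uw}\le\pseudonormprime{u}\pseudonormprime{w}$. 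I expect this treatment of the seminorm null vectors to be the only conceptual point; everything else is bookkeeping.
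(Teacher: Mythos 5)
Your proposal is correct and takes essentially the same approach as the paper: there $\pseudonormprime{u}=\sup_{\pseudonorm{v}=1}\pseudonorm{uv}$, which coincides with your supremum of $\pseudonorm{uv}/\pseudonorm{v}$ over $\pseudonorm{v}>0$, since any such $v$ can be rescaled by $\caninf^r$ to have $\pseudonorm{\caninf^r v}=1$ without changing the quotient, and submultiplicativity and equivalence are obtained by the same estimates. Your explicit checks that $\pseudonorm{1}\ne 0$ and that $\pseudonorm{uz}=0$ whenever $\pseudonorm{z}=0$ merely fill in details the paper leaves implicit.
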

\begin{proof}
We let ${\mathcal P}'(u)=\sup_{\pseudonorm v = 1}\pseudonorm{uv}$. If $v\in\Alg$ and $\pseudonorm{v}\ne 0$, then there exists $r\in\R$ such that $\pseudonorm{\caninf^r v} =1$. Thus (as classically) $\pseudonorm{uv}\le\pseudonormprime u\pseudonorm v$, $\forall u,v\in\Alg$. Further, $\pseudonorm{uvw}\le \pseudonormprime u \pseudonormprime v\pseudonorm w$, $\forall u,v,w\in\Alg$, hence $\pseudonormprime{uv}\le \pseudonormprime u \pseudonormprime v$, $\forall u,v \in\Alg$. Clearly, $\pseudonormprime 1=1$. The equivalence of the ultra-pseudo-seminorms follows since $\pseudonorm u\le \pseudonorm 1 \pseudonormprime u$, $\forall u\in \Alg$ and $\pseudonormprime u\le C \pseudonorm u$, $\forall u \in\Alg$.
\end{proof}

\begin{prop}
Let $\Alg$ be a $\GenC$-algebra provided with an ultra-pseudo-norm $\pseudonorm{.}$ for which the ring multiplication is $\cdot$: $\Alg\times\Alg\to\Alg$ is continuous. Then there exists an equivalent ultra-pseudo-norm $\pseudonormprime{.}$ on $\Alg$ such that $(\Alg,\pseudonormprime{.})$ is an ultra-pseudo-normed $\GenC$-algebra.
\end{prop}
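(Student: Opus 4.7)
The plan is to reduce the statement to Lemma \ref{lemma_seminorm_image_of_1}: it suffices to exhibit a constant $C\in\R$ with $\pseudonorm{uv}\le C\pseudonorm{u}\pseudonorm{v}$ for all $u,v\in\Alg$. Once such a $C$ is in hand, the lemma produces an equivalent submultiplicative ultra-pseudo-seminorm $\pseudonormprime{.}$ with $\pseudonormprime{1}=1$; and since the equivalence condition $\pseudonorm u\le C'\pseudonormprime u$ propagates definiteness from $\pseudonorm{.}$, the seminorm $\pseudonormprime{.}$ is in fact a norm. That completes the proof.

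To establish the bound, I will use continuity of multiplication at $(0,0)$: there exists $\delta>0$ such that $\pseudonorm u,\pseudonorm v<\delta$ implies $\pseudonorm{uv}<1$. For arbitrary $u,v\in\Alg$ with $\pseudonorm u,\pseudonorm v\ne0$, set
\[
r:=\log\frac{2\pseudonorm u}{\delta},\qquad s:=\log\frac{2\pseudonorm v}{\delta}.
\]
By the scaling identity $\pseudonorm{\caninf^r w}=e^{-r}\pseudonorm w$ recalled in Section~\ref{section_prelims}, we have $\pseudonorm{\caninf^r u}=\pseudonorm{\caninf^s v}=\delta/2<\delta$. Continuity then gives $\pseudonorm{(\caninf^r u)(\caninf^s v)}<1$; since $\caninf^{r+s}$ is a central scalar, the left side equals $e^{-(r+s)}\pseudonorm{uv}$, and rearranging yields $\pseudonorm{uv}\le(4/\delta^2)\pseudonorm u\pseudonorm v$. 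When $\pseudonorm u=0$ or $\pseudonorm v=0$, the hypothesis that $\pseudonorm{.}$ is a \emph{norm} forces the corresponding factor to vanish, so $\pseudonorm{uv}=0$ and the inequality holds trivially. Thus $C:=4/\delta^2$ does the job.

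The only genuine subtlety is the rescaling step: one must observe that in the ultra-pseudo-normed setting the element $\caninf^r\in\GenC$ exists for every real exponent $r$ and acts on $\pseudonorm{.}$ by the factor $e^{-r}$, so any nonzero element of $\Alg$ can be pushed into the small-ball neighborhood of $0$ on which continuity gives a uniform estimate, and then pushed back out. After that, the proof is purely bookkeeping together with the appeal to Lemma \ref{lemma_seminorm_image_of_1}.
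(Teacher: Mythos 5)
Your proposal is correct and follows the same route as the paper: both reduce the statement to Lemma \ref{lemma_seminorm_image_of_1} by producing a constant $C$ with $\pseudonorm{uv}\le C\pseudonorm{u}\pseudonorm{v}$, the paper simply asserting that this follows from continuity of the $\GenC$-bilinear multiplication while you spell out the standard rescaling argument via $\caninf^r$ and the identity $\pseudonorm{\caninf^r w}=e^{-r}\pseudonorm{w}$. Your extra remarks (the degenerate case $\pseudonorm{u}=0$ via definiteness, and the transfer of definiteness to $\pseudonormprime{.}$ through the equivalence) are exactly the details the paper leaves implicit.
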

\begin{proof}
As the ring multiplication is a $\GenC$-bilinear map, the continuity implies that there exists $C\in\R$ such that $\pseudonorm{uv}\le C\pseudonorm{u}\pseudonorm{v}$, $\forall u,v\in\Alg$. The result follows then by lemma \ref{lemma_seminorm_image_of_1}.
\end{proof}

\begin{prop}\label{prop_completion_of_normed}
Let $\Alg$ be an ultra-pseudo-normed $\GenC$-algebra. Then the completion $\Completion\Alg$ is a Banach $\GenC$-algebra. If $\Alg$ is faithful, then also $\Completion\Alg$ is faithful.
\end{prop}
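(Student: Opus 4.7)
The plan is to leverage the already-established fact (cited in the Preliminaries) that $\Completion{\Alg}$ carries a complete ultra-pseudo-norm extending $\pseudonorm{.}$ making it a Banach $\GenC$-module, and then to extend the multiplication by continuity. The canonical map $\Alg\to\Completion{\Alg}$ is an isometric, $\GenC$-linear, unit-preserving embedding with dense image, which will let us transport all algebraic identities by limits.

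The core step is to define a multiplication on $\Completion{\Alg}$. Given $u,v\in\Completion{\Alg}$, pick Cauchy sequences $(u_n), (v_n)$ in $\Alg$ converging to $u,v$ respectively. Because $\pseudonorm{.}$ is an ultra-pseudo-norm, Cauchy sequences are bounded: there exist $M_u, M_v$ with $\pseudonorm{u_n}\le M_u$, $\pseudonorm{v_n}\le M_v$ for all $n$. Using submultiplicativity and the ultrametric inequality,
\[
\pseudonorm{u_nv_n-u_mv_m}\le\max\bigl(\pseudonorm{u_n}\pseudonorm{v_n-v_m},\pseudonorm{u_n-u_m}\pseudonorm{v_m}\bigr)\le\max\bigl(M_u\pseudonorm{v_n-v_m},M_v\pseudonorm{u_n-u_m}\bigr),
\]
so $(u_nv_n)$ is Cauchy in $\Alg$; set $uv$ to be its limit in $\Completion{\Alg}$. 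The same estimate applied to two choices of approximating sequences shows that $uv$ is independent of the choice, so multiplication is well defined on $\Completion{\Alg}$.

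Next, all algebraic identities transfer by continuity: associativity, (left/right) distributivity, compatibility with the $\GenC$-action, and unitality $1\cdot u=u\cdot 1=u$ hold on the dense subalgebra $\Alg$, and each side is continuous in every variable (by the estimate above), so equality persists on $\Completion{\Alg}$. Passing to the limit in $\pseudonorm{u_nv_n}\le\pseudonorm{u_n}\pseudonorm{v_n}$ and using continuity of $\pseudonorm{.}$ yields submultiplicativity on $\Completion{\Alg}$; $\pseudonorm{1}=1$ is inherited since the embedding is isometric. Thus $\Completion{\Alg}$ is a Banach $\GenC$-algebra.

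For faithfulness: the embedding $\Alg\hookrightarrow\Completion{\Alg}$ is isometric, hence injective; if $\lambda\in\GenC\setminus\{0\}$, then $\lambda\cdot 1\ne 0$ in $\Alg$ and therefore $\lambda\cdot 1\ne 0$ in $\Completion{\Alg}$. I do not anticipate a real obstacle here; the only point requiring mild care is the independence of $uv$ from the choice of Cauchy representatives, which is immediate once boundedness of Cauchy sequences (automatic in an ultrametric) is used together with submultiplicativity.
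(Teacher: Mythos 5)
Your proof is correct and follows essentially the same route as the paper: take the Banach $\GenC$-module structure of $\Completion\Alg$ from the preliminaries, extend the multiplication by continuity using submultiplicativity, and transfer the algebraic identities, submultiplicativity, $\pseudonorm{1}=1$, and faithfulness by density and the isometric embedding. You merely spell out the Cauchy estimates and the faithfulness argument that the paper leaves implicit.
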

\begin{proof}
As explained in section \ref{section_prelims}, $\Completion\Alg$ is a Banach $\GenC$-module. The submultiplicativity of the pseudonorm $\pseudonorm{.}$ ensures that the ring multiplication can be extended to $\Completion\Alg$. In this way, $\Completion\Alg$ becomes a $\GenC$-algebra. By continuity, also the extended $\pseudonorm{.}$ is submultiplicative.
\end{proof}

\begin{df}
Let $(\Gen,\pseudonorm{.})$ be a Banach $\GenC$-module. Let $(u_\lambda)_{\lambda\in\Lambda}$ be a family of elements of $\Gen$. We say that $(u_\lambda)_{\lambda\in\Lambda}$ is \defstyle{summable} if there exists $v\in\Gen$ such that for each $r\in\R^+$ there exists a finite $F_0\subseteq\Lambda$ such that $\pseudonorm{v - \sum_{\lambda\in F} u_\lambda}\le r$, for each finite $F\subseteq \Lambda$ with $F_0\subseteq F$. Since $v$ is unique with this property, $\sum_{\lambda\in \Lambda} u_\lambda:= v$.
\end{df}

\begin{lemma}\label{lemma_convergence_Banach_module}
Let $(\Gen,\pseudonorm{.})$ be a Banach $\GenC$-module.
\begin{enumerate}
\item A family $(u_\lambda)_{\lambda\in\Lambda}$ of elements of $\Gen$ is summable iff for each $r\in\R^+$, there exists a finite $F\subseteq\Lambda$ such that for each $\lambda\in\Lambda\setminus F$, $\pseudonorm{u_\lambda}<r$. In that case:
\begin{enumerate}
\item there is a countable subset $\Lambda_0$ such that $u_\lambda = 0$ for each $\lambda\in\Lambda\setminus\Lambda_0$.
\item $\sum_{\lambda\in\Lambda} u_\lambda = \sum_{n\in\N} u_{\lambda_n}$ for any enumeration $(\lambda_n)_{n\in\N}$ of $\Lambda_0$.
\item $\pseudonorm{\sum_{\lambda\in\Lambda} u_\lambda}\le \max_{\lambda\in\Lambda} \pseudonorm{u_\lambda}$.
\end{enumerate}
\item Let $u_n\in \Gen$, for each $n\in\N$. Then $\sum_{n\in\N} u_n$ converges iff $(u_n)_{n\in\N}$ is summable iff $u_n\to 0$. If $\sum_{n\in\N} u_n$ converges, then $\sum_{n\in\N} u_{\sigma(n)} = \sum_{n\in\N} u_{n}$, for each bijection $\sigma$: $\N\to\N$, and $\pseudonorm{\sum_{n\in\N} u_n}\le \max_{n\in\N} \pseudonorm{u_n}$.
\item Let $u_{n,m}\in\Gen$, for each $n,m\in\N$ and let the family $(u_{n,m})_{n,m\in\N}$ be summable (we denote the sum of the family by $\sum_{n,m\in\N} u_{n,m}$). Then $\sum_{m\in\N} \sum_{n\in\N} u_{n,m} = \sum_{n\in\N} \sum_{m\in\N} u_{n,m} = \sum_{n,m\in\N} u_{n,m}$.
\item Let $\Gen$ be a Banach $\GenC$-algebra. Let $u_n$, $v_n\in\Gen$, for each $n\in\N$ and let $u_n, v_n\to 0$. Then $\big(\sum_{n\in\N} u_n\big)\big(\sum_{n\in\N} v_n\big) = \sum_{n,m\in\N}u_n v_m$.
\end{enumerate}
\end{lemma}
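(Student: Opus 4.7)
The plan is to treat the four parts in order, exploiting the ultrametric triangle inequality $\pseudonorm{u+v}\le\max(\pseudonorm u,\pseudonorm v)$ throughout; the non-archimedean nature of $\pseudonorm{.}$ is what makes the whole statement much cleaner than its classical analogue.

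For part (1), the forward direction is the standard ``difference of partial sums'' trick: if $v=\sum_\lambda u_\lambda$ and $F_0$ is chosen for a given $r$ as in the definition of summability, then taking $F=F_0$ and $F=F_0\cup\{\lambda\}$ for any $\lambda\notin F_0$ and combining by the ultrametric gives $\pseudonorm{u_\lambda}\le r$. For the converse, I would apply the tail hypothesis with $r=1/n$ to get finite sets $F_n$ and set $\Lambda_0=\bigcup_n F_n$; then $\lambda\notin\Lambda_0$ forces $\pseudonorm{u_\lambda}<1/n$ for every $n$, hence $u_\lambda=0$ since $\pseudonorm{.}$ is a pseudo-\emph{norm} (this gives (a)). Enumerating $\Lambda_0$ as $(\lambda_n)_n$, the partial sums $s_N=\sum_{n\le N}u_{\lambda_n}$ are ultra-Cauchy because any ``window'' $[N,M]$ with $N$ large lies outside some $F_n$; completeness yields a limit $v$, and a second ultrametric estimate comparing $\sum_{\lambda\in F}u_\lambda$ with $s_{N_0}$ (choosing $F_0\supseteq\{\lambda_1,\dots,\lambda_{N_0}\}$ and invoking that indices beyond $N_0$ contribute terms of pseudonorm $<r$) proves that this $v$ satisfies the unordered summability condition and equals the ordered sum, giving (b). Statement (c) then follows by applying the ultrametric inequality to finite partial sums and passing to the limit by continuity of $\pseudonorm{.}$.

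Part (2) is a direct corollary: convergence of $\sum_n u_n$ implies $u_n=s_n-s_{n-1}\to 0$, which is the sequential form of the tail condition from (1); conversely $u_n\to 0$ makes the partial sums ultra-Cauchy, hence convergent by completeness, and summability as a family then delivers convergence of the series. Permutation invariance and the norm bound are just the sequential restatements of (1)(b) and (1)(c).

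For part (3), summability of $(u_{n,m})_{n,m}$ together with (1) yields, for each $r>0$, a finite $F\subseteq\N^2$ outside of which $\pseudonorm{u_{n,m}}<r$. Restricting to a fixed $m$ inherits this tail condition, so $v_m:=\sum_n u_{n,m}$ exists by (1); moreover, for $m$ larger than the maximal second coordinate appearing in $F$ the whole row lies outside $F$, so by (1)(c) $\pseudonorm{v_m}\le r$, and hence $\sum_m v_m$ converges by (2). To identify the two sums, I would fix such an $M$ and for each $m\le M$ choose a finite $G_m\subseteq\N$ containing $\{n:(n,m)\in F\}$ and large enough that $\pseudonorm{v_m-\sum_{n\in G_m}u_{n,m}}\le r$; then $\bigcup_{m\le M}(G_m\times\{m\})\supseteq F$, so the unordered double sum is within $r$ of the corresponding finite rectangular sum $\sum_{m\le M}\sum_{n\in G_m}u_{n,m}$, and a single ultrametric comparison yields $\pseudonorm{\sum_m v_m-\sum_{n,m}u_{n,m}}\le r$. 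The symmetric identity is obtained by swapping the roles of $n$ and $m$. This bookkeeping step, arranging the rectangular sum to dominate $F$ column by column, is the part I expect to require the most care.

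Part (4) is then immediate. Submultiplicativity plus boundedness of $(\pseudonorm{u_n})$ and $(\pseudonorm{v_n})$ (both tend to $0$) give $\pseudonorm{u_nv_m}<r$ whenever $\max(n,m)$ exceeds a threshold, so $(u_nv_m)_{n,m}$ is summable by (1). Applying (3) and then pulling the fixed factor out using continuity of left, respectively right, multiplication in the ultra-pseudo-normed algebra yields
\[
\sum_{n,m}u_nv_m=\sum_n\sum_m u_nv_m=\sum_n u_n\Big(\sum_m v_m\Big)=\Big(\sum_n u_n\Big)\Big(\sum_m v_m\Big),
\]
which is the claimed Cauchy product formula.
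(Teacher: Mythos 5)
Your proposal is correct: it is the standard non-archimedean summability argument that the paper itself invokes, since the paper's proof of part (1) is just a citation of the ultrametric Banach-space case (Van Rooij, Ex.\ 3.K) and part (2) is read off from the ultrametric inequality, exactly as you do. The only real difference is in parts (3)--(4): the paper observes in one line that all expressions coincide with $\lim_{N\to\infty}\sum_{n,m\le N}u_{n,m}$ (square partial sums, plus continuity of multiplication for (4)), whereas you identify the iterated and unordered sums by a rectangle argument dominating the finite tail set and then deduce (4) from (3) by pulling out the fixed factors via continuity of multiplication. Both routes are sound; the paper's square-partial-sum shortcut avoids your column-by-column bookkeeping, while your version makes explicit the tail estimates that the citation leaves implicit (e.g.\ that any finite set containing the tail set gives a partial sum within $r$ of the total), so nothing is missing.
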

\begin{proof}
(1) As for Banach spaces over ultrametric fields \cite[Ex.\ 3.K]{VanRooij}.\\
(2) By the ultrametric property of $\pseudonorm{.}$, $\sum_{n\in\N} u_n$ converges iff $(u_n)_{n\in\N}$ is summable. The other assertions follow by part~1.\\
(3) By parts 1 and 2, all series involved are convergent. Further, the three expressions converge to $\lim_{N\to\infty} \sum_{n,m \le N} u_{n,m}$.\\
(4) By parts 1 and 2, all series involved are convergent. As the product in $\Gen$ is continuous, both expressions are equal to $\lim_{N\to\infty} \sum_{n,m\le N}u_n v_m$.
\end{proof}

\begin{lemma}\label{lemma_invertible_in_Banach_algebra}
Let $(\Alg, \pseudonorm{.})$ be a Banach $\GenC$-algebra. Let $u\in\Alg$ with $\pseudonorm u<1$. Then $v=\sum_{n=0}^\infty u^n$ exists in $\Alg$ and $v=\inv{(1-u)}$.
\end{lemma}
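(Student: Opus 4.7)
The plan is to imitate the classical Neumann series argument, using the ultrametric structure to replace the usual geometric-series bound by a direct convergence criterion.

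First I would establish convergence. By submultiplicativity of $\pseudonorm{.}$ together with $\pseudonorm 1 = 1$, an easy induction gives $\pseudonorm{u^n}\le \pseudonorm{u}^n$ for every $n\in\N$. Since $\pseudonorm u < 1$, this forces $\pseudonorm{u^n}\to 0$. Lemma \ref{lemma_convergence_Banach_module}(2) then says that in a Banach $\GenC$-module the condition $u_n\to 0$ is already sufficient for $\sum_{n\in\N} u_n$ to converge, so $v:= \sum_{n=0}^\infty u^n$ is a well-defined element of $\Alg$.

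Next I would check that $v$ is the inverse of $1-u$. For each $N\in\N$ the finite sum telescopes:
\[
(1-u)\sum_{n=0}^{N} u^n = 1 - u^{N+1} = \Big(\sum_{n=0}^{N} u^n\Big)(1-u).
\]
Letting $N\to\infty$, the left-multiplication and right-multiplication by $1-u$ are continuous on $\Alg$ (by submultiplicativity of $\pseudonorm{.}$), and $u^{N+1}\to 0$ since $\pseudonorm{u^{N+1}}\le\pseudonorm u^{N+1}\to 0$. Passing to the limit yields $(1-u)v = 1 = v(1-u)$, so $v = \inv{(1-u)}$ (uniqueness of the inverse having been recorded earlier).

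There is really no serious obstacle here; the only point that needs mentioning is that continuity of the multiplication (for fixed factor) is a direct consequence of the submultiplicative ultra-pseudo-norm, which replaces the triangle-inequality based telescoping argument of the classical case. No estimate on $\pseudonorm{v}$ is needed for the statement, but one gets $\pseudonorm{v}\le\max_{n\in\N}\pseudonorm{u^n}\le 1$ for free from Lemma \ref{lemma_convergence_Banach_module}(2) if desired.
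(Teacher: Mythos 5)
Your argument is correct and is exactly the classical Neumann-series proof that the paper invokes (it simply cites \cite[Lemma 3.1.5]{Kadison} ``as for Banach $\C$-algebras''), with the natural adaptation that convergence follows from $\pseudonorm{u^n}\to 0$ via lemma \ref{lemma_convergence_Banach_module}(2). Nothing further is needed.
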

\begin{proof}
As for Banach $\C$-algebras (e.g.\ \cite[Lemma 3.1.5]{Kadison}).
\end{proof}

\begin{prop}\label{prop_invertible_in_Banach_algebra}
Let $(\Alg, \pseudonorm{.})$ be a Banach $\GenC$-algebra.
\begin{enumerate}
\item The set of invertible elements of $\Alg$ is open.
\item Denote by $Q$ the set of invertible elements of $\Alg$. The map $\inv{.}$: $Q\to Q$ is continuous.
\item Let $S\subseteq(0,1)$ with $e_S1\ne 0$ and $u\in\Alg$ with $\pseudonorm{e_S u}<1$. Then $1-u$ is invertible w.r.t.\ $S$.
\item Let $S\subseteq(0,1)$ with $e_S1\ne 0$. Then $\{u\in\Alg: u$ invertible w.r.t.\ $S\}$ is open.
\end{enumerate}
\end{prop}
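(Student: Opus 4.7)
The plan is to treat the four parts in the order $(3) \to (1) \to (2) \to (4)$, since part (3) specializes to the classical geometric series argument when $S = (0,1)$, and the remaining parts follow from it and from lemma \ref{lemma_invertible_wrt_S}.

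For part (3), I would use lemma \ref{lemma_invertible_wrt_S}(2) to reduce the claim that $1 - u$ is invertible w.r.t.\ $S$ to the claim that $e_S(1-u) + e_{\co S} 1$ is invertible. Since $e_S + e_{\co S} = 1$, this element simplifies to $1 - e_S u$. The hypothesis $\pseudonorm{e_S u} < 1$ and lemma \ref{lemma_invertible_in_Banach_algebra} then yield invertibility of $1 - e_S u$ via the Neumann series $\sum_{n=0}^{\infty}(e_S u)^n$.

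For part (1), I would argue as in the classical Banach algebra case. If $u$ is invertible and $h\in\Alg$ satisfies $\pseudonorm{h} < \pseudonorm{\inv u}^{-1}$ (in particular, if $\pseudonorm{\inv u}\,\pseudonorm{h} < 1$), then $u + h = u(1 + \inv u h)$, and $\pseudonorm{\inv u h}\le \pseudonorm{\inv u}\pseudonorm h < 1$ by submultiplicativity. Applying part (3) with $S = (0,1)$ (so $e_S = 1$) to $-\inv u h$ shows that $1 + \inv u h$ is invertible, and hence so is $u + h$. This exhibits an open neighbourhood of each invertible $u$ inside $Q$. For part (2), I would read off continuity from the same factorization: $\inv{(u+h)} - \inv u = \inv{(1 + \inv u h)}\inv u - \inv u = \bigl(\inv{(1 + \inv u h)} - 1\bigr)\inv u$, and then expand $\inv{(1 + \inv u h)} = \sum_{n=0}^{\infty}(-\inv u h)^n$ via lemma \ref{lemma_invertible_in_Banach_algebra}. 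Using the ultrametric inequality termwise, the terms with $n\ge 1$ have pseudo-norm at most $\pseudonorm{\inv u}\pseudonorm h$, so $\pseudonorm{\inv{(u+h)} - \inv u}\le \pseudonorm{\inv u}^2 \pseudonorm h$ for $h$ small enough, giving continuity at $u$.

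For part (4), I would again use lemma \ref{lemma_invertible_wrt_S}(2): the set $\{u\in\Alg: u \text{ invertible w.r.t.\ }S\}$ is the preimage of the set $Q$ of invertible elements under the continuous $\GenC$-affine map $\Phi$: $\Alg\to\Alg$: $u\mapsto e_S u + e_{\co S} 1$ (continuity of $\Phi$ follows from $\pseudonorm{e_S u - e_S v} = \pseudonorm{e_S(u-v)}\le \sharpnorm{e_S}\pseudonorm{u-v}\le \pseudonorm{u-v}$). Since $Q$ is open by part (1), this preimage is open. The only real obstacle throughout is keeping track of the idempotent bookkeeping between $e_S$ and $e_{\co S}$ in part (3); once the reduction $e_S(1-u) + e_{\co S} 1 = 1 - e_S u$ is seen, everything else is a direct transcription of the classical Banach algebra arguments into the ultrametric pseudo-norm setting.
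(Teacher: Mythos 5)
Your proposal is correct and follows essentially the same route as the paper: part (3) via the reduction $e_S(1-u)+e_{\co S}1 = 1-e_S u$ together with the Neumann series lemma, and part (4) as the preimage of the invertible group under the continuous map $\Phi(u)=e_S u+e_{\co S}1$. For parts (1) and (2) the paper simply cites the classical Banach-algebra argument, which is exactly what you spell out (your ultrametric termwise bound $\pseudonorm{\inv{(u+h)}-\inv u}\le\pseudonorm{\inv u}^2\pseudonorm h$ is a clean way to record it).
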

\begin{proof}
(1), (2) As for Banach $\C$-algebras (e.g.\ \cite[Prop.\ 3.1.6]{Kadison}).\\
(3) By lemma \ref{lemma_invertible_in_Banach_algebra}, $1-e_Su =e_S(1-u)+e_{\co S}1$ is invertible. So by lemma \ref{lemma_invertible_wrt_S}, $1-u$ is invertible w.r.t.\ $S$.\\
(4) By lemma \ref{lemma_invertible_wrt_S}, $\{u\in\Alg: u$ invertible w.r.t.\ $S\} = \inv\Phi(Q)$, where $Q$ is the set of invertible elements of $\Alg$ and $\Phi(u)= e_S u + e_{\co S}1$. The result follows by part 1 and the continuity of $\Phi$.
\end{proof}


\begin{lemma}\label{lemma_closed_subalgebra_in_Banach_algebra}
Let $(\Alg, \pseudonorm{.})$ be a Banach $\GenC$-algebra and let ${\mathcal B}\subseteq \Alg$ be a closed sub-$\GenC$-algebra (with $1$). Then $({\mathcal B}, \pseudonorm{.})$ is a Banach $\GenC$-algebra. If $\Alg$ is faithful, then also $\mathcal B$ is faithful.
\end{lemma}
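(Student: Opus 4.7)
The plan is to verify the Banach $\GenC$-algebra axioms for $\mathcal B$ simply by restriction, so almost nothing has to be constructed; the only substantive point is completeness, which comes from $\mathcal B$ being a closed subset of a complete ultrametric space.

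First I would note that $\mathcal B$, as a sub-$\GenC$-algebra of $\Alg$ containing $1$, is closed under the module operations and contains the unit. Hence the restriction of $\pseudonorm{.}$ to $\mathcal B$ satisfies $\pseudonorm{u+v}\le\max(\pseudonorm u,\pseudonorm v)$, $\pseudonorm{\lambda u}\le\sharpnorm{\lambda}\pseudonorm u$, $\pseudonorm{uv}\le\pseudonorm u\pseudonorm v$ and $\pseudonorm 1=1$ for $u,v\in\mathcal B$, $\lambda\in\GenC$, because these inequalities hold throughout $\Alg$. Non-degeneracy $\pseudonorm u\ne 0$ for $u\in\mathcal B\setminus\{0\}$ is inherited from $\Alg$ trivially. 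So $(\mathcal B,\pseudonorm{.})$ is at least an ultra-pseudo-normed $\GenC$-algebra.

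Next I would prove completeness by the standard closed-subspace argument. As recalled in Section~\ref{section_prelims}, $\pseudonorm{.}$ defines a translation invariant ultrametric $d(u,v)=\pseudonorm{u-v}$ on $\Alg$, under which $\Alg$ is complete. Any Cauchy sequence $(u_n)_n$ in $\mathcal B$ is a fortiori Cauchy in $\Alg$, so converges to some $u\in\Alg$; since $\mathcal B$ is closed in $\Alg$, $u\in\mathcal B$, and $u_n\to u$ in $\mathcal B$. Hence $(\mathcal B,\pseudonorm{.})$ is complete, i.e., a Banach $\GenC$-algebra.

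For the faithfulness claim, the unit of $\mathcal B$ coincides with the unit of $\Alg$ by hypothesis, and scalar multiplication by $\lambda\in\GenC$ is the ambient one in $\Alg$. So if $\Alg$ is faithful and $\lambda\in\GenC\setminus\{0\}$, then $\lambda\cdot 1\ne 0$ in $\Alg$, hence $\lambda\cdot 1\ne 0$ in $\mathcal B$. There is no real obstacle in the proof; the only thing one must be slightly careful about is precisely this identification of units and of scalar actions between $\mathcal B$ and $\Alg$, which is built into the definition of a sub-$\GenC$-algebra with~$1$.
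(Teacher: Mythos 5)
Your proof is correct and is exactly the elementary argument the paper intends (its proof is simply ``Elementary''): the norm axioms restrict, completeness follows because $\mathcal B$ is a closed subset of the complete ultrametric space $\Alg$, and faithfulness is inherited since $\mathcal B$ shares the unit and the $\GenC$-action of $\Alg$. Nothing to add.
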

\begin{proof}
Elementary.
\end{proof}

\begin{lemma}\label{lemma_GenC_proper_ideal}
Let $\Alg$ be a faithful $\GenC$-algebra. Let $I\idealproper\Alg$. The following are equivalent:
\begin{enumerate}
\item $I\cap\GenC =\{0\}$
\item $\Alg/I$ is a faithful $\GenC$-algebra
\item $(\forall S\subseteq (0,1) \text{ with }e_S\ne 0)(e_S\notin I)$
\item $(\forall S\subseteq(0,1) \text{ with }e_S\ne 0) (I\cap \{u\in \Alg: u\text{ is invertible w.r.t.\ }S\}=\emptyset)$.
\end{enumerate}
\end{lemma}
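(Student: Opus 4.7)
The plan is to establish the four equivalences by a short cyclic argument $(1)\Rightarrow(3)\Rightarrow(4)\Rightarrow(1)$ together with the direct equivalence $(1)\iff(2)$. The key tool will be Lemma \ref{lemma_invertible_wrt_S}(4) together with the result recalled just before section 3, namely that $c\in\GenC$ vanishes iff it is not invertible w.r.t.\ any $S\subseteq(0,1)$ with $e_S\ne 0$.

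For $(1)\iff(2)$, I would unpack the definition of faithfulness: since $\Alg$ is faithful, we have the identification $\GenC\hookrightarrow\Alg$, $\lambda\mapsto\lambda 1$, and then $\lambda 1=0$ in $\Alg/I$ is equivalent to $\lambda\in I\cap\GenC$. Hence $\Alg/I$ is faithful precisely when $I\cap\GenC=\{0\}$.

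For $(1)\Rightarrow(3)$: if $e_S\ne 0$ then $e_S\in\GenC\setminus\{0\}$, so $e_S\in I$ would contradict $I\cap\GenC=\{0\}$. For $(3)\Rightarrow(4)$: if $u\in I$ is invertible w.r.t.\ $S$ and $v\in\Alg$ satisfies $uv=e_S 1$, then $e_S=uv\in I$ because $I$ is a right ideal, contradicting $(3)$.

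The one step needing slightly more care is $(4)\Rightarrow(1)$. Assume there is $c\in (I\cap\GenC)\setminus\{0\}$. By the characterization of nonzero elements of $\GenC$ just cited, there exists $S\subseteq(0,1)$ with $e_S\ne 0$ such that $c$ is invertible w.r.t.\ $S$ in $\GenC$. Lemma \ref{lemma_invertible_wrt_S}(4) then lifts this to invertibility w.r.t.\ $S$ of $c$ inside the faithful algebra $\Alg$, producing an element of $I$ that is invertible w.r.t.\ $S$; this violates $(4)$. The only place where any care is required is precisely this appeal to Lemma \ref{lemma_invertible_wrt_S}(4), which is what allows invertibility information to be transported from $\GenC$ into $\Alg$; beyond that, every implication is a direct application of the definitions.
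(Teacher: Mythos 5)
Your proof is correct and follows essentially the same route as the paper: the same ingredients (the identification of $\GenC$ with $\GenC 1$ in a faithful algebra, the fact that any nonzero $c\in\GenC$ is invertible w.r.t.\ some $S$ with $e_S\ne 0$, and lemma \ref{lemma_invertible_wrt_S}(4) to transport invertibility from $\GenC$ into $\Alg$) appear in the paper's argument, which merely arranges the implications as a single cycle $(1)\Rightarrow(2)\Rightarrow(3)\Rightarrow(4)\Rightarrow(1)$ instead of your $(1)\iff(2)$ plus $(1)\Rightarrow(3)\Rightarrow(4)\Rightarrow(1)$. This rearrangement is purely cosmetic, so no further comment is needed.
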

\begin{proof}
$(1)\implies (2)$: if $\lambda\in\GenC$ and $\lambda 1=0$ in $\Alg/I$, then $\lambda 1\in I\cap\GenC$, hence $\lambda 1=0$ in $\Alg$. As $\Alg$ is faithful, this implies that $\lambda=0$.\\
$(2)\implies (3)$: if there exists $S\subseteq(0,1)$ with $e_S\ne 0$ such that $e_S\in I$, then $e_S 1=0$ in $\Alg/I$, and $\Alg/I$ is not faithful (as $\Alg$ is faithful, $e_S\ne 0$ in $\GenC$ iff $e_S\ne 0$ in $\Alg$).\\
$(3)\implies (4)$: let $u\in\Alg$ be invertible w.r.t.\ $S$. Should $u\in I$, then also $0\ne e_S\in I$.\\
$(4)\implies (1)$: let $\lambda\in\GenC\setminus\{0\}$. Then there exists $S\subseteq(0,1)$ with $e_S\ne 0$ (also in $\Alg$, as $\Alg$ is faithful) such that $\lambda$ is invertible w.r.t.\ $S$, so $\lambda\notin I$.
\end{proof}

The following proposition is an extension of \cite[1.12]{Garetto2005}.
\begin{prop}\label{prop_closed_ideal_in_Banach_algebra}
Let $(\Alg, \pseudonorm{.})$ be a Banach $\GenC$-algebra and let $I\idealproper \Alg$ be closed. Then $(\Alg/I, \pseudonormprime{.})$ is a Banach $\GenC$-algebra with
\[
\pseudonormprime{u+I}= \inf_{u-v\in I} \pseudonorm v.
\]
If $\Alg$ is faithful, then $\Alg/I$ is faithful iff $I\cap\GenC = 0$.
\end{prop}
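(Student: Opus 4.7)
The plan is to mirror the classical quotient Banach algebra construction, but carrying out every estimate with the ultrametric inequality in place of the ordinary triangle inequality. First I would verify that $\pseudonormprime{.}$ is well-defined (the infimum lies in $[0,+\infty)$ since $\pseudonorm{.}$ does) and is an ultra-pseudo-seminorm on $\Alg/I$. For the ultrametric triangle inequality, given cosets $u+I,w+I$ and any $\delta>0$, choose $v_1\in u+I$, $v_2\in w+I$ with $\pseudonorm{v_i}<\pseudonormprime{\cdot}+\delta$; then $v_1+v_2\in(u+w)+I$, so
\[
\pseudonormprime{(u+w)+I}\le\pseudonorm{v_1+v_2}\le\max(\pseudonorm{v_1},\pseudonorm{v_2})<\max(\pseudonormprime{u+I},\pseudonormprime{w+I})+\delta,
\]
and letting $\delta\to 0$ gives the inequality. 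Scalar homogeneity uses that $\lambda v\in\lambda u+I$ whenever $v\in u+I$ (as $I$ is a $\GenC$-submodule). Submultiplicativity is analogous: for $v_1\in u+I$, $v_2\in w+I$ one has $v_1 v_2\in uw+I$ because $I$ is two-sided, and passing to infima gives $\pseudonormprime{uw+I}\le\pseudonormprime{u+I}\pseudonormprime{w+I}$. Separation, which upgrades the ultra-pseudo-seminorm to an ultra-pseudo-norm, is where the closedness of $I$ enters: if $\pseudonormprime{u+I}=0$, choose $v_n\in u+I$ with $\pseudonorm{v_n}\to 0$; then $u-v_n\in I$ converges to $u$, so $u\in\overline I=I$, i.e.\ $u+I=0$.

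Next I would verify $\pseudonormprime{1+I}=1$. The bound $\pseudonormprime{1+I}\le 1$ is trivial, taking the representative $1$. Conversely, if $\pseudonormprime{1+I}<1$ there would exist $v\in 1+I$ with $\pseudonorm{v}<1$; by lemma \ref{lemma_invertible_in_Banach_algebra} applied to $u:=v$, the element $1-v$ would be invertible; but $v-1\in I$ forces $1-v\in I$, so $I$ would contain an invertible element and hence coincide with $\Alg$, contradicting $I\idealproper\Alg$. This is the one step where properness of $I$ and the Banach algebra structure (through the Neumann series) are jointly essential; I regard it as the main conceptual point, although the argument itself is short.

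Completeness follows by the standard subsequence lifting. Given a Cauchy sequence $(u_n+I)$ in $\Alg/I$, extract a subsequence with $\pseudonormprime{(u_{n+1}-u_n)+I}\le 2^{-n}$ and choose $w_n\in(u_{n+1}-u_n)+I$ with $\pseudonorm{w_n}\le 2^{-n+1}$. Setting $v_0:=u_0$ and $v_{n+1}:=v_n+w_n$ yields $v_n\in u_n+I$, and by the ultrametric estimate $\pseudonorm{v_{n+p}-v_n}\le\max_{k\ge n}\pseudonorm{w_k}\to 0$ the sequence $(v_n)$ is Cauchy in the complete space $\Alg$, hence converges to some $v$; then $u_n+I=v_n+I\to v+I$ in $\Alg/I$, and the full original Cauchy sequence converges to the same limit. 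Finally, the faithfulness statement is immediate from lemma \ref{lemma_GenC_proper_ideal}: when $\Alg$ is faithful, conditions (1) and (2) there give exactly the equivalence between faithfulness of $\Alg/I$ and $I\cap\GenC=\{0\}$.
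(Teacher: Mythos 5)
Your proof is correct and follows essentially the same route as the paper, which simply cites the classical quotient construction and then notes exactly your two key points: $\pseudonormprime{1+I}=1$ via lemma \ref{lemma_invertible_in_Banach_algebra} together with properness of $I$, and faithfulness via lemma \ref{lemma_GenC_proper_ideal}. You merely spell out the classical details (ultrametric seminorm estimates, separation from closedness of $I$, completeness by subsequence lifting) that the paper leaves to the reference.
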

\begin{proof}
As for Banach $\C$-algebras (e.g., \cite[Thm.\ 1.5.3, Prop.\ 3.1.8]{Kadison}).
$\pseudonormprime{1+I}=1$ by lemma \ref{lemma_invertible_in_Banach_algebra}, since $I$ is a proper ideal.
Faithfulness follows by lemma \ref{lemma_GenC_proper_ideal}.
\end{proof}

The following proposition is an easy extension of \cite[Props.\ 3.17, 3.19]{Garetto2005}:
\begin{prop}\label{prop_lin_maps_on_normed_modules}
Let $(\Gen_1,\mathcal P_1)$, $(\Gen_2,\mathcal P_2)$ be ultra-pseudo-normed $\GenC$-modules. Then $\cLin(\Gen_1,\Gen_2)$ is an ultra-pseudo-normed $\GenC$-module with ultra-pseudo-norm $\pseudonorm{.}$ defined by
\begin{align*}
\pseudonorm{T} &= \inf\{C\in\R, C\ge 0: (\forall u\in\Gen_1) (\mathcal P_2(T u)\le C \mathcal P_1(u))\}\\
&= \sup_{\mathcal P_1(u)\le 1}\mathcal P_2(T u) = \sup_{\mathcal P_1(u) = 1}\mathcal P_2(T u) = \sup_{u \ne 0} \frac{\mathcal P_2(T u)}{\mathcal P_1(u)},
\end{align*}
and $\mathcal P_2(Tu)\le \pseudonorm T \mathcal P_1(u)$, $\forall u\in\Gen_1$.\\
If $(\Gen_2,\mathcal P_2)$ is complete, then also $(\cLin(\Gen_1,\Gen_2), \pseudonorm{.})$ is complete.
\end{prop}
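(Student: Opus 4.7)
The plan is to follow the classical argument for the operator norm on bounded linear maps between normed spaces, adapted to the ultrametric setting over $\GenC$: the triangle inequality becomes a maximum, and the role played classically by positive real scalars is taken by the generalized scalars $\caninf^r$, $r\in\R$, whose sharp norms $\sharpnorm{\caninf^r}=e^{-r}$ exhaust $(0,+\infty)$. The main (modest) obstacle will be the first step, the equivalence of continuity with boundedness: in a Banach space over $\C$ one simply normalises a nonzero $u$ by dividing by its norm, but since $\Gen_1$ is only a module over $\GenC$, the ``normalisation'' must be carried out by multiplication by a suitable power of $\caninf$, whose sharp norm can be tuned in $(0,+\infty)$.

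Concretely, I first show that a $\GenC$-linear $T$: $\Gen_1\to\Gen_2$ is continuous iff there exists $C\in\R$ with $\mathcal{P}_2(Tu)\le C\mathcal{P}_1(u)$ for all $u$. Boundedness trivially implies continuity. Conversely, if $T$ is continuous at $0$, pick $\delta>0$ with $\mathcal{P}_1(v)\le\delta\Rightarrow\mathcal{P}_2(Tv)\le 1$; then for $u$ with $\mathcal{P}_1(u)>0$ choose $s\in\R$ with $e^{-s}\mathcal{P}_1(u)=\delta$, so that $\mathcal{P}_1(\caninf^s u)=\delta$, whence $e^{-s}\mathcal{P}_2(Tu)=\mathcal{P}_2(T(\caninf^s u))\le 1$ and $\mathcal{P}_2(Tu)\le\mathcal{P}_1(u)/\delta$ (nothing to check when $\mathcal{P}_1(u)=0$, since $\mathcal{P}_1$ is a norm). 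The equality of the three supremum expressions for $\pseudonorm T$ then follows from the same scaling trick, which preserves the ratio $\mathcal{P}_2(Tu)/\mathcal{P}_1(u)$ while normalising $\mathcal{P}_1(u)$ to $1$; the identification with the infimum is classical. The estimate $\mathcal{P}_2(Tu)\le\pseudonorm T\mathcal{P}_1(u)$ is then immediate from the supremum formula. The ultra-pseudo-norm axioms descend from their pointwise counterparts via the supremum: $\pseudonorm{T+S}\le\max(\pseudonorm T,\pseudonorm S)$ from $\mathcal{P}_2((T+S)u)\le\max(\mathcal{P}_2(Tu),\mathcal{P}_2(Su))$, and $\pseudonorm{\lambda T}\le\sharpnorm\lambda\pseudonorm T$ from $\mathcal{P}_2(\lambda Tu)\le\sharpnorm\lambda\mathcal{P}_2(Tu)$; definiteness $\pseudonorm T=0\Rightarrow T=0$ uses that $\mathcal{P}_2$ is a norm.

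For completeness when $(\Gen_2,\mathcal{P}_2)$ is complete, I take a Cauchy sequence $(T_n)$ in $\cLin(\Gen_1,\Gen_2)$; the estimate $\mathcal{P}_2(T_n u-T_m u)\le\pseudonorm{T_n-T_m}\mathcal{P}_1(u)$ makes $(T_n u)$ Cauchy in $\Gen_2$ for every $u$, so I set $Tu:=\lim_n T_n u$, and the $\GenC$-linearity of $T$ passes to the limit by continuity of the module operations on $\Gen_2$. Given $r\in\R^+$, fix $N$ with $\pseudonorm{T_n-T_m}\le r$ for all $n,m\ge N$; for $n\ge N$, letting $m\to\infty$ and using continuity of $\mathcal{P}_2$ yields $\mathcal{P}_2((T_n-T)u)\le r\mathcal{P}_1(u)$ for all $u$. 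Hence $T_n-T\in\cLin(\Gen_1,\Gen_2)$ with $\pseudonorm{T_n-T}\le r$, so $T=T_n-(T_n-T)\in\cLin(\Gen_1,\Gen_2)$ and $T_n\to T$ in operator pseudo-norm.
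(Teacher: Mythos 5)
Your proof is correct: the $\caninf^{s}$-scaling (using $\mathcal P_1(\caninf^{s}u)=e^{-s}\mathcal P_1(u)$ exactly) is precisely what replaces classical normalisation, and the rest — equality of the sup/inf expressions, the ultra-pseudo-norm axioms via pointwise estimates, and the standard Cauchy-sequence argument for completeness — goes through as you describe. The paper itself gives no proof, labelling the proposition an easy extension of Props.\ 3.17 and 3.19 of the cited work of Garetto, and your argument is essentially the standard one that that reference carries out, so there is nothing to flag.
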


\begin{prop}\label{prop_cLin_of_Banach_module}
Let $(\Gen,\pseudonorm{.})$ be a Banach $\GenC$-module. Then $(\cLin(\Gen),\pseudonorm{.})$ is Banach $\GenC$-algebra. If $\Gen$ is a faithful $\GenC$-module, then $\cLin(\Gen)$ is a faithful $\GenC$-algebra.
\end{prop}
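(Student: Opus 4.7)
The statement follows by specializing Proposition \ref{prop_lin_maps_on_normed_modules} to the case $\Gen_1 = \Gen_2 = \Gen$ and then checking the three pieces missing from the definition of a Banach $\GenC$-algebra: submultiplicativity of $\pseudonorm{.}$, the normalization $\pseudonorm{1} = 1$, and (under the faithfulness hypothesis) faithfulness of $\cLin(\Gen)$. Completeness of $\cLin(\Gen)$ is already given by Proposition \ref{prop_lin_maps_on_normed_modules}, since $(\Gen, \pseudonorm{.})$ is complete by hypothesis. So no new analytic input is needed; the work is purely algebraic manipulation of the operator ultra-pseudo-norm.

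First I would verify submultiplicativity. Given $S, T \in \cLin(\Gen)$ and any $u \in \Gen$, two applications of the operator inequality from Proposition \ref{prop_lin_maps_on_normed_modules} yield
\[
\pseudonorm{S(Tu)} \le \pseudonorm{S}\,\pseudonorm{Tu} \le \pseudonorm{S}\,\pseudonorm{T}\,\pseudonorm{u}.
\]
Taking the supremum over $u$ with $\pseudonorm{u} \le 1$ gives $\pseudonorm{ST} \le \pseudonorm{S}\pseudonorm{T}$. The composition $ST$ is evidently continuous $\GenC$-linear, so it lies in $\cLin(\Gen)$, confirming that $(\cLin(\Gen),\pseudonorm{.})$ is a $\GenC$-algebra with a submultiplicative ultra-pseudo-norm.

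Next I would check $\pseudonorm{\id} = 1$. Using the last formula in Proposition \ref{prop_lin_maps_on_normed_modules},
\[
\pseudonorm{\id} = \sup_{u \ne 0}\frac{\pseudonorm{u}}{\pseudonorm{u}} = 1,
\]
which makes sense as long as $\Gen \ne \{0\}$ (an implicit hypothesis, since otherwise $\cLin(\Gen) = \{0\}$ cannot be unital in the sense of the paper).

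Finally, for faithfulness, let $\lambda \in \GenC \setminus \{0\}$. Since $\Gen$ is a faithful $\GenC$-module, there exists $u \in \Gen$ with $\lambda u \ne 0$. The scalar multiple $\lambda\,\id \in \cLin(\Gen)$ then satisfies $(\lambda\,\id)(u) = \lambda u \ne 0$, so $\lambda\,\id \ne 0$ in $\cLin(\Gen)$, which is the definition of faithfulness for the $\GenC$-algebra $\cLin(\Gen)$. There is no real obstacle in this proof; the only subtlety is to remember to invoke Proposition \ref{prop_lin_maps_on_normed_modules} both for completeness and for the operator inequality used in the submultiplicativity step.
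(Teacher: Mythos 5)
Your proposal is correct and follows essentially the same route as the paper, which likewise reduces the Banach-algebra structure to the classical argument via Proposition \ref{prop_lin_maps_on_normed_modules} (completeness plus submultiplicativity of the operator ultra-pseudo-norm) and proves faithfulness by exactly your observation, stated in contrapositive form: $\lambda\,\id = 0$ forces $\lambda u = 0$ for all $u$, hence $\lambda = 0$ when $\Gen$ is faithful. Your extra remarks on $\pseudonorm{\id}=1$ and the implicit hypothesis $\Gen\ne\{0\}$ are fine details the paper leaves tacit.
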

\begin{proof}
As for Banach $\C$-algebras. If $\lambda 1= 0$ in $\cLin(\Gen)$, then $\lambda u = 0$, $\forall u\in\Gen$, so $\lambda = 0$ in $\GenC$ if $\Gen$ is faithful.
\end{proof}

\begin{prop}\label{prop_G_L(B)_is_Banach_subalgebra}
Let $B$ be a (classical) Banach space and $\cLin(B)$ the Banach $\C$-algebra of continuous $\C$-linear operators on $B$. Then $\Gen_{\cLin(B)}$ is isomorphic with a Banach $\GenC$-subalgebra of $\cLin(\Gen_B)$.
\end{prop}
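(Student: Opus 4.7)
The plan is to exhibit the natural map $\Phi\colon \Gen_{\cLin(B)}\to\cLin(\Gen_B)$ given, on representatives, by
\[
\Phi\bigl([(T_\eps)_\eps]\bigr)\bigl([(u_\eps)_\eps]\bigr):=[(T_\eps u_\eps)_\eps],
\]
and to show that $\Phi$ is an isometric unital $\GenC$-algebra morphism. Then its image is automatically a sub-$\GenC$-algebra containing $1$, and isometric image of a complete ultrametric space is complete, hence closed in $\cLin(\Gen_B)$.

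First I would verify that $\Phi$ is well defined. The estimates $\norm{T_\eps u_\eps}\le\norm{T_\eps}\norm{u_\eps}$ show that if $(T_\eps)_\eps\in\Mod_{\cLin(B)}$ and $(u_\eps)_\eps\in\Mod_B$, then $(T_\eps u_\eps)_\eps\in\Mod_B$, and replacing $(T_\eps)_\eps$ or $(u_\eps)_\eps$ by a representative differing by a negligible net produces a difference bounded by $\eps^{-N}\eps^m$, which is negligible. The resulting map $\Phi(T)\colon\Gen_B\to\Gen_B$ is $\GenC$-linear by construction. Its continuity is a consequence of the submultiplicativity of $\sharpnorm{\cdot}$: one obtains
\[
\pseudonorm{\Phi(T)u}=\sharpnorm[\big]{\norm{T_\eps u_\eps}}\le\sharpnorm[\big]{\norm{T_\eps}}\,\sharpnorm[\big]{\norm{u_\eps}}=\pseudonorm{T}\pseudonorm{u},
\]
so in particular $\pseudonorm{\Phi(T)}\le\pseudonorm{T}$. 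Multiplicativity $\Phi(ST)=\Phi(S)\Phi(T)$ and $\Phi(1)=1$ follow componentwise, and $\GenC$-linearity of $\Phi$ itself is obvious.

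The main technical step, and the one I expect to be the hardest, is the reverse inequality $\pseudonorm{\Phi(T)}\ge\pseudonorm{T}$, which secures isometry and hence injectivity. Write $v:=\val\bigl(\norm{T_\eps}\bigr)$, so that $\pseudonorm{T}=e^{-v}$. Fix an arbitrary $r>v$. By definition of $\val$, the estimate $\norm{T_\eps}\le\eps^r$ fails on a sequence $\eps_n\to 0$, so $\norm{T_{\eps_n}}>\eps_n^r$. By the classical formula $\norm{T_{\eps_n}}=\sup_{\norm{x}=1}\norm{T_{\eps_n}x}$, for each $n$ I can choose $x_n\in B$ with $\norm{x_n}=1$ and $\norm{T_{\eps_n}x_n}\ge\tfrac12\norm{T_{\eps_n}}>\tfrac12\eps_n^r$. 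Define a net $(u_\eps)_\eps$ by $u_\eps:=x_n$ for $\eps\in[\eps_{n+1},\eps_n)$ (and anything with unit norm elsewhere); then $\norm{u_\eps}=1$, so $(u_\eps)_\eps\in\Mod_B$ and $u:=[(u_\eps)_\eps]$ satisfies $\pseudonorm{u}=1$. Since $\norm{T_{\eps_n}u_{\eps_n}}\ge\tfrac12\eps_n^r$ on the sequence $\eps_n$, one gets $\val(\norm{T_\eps u_\eps})\le r$, hence $\pseudonorm{\Phi(T)u}\ge e^{-r}$. Letting $r\downarrow v$ yields $\pseudonorm{\Phi(T)}\ge e^{-v}=\pseudonorm{T}$.

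Once isometry is established, injectivity is immediate, and $\Phi$ is a $\GenC$-algebra isomorphism onto its image. The image is a $\GenC$-subalgebra containing the identity of $\cLin(\Gen_B)$, and since $\Gen_{\cLin(B)}$ is complete under $\pseudonorm{\cdot}$ (proposition~\ref{prop_cLin_of_Banach_module} being unnecessary here; completeness of $\Gen_E$ for a Banach space $E$ is standard), the image is a complete, hence closed, sub-$\GenC$-algebra of $\cLin(\Gen_B)$. By lemma~\ref{lemma_closed_subalgebra_in_Banach_algebra} it is itself a Banach $\GenC$-algebra, and $\Phi$ realises the required isomorphism.
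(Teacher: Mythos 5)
Your proposal is correct and follows essentially the same route as the paper: the same componentwise map $[(T_\eps)_\eps]\mapsto\big(u\mapsto[(T_\eps u_\eps)_\eps]\big)$, verification that it is a well-defined unital $\GenC$-algebra morphism, and the isometry $\pseudonorm{\Phi(T)}=\pseudonorm{T}$, from which injectivity and the Banach subalgebra structure of the image follow. The only (harmless) variation is in the reverse norm inequality, where you pick near-maximizing unit vectors with a factor $\tfrac12$ along a sequence $\eps_n$ witnessing $\norm{T_{\eps_n}}>\eps_n^r$, whereas the paper chooses for every $\eps$ a unit vector with $\norm{T_\eps u_\eps}\ge\norm{T_\eps}-\eps^{1/\eps}$; both yield $\pseudonorm{\Phi(T)}\ge\pseudonorm{T}$.
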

\begin{proof}
As $\cLin(B)$ is a Banach $\C$-algebra, $\Gen_{\cLin(B)}$ is a Banach $\GenC$-algebra with ultra-pseudonorm $\pseudonorm T=\sharpnorm[\big]{\norm{T_\eps}}$, for $T=[(T_\eps)_\eps]\in\Gen_{\cLin(B)}$. By \cite[\S 1.1.2]{GV_Hilbert}, $(T_\eps)_\eps\in \Mod_{\cLin(B)}$ induces a well-defined (so-called basic) continuous $\GenC$-linear map $T$: $\Gen_B\to\Gen_B$ by means of $T(u)=[(T_\eps u_\eps)_\eps]$, for $u=[(u_\eps)_\eps]\in \Gen_B$. Further, as in \cite[Prop.\ 3.22]{Garetto2005}, one sees that $(T_\eps)_\eps\in\Null_{\cLin(B)}$ iff $(T_\eps)_\eps$ induces the $0$-map on $\Gen_B$. Hence we can identify $T\in\Gen_{\cLin(B)}$ with an element of $\cLin(\Gen_B)$. Clearly, the algebraic operations on $\Gen_{\cLin(B)}$ coincide with those on $\cLin(\Gen_B)$. We show that $\pseudonorm T =\mathcal P_{\cLin(\Gen_B)} (T)$. For $u\in\Gen_B$, $\pseudonorm{Tu}= \sharpnorm{\norm{T_\eps u_\eps}_B}\le \sharpnorm{\norm{T_\eps}}\sharpnorm{\norm{u_\eps}_B} = \pseudonorm T \pseudonorm u$, so $\mathcal P_{\cLin(\Gen_B)} (T)\le \pseudonorm{T}$. Conversely, for each $\eps\in (0,1)$, there exists $u_\eps\in B$ with $\norm{u_\eps} = 1$ and $\norm{T_\eps u_\eps}\ge \norm{T_\eps}-\eps^{1/\eps}$. Hence $u:=[(u_\eps)_\eps]\in\Gen_B$, $\pseudonorm u= 1$ and $\pseudonorm T\le \pseudonorm{Tu} \le \sup_{\pseudonorm u =1}\pseudonorm{Tu} =\mathcal P_{\cLin(\Gen_B)} (T)$.
\end{proof}

\section{Spectral values}
\subsection{Definition and elementary properties}
\begin{df}\label{df_spectrum}
Let $\Alg$ be a faithful $\GenC$-algebra. An element $u\in\Alg$ is called \defstyle{strictly non-invertible} if
\[(\forall S\subseteq(0,1) \text{ with } e_S \ne 0) (u \text{ is not invertible w.r.t.\ }S).\]
An element $\lambda\in\GenC$ is called a \defstyle{spectral value} for $u\in\Alg$ if $u-\lambda$ is strictly non-invertible.\\
The set of all spectral values of $u\in\Alg$ is called the \defstyle{spectrum} of $u$ and is denoted by $\spec_\Alg(u)$ (or by $\spec(u)$ if the algebra is clear from the context).\\
The set of all $\lambda\in\GenC$ such that $u-\lambda$ is invertible in $\Alg$ is called the \defstyle{resolvent set} of $u$ and is denoted by $\resolv_\Alg(u)$.
\end{df}
\begin{rem}
By definition, $\spec(u)\cap\resolv(u)=\emptyset$. But $\interl(\spec(u)\cup\resolv(u))\subsetneqq\GenC$ in general. E.g., by lemma \ref{lemma_invertible_wrt_S}(4), $\spec(0)=\{0\}$ and $\resolv(0)=\{\lambda\in\GenC: \lambda$ is invertible$\}$. If $\lambda\GenC$ is not generated by an idempotent (such $\lambda$ exists by \cite{AJOS2006}), then $\lambda\notin\interl(\spec(0)\cup\resolv(0))$. Since $\{\abs\lambda\in\GenR: \lambda\in\resolv(0)\}$ does not reach a minimum, $\resolv(0)$ is also not an internal set of $\GenC$ \cite[Cor.\ 2.6]{OV_internal}.
\end{rem}
We can motivate the definition of the spectrum by two classical examples of Banach $\GenC$-algebras.

Let $X$ be a compact topological space. Since $\Cnt(X)$ (with the sup-norm) is a commutative Banach $\C$-algebra, $\Gencnt X$ is a faithful commutative Banach $\GenC$-algebra.
\begin{df}
For $u = [(u_\eps)_\eps]\in\Gencnt X$ and $x=(x_\eps)_\eps\in X^{(0,1)}$, the \defstyle{point value} $u(x)$ is defined as $[(u_\eps(x_\eps))_\eps]\in\GenC$ (independent of the representative $(u_\eps)_\eps$).
\end{df}
Classically, for $u\in\Cnt(X)$, $\spec(u)=\{u(x): x\in X\}$.

\begin{prop}\label{prop_invertible_in_Gencnt}
Let $X$ be a compact topological space, $u=[(u_\eps)_\eps]\in\Gencnt X$ and $S\subseteq (0,1)$ with $e_S\ne 0$. Then the following are equivalent:
\begin{enumerate}
\item $u$ is invertible w.r.t.\ $S$ in $\Gencnt X$ (i.e., $\exists v\in\Gencnt{X}$ such that $uv= e_S$).
\item $u(x)$ is invertible w.r.t.\ $S$ in $\GenC$, for each $x\in X^{(0,1)}$.
\item There exists $m\in\N$ such that $\inf_{x\in X} \abs{u_\eps(x)} \ge \eps^m$, for small $\eps\in S$.
\item $[(\inf_{x\in X} \abs{u_\eps(x)})_\eps]$ is invertible w.r.t.\ $S$ in $\GenC$.
\end{enumerate}
\end{prop}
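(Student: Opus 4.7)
The plan is to prove $(3)\iff(4)$ first, which is an immediate application of Lemma \ref{lemma_invertible_wrt_S}(4) to the generalized number $[(\inf_{x\in X}|u_\eps(x)|)_\eps]\in\GenC$ (note that the infimum is attained and hence gives a well-defined element of $\C^{(0,1)}$). For the remaining conditions I would establish the cycle $(1)\implies(2)\implies(3)\implies(1)$.

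For $(1)\implies(2)$, my plan is to evaluate at representatives. If $uv=e_S$ in $\Gencnt{X}$, then $(u_\eps v_\eps - \chi_S(\eps))_\eps$ is negligible uniformly on $X$, so for every point $x=(x_\eps)_\eps\in X^{(0,1)}$ the scalar family $(u_\eps(x_\eps) v_\eps(x_\eps)-\chi_S(\eps))_\eps$ is negligible in $\C$, which gives $u(x)v(x)=e_S$ in $\GenC$. Moderateness of $v(x)$ follows from the uniform moderate bound on $\norm{v_\eps}_\infty$.

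For $(3)\implies(1)$, the plan is to build the inverse by pointwise reciprocation. Choose $m\in\N$ and $\eps_0>0$ so that $\inf_{x\in X}|u_\eps(x)|\ge\eps^m$ for all $\eps\in S$ with $\eps<\eps_0$; for such $\eps$ the function $u_\eps$ has no zeros on $X$, so $v_\eps:=1/u_\eps\in\Cnt(X)$ with $\norm{v_\eps}_\infty\le\eps^{-m}$. Setting $v_\eps:=0$ otherwise gives a moderate family whose class $v\in\Gencnt{X}$ satisfies $u_\eps v_\eps=\chi_S(\eps)$ for every $\eps<\eps_0$, hence $uv=e_S$.

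The step I expect to be the main obstacle is $(2)\implies(3)$, which I would approach by contraposition: assuming $(3)$ fails, I want to produce a single $x\in X^{(0,1)}$ such that $u(x)$ is not invertible w.r.t.\ $S$ in $\GenC$. The natural candidate is, for each $\eps\in(0,1)$, to let $x_\eps\in X$ be a point at which $|u_\eps|$ attains its minimum on $X$; such a point exists because $|u_\eps|(X)$ is a compact subset of $[0,\infty)$ (here $u_\eps$ is continuous and $X$ is compact). The failure of $(3)$ then says that for every $m\in\N$ there are $\eps\in S$ arbitrarily close to $0$ with $|u_\eps(x_\eps)|=\inf_{y\in X}|u_\eps(y)|<\eps^m$, which by Lemma \ref{lemma_invertible_wrt_S}(4) is exactly the statement that $u(x)$ is not invertible w.r.t.\ $S$ in $\GenC$, contradicting $(2)$. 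Two points deserve care: an appeal to the axiom of choice is needed to assemble the minimizers $x_\eps$ into a single family $(x_\eps)_{\eps\in(0,1)}$, and one must carefully translate the negation of $(3)$ (``no single $m$ works for all small $\eps\in S$'') into the form needed to apply Lemma \ref{lemma_invertible_wrt_S}(4) to $u(x)$.
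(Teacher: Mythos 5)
Your proposal is correct and follows essentially the same route as the paper: the cycle $(1)\Rightarrow(2)\Rightarrow(3)\Rightarrow(1)$ with pointwise evaluation, a chosen family of (near-)minimizing points combined with lemma \ref{lemma_invertible_wrt_S}(4) for $(2)\Rightarrow(3)$, pointwise reciprocation for $(3)\Rightarrow(1)$, and lemma \ref{lemma_invertible_wrt_S}(4) for $(3)\iff(4)$. The only difference is cosmetic: you carry a general $S$ through each step (and use exact minimizers for every $\eps$), whereas the paper first treats $S=(0,1)$ and then reduces the general case via lemma \ref{lemma_invertible_wrt_S}(2).
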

\begin{proof}
Let first $S=(0,1)$.\\
$1 \Rightarrow 2$: if $uv=1$, then $u(x)v(x)=1$, for each $x\in X^{(0,1)}$.\\
$2 \Rightarrow$ 3: supposing that the conclusion is not true, we find a decreasing sequence $(\eps_n)_{n\in\N}$ tending to $0$ and $x_{\eps_n}\in X$ with $\abs{u_{\eps_n}(x_{\eps_n})}<\eps_n^n$, for each $n\in\N$. Choose $x_\eps\in X$, if $\eps\notin\{\eps_n: n\in\N\}$. Then $x:=(x_\eps)_\eps\in X^{(0,1)}$, but $u(x)$ is not invertible in $\GenC$ by lemma \ref{lemma_invertible_wrt_S}(4).\\
$3 \Rightarrow 1$: let $v_\eps\in\Cnt(X)$ with $v_\eps(x)=1/u_\eps(x)$, for small $\eps$. Since $\sup_{x\in X}\abs{v_\eps(x)}\le \eps^{-m}$, $v=[(v_\eps)_\eps]\in\Gencnt X$ and $uv=1$.\\
For arbitrary $S$, the equivalences then follow by lemma \ref{lemma_invertible_wrt_S}(2). Finally, $(3)\iff(4)$ by lemma \ref{lemma_invertible_wrt_S}(4).
\end{proof}

\begin{prop}\label{prop_spec_Gencbd}
Let $X$ be a compact topological space and $u\in\Gencnt X$. Then
\[\spec(u)=\{u(x): x\in X^{(0,1)}\}.\]
\end{prop}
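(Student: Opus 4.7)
The plan is to prove the two inclusions separately, using Proposition \ref{prop_invertible_in_Gencnt} as the main characterization tool. Throughout, write $u = [(u_\eps)_\eps]$ and, for $\lambda \in \GenC$, fix a representative $(\lambda_\eps)_\eps$.

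For $(\supseteq)$, suppose $\lambda = u(x)$ for some $x=(x_\eps)_\eps \in X^{(0,1)}$. Choose the representative $\lambda_\eps = u_\eps(x_\eps)$. Then $(u_\eps - \lambda_\eps)(x_\eps) = 0$ for every $\eps$, hence $\inf_{y\in X}\abs{u_\eps(y)-\lambda_\eps}=0$ for every $\eps$. Consequently, condition (3) of Proposition \ref{prop_invertible_in_Gencnt} fails for \emph{every} $S\subseteq(0,1)$ with $e_S\ne 0$ (such $S$ accumulates at $0$), so $u-\lambda$ is not invertible w.r.t.\ any such $S$. By definition, $\lambda\in\spec(u)$.

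For $(\subseteq)$, I would argue by contrapositive: assume $\lambda \notin \{u(x): x\in X^{(0,1)}\}$ and produce $S$ with $e_S\ne 0$ such that $u-\lambda$ is invertible w.r.t.\ $S$. The crucial step is to exhibit a \emph{worst-case} point $x\in X^{(0,1)}$: by compactness of $X$ and continuity of $u_\eps$, for each $\eps$ the continuous function $y\mapsto \abs{u_\eps(y)-\lambda_\eps}$ attains its infimum, so using the axiom of choice pick $x_\eps\in X$ with $\abs{u_\eps(x_\eps)-\lambda_\eps} = \inf_{y\in X}\abs{u_\eps(y)-\lambda_\eps}$, and set $x=(x_\eps)_\eps\in X^{(0,1)}$. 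Then $u(x)-\lambda\ne 0$ in $\GenC$ by hypothesis.

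To finish, invoke the characterization recalled right after Lemma \ref{lemma_invertible_wrt_S}: a nonzero element of $\GenC$ must be invertible w.r.t.\ some $S$ with $e_S\ne 0$. Apply this to $u(x)-\lambda$ to get such an $S$. By Lemma \ref{lemma_invertible_wrt_S}(4), this means there exists $m\in\N$ with $\abs{u_\eps(x_\eps)-\lambda_\eps}\ge \eps^m$ for small $\eps\in S$. By the choice of $x_\eps$, this bound is exactly $\inf_{y\in X}\abs{u_\eps(y)-\lambda_\eps}\ge \eps^m$ for small $\eps\in S$, which is condition (3) of Proposition \ref{prop_invertible_in_Gencnt} for $u-\lambda$. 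Hence $u-\lambda$ is invertible w.r.t.\ $S$, so $\lambda\notin\spec(u)$. The main subtlety I anticipate is keeping the dependence on representatives straight — in particular, justifying that once the ``extremal'' family $(x_\eps)_\eps$ is chosen, bounds on its point values transfer to the uniform infimum — but this is immediate from the definition of $x_\eps$ and needs no further work.
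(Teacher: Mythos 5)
Your proof is correct and follows essentially the same route as the paper: both directions rest on Proposition \ref{prop_invertible_in_Gencnt}, Lemma \ref{lemma_invertible_wrt_S}(4) and the fact that $c=0$ in $\GenC$ iff $c$ is invertible w.r.t.\ no $S$ with $e_S\ne 0$. The only cosmetic difference is that you prove the inclusion $\subseteq$ by contrapositive, using a minimizing net $(x_\eps)_\eps$ and condition (3), whereas the paper argues directly via condition (4) and the negligibility of $(\inf_{y\in X}\abs{u_\eps(y)-\lambda_\eps})_\eps$.
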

\begin{proof}
$\subseteq$: let $\lambda\in\spec(u)$. Then for each $S\subseteq (0,1)$ with $e_S\ne 0$, $u-\lambda$ is not invertible w.r.t.\ $S$. By proposition \ref{prop_invertible_in_Gencnt}, $[(\inf_{x\in X}\abs{u_\eps(x)-\lambda_\eps})_\eps]$ is not invertible w.r.t.\ $S$, for each $S\subseteq(0,1)$. Hence $[(\inf_{x\in X}\abs{u_\eps(x)-\lambda_\eps})_\eps]=0$ in $\GenC$. Choosing $x_\eps\in X$ such that $(\abs{u_\eps(x_\eps)-\lambda_\eps})_\eps$ is negligible, we find $x:=(x_\eps)_\eps\in X^{(0,1)}$ with $\lambda = u(x)$.\\
$\supseteq$: let $S\subseteq (0,1)$ with $e_S\ne 0$ and $x\in X^{(0,1)}$. Then $u-u(x)$ is not invertible w.r.t.\ $S$ by proposition \ref{prop_invertible_in_Gencnt}, since its point value at $x$ is not invertible w.r.t.\ $S$.
\end{proof}

Classically, for $A\in\C^{d\times d}$, $\spec(A)$ is the set of eigenvalues of $A$.
\begin{prop}\label{prop_matrices_elementary}\leavevmode
\begin{enumerate}
\item $\GenC^{d\times d}$ is a Banach $\GenC$-algebra for the usual matrix operations and the ultra-pseudo-norm $\pseudonorm{(a_{i,j})_{i,j=1,\dots,d}} := \max_{i,j=1,\dots,d}\sharpnorm{a_{i,j}}$.
\item $\GenC^{d\times d}\cong \Gen_{\C^{d\times d}} \cong \cLin(\GenC^d)$ as Banach $\GenC$-algebras.
\end{enumerate}
Every $\GenC$-linear map $\GenC^d\to\GenC^d$ is continuous.
\end{prop}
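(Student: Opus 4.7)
The plan for (1) is routine entrywise verification. The ultra-pseudo-seminorm axioms for $\pseudonorm{.}$ on $\GenC^{d\times d}$ are inherited from $\sharpnorm{.}$ on $\GenC$ by taking $\max$ over $i,j$: the ultrametric inequality and scalar homogeneity pass through the max, and positive-definiteness is immediate. Submultiplicativity follows from $\sharpnorm{\sum_k a_{i,k}b_{k,j}} \le \max_k \sharpnorm{a_{i,k}}\sharpnorm{b_{k,j}} \le \pseudonorm{A}\pseudonorm{B}$, and $\pseudonorm{I}=\sharpnorm{1}=1$. Completeness follows coordinate-by-coordinate from completeness of $(\GenC,\sharpnorm{.})$.

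For the first isomorphism in (2), I would use the entry/net interchange $((a_{i,j,\eps})_\eps)_{i,j}\mapsto ((a_{i,j,\eps})_{i,j})_\eps$. Since any two norms on the finite-dimensional space $\C^{d\times d}$ are equivalent with $\eps$-independent constants, and constant positive factors do not affect the valuation $\val(\cdot)$, a net of matrices is moderate (resp.\ negligible) iff each of its entry nets is. So the map is a well-defined $\GenC$-algebra bijection, and the same norm equivalence yields $\sharpnorm[\big]{\norm{A_\eps}}=\max_{i,j}\sharpnorm{a_{i,j}}$, so the two ultra-pseudo-norms agree.

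For the second isomorphism I invoke proposition \ref{prop_G_L(B)_is_Banach_subalgebra} with $B=\C^d$ to get an isometric Banach $\GenC$-algebra embedding $\Gen_{\C^{d\times d}}=\Gen_{\cLin(\C^d)}\hookrightarrow\cLin(\GenC^d)$, and then establish surjectivity together with continuity of arbitrary $\GenC$-linear maps in one stroke. Given any (a priori discontinuous) $\GenC$-linear $T\colon\GenC^d\to\GenC^d$, freeness of $\GenC^d$ on the standard basis $(e_j)_{j=1,\dots,d}$ forces $T(\sum_j x_j e_j)=\sum_j x_j Te_j$; expanding $Te_j=\sum_i t_{i,j}e_i$ produces a matrix $(t_{i,j})\in\GenC^{d\times d}$, which by (1) and the first isomorphism comes from an element of $\Gen_{\C^{d\times d}}$ whose canonical action on $\GenC^d$ agrees with $T$. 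This simultaneously yields surjectivity of the embedding in proposition \ref{prop_G_L(B)_is_Banach_subalgebra} and the estimate $\pseudonorm{Tu}\le\max_{i,j}\sharpnorm{t_{i,j}}\pseudonorm{u}$, so $T$ is continuous.

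The main obstacle is mild: ensuring that the three candidate pseudonorms (entrywise max, operator norm on $\cLin(\GenC^d)$, and sharp norm of the operator-norm net in $\Gen_{\C^{d\times d}}$) coincide \emph{exactly}, not merely up to multiplicative constants. This is resolved by the sharp norm's invariance under multiplication by $\eps$-independent positive constants, which lifts the classical equivalence of matrix norms on $\C^{d\times d}$ to an equality of ultra-pseudo-norms on the Colombeau quotient. The rest is standard-basis bookkeeping.
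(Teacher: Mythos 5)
Your proposal is correct and follows essentially the same route as the paper: identify $\GenC^{d\times d}$ with $\Gen_{\C^{d\times d}}$ entrywise using the $\eps$-independent equivalence of norms on $\C^{d\times d}$ (which leaves the sharp norm unchanged), represent an arbitrary $\GenC$-linear map by its matrix on the standard basis to get surjectivity onto $\cLin(\GenC^d)$ together with continuity, and use proposition \ref{prop_G_L(B)_is_Banach_subalgebra} for the isometric identification of the operator ultra-pseudo-norm. The only cosmetic differences are that you verify the Banach-algebra axioms of part (1) directly rather than transporting them through the isomorphism, and you prove continuity by the explicit estimate $\pseudonorm{Tu}\le\max_{i,j}\sharpnorm{t_{i,j}}\,\pseudonorm{u}$ instead of citing the continuity of basic maps; both are fine.
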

\begin{proof}
Clearly, $\GenC^{d\times d}$ is a $\GenC$-algebra. Let $(A_{\eps})_{\eps}\in(\C^{d\times d})^{(0,1)}$ with $A_{\eps} = (a_{i,j,\eps})_{i,j=1,\dots,d}$, for each $\eps$. Since all norms on $\C^{d\times d}$ are equivalent, $(A_{\eps})_{\eps}\in\Mod_{\C^{d\times d}}$ iff there exists $N\in\N$ such that $\max_{i,j=1,\dots,d} \abs{a_{i,j,\eps}} \le \eps^{-N}$, for small $\eps$ iff $(a_{i,j,\eps})_{\eps} \in\Mod_{\C}$, for $i,j=1,\dots, d$. Similarly, $(A_{\eps})_{\eps}\in\Null_{\C^{d\times d}}$ iff $(a_{i,j,\eps})_{\eps} \in\Null_{\C}$, for $i,j=1,\dots, d$. Thus the map $\Gen_{\C^{d\times d}}\to\GenC^{d\times d}$: $[(A_{\eps})_{\eps}] \mapsto ([(a_{i,j,\eps})_{\eps}])_{i,j=1,\dots,d}$ is a well-defined algebraic isomorphism. It also preserves the ultra-pseudo-norm, since
\[
\pseudonorm{[(A_{\eps})_{\eps}]} = \sharpnorm[\big]{\norm{A_{\eps}}} = \sharpnorm[\big]{\max_{i,j=1,\dots,d}\abs{a_{i,j,\eps}}} = \max_{i,j=1,\dots,d}\sharpnorm{a_{i,j,\eps}} = \pseudonorm{([(a_{i,j,\eps})_{\eps}])_{i,j=1,\dots,d}}.
\]
(Notice that $\sharpnorm[\big]{\norm{A_{\eps}}}$ does not depend on the chosen equivalent norm on $\C^{d\times d}$.) By the isomorphism, $\GenC^{d\times d}$ is a Banach $\GenC$-algebra and $\pseudonorm{(a_{i,j})_{i,j=1,\dots,d}}$ is submultiplicative.\\
Let $\delta_1 = (1,0,\dots,0)$, \dots, $\delta_d = (0,\dots,0,1)$.
If $f$: $\GenC^d\to\GenC^d$ is $\GenC$-linear, then $f(\tilde z_1,\dots, \tilde z_d) = (\sum_{j=1}^d a_{ij} \tilde z_j)_{i=1,\dots,d}$, where $(a_{1j},\dots, a_{dj}) := f(\delta_j)$ by $\GenC$-linearity. In particular, $f(\tilde z) = [(A_\eps z_\eps)_\eps]$, for each $\tilde z=[(z_\eps)_\eps]\in\GenC^d$, with $A_\eps z = \sum_{j=1}^d a_{i,j,\eps} z_j$. By the isomorphism between $\GenC^{d\times d}$ and $\Gen_{\C^{d\times d}}$, $(A_\eps)_\eps\in\Mod_{\C^{d\times d}}$, so $(\norm{A_\eps})_\eps$ is moderate for the operator norm on $\C^{d\times d}$. Hence $f$ is a so-called basic $\GenC$-linear map, and therefore continuous \cite[\S 1.1.2]{GV_Hilbert}.
Conversely, for $A\in\GenC^{d\times d}$, the map $\GenC^d\to\GenC^d$: $\tilde z\mapsto (\sum_{j=1}^d a_{ij} \tilde z_j)_{i=1,\dots,d}$ is $\GenC$-linear. This defines an algebraic isomorphism $\cLin(\GenC^d)\to \GenC^{d\times d}$. By proposition \ref{prop_G_L(B)_is_Banach_subalgebra}, the $\cLin(\GenC^d)$-ultra-pseudo-norm of $f$ coincides with $\sharpnorm[\big]{\norm{A_\eps}}$, so the isomrophism also preserves the ultra-pseudo-norm.
\end{proof}

For $A = (a_{ij})_{i,j}\in\GenC^{d\times d}$, $\det A := [(\det A_\eps)_\eps] = \sum_{\sigma} a_{1,\sigma(1)}\cdots a_{d,\sigma(d)} \in\GenC$, where $\sigma$ runs over all permutations of $\{1,\dots,d\}$.
\begin{prop}\label{prop_invertible_matrix}
Let $A\in\GenC^{d\times d}$ and $S\subseteq (0,1)$ with $e_S\ne 0$. Then the following are equivalent:
\begin{enumerate}
\item $A$ is invertible w.r.t.\ $S$ in $\GenC^{d\times d}$
\item $(\forall \tilde z\in\GenC^d)$ $(e_S A \tilde z=0  \implies e_S \tilde z = 0)$
\item $\det A$ is invertible w.r.t.\ $S$ in $\GenC$.
\end{enumerate}
\end{prop}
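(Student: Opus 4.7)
My plan is to prove the cycle $(1)\implies(2)\implies(3)\implies(1)$. Two of the three implications are classical algebraic manipulations transported to $\GenC^{d\times d}\cong\Gen_{\C^{d\times d}}$ via the isomorphism of Proposition \ref{prop_matrices_elementary}(2); the substantive step is $(2)\implies(3)$.

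For $(1)\implies(2)$, I would pick a two-sided inverse $V$ of $A$ w.r.t.\ $S$ and left-multiply the hypothesis $e_S A\tilde z=0$ by $V$, obtaining $e_S VA\tilde z = e_S\tilde z = 0$. For $(3)\implies(1)$, the adjugate identity $A\cdot\operatorname{adj}(A)=\operatorname{adj}(A)\cdot A=(\det A) I$ holds entrywise in $\C^{d\times d}$ and therefore, via the isomorphism, in $\GenC^{d\times d}$ as well. If $D\in\GenC$ is an inverse of $\det A$ w.r.t.\ $S$, then $V:=D\operatorname{adj}(A)$ immediately satisfies $AV=VA=e_S I$, so $A$ is invertible w.r.t.\ $S$.

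The real obstacle is $(2)\implies(3)$, which I would establish by contraposition. If $\det A$ is not invertible w.r.t.\ $S$, Lemma \ref{lemma_invertible_wrt_S}(4) guarantees that no exponent $m\in\N$ bounds $\abs{\det A_\eps}$ from below by $\eps^m$ on small $\eps\in S$, so I can extract a decreasing sequence $\eps_n\downarrow 0$ in $S$ with $\abs{\det A_{\eps_n}}<\eps_n^n$. The key pointwise input is the linear-algebraic estimate $\sigma_d(M)\le\abs{\det M}^{1/d}$ for the smallest singular value of $M\in\C^{d\times d}$, which follows at once from $\abs{\det M}=\sigma_1(M)\cdots\sigma_d(M)$ with each factor at least $\sigma_d(M)$. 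This supplies, for each $n$, a unit vector $z_{\eps_n}\in\C^d$ with $\norm{A_{\eps_n}z_{\eps_n}}=\sigma_d(A_{\eps_n})<\eps_n^{n/d}$; setting $z_\eps:=0$ for $\eps\notin\{\eps_n:n\in\N\}$ yields a moderate $\tilde z:=[(z_\eps)_\eps]\in\GenC^d$ (since $\norm{z_\eps}\le 1$) satisfying $e_S A\tilde z=0$ (because on $S$ the quantity $\norm{A_\eps z_\eps}$ is either $0$ or beats every polynomial rate) while $e_S\tilde z\ne 0$ (the unit vectors at the $\eps_n\in S$ are certainly not negligible), contradicting (2). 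The heart of the matter is precisely this construction: because $\GenC$ is not a field, a small determinant does not by itself provide a witness to non-injectivity, and the singular-value bound is what converts ``small determinant'' into a uniformly quantitative ``near-kernel vector'' usable in the Colombeau setting.
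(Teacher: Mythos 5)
Your proof is correct, and while $(1)\Rightarrow(2)$ and $(3)\Rightarrow(1)$ coincide with the paper's argument (the paper also works with the cofactor/adjugate matrix, whose entries are $(d-1)\times(d-1)$ determinants over $\GenC$, and multiplies by an inverse of $\det A$ w.r.t.\ $S$), your treatment of the key implication $(2)\Rightarrow(3)$ is genuinely different. The paper avoids any pointwise construction: it passes to $B:=e_S A + e_{\co S}$, checks that $B\tilde z=0$ forces $\tilde z=0$, and then quotes the known result \cite[Lemma 1.2.41]{GKOS} that a $\GenC$-matrix inducing an injective map $\GenC^d\to\GenC^d$ has invertible determinant, finally descending to invertibility w.r.t.\ $S$ via lemma \ref{lemma_invertible_wrt_S}(2). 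You instead prove the contrapositive from scratch: lemma \ref{lemma_invertible_wrt_S}(4) yields $\eps_n\downarrow 0$ in $S$ with $\abs{\det A_{\eps_n}}<\eps_n^n$, and the bound $\sigma_d(M)\le\abs{\det M}^{1/d}$ on the smallest singular value produces unit vectors $z_{\eps_n}$ with $\norm{A_{\eps_n}z_{\eps_n}}<\eps_n^{n/d}$, giving a moderate $\tilde z$ with $e_S A\tilde z=0$ (indeed $A\tilde z=0$) but $e_S\tilde z\ne 0$. All the quantifier bookkeeping checks out (negligibility of $(A_\eps z_\eps)_\eps$, non-negligibility of $(\chi_S(\eps)z_\eps)_\eps$, moderateness from $\norm{z_\eps}\le 1$). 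What each approach buys: the paper's route is shorter and reduces the $S$-relative statement to an already established absolute fact about $\GenC^{d\times d}$ via the interleaving trick $e_S A+e_{\co S}$, while yours is self-contained, makes the quantitative mechanism (small determinant $\Rightarrow$ quantitatively small singular value $\Rightarrow$ explicit near-kernel vector) visible, and is essentially a direct proof of the cited GKOS lemma in the relative setting.
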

\begin{proof}
$(1)\implies (2)$: if $e_S A \tilde z = 0$, then by multiplying with an inverse of $A$ w.r.t.\ $S$, $e_S \tilde z = 0$.\\
$(2)\implies (3)$: let $B := e_S A + e_{\co S}$. If $B \tilde z = 0$, then $e_S A \tilde z = 0$, so by assumption $e_S \tilde z = 0$, and $e_{\co S} \tilde z = 0$. Hence $\tilde z = 0$. By \cite[Lemma 1.2.41]{GKOS}, $\det B = e_S \det A + e_{\co S}$ is invertible in $\GenC$. By lemma \ref{lemma_invertible_wrt_S}(2), $\det A$ is invertible w.r.t.\ $S$.\\
$(3)\implies (1)$: let $A = [(A_\eps)_\eps]$ and let $b\in\GenC$ with $b \cdot \det A = e_S$. Then $\det A_\eps \ne 0$ for small $\eps\in S$ by lemma \ref{lemma_invertible_wrt_S}(4). Hence $A_\eps C_\eps = C_\eps A_\eps = \det A_\eps$, for small $\eps\in S$, where $C_\eps$ is the cofactor matrix of $A_\eps$. Now $C := [(C_\eps)_\eps]\in \GenC^{d\times d}$ is well-defined, since the entries of $C$ are obtained as determinants of matrices in $\GenC^{(d-1)\times(d-1)}$. Hence $e_S A C = e_S C A = e_S \det A$, and $b e_S C$ is an inverse of $A$ w.r.t.\ $S$.
\end{proof}

\begin{lemma}\label{lemma_polynomial_eqn}
Let $a_0$, \dots, $a_{n-1}$ $\in\GenC$. Then there exist $\lambda_1$, \dots, $\lambda_n$ $\in\GenC$ such that $p(z):=z^n+a_{n-1}z^{n-1}+\cdots+a_1 z + a_0 = (z-\lambda_1)\cdots(z-\lambda_n)$. The solution set in $\GenC$ of the equation $p(z)=0$ is the sharply bounded internal set (with $\lambda_j=[(\lambda_{j,\eps})_\eps]$)
\begin{multline*}
[(\{\lambda_{1,\eps},\dots,\lambda_{n,\eps}\})_\eps]
= \interl(\{\lambda_1,\dots,\lambda_n\})\\
= \{\lambda_1 e_{S_1} + \cdots + \lambda_n e_{S_n}: \{S_1,\dots, S_n\}\text{ is a partition of }(0,1)\}.
\end{multline*}
\end{lemma}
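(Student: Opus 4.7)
The proof naturally splits into two parts: the existence of the factorization, and the identification of the solution set with both the internal set and the set of interleavings.

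For the first part, I would fix representatives $(a_{j,\eps})_\eps$ of $a_j$ and, using the fundamental theorem of algebra, factor $p_\eps(z) = z^n + a_{n-1,\eps}z^{n-1} + \cdots + a_{0,\eps} = (z-\lambda_{1,\eps})\cdots(z-\lambda_{n,\eps})$ in $\C$ for each $\eps$. The classical root bound $\abs{\lambda_{j,\eps}} \le 1 + \max_{k<n}\abs{a_{k,\eps}}$, combined with moderateness of each $(a_{k,\eps})_\eps$, yields $(\lambda_{j,\eps})_\eps \in \Mod_\C$. Setting $\lambda_j := [(\lambda_{j,\eps})_\eps]$, expanding the product componentwise and comparing coefficients gives $p(z) = \prod_j(z-\lambda_j)$ in $\GenC[z]$. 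This also shows $p(\lambda_j) = 0$ in $\GenC$, and more generally that every element of $\interl(\{\lambda_1,\dots,\lambda_n\})$ is a root: if $\mu = \sum_j e_{S_j}\lambda_j$ then the relations $e_{S_j}e_{S_k} = 0$ for $j \ne k$, $e_{S_j}^2 = e_{S_j}$, and $\sum_j e_{S_j} = 1$ give $\mu^k = \sum_j e_{S_j}\lambda_j^k$, hence $p(\mu) = \sum_j e_{S_j} p(\lambda_j) = 0$.

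The equality $\interl(\{\lambda_1,\dots,\lambda_n\}) = [(\{\lambda_{1,\eps},\dots,\lambda_{n,\eps}\})_\eps]$ is then routine: any interleaving $\mu = \sum_j e_{S_j}\lambda_j$ has a representative with $\mu_\eps = \lambda_{j,\eps}$ for $\eps \in S_j$, and conversely any representative $(\mu_\eps)_\eps$ with $\mu_\eps \in \{\lambda_{1,\eps},\dots,\lambda_{n,\eps}\}$ for small $\eps$ determines a partition $S_j := \{\eps : j \text{ is the smallest index with } \mu_\eps = \lambda_{j,\eps}\}$ (with the remaining $\eps$ assigned arbitrarily to $S_1$), realizing $\mu$ as an interleaving.

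The crux is the reverse inclusion, that every root $\mu$ of $p$ belongs to the internal set. Taking a representative, $p_\eps(\mu_\eps) = \prod_{j=1}^n(\mu_\eps - \lambda_{j,\eps})$ is negligible by hypothesis, so the elementary but key inequality
\[
\bigl(\min_j\abs{\mu_\eps - \lambda_{j,\eps}}\bigr)^n \le \prod_j\abs{\mu_\eps - \lambda_{j,\eps}} = \abs{p_\eps(\mu_\eps)}
\]
forces $\min_j\abs{\mu_\eps - \lambda_{j,\eps}}$ to be negligible as well. Choosing $j(\eps)$ attaining this minimum and defining $\mu'_\eps := \lambda_{j(\eps),\eps}$, one obtains $\mu = [(\mu'_\eps)_\eps]$ with $\mu'_\eps \in \{\lambda_{1,\eps},\dots,\lambda_{n,\eps}\}$ for small $\eps$, placing $\mu$ in the internal set. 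Sharp boundedness of the internal set is immediate from the same root bound together with \cite[Lemma 2.4]{OV_internal}. The main obstacle is precisely this converse step: one must resist the temptation to ``assign'' $\mu$ to a single $\lambda_j$, and instead exploit the freedom to change representatives within negligible differences so as to land inside the finite set $\{\lambda_{1,\eps},\dots,\lambda_{n,\eps}\}$ pointwise in $\eps$.
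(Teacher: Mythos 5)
Your proposal is correct, and the existence/moderateness part (fundamental theorem of algebra plus the root bound $\abs{\lambda_{j,\eps}}\le 1+\abs{a_{0,\eps}}+\cdots+\abs{a_{n-1,\eps}}$) coincides with the paper's argument. Where you genuinely diverge is the crucial converse inclusion. The paper stays at the level of $\GenC$: from $p(\mu)=(\mu-\lambda_1)\cdots(\mu-\lambda_n)=0$ it applies, inductively, a zero-divisor splitting lemma for $\GenC$ (\cite[Lemma 2.3]{HV_ideals}: a vanishing product splits along an idempotent, $(\mu-\lambda_1)e_{S_1}=0$ and the remaining product vanishing on $e_{\co{S_1}}$), producing directly a partition $\{S_1,\dots,S_n\}$ with $\mu e_{S_j}=\lambda_j e_{S_j}$, i.e.\ the interleaving description in one stroke. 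You instead argue at the level of representatives: negligibility of $p_\eps(\mu_\eps)$ together with $\bigl(\min_j\abs{\mu_\eps-\lambda_{j,\eps}}\bigr)^n\le\abs{p_\eps(\mu_\eps)}$ lets you replace $\mu_\eps$ by the nearest $\lambda_{j,\eps}$, landing in the internal set, and you then pass to the interleaving form via the minimal-index partition. Both routes are valid; the paper's is shorter because the cited lemma does the splitting work, while yours is self-contained (no appeal to \cite{HV_ideals}) and has the added merit of explicitly verifying the pieces the paper leaves implicit, namely that every interleaving is a root, that the interleaved closure equals the internal set, and the sharp boundedness via \cite[Lemma 2.4]{OV_internal}.
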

\begin{proof}
Fix representatives $(a_{j,\eps})_\eps$ of $a_j$. For each $\eps$, let $p_\eps(z) = z^n + a_{n-1,\eps} z^{n-1} + \cdots + a_{1,\eps} z + a_{0,\eps} = (z-\lambda_{1,\eps})\cdots (z-\lambda_{n,\eps})$. If $\abs{\lambda_{j,\eps}}\ge 1$, then $\abs{\lambda_{j,\eps}}^n\le \abs{a_{n-1,\eps}} \abs{\lambda_{j,\eps}}^{n-1}+ \cdots + \abs{a_{0,\eps}}\le (\abs{a_{n-1,\eps}}+ \cdots + \abs{a_{0,\eps}}) \abs{\lambda_{j,\eps}}^{n-1}$. So always $\abs{\lambda_{j,\eps}}\le \abs{a_{n-1,\eps}}+ \cdots + \abs{a_{0,\eps}} + 1$, and $(\lambda_{j,\eps})\in\Mod_\C$. Let $\lambda_j:= [(\lambda_{j,\eps})_\eps]\in\GenC$. Then $p(z) = (z-\lambda_1)\cdots (z-\lambda_n)$.\\
Now let $z$ be any solution of the equation. By \cite[lemma~2.3]{HV_ideals}, there exists $S_1\subseteq (0,1)$ such that $(z-\lambda_1)e_{S_1}=0$ and $(z-\lambda_2)\cdots(z-\lambda_n)e_{\co S_1}=0$. By the same lemma, we find $T\subseteq (0,1)$ and $S_2:=T\setminus S_1$ such that $(z-\lambda_2)e_{\co S_1}e_T = (z-\lambda_2)e_{S_2} = 0$ and $(z-\lambda_3)\cdots(z-\lambda_n)e_{\co S_1} e_{\co T} = (z-\lambda_3)\cdots(z-\lambda_n)e_{\co{(S_1\cup S_2)}} = 0$. Inductively, we find a partition $\{S_1,\dots, S_n\}$ of $(0,1)$ such that $z e_{S_j}=\lambda_j e_{S_j}$, for $j=1,\dots, n$. Hence $z=z(e_{S_1}+\cdots e_{S_n})=\lambda_1 e_{S_1} + \cdots + \lambda_n e_{S_n}$.
\end{proof}

\begin{df}
Let $A\in\GenC^{d\times d}$. Then $\tilde z\in\GenC^d$ is an eigenvector for $A$ with eigenvalue $\lambda\in\GenC$ iff $\abs{\tilde z}\gg 0$ and $A \tilde z = \lambda \tilde z$.
\end{df}

\begin{prop}\label{prop_spec_matrix}
Let $A\in\GenC^{d\times d}$. Then
\[
\spec(A)=\{\lambda\in\GenC: \det(A-\lambda)=0\}=\{\lambda\in\GenC: \lambda \text{ is an eigenvalue for }A\}.
\]
\end{prop}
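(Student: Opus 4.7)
The plan is to establish the two claimed equalities separately, relying on Proposition \ref{prop_invertible_matrix}, Lemma \ref{lemma_polynomial_eqn}, and the characterization of $0 \in \GenC$ recalled at the end of Section \ref{section_prelims}. For the first equality, I would observe that by Proposition \ref{prop_invertible_matrix}, for every $S \subseteq (0,1)$ with $e_S \ne 0$, invertibility of $A - \lambda$ w.r.t.\ $S$ in $\GenC^{d \times d}$ is equivalent to invertibility of $\det(A - \lambda)$ w.r.t.\ $S$ in $\GenC$. Negating this uniformly over all such $S$ turns the condition ``$A - \lambda$ is strictly non-invertible'' into ``$\det(A - \lambda)$ is not invertible w.r.t.\ any $S$ with $e_S \ne 0$'', which by the recalled characterization is precisely $\det(A - \lambda) = 0$.

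For the second equality, the easy direction $(\supseteq)$ is that an eigenvalue annihilates the determinant. Assuming $A \tilde z = \lambda \tilde z$ with $|\tilde z| \gg 0$, I would argue by contradiction: if $\det(A - \lambda)$ were invertible w.r.t.\ some $S$ with $e_S \ne 0$, Proposition \ref{prop_invertible_matrix} supplies $B \in \GenC^{d \times d}$ with $B(A - \lambda) = e_S$, and applying $B$ to $(A - \lambda) \tilde z = 0$ gives $e_S \tilde z = 0$, hence $e_S |\tilde z| = 0$ in $\GenR$, contradicting the invertibility of $|\tilde z|$ that is built into $|\tilde z| \gg 0$.

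The substantive direction $(\subseteq)$ is to manufacture an eigenvector from a zero of the characteristic polynomial $p(z) := \det(zI - A) \in \GenC[z]$. Since $p$ is monic of degree $d$, Lemma \ref{lemma_polynomial_eqn} both factors $p(z) = \prod_{j=1}^{d}(z - \lambda_j)$ with $\lambda_j \in \GenC$ and forces every solution of $p(z) = 0$ to have the form $\lambda = \sum_{j=1}^{d} e_{S_j} \lambda_j$ for some partition $\{S_1, \dots, S_d\}$ of $(0,1)$. The crucial point, visible in the proof of that lemma, is that each $\lambda_{j,\eps}$ may be chosen as an exact root of $p_\eps = \det(zI - A_\eps)$, hence as a classical eigenvalue of $A_\eps$, which admits a Euclidean unit-norm eigenvector $v_{j,\eps} \in \C^d$. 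The family $(v_{j,\eps})_\eps$ is bounded, hence moderate, so $v_j := [(v_{j,\eps})_\eps] \in \GenC^d$, and I would define $v := \sum_{j=1}^{d} e_{S_j} v_j$. The orthogonality $e_{S_j} e_{S_k} = \delta_{jk} e_{S_j}$ together with $A v_j = \lambda_j v_j$ then yields $A v = \lambda v$, while $\|v_\eps\|_2 = 1$ at every $\eps$ ensures $|v| \gg 0$. I expect this last step to be the main obstacle: one must combine the partition structure of zeros produced by Lemma \ref{lemma_polynomial_eqn} with $\eps$-wise classical eigenvectors into a single generalized eigenvector satisfying the non-degeneracy $|\tilde z| \gg 0$, and it is only the sharp form of Lemma \ref{lemma_polynomial_eqn} (producing exact $\eps$-level roots, not merely asymptotic ones) that makes this globalization possible.
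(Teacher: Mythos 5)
Your proposal is correct and follows essentially the same route as the paper: the first equality via Proposition \ref{prop_invertible_matrix} together with the fact that $c=0$ iff $c$ is invertible w.r.t.\ no $S$, the inclusion of eigenvalues via multiplying $(A-\lambda)\tilde z=0$ by a partial inverse, and the converse via the exact $\eps$-level roots from (the proof of) Lemma \ref{lemma_polynomial_eqn} combined with classical unit eigenvectors of $A_\eps$. Your interleaved eigenvector $\sum_j e_{S_j}v_j$ is just a repackaging of the paper's choice of a representative $(\lambda_\eps)_\eps$ of $\lambda$ with $\det(A_\eps-\lambda_\eps)=0$ and $\eps$-wise unit eigenvectors $z_\eps$.
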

\begin{proof}
By proposition \ref{prop_invertible_matrix}, $\lambda\in\spec(A)$ iff $A-\lambda$ strictly not invertible in $\GenC^{d\times d}$ iff $\det(A-\lambda)$ strictly not invertible in $\GenC$ iff $\det(A-\lambda)= 0$ in $\GenC$.\\
If $\tilde z$ is an eigenvector for $A$ with eigenvalue $\lambda$, then for each $S\subseteq (0,1)$ with $e_S\ne 0$, $e_S \tilde z\ne 0$ and $e_S(A-\lambda)\tilde z = 0$, hence $A-\lambda$ is strictly not invertible by proposition \ref{prop_invertible_matrix}.\\
Let $A=[(A_\eps)_\eps]$. If $\lambda\in\GenC$ is a root of $\det(A-\lambda) = 0$, then by lemma \ref{lemma_polynomial_eqn} (and its proof), there exists a representative $(\lambda_\eps)_\eps$ of $\lambda$ such that $\det(A_\eps - \lambda_\eps) = 0$, for each $\eps$. Thus there exist $z_\eps\in\C^d$ with $\abs{z_\eps}=1$ and $A_\eps z_\eps = \lambda_\eps z_\eps$, for each $\eps$. Then $\tilde z:= [(z_\eps)_\eps]\in\GenC^d$ is an eigenvector for $A$ with eigenvalue $\lambda$.
\end{proof}

For many $\GenC$-algebras, $\spec(u)$ can alternatively be defined by means of $\resolv(u)$.
\begin{lemma}\label{lemma_far_away_set}
Let $\K$ be $\R$ or $\C$. Let $A\subseteq\GenK^d$. Then $\{\tilde x\in\GenK^d:$  $\abs{\tilde x - \tilde a}\gg 0$, $\forall \tilde a\in A\}$ $=$ $\{\tilde x\in\GenK^d:$ $(\forall S\subseteq (0,1)$ with $e_S\ne 0)$ $(\tilde x e_S\notin Ae_S)\}$.
\end{lemma}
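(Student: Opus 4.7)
My plan is to prove both inclusions by direct contradiction, using the following standard characterization of invertibility in $\GenR$ via idempotents: for $c \in \GenR$ with $c \ge 0$, one has $c \gg 0$ (equivalently, $c$ invertible) if and only if there is no $S \subseteq (0,1)$ with $e_S \ne 0$ such that $c\,e_S = 0$. The nontrivial half is \cite[Lemma~4.1]{HV_ideals}, already invoked in the proof of lemma \ref{lemma_invertible_wrt_S}(4) and in the remark following it. I also use that on $\K^d$ all norms are equivalent, so for $\tilde v \in \GenK^d$ and $S \subseteq (0,1)$ the componentwise identity $\tilde v\,e_S = 0$ in $\GenK^d$ is equivalent to $\abs{\tilde v}\,e_S = 0$ in $\GenR$.

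For the inclusion $\subseteq$, I would fix $\tilde x$ in the left-hand set, an $S$ with $e_S \ne 0$, and $\tilde a \in A$, and suppose for contradiction that $\tilde x\,e_S = \tilde a\,e_S$. Then $\abs{\tilde x - \tilde a}\,e_S = 0$ in $\GenR$, while by hypothesis $\abs{\tilde x - \tilde a} \ge \caninf^m$ for some $m \in \N$. Multiplying this inequality by $e_S \ge 0$ gives $\caninf^m e_S \le 0$, hence $e_S = 0$ (since $\caninf^m$ is invertible), a contradiction.

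For the inclusion $\supseteq$, I would fix $\tilde x$ in the right-hand set and an arbitrary $\tilde a \in A$, and argue contrapositively: if $\abs{\tilde x - \tilde a}$ is not $\gg 0$, then it is not invertible in $\GenR$, so by \cite[Lemma~4.1]{HV_ideals} (applied with $S = (0,1)$) there exists $T \subseteq (0,1)$ with $e_T \ne 0$ and $\abs{\tilde x - \tilde a}\,e_T = 0$. By norm-equivalence on $\K^d$ this yields $\tilde x\,e_T = \tilde a\,e_T \in A\,e_T$, contradicting the membership of $\tilde x$ in the right-hand set.

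There is no real obstacle here: once one has the idempotent characterization of invertibility in $\GenR$, the lemma is essentially formal, the only point worth noting being the elementary translation between the vector-valued equation $\tilde v\,e_S = 0$ in $\GenK^d$ and the scalar equation $\abs{\tilde v}\,e_S = 0$ in $\GenR$.
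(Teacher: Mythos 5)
Your proof is correct and follows essentially the same route as the paper: both inclusions are reduced to the scalar $\abs{\tilde x-\tilde a}\in\GenR$ and handled by contraposition/contradiction. The only (inessential) difference is that for the inclusion $\supseteq$ you invoke the idempotent characterization of non-invertibility from \cite[Lemma 4.1]{HV_ideals} (already cited elsewhere in the paper), whereas the paper performs the underlying construction by hand, choosing a decreasing sequence $\eps_n\to 0$ with $\abs{x_{\eps_n}-a_{\eps_n}}\le\eps_n^{n}$ and taking $S=\{\eps_n: n\in\N\}$.
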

\begin{proof}
$\subseteq$: if $\tilde x e_S = \tilde a e_S$, for some $\tilde a \in A$ and $e_S\ne 0$, then $\abs{\tilde x - \tilde a} = \abs{\tilde x - \tilde a} e_{\co S}\ngeq \caninf^m$, for any $m\in\N$.\\
$\supseteq$: if $\tilde x = [(x_\eps)_\eps]$ and there exists $\tilde a = [(a_\eps)_\eps] \in A$ such that $\abs{\tilde x - \tilde a} \ngeq \caninf^n$, for each $n\in\N$, then we can find a decreasing sequence $(\eps_n)_{n\in\N}$ tending to $0$ with $\abs{x_{\eps_n} - a_{\eps_n}}\le\eps_n^n$, for each $n\in\N$. Hence $\tilde x e_S = \tilde a e_S$ for $S:=\{\eps_n: n\in\N\}$.
\end{proof}

\begin{lemma}\label{lemma_resolv_set}
Let $(\Alg,\pseudonorm{.})$ be a faithful Banach $\GenC$-algebra and $u\in\Alg$.
Let $r\in\R$, $r > 0$ and $\lambda\in\GenC$ with $\abs{\lambda} \ge \caninf^{-\ln \pseudonorm{u} - r}$. Then $\lambda\in\resolv(u)$.
\end{lemma}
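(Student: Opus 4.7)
The plan is to reduce the claim to lemma \ref{lemma_invertible_in_Banach_algebra} via the standard factorization
\[
u - \lambda = -\lambda\bigl(1 - \lambda^{-1}u\bigr),
\]
and to argue that both $\lambda$ and $1-\lambda^{-1}u$ are invertible in $\Alg$. We may assume $\pseudonorm{u} > 0$ (the case $u = 0$ is degenerate, with the convention $-\ln 0 = +\infty$, and follows directly from invertibility of $\lambda$).

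The first step is to show that $\lambda$ is invertible in $\GenC$. Fix any representative $(\lambda_\eps)_\eps$ of $\lambda$. The inequality $\abs{\lambda} \ge \caninf^{-\ln\pseudonorm{u} - r}$ in $\GenR$ translates, modulo negligibles, into a lower bound of the form $\abs{\lambda_\eps} \ge \eps^{-\ln\pseudonorm{u}-r} - \eps^n$ for small $\eps$, valid for every $n\in\N$. Choosing any integer $s > -\ln\pseudonorm u - r$ and any $n > s$, this yields $\abs{\lambda_\eps} \ge \eps^s$ for small $\eps$. Lemma \ref{lemma_invertible_wrt_S}(4) (with $S=(0,1)$) then gives invertibility of $\lambda$ in $\GenC$, hence in $\Alg$.

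The second step is to bound $\sharpnorm{\lambda^{-1}}$. The same representative-level inequality yields $\abs{\lambda_\eps^{-1}} \le \eps^{\ln\pseudonorm u + r}$ (up to a harmless multiplicative factor tending to $1$), so $\val(\lambda^{-1}) \ge \ln\pseudonorm u + r$ and therefore
\[
\sharpnorm{\lambda^{-1}} = e^{-\val(\lambda^{-1})} \le e^{-\ln\pseudonorm{u} - r} = \frac{e^{-r}}{\pseudonorm{u}}.
\]
Submultiplicativity of $\pseudonorm{.}$ in the Banach $\GenC$-module structure then gives $\pseudonorm{\lambda^{-1}u} \le \sharpnorm{\lambda^{-1}}\pseudonorm{u} \le e^{-r} < 1$.

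Lemma \ref{lemma_invertible_in_Banach_algebra} now applies to $1 - \lambda^{-1}u$, so $1-\lambda^{-1}u$ is invertible in $\Alg$. Since $u-\lambda = -\lambda(1-\lambda^{-1}u)$ is then a product of two invertible elements, it is invertible, which is exactly $\lambda\in\resolv(u)$. The only mildly delicate point is the conversion of the global $\GenR$-inequality into a polynomial lower bound on a representative of $\lambda$ usable by lemma \ref{lemma_invertible_wrt_S}(4); this is routine valuation bookkeeping, and I do not anticipate any real obstacle.
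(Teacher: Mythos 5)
Your proof is correct and follows essentially the same route as the paper: the factorization $u-\lambda=-\lambda(1-\inv\lambda u)$, invertibility of $\lambda$ via lemma \ref{lemma_invertible_wrt_S}(4), the estimate $\pseudonorm{\inv\lambda u}\le\sharpnorm{\inv\lambda}\pseudonorm{u}\le e^{-r}<1$, and lemma \ref{lemma_invertible_in_Banach_algebra}. The representative-level valuation bookkeeping you spell out is exactly what the paper leaves implicit, so there is nothing to add.
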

\begin{proof}
By lemma \ref{lemma_invertible_wrt_S}(4), $\lambda$ is invertible in $\GenC$.
Since $u- \lambda = -\lambda(1 - \inv\lambda u)$, $u- \lambda$ is invertible by lemma \ref{lemma_invertible_in_Banach_algebra} and the fact that $\pseudonorm{\inv\lambda u} \le \sharpnorm{\inv\lambda}\pseudonorm{u}\le \sharpnorm{\caninf^{\ln \pseudonorm{u} + r}}\pseudonorm{u} = e^{-r} < 1$.
\end{proof}

\begin{thm}\label{thm_resolv_determines_spec}
Let $\Alg$ be a faithful $\GenC$-algebra and $u\in\Alg$. If $\resolv(u)\ne\emptyset$ (e.g., by lemma \ref{lemma_resolv_set}, if $\Alg$ is a Banach $\GenC$-algebra), then
\begin{align*}
\spec(u) &= \{\lambda\in\GenC: \abs{\lambda - \mu}\gg 0, \ \forall \mu\in\resolv(u)\}\\
&= \{\lambda\in\GenC: (\forall S\subseteq (0,1) \text{ with } e_S\ne 0) (\lambda e_S \notin \resolv(u)e_S)\}.
\end{align*}
\end{thm}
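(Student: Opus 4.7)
My plan is to note first that the second equality is essentially free: lemma \ref{lemma_far_away_set}, applied to $\K = \C$, $d=1$, and $A = \resolv(u) \subseteq \GenC$, already yields
\[
\{\lambda\in\GenC: \abs{\lambda-\mu}\gg 0,\ \forall \mu\in\resolv(u)\} = \{\lambda\in\GenC:(\forall S\subseteq(0,1)\text{ with }e_S\ne 0)(\lambda e_S \notin \resolv(u) e_S)\}.
\]
So the only substantive task is to show that $\spec(u)$ equals the right-hand set. I would do this by two inclusions, both based on manipulating inverses through the idempotent $e_S$, and I would use faithfulness of $\Alg$ only through the equivalence ``$e_S\ne 0$ in $\GenC$'' $\iff$ ``$e_S 1\ne 0$ in $\Alg$''.

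For $(\subseteq)$, I would argue by contradiction. Suppose $\lambda\in\spec(u)$ and $\lambda e_S = \mu e_S$ for some $\mu\in\resolv(u)$ and some $S$ with $e_S\ne 0$; let $v\in\Alg$ be the (two-sided) inverse of $u-\mu$. Multiplying $(u-\mu)v = v(u-\mu)=1$ by $e_S$ and using $e_S\lambda = e_S\mu$ (whence $e_S(u-\lambda)=e_S(u-\mu)$) gives $(u-\lambda)(e_S v)=(e_S v)(u-\lambda)=e_S$, so $u-\lambda$ is invertible w.r.t.\ $S$, contradicting $\lambda\in\spec(u)$. For $(\supseteq)$, I take $\lambda\notin\spec(u)$, pick $S$ with $e_S\ne 0$ such that $u-\lambda$ is invertible w.r.t.\ $S$ with some inverse $v\in\Alg$, and use the hypothesis $\resolv(u)\ne\emptyset$ to choose any $\mu_0\in\resolv(u)$ with global inverse $w\in\Alg$. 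I then form the interleaved scalar $\mu := e_S\lambda + e_{\co S}\mu_0 \in\GenC$; by construction $\mu e_S = \lambda e_S$, and since
\[
u-\mu = e_S(u-\lambda) + e_{\co S}(u-\mu_0),
\]
a direct computation using $e_S^2 = e_S$, $e_{\co S}^2 = e_{\co S}$, $e_S e_{\co S} = 0$ and the centrality of scalars shows that $e_S v + e_{\co S} w$ is a two-sided inverse of $u-\mu$; hence $\mu\in\resolv(u)$, witnessing that $\lambda$ lies in the right-hand set.

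The only real obstacle, and the reason the hypothesis $\resolv(u)\ne\emptyset$ is needed, is the $(\supseteq)$ direction: one must actually produce an element of $\resolv(u)$ that agrees with $\lambda$ on $S$, and this cannot be done from $v$ alone. The interleaving trick $\mu = e_S\lambda + e_{\co S}\mu_0$, patched together from the ``local'' resolvent w.r.t.\ $S$ and an arbitrary global resolvent point $\mu_0$, is exactly what is required; everything else reduces to straightforward algebra with idempotents in $\GenC$.
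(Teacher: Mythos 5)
Your proposal is correct and follows essentially the same route as the paper: the second equality is obtained from lemma \ref{lemma_far_away_set} with $A=\resolv(u)$, the inclusion $\spec(u)\subseteq\{\lambda: \lambda e_S\notin\resolv(u)e_S,\ \forall S\}$ comes from multiplying the global inverse of $u-\mu$ by $e_S$, and the reverse inclusion uses exactly the paper's interleaving $\mu=e_S\lambda+e_{\co S}\mu_0$ with inverse $e_S v+e_{\co S}(u-\mu_0)^{-1}$, which is where the hypothesis $\resolv(u)\ne\emptyset$ enters. No gaps; your version merely spells out the idempotent computations the paper leaves implicit.
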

\begin{proof}
This result follows by lemma \ref{lemma_far_away_set}, if we show that $\spec(u) = \{\lambda\in\GenC: (\forall S\subseteq (0,1) \text{ with } e_S\ne 0) (\lambda e_S \notin \resolv(u)e_S)\}$.\\
$\subseteq$: if $\lambda e_S = \mu e_S$, for some $\mu\in \resolv(u)$ and $S\subseteq(0,1)$ with $e_S\ne 0$, then $u-\lambda$ is invertible w.r.t.\ $S$.\\
$\supseteq$: let $\lambda\in\GenC\setminus\spec(u)$. There exists $S\subseteq (0,1)$ with $e_S\ne 0$ and $w\in \Alg$ such that $(u-\lambda)w= w (u-\lambda) =e_S$. Let $\mu\in\resolv(u)$ (by assumption, $\resolv(u)\ne\emptyset$). Then $(u-(\lambda e_S + \mu e_{\co S})) (w e_S + (u - \mu)^{-1} e_{\co S}) = (u-\lambda) w e_S + (u-\mu) (u-\mu)^{-1} e_{\co S} = 1$, and similarly for the left inverse. Thus $\lambda e_S\in \resolv(u) e_S$.
\end{proof}

\begin{lemma}\label{lemma_spec_inclusion}
Let $\Alg$ be a faithful $\GenC$-algebra.
\begin{enumerate}
\item If ${\mathcal B}$ is a sub-$\GenC$-algebra of $\Alg$, then $\spec_{\Alg}(u)\subseteq\spec_{\mathcal B}(u)$, $\forall u\in{\mathcal B}$.
\item Let $(\Alg_i)_{i\in I}$ be a chain (i.e., totally ordered for $\subseteq$) of sub-$\GenC$-algebras of $\Alg$. Then $\spec_{\bigcup_{i\in I}\Alg_i}(u)=\bigcap_{\substack{i\in I\\ u\in \Alg_i}}\spec_{\Alg_i}(u)$ and $\resolv_{\bigcup_{i\in I}\Alg_i}(u) = \bigcup_{\substack{i\in I\\ u\in \Alg_i}}\resolv_{\Alg_i}(u)$, $\forall u\in\bigcup_i \Alg_i$.
\item Let $(\Alg_i)_{i\in I}$ be a family of $\GenC$-algebras. Then $\resolv_{\bigcap_{i\in I}\Alg_i}(u) = \bigcap_{i\in I}\resolv_{\Alg_i}(u)$, $\forall u\in \bigcap_{i\in I}\Alg_i$.
\item If $I\ideal\Alg$ and $\Alg$, $\Alg/I$ are faithful, then $\spec_{\Alg/I}(u+I)\subseteq\spec_{\Alg}(u)$, $\forall u\in\Alg$.
\end{enumerate}
\end{lemma}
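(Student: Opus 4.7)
The plan is to handle the four parts by direct manipulation of the definition of strict non-invertibility: in each case I would use the contrapositive and transfer a witness $v$ of invertibility w.r.t.\ some $S$ from one algebra to another. Part (1) is immediate: if $\lambda\notin\spec_{\mathcal B}(u)$, choose $S\subseteq(0,1)$ with $e_S\ne 0$ and $v\in{\mathcal B}$ with $(u-\lambda)v=v(u-\lambda)=e_S 1$; the same $v$, now viewed in $\Alg$, shows $\lambda\notin\spec_\Alg(u)$.

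For (2), the `$\subseteq$' of the spectrum equality and the `$\supseteq$' of the resolvent equality follow directly from (1) applied to each $\Alg_i\subseteq\bigcup_i\Alg_i$. For the reverse inclusions, suppose $\lambda\notin\spec_{\bigcup_i\Alg_i}(u)$ (respectively $\mu\in\resolv_{\bigcup_i\Alg_i}(u)$); pick a witness $v\in\bigcup_i\Alg_i$ together with indices $i$, $j$ such that $u\in\Alg_i$ and $v\in\Alg_j$, and then invoke the chain hypothesis to pass to whichever of $\Alg_i$, $\Alg_j$ is the larger. This single subalgebra $\Alg_k$ contains both $u$ and $v$, exhibiting the required invertibility inside it.

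For (3), the `$\subseteq$' inclusion is immediate since any inverse of $u-\mu$ in $\bigcap_i\Alg_i$ serves simultaneously as an inverse in each individual $\Alg_i$. For the reverse, if $\mu\in\bigcap_i\resolv_{\Alg_i}(u)$ with two-sided inverses $v_i\in\Alg_i$, then in the common ambient algebra the computation $v_i=v_i(u-\mu)v_j=v_j$ shows all $v_i$ coincide, so their common value lies in every $\Alg_i$ and hence in $\bigcap_i\Alg_i$. For (4), if $\lambda\notin\spec_\Alg(u)$ and $(u-\lambda)v=v(u-\lambda)=e_S 1$ in $\Alg$ for some $S$ with $e_S\ne 0$ and some $v\in\Alg$, reducing modulo $I$ gives $(u+I-\lambda)(v+I)=(v+I)(u+I-\lambda)=e_S 1+I$; the assumed faithfulness of $\Alg/I$ combined with Lemma \ref{lemma_GenC_proper_ideal} yields $e_S 1+I\ne 0$ in $\Alg/I$, certifying invertibility w.r.t.\ $S$ there and hence $\lambda\notin\spec_{\Alg/I}(u+I)$. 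No serious obstacle is expected; the only delicate point is the use of the chain hypothesis in (2), without which one could have $u\in\Alg_i$ and $v\in\Alg_j$ with neither subalgebra containing both, and the reduction to a single $\Alg_k$ would fail.
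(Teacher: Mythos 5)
Your proof is correct, and since the paper dismisses this lemma with the single word ``Elementary,'' your direct transfer-of-witness arguments (including the chain trick in (2), the uniqueness-of-inverse computation in (3), and the faithfulness check that $e_S 1 + I \ne 0$ in (4)) are exactly the intended filling-in of that elementary proof.
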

\begin{proof}
Elementary.
\end{proof}

\begin{lemma}\label{lemma_spec_resolv_interleave}
Let $\Alg$ be a faithful $\GenC$-algebra. Then
\begin{enumerate}
\item $\{u\in\Alg: u$ is invertible$\}$ is closed under finite interleaving.
\item $\{u\in\Alg: u$ is strictly non-invertible$\}$ is closed under finite interleaving.
\item Let $u\in\Alg$. Then $\resolv(u)$ and $\spec(u)$ are closed under finite interleaving.
\end{enumerate}
\end{lemma}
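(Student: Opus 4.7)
My plan is to handle the three parts in order, since (3) follows from (1) and (2) by an identity, and the real content is in (1) and (2); the key fact used throughout is that the idempotents $e_S$ live in $\GenC$ and therefore commute with every element of $\Alg$, and that $e_T e_{\co T} = 0$, $e_T^2 = e_T$, $e_T + e_{\co T} = 1$.

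For part (1), given invertible $u_1, u_2 \in \Alg$ and $T\subseteq(0,1)$, I would simply verify that $e_T \inv{u_1} + e_{\co T} \inv{u_2}$ is a two-sided inverse of $e_T u_1 + e_{\co T} u_2$: the cross terms vanish because $e_T e_{\co T}=0$, and the diagonal terms give $e_T + e_{\co T} = 1$. An easy induction on the number of pieces (or iterated application of the binary case) then gives closure under finite interleaving.

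For part (2), assume $u_1, u_2$ are strictly non-invertible and fix $T\subseteq(0,1)$. Set $v = e_T u_1 + e_{\co T} u_2$ and suppose, for contradiction, that $v$ is invertible w.r.t.\ some $S\subseteq(0,1)$ with $e_S\ne 0$, so there exists $w\in\Alg$ with $wv = vw = e_S$. Multiplying on the left by $e_T$ and using that $e_T$ is central and $e_T e_{\co T}=0$, I get $(e_T w)\,u_1 = e_{T\cap S}$, and similarly $u_1\,(e_T w) = e_{T\cap S}$. If $e_{T\cap S}\ne 0$, this says $u_1$ is invertible w.r.t.\ $T\cap S$, contradicting strict non-invertibility of $u_1$; hence $e_{T\cap S}=0$. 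The symmetric argument with $e_{\co T}$ gives $e_{\co T\cap S}=0$, and then $e_S = e_{T\cap S} + e_{\co T\cap S} = 0$, contradicting $e_S\ne 0$. This is the step I expect to be the main obstacle, in the sense that one has to be careful to handle both left and right inverses (since $\Alg$ may be non-commutative) and to notice that the contradiction ultimately comes from decomposing $e_S$ along $T$ and $\co T$. Induction on the number of pieces again gives the general finite case.

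For part (3), I use the identity $u - (e_T \lambda_1 + e_{\co T}\lambda_2) = e_T(u-\lambda_1) + e_{\co T}(u-\lambda_2)$ (which holds because $e_T + e_{\co T}=1$). If $\lambda_1,\lambda_2 \in \resolv(u)$, then $u-\lambda_1$ and $u-\lambda_2$ are invertible, so by part~(1) the right-hand side is invertible, giving $e_T\lambda_1 + e_{\co T}\lambda_2 \in \resolv(u)$. If $\lambda_1,\lambda_2 \in \spec(u)$, then $u-\lambda_1$ and $u-\lambda_2$ are strictly non-invertible, so by part~(2) the right-hand side is strictly non-invertible, giving $e_T\lambda_1 + e_{\co T}\lambda_2 \in \spec(u)$. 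Closure under finite interleaving now follows from closure under the binary case.
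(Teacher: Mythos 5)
Your proposal is correct and follows essentially the same route as the paper: the explicit binary inverse formula for part (1), multiplying the relation $wv=vw=e_S$ by the central idempotents $e_T,e_{\co T}$ to contradict strict non-invertibility in part (2) (the paper phrases this as choosing WLOG $e_{S\cap T}\ne 0$ and invoking Lemma \ref{lemma_invertible_wrt_S}, which is the same computation), and the identity $u-(e_T\lambda_1+e_{\co T}\lambda_2)=e_T(u-\lambda_1)+e_{\co T}(u-\lambda_2)$ for part (3).
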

\begin{proof}
(1) Let $u,v\in\Alg$ be invertible and $T\subseteq (0,1)$. Then $\inv u e_T + \inv v e_{\co T}=(e_T u + e_{\co T} v)^{-1}$.\\
(2) Let $u,v\in\Alg$ be strictly non-invertible and $T\subseteq (0,1)$. Suppose $e_T u+e_{\co T}v$ is invertible w.r.t.\ $S$, for some $S\subseteq(0,1)$ with $e_S\ne 0$. Then $e_S e_T \ne 0$ or $e_S e_{\co T} \ne 0$. By symmetry of $(T,u)$ and $(\co T,v)$, we may suppose that $e_{S\cap T} = e_S e_T \ne 0$. As $e_T u+e_{\co T} v$ is invertible w.r.t\ $S\cap T$, also $e_{S\cap T} u$ and thus $u$ are invertible w.r.t.\ $S\cap T$ by lemma \ref{lemma_invertible_wrt_S}, a contradiction.\\
(3) By parts 1~and 2.
\end{proof}

\begin{lemma}\label{lemma_closed_spectrum}
Let $\Alg$ be a faithful Banach $\GenC$-algebra and $u\in\Alg$. Then $\resolv(u)$ is open and $\spec(u)$ is closed (in $\GenC$).
\end{lemma}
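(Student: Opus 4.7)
The plan is to obtain both assertions as easy consequences of the openness results in Proposition~\ref{prop_invertible_in_Banach_algebra}, via the continuous ``translation'' map
\[
\Phi \colon \GenC \to \Alg, \quad \lambda \mapsto u - \lambda.
\]
Continuity (in fact non-expansiveness) of $\Phi$ is immediate from the ultra-pseudo-norm estimate
\[
\pseudonorm{\Phi(\lambda) - \Phi(\mu)} = \pseudonorm{(\mu - \lambda) 1} \le \sharpnorm{\mu - \lambda}\,\pseudonorm{1} = \sharpnorm{\mu-\lambda},
\]
where $\sharpnorm{\cdot}$ is the ultra-pseudo-norm on $\GenC$.

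For the openness of $\resolv(u)$, I would simply note that by definition
\[
\resolv(u) = \Phi^{-1}(Q), \qquad Q := \{v \in \Alg : v \text{ is invertible}\},
\]
and $Q$ is open by Proposition~\ref{prop_invertible_in_Banach_algebra}(1). Hence $\resolv(u)$ is the preimage of an open set under a continuous map, so it is open.

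The closedness of $\spec(u)$ cannot be deduced from the above by passing to complements, since $\spec(u) \cup \resolv(u) \subsetneq \GenC$ in general (cf.\ the Remark after Definition~\ref{df_spectrum}). Instead, I would use the original definition: $\lambda \in \spec(u)$ iff $u-\lambda$ is not invertible w.r.t.\ any $S \subseteq (0,1)$ with $e_S 1 \ne 0$. Setting
\[
Q_S := \{v \in \Alg : v \text{ is invertible w.r.t.\ } S\},
\]
this reads
\[
\spec(u) = \bigcap_{\substack{S \subseteq (0,1) \\ e_S 1 \ne 0}} \Phi^{-1}\bigl(\Alg \setminus Q_S\bigr).
\]
By Proposition~\ref{prop_invertible_in_Banach_algebra}(4), each $Q_S$ is open in $\Alg$, so each $\Alg \setminus Q_S$ is closed; continuity of $\Phi$ then makes each $\Phi^{-1}(\Alg \setminus Q_S)$ closed in $\GenC$, and an arbitrary intersection of closed sets is closed. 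There is no real obstacle here beyond remembering not to try the naive complement argument; the work has already been done in Proposition~\ref{prop_invertible_in_Banach_algebra}.
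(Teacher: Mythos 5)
Your proof is correct. The openness of $\resolv(u)$ is in substance the paper's own argument: the paper applies proposition \ref{prop_invertible_in_Banach_algebra}(1) directly to $u-\lambda'$ for $\lambda'$ near $\lambda$, which is exactly your observation that $\resolv(u)=\Phi^{-1}(Q)$ with $\Phi$ continuous. For the closedness of $\spec(u)$, however, you take a genuinely more direct route. The paper first invokes theorem \ref{thm_resolv_determines_spec}, writing $\spec(u)$ as the set of $\lambda$ with $\lambda e_S\notin\resolv(u)e_S$ for all admissible $S$ (this needs $\resolv(u)\ne\emptyset$, supplied by lemma \ref{lemma_resolv_set} in the Banach case), and then argues inline, again from part (1), that each set $\{\tilde z\in\GenC:\tilde z e_S\in\resolv(u)e_S\}$ is open. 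You instead work straight from definition \ref{df_spectrum}: $\spec(u)=\bigcap_S\Phi^{-1}(\Alg\setminus Q_S)$, and quote part (4) of proposition \ref{prop_invertible_in_Banach_algebra} for the openness of each $Q_S$. The two arguments are essentially equivalent --- when $\resolv(u)\ne\emptyset$ one has $\{\tilde z:\tilde z e_S\in\resolv(u)e_S\}=\Phi^{-1}(Q_S)$, by the interleaving trick the paper uses in theorem \ref{thm_resolv_determines_spec} --- but yours is slightly more economical: it bypasses theorem \ref{thm_resolv_determines_spec} and lemma \ref{lemma_resolv_set} entirely and leans on a result already proved (part (4)), at the cost of not exhibiting the reformulation of $\spec(u)$ in terms of the resolvent, which the paper reuses elsewhere. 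Your explicit warning that one cannot just take complements, since $\spec(u)\cup\resolv(u)$ need not exhaust $\GenC$, is exactly the right point to flag.
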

\begin{proof}
Let $\lambda\in\resolv(u)$, i.e., $u-\lambda$ is invertible. By proposition \ref{prop_invertible_in_Banach_algebra}(1), also $u-\lambda'$ is invertible, hence $\lambda'\in\resolv(u)$, as soon as $\sharpnorm{\lambda-\lambda'}$ is sufficiently small. It follows that also for $S\subseteq (0,1)$, $\{\tilde z\in\GenC: \tilde z e_S\in \resolv(u) e_S\}$ is open.
Hence $\spec(u)$ is closed by theorem \ref{thm_resolv_determines_spec}.
\end{proof}

\begin{thm}\label{thm_bounded_spectrum}
Let $(\Alg,\pseudonorm{.})$ be a faithful Banach $\GenC$-algebra and $u\in\Alg$. Then $\sharpnorm{\lambda}\le \pseudonorm u$, $\forall\lambda\in \spec_{\Alg}(u)$.
\end{thm}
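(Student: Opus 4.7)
The plan is to derive a contradiction from the assumption that some $\lambda \in \spec_\Alg(u)$ satisfies $\sharpnorm{\lambda} > \pseudonorm{u}$, by producing a spliced generalized number $\mu \in \resolv(u)$ that agrees with $\lambda$ on a nonzero index set $S$; Theorem \ref{thm_resolv_determines_spec} will then rule out $\lambda \in \spec(u)$.

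First I would dispose of the trivial case $\pseudonorm{u} = 0$: then $u = 0$, and $\spec(0) = \{0\}$ because a generalized number is zero iff it fails to be invertible w.r.t.\ every nonzero $S$ (cf.\ the remark after Lemma \ref{lemma_invertible_wrt_S}), so the desired bound is automatic. Assuming $\pseudonorm{u} > 0$, I translate the contrary hypothesis into $\val(\lambda) < -\ln\pseudonorm{u}$ and fix $r > 0$ with $t := -\ln\pseudonorm{u} - r > \val(\lambda)$. Fixing a representative $(\lambda_\eps)_\eps$ of $\lambda$, the very definition of $\val$ says that $\abs{\lambda_\eps} \le \eps^t$ must fail for arbitrarily small $\eps$, so I can extract a sequence $\eps_n \downarrow 0$ with $\abs{\lambda_{\eps_n}} > \eps_n^t$. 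Setting $S := \{\eps_n : n \in \N\}$ gives $e_S \ne 0$ in $\GenC$, and hence in $\Alg$ by faithfulness.

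Next comes the splicing step. I define $\mu \in \GenC$ by $\mu_\eps := \lambda_\eps$ for $\eps \in S$ and $\mu_\eps := \eps^{t-1}$ for $\eps \in \co S$. Moderateness of $\mu$ follows from moderateness of $\lambda$ together with the explicit form on $\co S$, and $\abs{\mu_\eps} \ge \eps^t$ for all small $\eps$, so $\abs{\mu} \ge \caninf^{-\ln\pseudonorm{u} - r}$ in $\GenR$. Lemma \ref{lemma_resolv_set} then gives $\mu \in \resolv(u)$, while $\mu e_S = \lambda e_S$ by construction; hence $\lambda e_S \in \resolv(u)\, e_S$, which by Theorem \ref{thm_resolv_determines_spec} contradicts $\lambda \in \spec(u)$.

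The main obstacle is precisely this splicing: one must promote the weak "along $S$" lower bound on a chosen representative of $\lambda$ to an everywhere lower bound on a genuinely new generalized number $\mu$, without destroying moderateness and while preserving the equality $\mu e_S = \lambda e_S$. The characterization of $\spec(u)$ by restrictions to \emph{every} nonzero $S$ in Theorem \ref{thm_resolv_determines_spec} is exactly what makes the free modification on $\co S$ harmless; this is the conceptual reason that Definition \ref{df_spectrum} yields a well-behaved notion of spectrum.
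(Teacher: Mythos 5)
Your proof is correct and takes essentially the same route as the paper's: assuming $\sharpnorm{\lambda}>\pseudonorm{u}$, you extract a set $S$ with $e_S\ne 0$ on which a representative of $\lambda$ exceeds $\eps^t$ for some $t>\val(\lambda)$, splice a $\mu$ with $\mu e_S=\lambda e_S$ and $\abs{\mu}\ge\caninf^{-\ln\pseudonorm{u}-r}$ so that lemma \ref{lemma_resolv_set} gives $\mu\in\resolv(u)$, and conclude by theorem \ref{thm_resolv_determines_spec}. Your explicit definition of $\mu$ on $\co S$ and the separate treatment of the case $\pseudonorm{u}=0$ only make explicit what the paper leaves implicit.
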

\begin{proof}
Let $\lambda=[(\lambda_\eps)_\eps]\in\GenC$. Call $r=\sharpnorm{\lambda}$. Let $r>\pseudonorm u$, then in particular, $r>0$ and $\val(\lambda)=-\ln r$. Let $\delta\in\R$, $\delta > 0$ such that $\ln r - \delta \ge \ln{\pseudonorm{u}} + \delta$. By the definition of the valuation on $\GenC$,
\[
(\forall\eta\in(0,1))(\exists\eps\le\eta)(\abs{\lambda_\eps}\ge\eps^{-\ln r + \delta}).
\]
So we can construct a decreasing sequence $(\eps_n)_{n\in\N}$ tending to $0$ with $\eps_n^{-\ln r +\delta}\le\abs{\lambda_{\eps_n}}$, $\forall n\in\N$. Call $T=\{\eps_n:n\in\N\}$. Then $e_T\ne 0$. Further, $\abs{\lambda} e_T \ge \caninf^{-\ln r + \delta} e_T\ge \caninf^{-\ln{\pseudonorm{u}} - \delta} e_T$. Hence there exists $\mu\in\GenC$ with $\mu e_T =\lambda e_T$ and $\mu\in\resolv(u)$ by lemma \ref{lemma_resolv_set}. Thus $\lambda e_T\in\resolv(u) e_T$. By theorem \ref{thm_resolv_determines_spec}, $\lambda\notin\spec(u)$.
\end{proof}

\begin{lemma}\label{lemma_sp_uv_and_sp_vu}
Let $\Alg$ be a $\GenC$-algebra. Let $u,v\in\Alg$ and $S\subseteq(0,1)$ with $e_S 1\ne 0$. Then $1-uv$ is invertible w.r.t.\ $S$ iff $1-vu$ is invertible w.r.t.\ $S$.
\end{lemma}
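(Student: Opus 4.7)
The plan is to mimic the classical Banach-algebra identity $(1-vu)^{-1} = 1 + v(1-uv)^{-1}u$, adapted so that the "$1$" in the constant term is replaced by $e_S$. By symmetry (swap $u$ and $v$), it suffices to show one implication: suppose $w\in\Alg$ satisfies $(1-uv)w = w(1-uv) = e_S$, and produce an inverse of $1-vu$ with respect to $S$.

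The candidate I would try is $z := e_S + vwu$. To verify $(1-vu)z = e_S$, I would expand
\[
(1-vu)(e_S + vwu) = e_S + vwu - e_S vu - vuvwu = e_S - e_S vu + v(w - uvw)u,
\]
then use $w - uvw = (1-uv)w = e_S$ and the fact that $e_S\in\GenC$ is central in $\Alg$ to rewrite $v e_S u = e_S vu$, so the last two terms cancel and the result is $e_S$. The computation of $z(1-vu) = e_S$ is completely parallel, using $w(1-uv) = e_S$ instead. Thus $z$ is a two-sided inverse of $1-vu$ w.r.t.\ $S$.

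The only conceptual point worth singling out is the centrality of $e_S$: since $\Alg$ is a $\GenC$-algebra (and we have identified $\GenC$ with $\GenC\cdot 1\subseteq\Alg$ implicitly, or else interpret $e_S$ as $e_S 1$), the scalar $e_S$ commutes with every element of $\Alg$, which is exactly what makes the cancellation $v e_S u = e_S vu$ work. There is no real obstacle here; the proof is a short algebraic verification.
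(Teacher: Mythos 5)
Your proof is correct and rests on the same key idea as the paper's, namely the classical identity $\inv{(1-vu)}=1+v\inv{(1-uv)}u$; the only difference is that you verify the $e_S$-localized version directly (taking $z=e_S+vwu$ and using centrality of $e_S$), whereas the paper first records the unrestricted identity and then reduces invertibility w.r.t.\ $S$ to ordinary invertibility of $1-e_Suv$ via lemma \ref{lemma_invertible_wrt_S}(2). Both routes are sound and of essentially the same length.
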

\begin{proof}
If $1-uv$ is invertible, then a calculation (cf.\ \cite[Prop.\ 3.2.8]{Kadison}) shows that
\[\inv{(1-vu)}=v\inv{(1-uv)}u+1.\]
Further, by lemma \ref{lemma_invertible_wrt_S}, $1-uv$ is invertible w.r.t.\ $S$ iff $e_S(1-uv) + e_{\co S}1 = 1- e_S uv$ is invertible iff (by the first part) $1- e_S vu$ is invertible iff $1-vu$ is invertible w.r.t.\ $S$.
\end{proof}

\begin{prop}
Let $\Alg$ be a faithful $\GenC$-algebra. Let $u,v \in\Alg$. Then
\[
\spec(uv)\cap\{\lambda\in\GenC: \lambda \text{ invertible}\}
=\spec(vu)\cap\{\lambda\in\GenC: \lambda \text{ invertibe}\}.
\]
\end{prop}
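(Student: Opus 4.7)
The goal is to show that, for an invertible $\lambda\in\GenC$, one has $\lambda\in\spec(uv)$ iff $\lambda\in\spec(vu)$. By the very definition of spectral value, this reduces to proving the following statement for every $S\subseteq(0,1)$ with $e_S\ne 0$: $uv-\lambda$ is invertible w.r.t.\ $S$ iff $vu-\lambda$ is invertible w.r.t.\ $S$. The natural pivot is lemma~\ref{lemma_sp_uv_and_sp_vu}, which handles exactly this symmetry, but only for elements of the form $1-xy$. So the plan is to ``rescale'' $uv-\lambda$ into such a form using the invertibility of $\lambda$.

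\textbf{Key steps.} First, since $\lambda\in\GenC$ is invertible, lemma \ref{lemma_invertible_wrt_S}(4) (applied in $\Alg$, using faithfulness) ensures that $\lambda 1\in\Alg$ is invertible, and moreover $\lambda 1$ is central. Write
\[
uv-\lambda = -\lambda\bigl(1-\inv\lambda uv\bigr),\qquad vu-\lambda = -\lambda\bigl(1-\inv\lambda vu\bigr).
\]
Using that multiplication by the central invertible element $-\lambda$ is an algebra automorphism (with inverse multiplication by $-\inv\lambda$), one checks directly that $uv-\lambda$ is invertible w.r.t.\ $S$ iff $1-\inv\lambda uv$ is, and likewise for the $vu$-side; indeed, if $(uv-\lambda)z=z(uv-\lambda)=e_S$ then $(1-\inv\lambda uv)(-\lambda z)=(-\lambda z)(1-\inv\lambda uv)=e_S$, and the converse is similar.

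\textbf{Applying the lemma.} Second, set $u':=\inv\lambda u$ and $v':=v$ and apply lemma \ref{lemma_sp_uv_and_sp_vu} to the pair $(u',v')$: $1-u'v' = 1-\inv\lambda uv$ is invertible w.r.t.\ $S$ iff $1-v'u' = 1-v\inv\lambda u = 1-\inv\lambda vu$ is invertible w.r.t.\ $S$, again using centrality of $\inv\lambda$. Chaining the three equivalences above yields that $uv-\lambda$ is invertible w.r.t.\ $S$ iff $vu-\lambda$ is. Since this holds for every $S\subseteq(0,1)$ with $e_S\ne 0$, one concludes that $uv-\lambda$ is strictly non-invertible iff $vu-\lambda$ is, which is precisely the desired equivalence $\lambda\in\spec(uv)\iff\lambda\in\spec(vu)$.

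\textbf{Expected difficulty.} There is no serious obstacle: the proof is a two-line reduction plus one invocation of lemma \ref{lemma_sp_uv_and_sp_vu}. The only mild point to be careful about is to perform the scalar manipulations at the level of ``invertibility w.r.t.\ $S$'' rather than outright invertibility, so that one passes directly between $uv-\lambda$ and $1-\inv\lambda uv$ for every admissible $S$ simultaneously; this is immediate since $-\lambda$ and $-\inv\lambda$ are (two-sided) invertible central elements of $\Alg$ and commute through everything.
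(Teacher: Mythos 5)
Your proposal is correct and follows essentially the same route as the paper: rescale $uv-\lambda$ to $1-\inv\lambda uv$ using the invertible central scalar $\lambda$, apply lemma \ref{lemma_sp_uv_and_sp_vu} (which handles invertibility w.r.t.\ $S$ of $1-uv$ versus $1-vu$), and conclude for every admissible $S$ simultaneously. The only cosmetic difference is that the paper argues via non-invertibility w.r.t.\ each $S$ rather than the equivalent two-way statement, and your citation of lemma \ref{lemma_invertible_wrt_S}(4) for the invertibility of $\lambda 1$ in $\Alg$ is unnecessary (the inverse is simply $\inv\lambda 1$), but harmless.
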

\begin{proof}
Let $\lambda\in\spec(uv)$, $\lambda$ invertible. So for each $S\subseteq (0,1)$ with $e_S\ne 0$, $uv-\lambda$, hence also $\inv{\lambda}uv-1$, is not invertible w.r.t.\ $S$. By lemma \ref{lemma_sp_uv_and_sp_vu}, $\inv{\lambda}vu-1$, hence also $vu-\lambda$ is not invertible w.r.t.\ $S$. So $\lambda\in\spec(vu)$.
\end{proof}

\subsection{$\Smlf{\Alg}$ and the Gelfand-Mazur theorem}\label{section_GelfandMazur}
\begin{df}
Let $\Alg$ be a faithful $\GenC$-algebra.\\
We call the \defstyle{functional spectrum} of $u\in\Alg$
\[
\functspec(u)=\{m(u)\in\GenC: m\in\Smlf{\Alg}\}.
\]
\end{df}
\begin{lemma}\label{lemma_fsp_sub_sp}
Let $\Alg$ be a faithful $\GenC$-algebra. Then $\functspec(u)\subseteq\spec(u)$, $\forall u\in\Alg$.
\end{lemma}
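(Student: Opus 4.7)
The plan is to argue by contradiction: fix $\lambda\in\functspec(u)$, so $\lambda=m(u)$ for some $m\in\Smlf\Alg$, and suppose $\lambda\notin\spec(u)$. Then, by definition of the spectrum, there exists $S\subseteq(0,1)$ with $e_S\ne 0$ such that $u-\lambda$ is invertible with respect to $S$, i.e.\ there is $v\in\Alg$ with $(u-\lambda)v=e_S 1$ (identifying $\GenC$ with its image in $\Alg$ via faithfulness).

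Next I would apply the morphism $m$ to this identity. Because $m$ is a $\GenC$-algebra morphism that preserves $1$, it is in particular $\GenC$-linear, multiplicative, and sends $\lambda 1$ to $\lambda$ and $e_S 1$ to $e_S$. Hence
\[
0 = (m(u)-\lambda)\,m(v) = m\bigl((u-\lambda)v\bigr) = m(e_S 1) = e_S.
\]
This contradicts $e_S\ne 0$ (in $\GenC$, which is equivalent to $e_S 1\ne 0$ in $\Alg$ by faithfulness of $\Alg$).

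The argument is essentially one line once the definitions are unpacked; no real obstacle arises. The only points requiring a little care are the use of faithfulness to identify $\GenC 1$ with $\GenC$ inside $\Alg$ (so that the equation $(u-\lambda)v=e_S 1$ is meaningful and $e_S\ne 0$ in $\Alg$ is the same as $e_S\ne 0$ in $\GenC$), and the observation that $\GenC$-linearity of $m$ automatically gives $m(\mu 1)=\mu$ for every $\mu\in\GenC$ (using $m(1)=1$), which is what makes $m(u-\lambda)=m(u)-\lambda$ hold.
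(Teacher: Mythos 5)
Your proof is correct and follows essentially the same route as the paper: apply $m$ to the identity $(u-\lambda)v=e_S$ and conclude $e_S=m(u-\lambda)m(v)$, forcing $m(u)\ne\lambda$; the paper phrases it as a contraposition while you phrase it as a contradiction, but the content is identical. The points you flag (faithfulness identifying $\GenC 1$ with $\GenC$, and $\GenC$-linearity with $m(1)=1$ giving $m(\mu 1)=\mu$) are exactly the implicit ingredients the paper relies on.
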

\begin{proof}
Let $u\in\Alg$ and $\lambda\notin\spec(u)$. 
So there exists $S\subseteq(0,1)$ with $e_S\ne 0$ and $v\in\Alg$ such that $(u-\lambda)v=e_S$. Let $m\in\Smlf{\Alg}$. Then $m(u-\lambda)m(v)=e_S\ne 0$, hence also $m(u)-\lambda=m(u-\lambda)\ne 0$. So $\lambda\notin\functspec(u)$.
\end{proof}

\begin{prop}\label{prop_cnt_mlf_in_Banach_algebra}
Let $(\Alg,\pseudonorm{.})$ be a faithful Banach $\GenC$-algebra. Let $m\in\Smlf{\Alg}$. Then $\sharpnorm{m(u)}\le\pseudonorm u$, $\forall u\in\Alg$. In particular, $m$ is continuous.
\end{prop}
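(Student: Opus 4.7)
The plan is to reduce the estimate to the spectrum bound already established in Theorem \ref{thm_bounded_spectrum}, and then to derive continuity from the estimate via $\GenC$-linearity.

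First, I would observe that for any $m\in\Smlf{\Alg}$ and any $u\in\Alg$, the generalized number $m(u)$ lies in $\functspec(u)$ by definition, and hence in $\spec(u)$ by Lemma \ref{lemma_fsp_sub_sp}. Indeed, $m$ preserves $1$, so $m(u - m(u)\cdot 1) = m(u) - m(u) = 0$, and if $u - m(u)$ were invertible (even just with respect to some $S\subseteq(0,1)$ with $e_S\ne 0$), applying $m$ to the identity $(u-m(u))v = e_S$ would force $e_S = 0$, a contradiction.

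Next, I would apply Theorem \ref{thm_bounded_spectrum} directly to obtain $\sharpnorm{m(u)}\le\pseudonorm{u}$. This is the key estimate of the proposition and requires no further work beyond invoking the previous theorem.

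Finally, for the continuity statement, I would exploit that $m$ is $\GenC$-linear, so $m(u)-m(v) = m(u-v)$ for all $u,v\in\Alg$. The estimate just proved then yields $\sharpnorm{m(u)-m(v)} \le \pseudonorm{u-v}$, i.e., $m$ is a nonexpansive map from $(\Alg,\pseudonorm{.})$ to $(\GenC,\sharpnorm{.})$, hence continuous. No step should present any real obstacle, since all the work has been done in Lemma \ref{lemma_fsp_sub_sp} and Theorem \ref{thm_bounded_spectrum}; the mildest subtlety is simply recognizing that the proposition is a direct corollary of these two results combined with $\GenC$-linearity.
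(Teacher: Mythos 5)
Your proof is correct and follows exactly the paper's route: the paper's proof is the one-line observation that the estimate follows from Lemma \ref{lemma_fsp_sub_sp} (since $m(u)\in\functspec(u)\subseteq\spec(u)$) combined with Theorem \ref{thm_bounded_spectrum}, with continuity then immediate from $\GenC$-linearity, just as you argue.
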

\begin{proof}
Follows by theorem \ref{thm_bounded_spectrum} and lemma \ref{lemma_fsp_sub_sp}.
\end{proof}

Classically, the Gelfand-Mazur theorem says that a Banach $\C$-algebra $B$ that is also a division algebra (i.e., $0$ is the only element in $B$ that is not invertible), is necessarily isomorphic with $\C$. Under the assumption that spectra of elements in $\Alg$ are not empty, we have the following analogon:

\begin{prop}[Gelfand-Mazur]\label{prop_Gelfand-Mazur}
Let $\Alg$ be a faithful Banach $\GenC$-algebra with $\spec(u)\ne\emptyset$, $\forall u\in \Alg$. 
If $0$ is the only element in $\Alg$ that is strictly non-invertible, then $\Alg\cong\GenC$.
\end{prop}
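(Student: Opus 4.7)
The plan is to mimic the classical Gelfand-Mazur argument: exhibit the canonical embedding $\GenC \hookrightarrow \Alg$, $\lambda \mapsto \lambda \cdot 1$, and show that under the two hypotheses it is a bijection (hence an isomorphism of $\GenC$-algebras, since it is evidently a unital $\GenC$-algebra morphism). Because $\Alg$ is assumed faithful, the definition of faithfulness gives injectivity of this map immediately: $\lambda \cdot 1 = 0$ forces $\lambda = 0$. So the only real work is surjectivity.

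For surjectivity I would fix an arbitrary $u \in \Alg$ and use the two hypotheses in succession. First, by the assumption $\spec(u) \ne \emptyset$, choose some $\lambda \in \spec(u) \subseteq \GenC$. By definition~\ref{df_spectrum}, $u - \lambda \cdot 1$ is then strictly non-invertible in $\Alg$. Second, by the hypothesis that $0$ is the only strictly non-invertible element of $\Alg$, this forces $u - \lambda \cdot 1 = 0$, i.e., $u = \lambda \cdot 1$ lies in the image of the canonical map. Hence the map is onto, and combined with injectivity and its evident multiplicative/$\GenC$-linear/unital character, it is an isomorphism of $\GenC$-algebras.

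There is essentially no obstacle here; the content of the proposition is that the definition of the spectrum (through the strictly-non-invertibility of $u - \lambda$) is exactly tailored so that the classical one-line Gelfand-Mazur argument still works, once the two hypotheses (non-emptiness of spectra and triviality of the strictly-non-invertible set) are assumed. The only minor point worth mentioning in the write-up is that the identification of $\GenC$ with $\{\lambda \cdot 1 : \lambda \in \GenC\} \subseteq \Alg$ (used implicitly throughout the paper for faithful algebras) is precisely what makes the conclusion $\Alg \cong \GenC$ meaningful.
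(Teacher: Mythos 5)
Your proof is correct and is essentially identical to the paper's: both identify $\GenC$ with $\{\lambda\cdot 1:\lambda\in\GenC\}\subseteq\Alg$ via faithfulness and then use a nonempty spectrum to write each $u$ as $u=\lambda\cdot 1$ because $u-\lambda$, being strictly non-invertible, must vanish. Nothing further is needed.
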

\begin{proof}
As $\Alg$ is faithful, $\GenC\subseteq\Alg$. Let $u\in\Alg$. By assumption, there exists $\lambda\in\spec_\Alg(u)$, i.e., $u-\lambda$ is strictly non-invertible in $\Alg$, and thus $u-\lambda=0$. Hence $\Alg\subseteq \GenC$.
\end{proof}

\begin{df}(cf.\ \cite{HV_isom})
Let $\Alg$ be a $\GenC$-algebra. Let $I\idealproper\Alg$. Then $I$ is \defstyle{maximal with respect to $I\cap\GenC 1=\{0\}$} if $I\cap\GenC 1=\{0\}$ and
\[
J\idealproper\Alg,\ I\subseteq J \text{ and }J\cap\GenC 1 =\{0\}\text{ imply that }I=J.
\]
\end{df}
The Gelfand-Mazur theorem classically implies a bijective correspondence between maximal ideals of a commutative Banach algebra $B$ and non-zero multiplicative linear functionals on $B$. As motivated in \cite{HV_isom}, the maximal ideals have to be replaced in this context by ideals $I$ maximal w.r.t.\ $I\cap \GenC 1= 0$.

\begin{lemma}\label{lemma_closure_of_GenC_proper_ideal}
Let $\Alg$ be a faithful Banach $\GenC$-algebra. Let $I\idealproper\Alg$ with $I\cap\GenC =\{0\}$. Then $\overline I\idealproper\Alg$ with $\overline I\cap\GenC =\{0\}$.
\end{lemma}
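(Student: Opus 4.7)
The plan is to first verify that $\overline I$ is an ideal by the usual continuity argument, and then to exploit the openness of the sets of elements invertible with respect to various $S$ (Proposition \ref{prop_invertible_in_Banach_algebra}(4)) together with the characterization of the condition $I\cap\GenC=\{0\}$ given by Lemma \ref{lemma_GenC_proper_ideal}.

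First I would note that $\overline I$ is a $\GenC$-linear subspace of $\Alg$ (closure of a subspace in a topological group), and that for any $a\in\Alg$ the maps $u\mapsto au$ and $u\mapsto ua$ are continuous (since $\pseudonorm{\cdot}$ is submultiplicative), so they send $\overline I$ into $\overline{aI}\subseteq\overline I$ and $\overline{Ia}\subseteq\overline I$. Hence $\overline I \ideal \Alg$.

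Next, for each $S\subseteq(0,1)$ with $e_S\ne 0$, set $U_S := \{u\in\Alg : u \text{ is invertible w.r.t.\ } S\}$; this is open by Proposition \ref{prop_invertible_in_Banach_algebra}(4). The hypothesis $I\cap\GenC=\{0\}$ combined with the implication $(1)\Rightarrow(4)$ of Lemma \ref{lemma_GenC_proper_ideal} gives $I\cap U_S=\emptyset$ for every such $S$. Since $\Alg\setminus U_S$ is closed and contains $I$, it contains $\overline I$, so $\overline I\cap U_S=\emptyset$ as well. Taking in particular $S=(0,1)$ (so that $e_S=1\ne 0$ by faithfulness of $\Alg$) shows that $\overline I$ contains no invertible element, whence $1\notin\overline I$ and $\overline I\idealproper\Alg$.

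Now that $\overline I$ is a proper ideal satisfying $\overline I\cap U_S=\emptyset$ for every $S\subseteq(0,1)$ with $e_S\ne 0$, the implication $(4)\Rightarrow(1)$ of Lemma \ref{lemma_GenC_proper_ideal} yields $\overline I\cap\GenC=\{0\}$, completing the proof. There is no real obstacle here: the only substantive input is the topological fact that $I$, being disjoint from each open set $U_S$, has its closure disjoint from the same sets, which immediately propagates the characterization from $I$ to $\overline I$.
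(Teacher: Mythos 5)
Your proof is correct and follows essentially the same route as the paper: continuity of the algebra operations gives that $\overline I$ is an ideal, and the openness of each set of elements invertible w.r.t.\ $S$ (proposition \ref{prop_invertible_in_Banach_algebra}) combined with the characterization in lemma \ref{lemma_GenC_proper_ideal} transfers the condition $I\cap\GenC=\{0\}$ to $\overline I$. Your extra explicit check that $\overline I$ is proper (via $S=(0,1)$) is a welcome clarification, since lemma \ref{lemma_GenC_proper_ideal} is stated for proper ideals, but it is the same argument.
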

\begin{proof}
By the continuity of the operations $+$ and $\cdot$, it follows that $\overline I$ is an ideal.\\
Let $S\subseteq (0,1)$ with $e_S\ne 0$. By lemma \ref{lemma_GenC_proper_ideal}, $\{u\in\Alg: u$ invertible w.r.t.\ $S\}\subseteq\Alg\setminus I$. By proposition \ref{prop_invertible_in_Banach_algebra}, $\{u\in\Alg: u$ invertible w.r.t.\ $S\}$ is open, so $\{u\in\Alg: u$ invertible w.r.t.\ $S\}\subseteq\interior{(\Alg\setminus I)}=\Alg\setminus\overline{I}$. As $S$ is arbitrary, again by lemma \ref{lemma_GenC_proper_ideal}, $\overline I\cap\GenC=\{0\}$.
\end{proof}

\begin{cor}\label{cor_closure_of_GenC_proper_ideal}
Let $\Alg$ be a faithful Banach $\GenC$-algebra. Let $I\idealproper\Alg$ maximal w.r.t.\ $I\cap\GenC =\{0\}$. Then $I$ is closed.
\end{cor}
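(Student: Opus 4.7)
The plan is immediate from the preceding lemma. By Lemma \ref{lemma_closure_of_GenC_proper_ideal}, the topological closure $\overline{I}$ is again a proper two-sided ideal of $\Alg$ satisfying $\overline{I}\cap\GenC=\{0\}$. Since trivially $I\subseteq\overline{I}$, the maximality of $I$ with respect to the property $I\cap\GenC=\{0\}$ forces $I=\overline{I}$, so $I$ is closed.

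The only thing that requires a moment's care is that Lemma \ref{lemma_closure_of_GenC_proper_ideal} must apply, i.e.\ one needs $\overline{I}$ to be \emph{proper}; but this is automatic because $\overline{I}\cap\GenC=\{0\}$ in particular implies $1\notin\overline{I}$. No further obstacle arises.
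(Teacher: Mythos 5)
Your proof is correct and is exactly the argument the paper intends (the corollary is stated without proof as an immediate consequence of Lemma \ref{lemma_closure_of_GenC_proper_ideal}): the closure is a proper ideal meeting $\GenC$ only in $0$, so maximality forces $I=\overline I$. Your side remark about properness is harmless but unnecessary, since $\overline I\idealproper\Alg$ is already part of the lemma's conclusion rather than a hypothesis you must verify.
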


The following theorem can be viewed as an analogue of the correspondence between maximal ideals of a commutative $\C$-algebra $\Alg$ and quotients of $\Alg$ that are fields.
\begin{prop}\label{prop_GenC_max_ideal}
Let $\Alg$ be a faithful $\GenC$-algebra. Let $I\ideal\Alg$. Consider the following statements:
\begin{enumerate}
\item $\Alg/I$ is a faithful $\GenC$-algebra and $0$ is the only element in $\Alg/I$ that is strictly non-invertible
\item $I$ is maximal w.r.t.\ $I\cap\GenC=\{0\}$.
\end{enumerate}
Then $(1)\implies (2)$. If $\Alg$ is commutative, then $(1)\iff(2)$.
\end{prop}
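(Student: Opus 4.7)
The plan is to handle the two directions separately, both resting on the criterion of Lemma \ref{lemma_GenC_proper_ideal} together with Lemma \ref{lemma_invertible_wrt_S}. The key algebraic fact to exploit is that the strict non-invertibility of $u+I$ in $\Alg/I$ forbids producing any nonzero $e_S\cdot 1$ in the ideal $I+\text{(stuff generated by }u)$.

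For $(1)\implies(2)$, suppose $\Alg/I$ is faithful and $0$ is the only strictly non-invertible element. Faithfulness of $\Alg/I$ gives $I\cap\GenC=\{0\}$ by Lemma \ref{lemma_GenC_proper_ideal}. Take $J\idealproper\Alg$ with $I\subseteq J$ and $J\cap\GenC=\{0\}$; we must show $J\subseteq I$. Pick $u\in J$ and argue that $u+I$ cannot be invertible w.r.t.\ any $S\subseteq(0,1)$ with $e_S\ne 0$: if $(u+I)(v+I)=(v+I)(u+I)=e_S+I$, then $uv-e_S\in I\subseteq J$, hence $e_S\in J\cap\GenC$, contradicting $J\cap\GenC=\{0\}$ (recall $e_S\ne 0$ in $\GenC$). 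Thus $u+I$ is strictly non-invertible in $\Alg/I$, so by hypothesis $u+I=0$, i.e.\ $u\in I$. This yields $J=I$.

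For $(2)\implies(1)$ in the commutative setting, $I\cap\GenC=\{0\}$ already gives that $\Alg/I$ is faithful by Lemma \ref{lemma_GenC_proper_ideal}. Let $u+I\in\Alg/I$ be strictly non-invertible; I want to show $u\in I$. Using commutativity, form the ideal $J:=I+u\Alg$, which contains $I$. If $u\notin I$ then $J\supsetneq I$, so by maximality either $J=\Alg$ or $J\cap\GenC\ne\{0\}$. In the first case, $1=a+ub$ with $a\in I$, $b\in\Alg$, so $(u+I)(b+I)=1+I$, making $u+I$ invertible (w.r.t.\ $(0,1)$), contradicting strict non-invertibility. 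In the second case, pick $\lambda\in J\cap\GenC$ with $\lambda\ne 0$; write $\lambda=a+ub$ and choose, via Lemma \ref{lemma_invertible_wrt_S}(4), some $S\subseteq(0,1)$ with $e_S\ne 0$ and $\mu\in\GenC$ so that $\lambda\mu=e_S$. Then $(u+I)(b\mu+I)=e_S+I$, which by commutativity makes $u+I$ invertible w.r.t.\ $S$ in $\Alg/I$; since $\Alg/I$ is faithful, $e_S\ne 0$ there, again contradicting strict non-invertibility. Hence $u\in I$.

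The only genuinely delicate point is the step in $(2)\implies(1)$ where a one-sided relation of the form $ub-\lambda\in I$ must be promoted to a two-sided invertibility of $u+I$ w.r.t.\ $S$; this is precisely where commutativity is used, and accounts for why the converse in the non-commutative case is not claimed. Everything else is a routine pairing of the maximality condition with the invertibility-w.r.t.-$S$ machinery already developed.
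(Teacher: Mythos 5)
Your proof is correct and takes essentially the same route as the paper's: both directions rest on Lemma \ref{lemma_GenC_proper_ideal}, the ideal $I+u\Alg$, and the invertibility w.r.t.\ some $S$ of a nonzero $\lambda\in\GenC$ (Lemma \ref{lemma_invertible_wrt_S}(4)). Your case split in $(2)\implies(1)$ between $J=\Alg$ and $J\cap\GenC\ne\{0\}$ is merged in the paper into the single observation that some $\lambda\in\GenC\setminus\{0\}$ lies in $I+u\Alg$, but the argument is otherwise identical.
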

\begin{proof}
$(1)\implies(2)$: by lemma \ref{lemma_GenC_proper_ideal}, $I\cap\GenC=\{0\}$. Let $J\ideal\Alg$ with $I\subsetneqq J$. Let $u\in J\setminus I$. As $u+I\ne 0$ in $\Alg/I$, there exists $S\subseteq(0,1)$ with $e_S\ne 0$ and $v\in \Alg$ such that $uv+I=e_S+I$. Hence $e_S\in J\cap\GenC\setminus\{0\}$.\\
$(2)\implies(1)$: let $\Alg$ be commutative. By lemma \ref{lemma_GenC_proper_ideal}, $\Alg/I$ is faithful. Let $u\in\Alg\setminus I$. By the maximality of $I$, there exists $\lambda\in\GenC\setminus\{0\}$ with $\lambda\in I+u\Alg$. As $\lambda\ne 0$, there exists $S\subseteq(0,1)$ with $e_S\ne 0$ such that $\lambda$ is invertible w.r.t.\ $S$. Hence $e_S\in I+u\Alg$, i.e., there exists $v\in\Alg$ with $uv+I=e_S+I$, hence $u+I$ is invertible w.r.t.\ $S$ in $\Alg/I$.
\end{proof}
\begin{cor}\label{cor_Kerm_char}
Let $\Alg$ be a faithful $\GenC$-algebra. Let $m\in\Smlf{\Alg}$. Then $\Ker m$ is an ideal maximal w.r.t.\ $\Ker m\cap\GenC = \{0\}$ and $\spec(\bar u)\ne\emptyset$, for each $\bar u\in\Alg/ \Ker m$.
\end{cor}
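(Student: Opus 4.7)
The plan is to observe that $m$ being a surjective $\GenC$-algebra morphism $\Alg\to\GenC$ preserving $1$ makes $\Ker m$ a proper two-sided ideal (proper since $m(1)=1\ne 0$), and the first isomorphism theorem gives an induced $\GenC$-algebra isomorphism $\bar m$: $\Alg/\Ker m\to\GenC$. From this single identification, everything that needs to be proved is inherited from the corresponding properties of $\GenC$ itself.

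First I would check hypothesis (1) of proposition \ref{prop_GenC_max_ideal} for $I=\Ker m$. Via $\bar m$, the quotient $\Alg/\Ker m$ is isomorphic as $\GenC$-algebra to $\GenC$, and $\GenC$ is faithful; hence $\Alg/\Ker m$ is faithful. For the second half of (1), I would invoke the remark stated right after lemma \ref{lemma_invertible_wrt_S}: in $\GenC$ the only strictly non-invertible element is $0$, since $c=0$ iff $c$ is not invertible w.r.t.\ $S$ for every $S\subseteq(0,1)$ with $e_S\ne 0$. Transporting this property across $\bar m$, the only strictly non-invertible element of $\Alg/\Ker m$ is $0$. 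Then the implication $(1)\Rightarrow(2)$ of proposition \ref{prop_GenC_max_ideal} (which does not require commutativity of $\Alg$) yields that $\Ker m$ is maximal w.r.t.\ $\Ker m\cap\GenC=\{0\}$.

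For the second assertion, given $\bar u\in\Alg/\Ker m$, I would set $\lambda:=\bar m(\bar u)\in\GenC$. Then $\bar m(\bar u-\lambda)=0$, and since $\bar m$ is an isomorphism, $\bar u-\lambda=0$ in $\Alg/\Ker m$. In particular $\bar u-\lambda$ is strictly non-invertible, so $\lambda\in\spec_{\Alg/\Ker m}(\bar u)$, whence $\spec(\bar u)\ne\emptyset$.

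There is essentially no serious obstacle: the proof is a direct packaging of proposition \ref{prop_GenC_max_ideal} with the quotient isomorphism $\Alg/\Ker m\cong\GenC$. The only place requiring a moment's care is making sure the characterization of strictly non-invertible elements of $\GenC$ transfers cleanly across $\bar m$, but this is immediate because $\bar m$ is a $\GenC$-algebra isomorphism and therefore preserves invertibility w.r.t.\ $S$ for each $S\subseteq(0,1)$.
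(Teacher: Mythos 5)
Your proposal is correct and follows essentially the same route as the paper: both establish the first assertion by combining $\Alg/\Ker m\cong\GenC$ (so the quotient is faithful and $0$ is its only strictly non-invertible element) with the implication $(1)\Rightarrow(2)$ of proposition \ref{prop_GenC_max_ideal}, and both prove the second assertion by observing that $\bar u$ equals the scalar $m(u)$ in the quotient, so that $m(u)\in\spec(\bar u)$.
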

\begin{proof}
The first assertion follows by proposition \ref{prop_GenC_max_ideal} and the fact that $\Alg/\Ker m\cong \GenC$. For the second assertion, let $\bar u= u + \Ker m \in\Alg/\Ker m$, for some $u\in \Alg$. Since $u = m(u) + (u-m(u))\in m(u) + \Ker m$, $\bar u = m(u)$ in $\Alg/\Ker m$. Thus
$m(u)\in \spec(\bar u)$.
\end{proof}

The converse of the previous corollary is given by the following proposition:
\begin{prop}\label{prop_mlf_in_Banach_algebra}
Let $\Alg$ be a faithful commutative Banach $\GenC$-algebra. Let $I\ideal\Alg$ be maximal w.r.t.\ $I\cap\GenC=\{0\}$. If $\spec(u)\ne\emptyset$, $\forall u \in\Alg/I$, then $I=\Ker m$, for some $m\in\Smlf{\Alg}$.
\end{prop}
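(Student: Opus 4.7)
The plan is to mimic the classical proof that maximal ideals in a commutative Banach algebra correspond to multiplicative linear functionals, using the Gelfand--Mazur analogue (proposition \ref{prop_Gelfand-Mazur}) applied to the quotient $\Alg/I$. The essential content is that all the heavy lifting has already been done in the preceding lemmas: what remains is to verify the hypotheses of Gelfand--Mazur for $\Alg/I$ and then read off the functional from the resulting isomorphism $\Alg/I\cong\GenC$.

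First I would observe that by corollary \ref{cor_closure_of_GenC_proper_ideal} the ideal $I$ is automatically closed, so that proposition \ref{prop_closed_ideal_in_Banach_algebra} applies and $\Alg/I$, equipped with the quotient ultra-pseudo-norm $\pseudonormprime{u+I}=\inf_{u-v\in I}\pseudonorm v$, is a Banach $\GenC$-algebra. Moreover, since $I\cap\GenC=\{0\}$, that same proposition (together with lemma \ref{lemma_GenC_proper_ideal}) gives that $\Alg/I$ is faithful. Note also that $\Alg/I$ remains commutative because $\Alg$ is.

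Next I would invoke proposition \ref{prop_GenC_max_ideal} in the commutative direction $(2)\Longrightarrow(1)$: the maximality of $I$ with respect to $I\cap\GenC=\{0\}$ translates into the statement that the only strictly non-invertible element of $\Alg/I$ is $0$. Combining this with the standing hypothesis that $\spec(\bar u)\ne\emptyset$ for every $\bar u\in\Alg/I$ puts us precisely in the setting of proposition \ref{prop_Gelfand-Mazur}. The Gelfand--Mazur theorem therefore yields $\Alg/I\cong\GenC$; concretely, for each $\bar u\in\Alg/I$ there is a unique $\lambda(\bar u)\in\GenC$ with $\bar u=\lambda(\bar u)\cdot 1$ in $\Alg/I$, and the assignment $\bar u\mapsto \lambda(\bar u)$ is a $\GenC$-algebra isomorphism preserving $1$.

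Finally I would define $m\colon\Alg\to\GenC$ as the composition of the canonical projection $\pi\colon\Alg\to\Alg/I$ with this isomorphism, i.e.\ $m(u):=\lambda(u+I)$. Then $m$ is a $\GenC$-algebra morphism with $m(1)=1$, hence surjective, so $m\in\Smlf\Alg$. Moreover $\Ker m=\{u\in\Alg:u+I=0\text{ in }\Alg/I\}=I$, which is exactly the desired conclusion. There is no real obstacle here: all the nontrivial content lives in corollary \ref{cor_closure_of_GenC_proper_ideal}, proposition \ref{prop_closed_ideal_in_Banach_algebra}, the commutative implication of proposition \ref{prop_GenC_max_ideal}, and proposition \ref{prop_Gelfand-Mazur}; the present proposition is essentially the standard bookkeeping that glues these ingredients together.
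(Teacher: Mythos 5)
Your proposal is correct and follows essentially the same route as the paper: closedness of $I$ via corollary \ref{cor_closure_of_GenC_proper_ideal}, the Banach quotient via proposition \ref{prop_closed_ideal_in_Banach_algebra}, the commutative implication of proposition \ref{prop_GenC_max_ideal}, and then proposition \ref{prop_Gelfand-Mazur} to get $\Alg/I\cong\GenC$ and read off $m$ from the canonical surjection. The extra details you supply (faithfulness of $\Alg/I$, commutativity of the quotient, $\Ker m=I$) are exactly the bookkeeping the paper leaves implicit.
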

\begin{proof}
By corollary \ref{cor_closure_of_GenC_proper_ideal}, $I$ is closed. By proposition \ref{prop_closed_ideal_in_Banach_algebra}, $\Alg/I$ is a Banach $\GenC$-algebra. By proposition \ref{prop_GenC_max_ideal}, $\Alg/I$ satisfies the conditions of proposition \ref{prop_Gelfand-Mazur}, so $\Alg/I\cong\GenC$. Hence the canonical surjection $\Alg\to\Alg/I$ induces a surjective multiplicative $\GenC$-linear functional $\Alg\to\GenC$ with $I$ as its kernel.
\end{proof}

\begin{cor}
Let $\Alg$ be a faithful commutative Banach $\GenC$-algebra. If each strictly non-invertible $v\in\Alg$ is contained in an ideal $I\ideal\Alg$ maximal w.r.t.\ $I\cap\GenC=\{0\}$ with the property that $\spec(u)\ne\emptyset$, $\forall u \in\Alg/I$, then $\spec(v)=\functspec(v)$, for each $v\in\Alg$.
\end{cor}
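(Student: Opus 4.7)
The plan is to establish the nontrivial inclusion $\spec(v)\subseteq\functspec(v)$, since the reverse inclusion is already given by lemma \ref{lemma_fsp_sub_sp}. So I fix $v\in\Alg$ and pick an arbitrary $\lambda\in\spec(v)$, and aim to produce $m\in\Smlf{\Alg}$ with $m(v)=\lambda$.

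By definition of the spectrum, $v-\lambda$ is strictly non-invertible. This is exactly the hypothesis needed to apply the assumption of the corollary: there exists an ideal $I\ideal\Alg$ with $v-\lambda\in I$, maximal with respect to $I\cap\GenC=\{0\}$, and such that $\spec(u)\ne\emptyset$ for each $u\in\Alg/I$.

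Now I invoke proposition \ref{prop_mlf_in_Banach_algebra}: since $\Alg$ is a faithful commutative Banach $\GenC$-algebra and $I$ satisfies all the hypotheses of that proposition, there is some $m\in\Smlf\Alg$ with $I=\Ker m$. Applying $m$ to $v-\lambda\in\Ker m$ yields $m(v)-\lambda=m(v-\lambda)=0$ (using that $m$ is $\GenC$-linear and preserves $1$), i.e., $m(v)=\lambda$, so $\lambda\in\functspec(v)$.

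There is no real obstacle here: the work has all been done in propositions \ref{prop_Gelfand-Mazur}, \ref{prop_GenC_max_ideal}, \ref{prop_mlf_in_Banach_algebra}, and lemma \ref{lemma_fsp_sub_sp}. The only point to verify carefully is that the hypothesis of the corollary is applicable to the element $v-\lambda$ (which is strictly non-invertible precisely because $\lambda\in\spec(v)$), and then that proposition \ref{prop_mlf_in_Banach_algebra} indeed applies to the resulting ideal $I$, which it does because all of its hypotheses (faithfulness, commutativity, completeness, maximality w.r.t.\ $I\cap\GenC=\{0\}$, and non-emptiness of spectra in $\Alg/I$) are directly supplied by the assumptions of the corollary.
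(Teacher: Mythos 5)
Your proposal is correct and follows essentially the same argument as the paper's proof: apply the hypothesis to the strictly non-invertible element $v-\lambda$, invoke proposition \ref{prop_mlf_in_Banach_algebra} to write the resulting ideal as $\Ker m$ for some $m\in\Smlf\Alg$, conclude $m(v)=\lambda$, and get the reverse inclusion from lemma \ref{lemma_fsp_sub_sp}. No gaps.
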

\begin{proof}
Let $v\in\Alg$ and $\lambda\in\spec(v)$. Then $v-\lambda$ is strictly non-invertible, so there exists $I\ideal\Alg$, maximal w.r.t.\ $I\cap\GenC=\{0\}$ with the property that $\spec(u)\ne\emptyset$, $\forall u \in\Alg/I$ containing $v-\lambda$. By proposition \ref{prop_mlf_in_Banach_algebra}, $I=\Ker m$, for some $m\in\Smlf{\Alg}$. Hence $m(v-\lambda)=0$, and $\lambda = m(v)\in \functspec(v)$. The converse inclusion follows by lemma \ref{lemma_fsp_sub_sp}.
\end{proof}

\subsection{The spectral mapping theorem}\label{section_spectralmapping}
\begin{thm}\label{thm_poly_spec_calc}
Let $\Alg$ be a faithful $\GenC$-algebra and $u,v\in\Alg$.
\begin{enumerate}
\item Let $p\in\GenC[x]$ be a polynomial and let $p(C)=\{p(\lambda):\lambda\in C\}$, for $C\subseteq\GenC$.\\
Then $p(\spec(u))\subseteq\spec(p(u))$ and $p(\functspec(u))=\functspec(p(u))$. If $p(\lambda)\in\resolv(p(u))$, then $\lambda\in\resolv(u)$.
\item For $C\subseteq\{\lambda\in\GenC: \lambda$ is invertible$\}$, let $\inv C=\{\inv\lambda: \lambda\in C\}$. If $u$ is invertible, then
$\spec(\inv u)=\inv{(\spec(u))}$ and $\functspec(\inv u)=\inv{(\functspec(u))}$.
\item $\functspec(u+v)\subseteq\functspec(u)+\functspec(v)$ and $\functspec(uv)\subseteq\functspec(u)\functspec(v)$.
\end{enumerate}
\end{thm}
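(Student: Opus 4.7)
The plan is to unify all three parts using the morphism property of $m \in \Smlf\Alg$ and lemma \ref{lemma_invertible_wrt_S}(3), which translates invertibility (w.r.t.\ $S$) of a product of commuting factors to simultaneous invertibility (w.r.t.\ $S$) of the factors.

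For Part (1) I would first perform polynomial division in $\GenC[x]$ (valid since $x-\lambda$ is monic), writing $p(x) - p(\lambda) = (x-\lambda) q(x)$ and hence obtaining the commuting factorization $p(u) - p(\lambda) = (u-\lambda) q(u) = q(u)(u-\lambda)$. Applying lemma \ref{lemma_invertible_wrt_S}(3) with $S = (0,1)$ then yields the resolvent implication: invertibility of $p(u) - p(\lambda)$ forces invertibility of $u-\lambda$. Applied for each $S$ with $e_S \ne 0$, the same lemma gives $p(\spec(u)) \subseteq \spec(p(u))$, since strict non-invertibility of $u-\lambda$ blocks $p(u) - p(\lambda)$ from being invertible w.r.t.\ any such $S$. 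The two inclusions defining $p(\functspec(u)) = \functspec(p(u))$ then follow from the intertwining $m(p(u)) = p(m(u))$ for any $m \in \Smlf\Alg$.

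For Part (2), a preliminary step is crucial: when $u$ is invertible, every $\lambda \in \spec(u)$ must itself be invertible in $\GenC$. Otherwise lemma \ref{lemma_invertible_wrt_S}(4) (with $S=(0,1)$) provides a decreasing sequence $\eps_n \to 0$ along which $\abs{\lambda_{\eps_n}} < \eps_n^n$; setting $T = \{\eps_n : n \in \N\}$ produces $e_T \ne 0$ with $\lambda e_T = 0$, whence $(u-\lambda)e_T = u e_T$ is invertible w.r.t.\ $T$ (inverse $\inv u e_T$), contradicting strict non-invertibility of $u-\lambda$. Once $\inv\lambda$ is available, the identity
\[
\inv u - \inv\lambda \;=\; -\inv u \inv\lambda (u-\lambda) \;=\; -(u-\lambda)\inv\lambda \inv u,
\]
combined with lemma \ref{lemma_invertible_wrt_S}(3) applied w.r.t.\ each $S$, translates invertibility w.r.t.\ $S$ between $\inv u - \inv\lambda$ and $u-\lambda$; hence $\lambda \in \spec(u) \iff \inv\lambda \in \spec(\inv u)$. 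For $\functspec$, $m(u)\,m(\inv u) = m(1) = 1$ immediately yields $m(\inv u) = \inv{m(u)}$. Part (3) is a direct consequence of $m(u+v) = m(u) + m(v)$ and $m(uv) = m(u)\,m(v)$.

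The main obstacle is the preliminary invertibility claim in Part (2). In classical Banach algebra theory, only $0$ needs to be excluded from the spectrum of an invertible element, and $0 \in \resolv(u)$ is trivial. Here, $\GenC$ contains a wealth of zero-divisors, and the construction above is genuinely needed to rule out every non-invertible $\lambda$ before the symbol $\inv\lambda$ can even be formed. All the other steps are essentially cosmetic variants of the classical Banach algebra argument, with lemma \ref{lemma_invertible_wrt_S}(3) replacing the standard ``product of commuting elements is a unit iff each factor is.''
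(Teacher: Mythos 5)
Your proof is correct and follows essentially the same route as the paper: the commuting factorization $p(u)-p(\lambda)=(u-\lambda)q(u)$ together with lemma \ref{lemma_invertible_wrt_S} to transfer invertibility w.r.t.\ each $S$, the preliminary observation that spectral values of an invertible element are invertible (you reconstruct the needed fact $\lambda e_T=0$ directly from lemma \ref{lemma_invertible_wrt_S}(4) where the paper cites it), the scalar-identity relating $\inv u-\inv\lambda$ to $u-\lambda$, and elementary morphism computations for $\functspec$. The only differences (polynomial division versus the explicit telescoping identity, and your slightly different but equivalent resolvent identity in part (2)) are cosmetic.
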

\begin{proof}
(1) Let $u\in\Alg$, $\lambda\in\GenC$ and $p(x)=a_n x^n + \cdots + a_0$, with $a_j\in\GenC$, $\forall j$. Then
\begin{align*}
p(u)-p(\lambda ) &=a_n(u^n-\lambda^n)+\cdots + a_1 (u-\lambda),\\
u^j - \lambda^j &=(u-\lambda) (u^{j-1}+\lambda u^{j-2} + \cdots + \lambda^{j-1})\\
&=(u^{j-1}+\lambda u^{j-2} + \cdots + \lambda^{j-1})(u-\lambda),
\end{align*}
for $j=1$, \dots, $n$, so there exists $v\in\Alg$ such that
$p(u)-p(\lambda) = (u-\lambda) v=v (u-\lambda)$. So if $p(\lambda)\in \resolv(p(u))$, i.e., $p(u)-p(\lambda)$ is invertible, then also $u-\lambda$ is invertible, i.e., $\lambda\in\resolv(u)$. Similarly, if $p(\lambda)\notin\spec(p(u))$, then there exists $S\subseteq(0,1)$ with $e_S\ne 0$ such that $p(u)-p(\lambda)$ is invertible w.r.t.\ $S$. By lemma \ref{lemma_invertible_wrt_S}, also $u-\lambda$ is invertible w.r.t.\ $S$, and $\lambda\notin\spec(u)$. By contraposition, $p(\spec(u))\subseteq\spec(p(u))$.\\
(2) Let $u\in\Alg$ be invertible. We show that $\spec(u)$ contains only invertible elements.\\
Let $\lambda\in\spec(u)$. Suppose that $\lambda\in\GenC$ is not invertible. Then there exists $S\subseteq(0,1)$ with $e_S\ne 0$ and $\lambda e_S =0$ \cite[Lemma 4.1]{HV_ideals}. As $u-\lambda$ is not invertible w.r.t.\ $S$, also $u=u-\lambda e_S$ is not invertible w.r.t.\ $S$ by lemma \ref{lemma_invertible_wrt_S}, a contradiction.\\
Let $\lambda\in\GenC$ be invertible and let $S\subseteq (0,1)$ with $e_S\ne 0$. Then by lemma \ref{lemma_invertible_wrt_S}, $\inv{u}-\lambda$ is invertible w.r.t\ $S$ iff $\inv{\lambda}u(\inv{u}-\lambda)=\inv{\lambda}-u$ is invertible w.r.t.\ $S$. So for $\lambda$ invertible, $\lambda\in\spec(\inv u)$ iff $\inv{\lambda}\in\spec(u)$. As $\spec(u)$ and $\spec(\inv u)$ only contain invertible elements, $\spec(\inv u)=\inv{(\spec(u))}$.\\
The assertions about the functional spectrum are elementary.
\end{proof}

\begin{lemma}\label{lemma_spec_au}
Let $\Alg$ be a faithful $\GenC$-algebra, $a\in\GenC$, $u\in\Alg$ and $\lambda\in\spec(au)$. Then $\lambda\in\overline{a\GenC}$ (here $\overline{\phantom{A}}$ denotes the topological closure).
\end{lemma}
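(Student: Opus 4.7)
I would argue by contrapositive: assume $\lambda \in \GenC \setminus \overline{a\GenC}$ and produce some $T \subseteq (0,1)$ with $e_T \ne 0$ for which $au - \lambda$ is invertible w.r.t.\ $T$, contradicting $\lambda \in \spec(au)$. The first task is to translate the non-approximation hypothesis into a usable fact about representatives. For each $m \in \N$ let $S_m := \{\eps : \abs{a_\eps} \ge \eps^m\}$; the natural $\GenC$-approximation to $\lambda$ by an element of $a\GenC$ comes from the moderate net $\mu^{(m)}_\eps := \chi_{S_m}(\eps)\lambda_\eps / a_\eps$, for which $a \mu^{(m)} = e_{S_m}\lambda$ and hence $\lambda - a\mu^{(m)} = e_{\co{S_m}}\lambda$. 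The hypothesis then delivers a single $k \in \N$ with $\sharpnorm{e_{\co{S_m}}\lambda} > e^{-k}$ for every $m$, which says concretely that for each $m$ there are arbitrarily small $\eps \in \co{S_m}$ with $\abs{\lambda_\eps} > \eps^k$.

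From this I would extract, by a diagonal argument, a strictly decreasing sequence $\eps_m \to 0$ satisfying both $\abs{a_{\eps_m}} < \eps_m^m$ and $\abs{\lambda_{\eps_m}} > \eps_m^k$ simultaneously. Setting $T := \{\eps_m : m \in \N\}$, two facts fall out: $e_T \ne 0$, and the condition $\abs{a_{\eps_m}} < \eps_m^m$, valid for every $m$, forces $e_T a = 0$ in $\GenC$ (for any $N$, $\abs{a_{\eps_m}} \le \eps_m^N$ as soon as $m \ge N$), so $e_T\,au = 0$ in $\Alg$. Consequently $e_T(au - \lambda) + e_{\co{T}} \cdot 1$ collapses to the scalar element $-e_T \lambda + e_{\co{T}} \in \GenC \subseteq \Alg$. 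The lower bound $\abs{\lambda_{\eps_m}} > \eps_m^k$ on $T$, together with the value $1$ on $\co{T}$, makes this element invertible in $\GenC$; Lemma~\ref{lemma_invertible_wrt_S}(4) then lifts the invertibility to $\Alg$, and Lemma~\ref{lemma_invertible_wrt_S}(2) delivers the desired invertibility of $au - \lambda$ w.r.t.\ $T$.

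The principal obstacle is the very first paragraph: the topological statement $\lambda \notin \overline{a\GenC}$ has to be converted into a single set $T$ on which $a$ is negligible to \emph{every} order, not just to order $m$ for one $m$. Each approximant $\mu^{(m)}$ only detects failure at one finite order, so it is the diagonal extraction that upgrades the per-order witnesses into the genuine annihilation $e_T a = 0$. Once this is in hand, the rest reduces to elementary algebraic manipulation with idempotents and the characterisations of invertibility w.r.t.\ $S$ already recorded in Lemma~\ref{lemma_invertible_wrt_S}.
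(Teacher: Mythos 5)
Your argument is correct, but it takes a genuinely different (and more self-contained) route than the paper. The paper's proof is two lines: for any $S$ with $e_S a=0$ it applies the spectral mapping inclusion of theorem~\ref{thm_poly_spec_calc} to the polynomial $p(x)=e_Sx$ to get $e_S\lambda\in e_S\spec(au)\subseteq\spec(e_Sau)=\spec(0)=\{0\}$, so every idempotent annihilating $a$ annihilates $\lambda$; it then invokes the characterisation of $\overline{a\GenC}$ by annihilating idempotents from \cite{HV_ideals} to conclude $\lambda\in\overline{a\GenC}$. You instead prove the contrapositive at the level of representatives: from $\lambda\notin\overline{a\GenC}$ you manufacture, via the approximants $\mu^{(m)}$ supported on $S_m=\{\eps:\abs{a_\eps}\ge\eps^m\}$ and a diagonal choice of $\eps_m$, a single set $T$ with $e_T\ne 0$, $e_Ta=0$ and $\abs{\lambda_\eps}>\eps^k$ on $T$, so that $e_T(au-\lambda)+e_{\co T}1=-e_T\lambda+e_{\co T}$ is invertible by lemma~\ref{lemma_invertible_wrt_S}(4), and lemma~\ref{lemma_invertible_wrt_S}(2) then makes $au-\lambda$ invertible w.r.t.\ $T$, contradicting $\lambda\in\spec(au)$. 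In effect you re-prove the one direction of the closure characterisation that the paper outsources to \cite{HV_ideals}, and you bypass the spectral mapping theorem entirely, replacing it with the bare idempotent calculus of lemma~\ref{lemma_invertible_wrt_S}; the cost is the explicit diagonal extraction (and the small bookkeeping that strict decrease of $(\eps_m)_m$ is what upgrades the order-$m$ smallness of $a$ on $T$ into $e_Ta=0$), while the benefit is a proof that needs neither the external ideal-theoretic results nor any spectral calculus, and in fact yields the slightly stronger witness that $\lambda$ is invertible w.r.t.\ $T$ rather than merely $e_T\lambda\ne 0$. Both proofs are valid; yours is longer but elementary, the paper's is a slick reduction to known structure theory of $\GenC$.
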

\begin{proof}
Let $S\subseteq (0,1)$ such that $e_S a = 0$. Then $e_S \lambda \in \spec(e_S au) = \spec(0) = \{0\}$. The result follows by \cite[Prop.\ 4.3(1), Thm.\ 5.7(2)]{HV_ideals}.
\end{proof}

\begin{prop}\label{prop_spec_au}
Let $\Alg$ be a faithful $\GenC$-algebra and $u\in\Alg$ with $\spec(u)\ne\emptyset$. Let $a\in\GenC$.
\begin{enumerate}
\item Let $a\GenC$ be closed (i.e., there exists $S\subseteq (0,1)$ such that $a\GenC=e_S\GenC$ \cite[Thm.\ 5.7]{HV_ideals}). Then $a\spec(u) = \spec(au)$.
\item $a\spec(u)\subseteq \spec(au) \subseteq \overline{a\spec(u)}$.
\item If $\spec(u)$ is the intersection of a decreasing sequence of internal subsets of $\GenC$, then $a\spec(u) = \spec(au)$.
\end{enumerate}
\end{prop}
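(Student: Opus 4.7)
My plan is to handle the three parts in sequence, exploiting that the first inclusion $a\spec(u)\subseteq\spec(au)$ is common to (1) and (2). For this first inclusion, I take $\lambda\in\spec(u)$ and suppose, for contradiction, that $a(u-\lambda)v = va(u-\lambda) = e_T$ for some $v\in\Alg$ and $T\subseteq(0,1)$ with $e_T\ne 0$. Since $a\in\GenC$ is central in $\Alg$, these relations rewrite as $(u-\lambda)(av) = (av)(u-\lambda) = e_T$, so $u-\lambda$ is invertible w.r.t.\ $T$, contradicting $\lambda\in\spec(u)$.

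For the reverse inclusion in (1), I use the hypothesis $a\GenC = e_S\GenC$ to choose $c\in\GenC$ with $ac = e_S$; replacing $c$ by $e_S c$, I may assume $c = e_S c$. Given $\mu\in\spec(au)$, Lemma~\ref{lemma_spec_au} together with the closedness hypothesis gives $\mu\in a\GenC\subseteq e_S\GenC$, so $e_S\mu = \mu$. Picking any $\nu\in\spec(u)$ (available since $\spec(u)\ne\emptyset$), I set $\lambda := c\mu + e_{\co S}\nu$ and compute $a\lambda = ac\mu + 0 = e_S\mu = \mu$. To verify $\lambda\in\spec(u)$, I exploit the decomposition $u-\lambda = e_S(u-c\mu) + e_{\co S}(u-\nu)$: supposing $u-\lambda$ were invertible w.r.t.\ some $T$ with $e_T\ne 0$, I split into two cases. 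If $T\subseteq\co S$, multiplying the inverse relation by $e_T$ forces $u-\nu$ invertible w.r.t.\ $T$, contradicting $\nu\in\spec(u)$. If $e_{S\cap T}\ne 0$, I replace $T$ by $S\cap T\subseteq S$ (legitimate by lemma~\ref{lemma_invertible_wrt_S}), obtain $u-c\mu$ invertible w.r.t.\ $T$, then multiply by $a$ and use $(ae_T)c = e_S e_T = e_T$ (so $ae_T$ has exact inverse $c$ w.r.t.\ $T$) to obtain an inverse of $au-\mu$ w.r.t.\ $T$, contradicting $\mu\in\spec(au)$.

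For (2)'s second inclusion I mimic the (1)-construction with approximate inverses. By Lemma~\ref{lemma_spec_au} and \cite[Thm.~5.7]{HV_ideals}, $\overline{a\GenC} = e_S\GenC$ for some $S$ and $\mu = e_S\mu$; density of $a\GenC$ in $e_S\GenC$ (sharp norm) yields, for each $r>0$, some $c_r\in\GenC$ (WLOG $c_r = e_S c_r$) with $\sharpnorm{ac_r - e_S} < r$. Setting $\lambda_r := c_r\mu + e_{\co S}\nu$ gives $\sharpnorm{a\lambda_r - \mu} = \sharpnorm{(ac_r - e_S)\mu} \le r\sharpnorm{\mu}$, so it suffices to show $\lambda_r\in\spec(u)$. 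This is the \emph{main obstacle}: the (B)-case of (1) is unchanged, but the (A)-case now produces $(au-\mu)(e_T w) = ae_T + \eta\mu\cdot e_T w$ with $\sharpnorm{\eta} < r$. Writing $(ac_r)e_T = (1 + \eta)e_T$, proposition~\ref{prop_invertible_in_Banach_algebra}(3) applied in $\GenC$ (valid since $\sharpnorm{e_T\cdot(-\eta e_T)} \le \sharpnorm{\eta} < r < 1$) yields an exact inverse of $(ac_r)e_T$ and hence of $ae_T$ w.r.t.\ $T$; a further geometric-series step absorbs the $\eta\mu\cdot e_Tw$ correction to produce invertibility of $au-\mu$ w.r.t.\ $T$, contradicting $\mu\in\spec(au)$.

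For (3), by (2) applied for each $n\in\N$ I obtain $\lambda_n\in\spec(u)\subseteq U_n$ with $\sharpnorm{\mu - a\lambda_n} \le e^{-n}$. Choosing representatives $U_n = [(U_{n,\eps})_\eps]$ and $(\lambda_{n,\eps})_\eps$ of $\lambda_n$ with $\lambda_{n,\eps}\in U_{n,\eps}$ for $\eps$ small and $|a_\eps\lambda_{n,\eps} - \mu_\eps|\le \eps^n$ for $\eps$ below some threshold $\eps_n$, I splice diagonally: for $\eps\in[\eps_{n+1},\eps_n)$ set $\lambda_\eps := \lambda_{n,\eps}$, with the $\eps_n\downarrow 0$ chosen rapidly enough to preserve moderateness of $(\lambda_\eps)_\eps$. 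Then $\lambda := [(\lambda_\eps)_\eps]$ lies in every $U_n$ by the decreasing property, so $\lambda\in\bigcap_n U_n = \spec(u)$, and $(a_\eps\lambda_\eps - \mu_\eps)_\eps$ is negligible by construction, giving $a\lambda = \mu$ exactly; together with (2), this yields the equality.
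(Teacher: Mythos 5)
Your proof of the inclusion $a\spec(u)\subseteq\spec(au)$ and of part (1) is essentially correct, and (1) is done by a different route than the paper: the paper first treats $a=e_S$ via interleaving and then handles general $a$ through $e_S\lambda=ab\lambda\in ab\spec(au)\subseteq a\spec(abu)$, whereas you directly build a preimage $\lambda=c\mu+e_{\co S}\nu$ and rule out invertibility of $u-\lambda$ w.r.t.\ any $T$ by splitting on $S\cap T$ (only cosmetic fix needed: the alternative to $e_{S\cap T}\ne 0$ is $e_{S\cap T}=0$, in which case replace $T$ by $T\cap\co S$, rather than literally ``$T\subseteq\co S$'').

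Part (2), however, has a genuine gap. Your opening claim that $\overline{a\GenC}=e_S\GenC$ for some $S$ is false in general: if $e_S\in\overline{a\GenC}$, then approximating $e_S$ by $ay$ with $y$ moderate forces $|a_\eps|\ge\eps^N$ for small $\eps\in S$, so $a$ is invertible w.r.t.\ $S$, and $a\in e_S\GenC$ gives $a=e_Sa$; together these already yield $a\GenC=e_S\GenC$. So the closure of $a\GenC$ is generated by an idempotent only when $a\GenC$ is closed, i.e.\ exactly under the hypothesis of part (1) (e.g.\ $a$ with $|a_\eps|=\eps^n$ on disjoint sets $S_n$ accumulating at $0$ has $\overline{a\GenC}$ not of this form). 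This is precisely why (2) is stated with a closure, and why the paper instead uses the increasing level sets $S_n$ of $a$, for which $ae_{S_n}\GenC=e_{S_n}\GenC$, applies part (1) to $ae_{S_n}$, interleaves with an arbitrary $\mu\in\spec(u)$ via lemma \ref{lemma_spec_resolv_interleave}, and gets $e_{S_n}\lambda\to\lambda$ from lemma \ref{lemma_spec_au} and \cite[Thm.~5.7(4)]{HV_ideals}. Moreover, even granting a dense approximation $ac_r\approx e_S$, your ``geometric-series'' repair does not close: the term you must absorb is $\eta\mu wd$, where $w$ is the inverse of $u-c_r\mu$ w.r.t.\ $T$ and $d$ involves $c_r$; neither is controlled (both typically blow up as $r\to 0$), so $\sharpnorm{\eta}<r$ does not make this term small in pseudonorm, and a Neumann series in $\Alg$ would in any case require $\Alg$ to be a Banach $\GenC$-algebra, which is not assumed in this proposition. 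Since your (3) draws its approximants $\lambda_n$ from (2), it inherits this gap; the diagonal splicing itself is in the spirit of the paper's proof of (3), but to make it rigorous you also need sharply bounded and nested representatives of the internal sets $U_n$ (the paper secures these via theorem \ref{thm_bounded_spectrum} and \cite[2.9]{OV_internal}) before the spliced net is moderate and lies in every $U_n$.
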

\begin{proof}
By theorem \ref{thm_poly_spec_calc}, $a\spec(u)\subseteq \spec(au)$, $\forall a\in\GenC$.\\
(1) Let first $a= e_S$. Let $\lambda\in\spec(e_S u)$. Let $\mu\in\spec(u)$ arbitrary. As $e_S u- e_S\lambda$, $e_{\co S}u- e_{\co S}\mu$ are strictly non-invertible, also $u - (e_S\lambda + e_{\co S}\mu)$ is strictly non-invertible by lemma \ref{lemma_spec_resolv_interleave}. Hence $e_S\lambda\in e_S\spec(u)$. As $e_{\co S}\lambda\in\spec(e_{\co S}e_S u)=\spec(0) = \{0\}$, $\lambda = e_S\lambda\in e_S\spec(u)$.\\
Let now $a\GenC=e_S\GenC$. Then there exists $b\in\GenC$ such that $ab=e_S$. Let $\lambda\in\spec(au)$. Then $e_S \lambda = a b \lambda\in a b \spec(a u) \subseteq a \spec(a b u)= a \spec(e_S u) = a e_S \spec(u) = a \spec(u)$. As $e_{\co S}\lambda\in\spec(e_{\co S}a u)=\spec(0) = \{0\}$, $\lambda = e_S\lambda\in a\spec(u)$.\\
(2) Let $\lambda\in\spec(au)$. Let $(S_n)_{n\in\N}$ be a sequence of level sets for $a$ \cite[\S 5]{HV_ideals}. Then $a e_{S_n} \GenC = e_{S_n}\GenC$, for each $n$. By part 1, $e_{S_n} \lambda\in \spec(a e_{S_n} u) = a e_{S_n}\spec(u)$. Let $\mu\in\spec(u)$ arbitrary. Then $e_{S_n}\lambda + a e_{\co S_n}\mu\in a (e_{S_n}\spec(u) + e_{\co S_n}\spec(u)\subseteq a \spec(u)$ by lemma \ref{lemma_spec_resolv_interleave}. By lemma \ref{lemma_spec_au} and \cite[Thm.\ 5.7(4)]{HV_ideals}, $\lim_{n\to\infty} (e_{S_n}\lambda + a e_{\co S_n}\mu)=\lambda$. Hence $\lambda\in \overline{a\spec(u)}$.\\
(3) By part 2, it is sufficient to show that $a\spec(u)$ is closed. Let $\spec(u) = \bigcap_{n\in\N} C_n$, where $C_n=[(C_{n,\eps})_\eps]$ are internal and $(C_n)_n$ is decreasing. By theorem \ref{thm_bounded_spectrum}, there exists $N\in\N$ such that $\abs{\lambda}\le\alpha^{-N}$, for each $\lambda\in\spec(u)$. Hence $\spec(u) = \bigcap_{n\in\N} [(C_{n,\eps}\cap B(0,(1+1/n)\eps^{-N}))_\eps]$, and we may assume that $C_n$ have sharply bounded representatives. By \cite[2.9]{OV_internal}, we may also assume that $C_{n+1,\eps}\subseteq C_{n,\eps}$, for each $n,\eps$. We show that then $a\spec(u) = \bigcap_{n\in\N} [(a_\eps C_{n,\eps})_\eps]$, and thus $a\spec(u)$ is closed by \cite[2.3]{OV_internal}.\\
Let $a=[(a_\eps)_\eps]$. If $\lambda\in a\spec(u)$, then $\lambda \in a C_n\subseteq [(a_\eps C_{n,\eps})_\eps]$, for each $n$. Conversely, if for each $n$, $\lambda = [(\lambda_\eps)_\eps]= [(a_\eps c_{n,\eps})_\eps]$, with $c_{n,\eps}\in C_{n,\eps}$, then there exist $\eps_n$ (w.l.o.g.\ decreasingly tending to $0$) such that $\abs{\lambda_\eps - a_\eps c_{n,\eps}}\le \eps^n$, if $\eps\le\eps_n$. Let $c_\eps := c_{n,\eps}$, if $\eps_{n+1}<\eps\le \eps_n$, for each $n$. As $C_{n,\eps}$ are sharply bounded, $(c_\eps)_\eps\in\Mod_\C$ and $\lambda = a c$, with $c := [(c_\eps)_\eps]\in\GenC$. As $C_{n+1,\eps}\subseteq C_{n,\eps}$, for each $n$, $\eps$, we have $c\in \bigcap_{n\in\N} C_n=\spec(u)$.
\end{proof}

\begin{thm}\label{thm_holomorphic_spec_calc}
Let $(\Alg,\pseudonorm{.})$ be a faithful Banach $\GenC$-algebra.
Let $f(z)=\sum_{n=0}^\infty a_n z^n$ with $a_n\in\GenC$ and with $1/R:=\limsup_{n\to\infty}\sqrt[n]{\sharpnorm{a_n}}<+\infty$. Then $f(\spec(u))\subseteq\spec(f(u))$ and $f(\functspec(u))=\functspec(f(u))$, $\forall u\in\Alg$ with $\pseudonorm{u}<R$.
\end{thm}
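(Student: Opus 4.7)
The plan is to mimic Theorem \ref{thm_poly_spec_calc}(1), replacing a polynomial by a convergent power series. The factorization $u^n-\lambda^n=(u-\lambda)\sum_{k=0}^{n-1}\lambda^k u^{n-1-k}$ will let me pull $u-\lambda$ out of $f(u)-f(\lambda)$ as both a left and a right factor, after which Lemma \ref{lemma_invertible_wrt_S}(3) closes the spectrum inclusion. The first step is to verify convergence. Submultiplicativity gives $\pseudonorm{a_n u^n}\le\sharpnorm{a_n}\pseudonorm u^n$, and $\limsup_n\sqrt[n]{\sharpnorm{a_n}\pseudonorm u^n}=\pseudonorm u/R<1$ implies $a_n u^n\to 0$ in $\Alg$. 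Lemma \ref{lemma_convergence_Banach_module}(2) thus makes $f(u):=\sum_{n=0}^\infty a_n u^n\in\Alg$ well-defined. For $\lambda\in\spec(u)$, Theorem \ref{thm_bounded_spectrum} gives $\sharpnorm\lambda\le\pseudonorm u<R$, so $f(\lambda)\in\GenC$ is defined by the same argument.

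Next, fix $\lambda\in\spec(u)$ and set $v_n:=\sum_{k=0}^{n-1}\lambda^k u^{n-1-k}$ for $n\ge 1$, so that $u^n-\lambda^n=(u-\lambda)v_n=v_n(u-\lambda)$. The ultrametric inequality combined with submultiplicativity and the bound $\sharpnorm\lambda\le\pseudonorm u$ yields $\pseudonorm{v_n}\le\pseudonorm u^{n-1}$, whence $\pseudonorm{a_n v_n}\le\sharpnorm{a_n}\pseudonorm u^{n-1}\to 0$. Lemma \ref{lemma_convergence_Banach_module}(2) then produces $w:=\sum_{n=1}^\infty a_n v_n\in\Alg$, and continuity of left and right multiplication by $u-\lambda$ gives
\[
f(u)-f(\lambda)=\sum_{n=1}^\infty a_n(u^n-\lambda^n)=(u-\lambda)w=w(u-\lambda).
\]
If $f(\lambda)\notin\spec(f(u))$, some $S\subseteq(0,1)$ with $e_S\ne 0$ would make $f(u)-f(\lambda)$ invertible w.r.t.\ $S$; since this element equals both $(u-\lambda)w$ and $w(u-\lambda)$, Lemma \ref{lemma_invertible_wrt_S}(3) would force $u-\lambda$ to be invertible w.r.t.\ $S$, contradicting $\lambda\in\spec(u)$. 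Hence $f(\spec(u))\subseteq\spec(f(u))$.

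The functional spectrum equality is essentially formal. For $m\in\Smlf\Alg$, Proposition \ref{prop_cnt_mlf_in_Banach_algebra} gives continuity, and together with $\GenC$-linearity and $m(1)=1$ this forces $m(a_n u^n)=a_n m(u)^n$; swapping $m$ with the convergent series then yields $m(f(u))=f(m(u))$, from which both inclusions $f(\functspec(u))\subseteq\functspec(f(u))$ and $\functspec(f(u))\subseteq f(\functspec(u))$ follow at once. The only genuine content in the whole argument is establishing the convergence of the series defining $w$ and its commutation with left/right multiplication by $u-\lambda$; once these are granted, the ultrametric structure and the algebraic machinery already available in Lemma \ref{lemma_invertible_wrt_S} make the rest very clean.
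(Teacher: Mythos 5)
Your proposal is correct and follows essentially the same route as the paper: convergence via Theorem \ref{thm_bounded_spectrum} and Lemma \ref{lemma_convergence_Banach_module}, the factorization $u^n-\lambda^n=(u-\lambda)\sum_{k}\lambda^k u^{n-1-k}$ summed into a two-sided factor $w$ with $f(u)-f(\lambda)=(u-\lambda)w=w(u-\lambda)$, Lemma \ref{lemma_invertible_wrt_S}(3) for the spectral inclusion, and continuity of $m\in\Smlf\Alg$ (Proposition \ref{prop_cnt_mlf_in_Banach_algebra}) for $m(f(u))=f(m(u))$. No gaps worth noting.
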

\begin{proof}
Let $u\in\Alg$ with $\pseudonorm{u}< R$. Let $\lambda\in\spec(u)$. By theorem \ref{thm_bounded_spectrum}, also $\sharpnorm\lambda< R$. So $\sum_n \pseudonorm{a_n u^n}\le\sum_n\sharpnorm{a_n}\pseudonorm{u}^n<+\infty$, and thus by lemma \ref{lemma_convergence_Banach_module}, $f(u)=\sum_n a_n u^n$ converges in $\Alg$. Similarly, $f(\lambda)=\sum_n a_n \lambda^n$ converges in $\GenC$. Then $f(u)-f(\lambda) = \sum_{n= 0}^\infty a_n(u^n-\lambda^n)=\sum_{n=0}^\infty a_n(u-\lambda) (u^{n-1}+\lambda u^{n-2}+\cdots + \lambda^{n-1})$. As $\pseudonorm{\lambda^{n-j-1}u^{j}} \le\sharpnorm{\lambda}^{n-j-1}\pseudonorm{u}^j \le\pseudonorm{u}^{n-1}$, $\forall j$, also $\sum_n \pseudonorm{a_n(u^{n-1}+\lambda u^{n-2}+\cdots + \lambda^{n-1})}\le \sum_n \sharpnorm{a_n} \pseudonorm{u}^{n-1}$ $<\infty$. Hence there exists $v\in\Alg$ such that $f(u)-f(\lambda) = (u-\lambda)v= v(u-\lambda)$. As in theorem \ref{thm_poly_spec_calc}, the assumption that $f(\lambda)\notin\spec(f(u))$ then implies that $\lambda\notin\spec(u)$, a contradiction. So $f(\spec(u))\subseteq\spec(f(u))$.\\
Let $m\in\Smlf{\Alg}$. By proposition \ref{prop_cnt_mlf_in_Banach_algebra}, $f(m(u))=\sum_n a_n m(u)^n$ converges in $\GenC$ and since $m$ is continuous, $f(m(u)) = m(f(u))$.
\end{proof}

\begin{df}
Let $\Alg$ be a faithful $\GenC$-algebra.
The \defstyle{spectral radius} of $u\in\Alg$ is
\[\specrad_\Alg(u)=\sup\{\sharpnorm{\lambda}: \lambda\in \spec(u)\}.\]
If the algebra is clear from the context, we simply write $\specrad(u)$. So by definition, $\specrad(u)\in [0,\infty]\cup\{-\infty\}$. By theorem \ref{thm_bounded_spectrum}, if $(\Alg,\pseudonorm{.})$ is a Banach algebra, $\specrad(u)\le \pseudonorm{u}$.
\end{df}

\begin{prop}\label{prop_spectral_radius_bound}
Let $(\Alg,\pseudonorm{.})$ be a faithful Banach $\GenC$-algebra. Then $\specrad(u)\le\lim_{n\to\infty} \sqrt[n]{\pseudonorm{u^n}}$.
\end{prop}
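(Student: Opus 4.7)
The plan is to reduce to the already-proven facts that spectra are bounded by the pseudo-norm (theorem \ref{thm_bounded_spectrum}) and that polynomials map spectra into spectra (theorem \ref{thm_poly_spec_calc}). Fix $u\in\Alg$ and let $\lambda\in\spec(u)$. Applying the polynomial $p(z)=z^n$ yields $\lambda^n = p(\lambda)\in p(\spec(u))\subseteq \spec(u^n)$, and then theorem \ref{thm_bounded_spectrum} gives $\sharpnorm{\lambda^n}\le \pseudonorm{u^n}$ for every $n\in\N$.

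Next I would verify the multiplicativity identity $\sharpnorm{\lambda^n}=\sharpnorm{\lambda}^n$. Picking any representative $(\lambda_\eps)_\eps$ of $\lambda$, one has $\abs{\lambda_\eps^n}=\abs{\lambda_\eps}^n$ for each $\eps$, and this immediately gives $\val(\lambda^n)=n\,\val(\lambda)$, hence $\sharpnorm{\lambda^n}=e^{-n\val(\lambda)}=\sharpnorm{\lambda}^n$. Taking $n$-th roots in the inequality of the previous paragraph gives $\sharpnorm{\lambda}\le \pseudonorm{u^n}^{1/n}$ for all $n$ and all $\lambda\in\spec(u)$, so
\[
\specrad(u)=\sup_{\lambda\in\spec(u)}\sharpnorm{\lambda}\le \inf_{n\in\N} \pseudonorm{u^n}^{1/n}.
\]

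Finally, I would justify that $\lim_{n\to\infty} \pseudonorm{u^n}^{1/n}$ exists and equals this infimum by a Fekete-type argument, using submultiplicativity $\pseudonorm{u^{m+n}}\le \pseudonorm{u^m}\pseudonorm{u^n}$. Setting $a_n=\log\pseudonorm{u^n}\in[-\infty,+\infty)$, subadditivity $a_{m+n}\le a_m+a_n$ together with the standard Fekete lemma gives $\lim_n a_n/n = \inf_n a_n/n$, whence $\lim_n \pseudonorm{u^n}^{1/n}=\inf_n \pseudonorm{u^n}^{1/n}$; the degenerate case in which $\pseudonorm{u^n}=0$ for some $n$ (so the limit is trivially $0$) is handled separately by noting that then $\pseudonorm{u^m}=0$ for all $m\ge n$ by submultiplicativity, and the inequality $\specrad(u)\le 0$ follows directly from the step above for that particular $n$. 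The main conceptual step is simply the spectral mapping $\lambda^n\in\spec(u^n)$; no obstacle beyond that, as the rest is book-keeping of the sharp norm and a standard subadditive-limit argument.
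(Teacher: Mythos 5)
Your proposal is correct and follows essentially the same route as the paper: apply the spectral mapping theorem to $p(z)=z^n$ to get $\lambda^n\in\spec(u^n)$, bound $\sharpnorm{\lambda}^n=\sharpnorm{\lambda^n}\le\pseudonorm{u^n}$ via theorem \ref{thm_bounded_spectrum}, and take $n$-th roots. The only difference is that the paper cites \cite[Ch.~VI, Problem 11]{ReedSimon} for the existence of $\lim_n\sqrt[n]{\pseudonorm{u^n}}$, whereas you spell out the Fekete subadditivity argument (including the degenerate case $\pseudonorm{u^n}=0$), which is just a more explicit version of the same step.
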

\begin{proof}
Since $\pseudonorm{.}$ is submultiplicative, $\lim_{n\to\infty}\sqrt[n]{\pseudonorm{u^n}}$ exists (as in \cite[Ch.~VI, Problem 11]{ReedSimon}). If $\lambda\in\spec(u)$, then $\lambda^n\in\spec(u)^n\subseteq \spec(u^n)$ by theorem \ref{thm_poly_spec_calc}. By theorem \ref{thm_bounded_spectrum}, $\sharpnorm{\lambda}^n=\sharpnorm{\lambda^n}\le \pseudonorm{u^n}$. So $\specrad(u)\le\sqrt[n]{\pseudonorm{u^n}}$, $\forall n\in\N$.
\end{proof}

\section{Colombeau $*$-algebras}
\begin{df}
A $\GenC$-algebra $\Alg$ is called a \defstyle{Colombeau $*$-algebra} if there exists a map $*$: $\Alg\to\Alg$ (called \defstyle{involution}) satisfying
\begin{enumerate}
\item $(\forall u\in\Alg)$ $(u^{**}=u)$
\item $(\forall u, v\in\Alg)$ $((u+v)^*=u^* + v^*)$
\item $(\forall u\in\Alg)$ $(\forall \lambda\in\GenC)$ $((\lambda u)^*=\overline\lambda u^*)$ (where $\bar{.}$ denotes complex conjugation in $\GenC$)
\item $(\forall u, v\in\Alg)$ $((uv)^*=v^*u^*)$.
\end{enumerate}
As for $\C$-algebras, it follows that $1^*=1$ (e.g., \cite[\S 4.1]{Kadison}).
If $\Alg$ is faithful, then $\spec(u^*)=\{\overline \lambda: \lambda \in \spec(u)\}$, $\forall u\in\Alg$.\\
$u\in\Alg$ is called \defstyle{self-adjoint} if $u^*=u$; \defstyle{unitary} if $u u^*=u^* u =1$; \defstyle{normal} if $uu^*=u^*u$.\\
A $*$-algebra $\Alg$ is called \defstyle{symmetric} if $1+u u^*$ is invertible for each $u\in \Alg$ (cf.~\cite[2.24]{Husain}).
\end{df}

\begin{lemma}\label{lemma_real}
Let $\lambda\in\GenC$ such that $\lambda^2\in\GenR$ and $\lambda^2 \ge 0$. Then $\lambda\in\GenR$.
\end{lemma}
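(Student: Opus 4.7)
The plan is to decompose $\lambda$ into its real and imaginary parts and exploit the two hypotheses to force the imaginary part to vanish. Fix a representative $(\lambda_\eps)_\eps$ of $\lambda$ and write $\lambda_\eps = a_\eps + i b_\eps$ with $a_\eps, b_\eps \in \R$. Setting $a := [(a_\eps)_\eps]$ and $b := [(b_\eps)_\eps]$ in $\GenR$, we have $\lambda^2 = (a^2 - b^2) + 2iab$. Hence the hypothesis $\lambda^2 \in \GenR$ is equivalent to $ab = 0$ in $\GenR$ (i.e.\ $(a_\eps b_\eps)_\eps \in \Null_\R$), and the hypothesis $\lambda^2 \ge 0$ is equivalent to $a^2 - b^2 \ge 0$ in $\GenR$. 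The goal reduces to showing that $b = 0$, from which $\lambda = a \in \GenR$ follows.

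To exploit $ab = 0$, I would invoke the zero-divisor splitting property \cite[Lemma~2.3]{HV_ideals} already used in the proof of lemma \ref{lemma_polynomial_eqn}: there exists $S \subseteq (0,1)$ such that $a e_S = 0$ and $b e_{\co S} = 0$. Since $e_S$ admits a nonnegative representative, multiplying the inequality $a^2 - b^2 \ge 0$ by $e_S$ preserves it, yielding $a^2 e_S - b^2 e_S \ge 0$. But $a e_S = 0$ forces $a^2 e_S = 0$, so $-b^2 e_S \ge 0$. On the other hand $b^2 e_S \ge 0$, being a product of squares by an idempotent; combining these gives $b^2 e_S = 0$. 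Together with $b^2 e_{\co S} = 0$ (which follows from $b e_{\co S}=0$), we conclude $b^2 = b^2 e_S + b^2 e_{\co S} = 0$ in $\GenR$, and hence $b = 0$.

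No step is seriously delicate. The only points requiring verification are the elementary facts that ``$\ge 0$'' in $\GenR$ is preserved under multiplication by a nonnegative element (immediate from the representative characterization), and that $b^2 = 0$ in $\GenR$ implies $b = 0$ (from $b_\eps^2 \le \eps^m$ for every $m$ one gets $\abs{b_\eps}\le \eps^{m/2}$, so $(b_\eps)_\eps \in \Null_\R$). Once these are noted, the lemma reduces to the splitting property of zero divisors in $\GenR$, which is the main conceptual input.
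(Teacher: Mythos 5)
Your argument is correct, but it takes a genuinely different middle step from the paper. You reduce to $ab=0$, $a^2-b^2\ge 0$ exactly as the paper does, but then you invoke the zero-divisor splitting property of \cite[Lemma~2.3]{HV_ideals} (the same input used in the proof of lemma \ref{lemma_polynomial_eqn}) to produce an idempotent $e_S$ with $ae_S=0$ and $be_{\co S}=0$, and you kill $b^2$ piecewise on $S$ and $\co S$. The paper instead finishes with a one-line inequality manipulation that needs no splitting: multiplying $a^2\ge b^2$ by $b^2\ge 0$ gives $0\le b^4\le a^2b^2=(ab)^2=0$, hence $b^4=0$, and $b=0$ because $0$ is the only nilpotent element of $\GenR$. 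Both routes rest on the same elementary facts (order is preserved under multiplication by nonnegative elements of $\GenR$, and $b^{2k}=0$ forces $b=0$ via taking representatives and square roots of the negligibility estimates); the paper's version is shorter and self-contained, avoiding the external lemma, while yours makes the underlying structural picture explicit -- on each piece of the interleaving one of the two factors vanishes -- at the cost of importing the splitting result. Your verification points (positivity of $e_S$, $a e_S=0\implies a^2e_S=0$ since $a^2e_S=(ae_S)^2$, and $x\ge 0$, $-x\ge 0\implies x=0$ in $\GenR$) are all sound, so there is no gap; it is simply a slightly heavier path to the same conclusion.
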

\begin{proof}
As $\lambda\in\GenC$, $\lambda= a+bi$, with $a$, $b$ $\in\GenR$, so $\lambda^2=a^2-b^2+2abi$. By assumption, $ab=0$ and $a^2\ge b^2$. Hence $0\le b^4\le a^2 b^2 = 0$, and $b=0$ since $0$ is the only nilpotent element in $\GenR$.
\end{proof}

\begin{lemma}\label{lemma_hermitian_mlf}
For a multiplicative $\GenC$-linear functional $m$ on a Colombeau $*$-algebra $\Alg$, the following are equivalent:
\begin{enumerate}
\item $(\forall u\in\Alg)$ $(m(u^*)=\overline{m(u)})$
\item $(\forall u\in\Alg, u$ self-adjoint$)$ $(m(u)\in\GenR)$
\item $(\forall u\in\Alg)$ $(m(uu^*)\ge 0)$.
\end{enumerate}
\end{lemma}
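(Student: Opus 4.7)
The plan is to prove a cyclic chain $(1)\Rightarrow(2)\Rightarrow(1)$ together with $(1)\Rightarrow(3)\Rightarrow(2)$, using Lemma \ref{lemma_real} for the only non-trivial implication.

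For $(1)\Rightarrow(2)$: if $u=u^*$, then $m(u)=m(u^*)=\overline{m(u)}$, so $m(u)\in\GenR$. For $(2)\Rightarrow(1)$: given any $u\in\Alg$, write $u=a+ib$ with $a=\frac{1}{2}(u+u^*)$ and $b=\frac{1}{2i}(u-u^*)$, both of which are self-adjoint by the axioms of the involution (here we use that $2\in\GenC$ is invertible). Then $u^*=a-ib$, and applying $m$ and using $m(a),m(b)\in\GenR$ yields $m(u^*)=m(a)-im(b)=\overline{m(a)+im(b)}=\overline{m(u)}$. For $(1)\Rightarrow(3)$: $m(uu^*)=m(u)m(u^*)=m(u)\overline{m(u)}=\abs{m(u)}^2\ge 0$.

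The key implication is $(3)\Rightarrow(2)$. Let $u\in\Alg$ be self-adjoint. Then $uu^*=u^2$, so by assumption $m(u)^2=m(u^2)=m(uu^*)\ge 0$. In particular, $m(u)^2\in\GenR$ and is nonnegative, so Lemma \ref{lemma_real} applies and gives $m(u)\in\GenR$.

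The only real subtlety is $(3)\Rightarrow(2)$; everything else is a direct algebraic manipulation. The work is already done in Lemma \ref{lemma_real}, which rules out nonzero imaginary parts by the fact that $0$ is the only nilpotent element of $\GenR$, so no further obstacle arises.
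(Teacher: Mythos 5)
Your proof is correct and follows essentially the same route as the paper: the implication $(1)\Rightarrow(3)$ is the same one-liner, $(3)\Rightarrow(2)$ is exactly the paper's application of lemma \ref{lemma_real} to $m(u)^2=m(uu^*)\ge 0$, and your decomposition $u=a+ib$ with $a=\frac{1}{2}(u+u^*)$, $b=\frac{1}{2i}(u-u^*)$ is the paper's $v+iw$ in $(2)\Rightarrow(1)$. The only difference is that you additionally prove the (trivial) direct implication $(1)\Rightarrow(2)$, whereas the paper closes the cycle $(1)\Rightarrow(3)\Rightarrow(2)\Rightarrow(1)$; both are logically complete.
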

\begin{proof}
$(1)\implies(3)$: $m(uu^*)=\abs{m(u)}^2\ge 0$.\\
$(3)\implies(2)$: if $u=u^*$, then $m(u)^2=m(uu^*)\ge 0$, so $m(u)\in\GenR$ by lemma \ref{lemma_real}.\\
$(2)\implies(1)$: for $u\in\Alg$, $u=v+ iw$, where $v=(u + u^*)/2$ and $w=i(u^*-u)/2$ are self-adjoint. So $m(v)$, $m(w)\in\GenR$. Hence $m(u^*) = m(v - iw)=m(v)-im(w)=\overline{m(v) + im(w)}=\overline{m(u)}$.
\end{proof}

\begin{df}
If a multiplicative $\GenC$-linear functional $m$ satisfies one of the conditions of the previous lemma, then $m$ is called \defstyle{hermitian}.
\end{df}

\begin{thm}\label{thm_sp_sub_GenR_in_symm_algebra}
Let $\Alg$ be a faithful symmetric Colombeau $*$-algebra and let $u\in\Alg$ be self-adjoint. Then $\spec(u)\subseteq\GenR$.
\end{thm}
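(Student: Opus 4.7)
The plan is to prove the contrapositive. Writing $\lambda=\alpha+i\beta$ with $\alpha,\beta\in\GenR$, I will show that if $\beta\ne 0$ then $u-\lambda$ is invertible with respect to some $S\subseteq(0,1)$ with $e_S\ne 0$, contradicting $\lambda\in\spec(u)$; hence every spectral value must be real.

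Since $\beta\ne 0$ in $\GenR$, the remark following Lemma \ref{lemma_invertible_wrt_S} provides $S\subseteq(0,1)$ with $e_S\ne 0$ such that $\beta$ is invertible w.r.t.\ $S$, and Lemma \ref{lemma_invertible_wrt_S}(4) lets me choose a real $b\in\GenR$ with $\beta b=e_S$. The idea is then to mimic the classical symmetric $*$-algebra trick by setting $w:=b(u-\alpha)$. Since $b\in\GenR$ and both $u$ and $\alpha$ are self-adjoint, $w=w^*$, so the symmetry hypothesis applied to $w$ gives that $1+w^2=1+ww^*$ is globally invertible; call its inverse $q$.

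The key algebraic observation is the factorization $b(u-\lambda)=w-ie_S$ together with
\[
(w-ie_S)(w+ie_S)=w^2+e_S=(w+ie_S)(w-ie_S),
\]
which uses only that $e_S$ is a central idempotent. A direct check shows that $e_Sq$ is a two-sided inverse of $w^2+e_S$ with respect to $S$ (because $q$ commutes with $w$ and $e_S(w^2+1)q=e_S$). Composing with the factorization yields left and right inverses of $w-ie_S$ w.r.t.\ $S$, whence Lemma \ref{lemma_invertible_wrt_S}(1) turns these into a genuine inverse w.r.t.\ $S$. Finally, $b\in\GenC$ is central so $b(u-\lambda)=(u-\lambda)b$, and Lemma \ref{lemma_invertible_wrt_S}(3) applied with $u_1=b$, $u_2=u-\lambda$ (noting that $b$ itself is invertible w.r.t.\ $S$ with inverse $\beta$) transfers invertibility w.r.t.\ $S$ from $b(u-\lambda)$ to $u-\lambda$, contradicting $\lambda\in\spec(u)$.

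The main obstacle is purely bookkeeping inside the partial-invertibility calculus: because $\beta$ need not be globally invertible in $\GenR$, one cannot literally divide by $\beta$ and reduce to the scalar $i$ as in the classical Shirali–Ford argument. The remedy is to drag $e_S$ through every step, replacing $i$ by $ie_S$, and to recognise that the globally invertible element $1+w^2$ supplied by the symmetry axiom automatically produces an inverse of $w^2+e_S$ w.r.t.\ $S$ via multiplication by $e_S$.
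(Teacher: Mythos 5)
Your argument is correct and is essentially the paper's own proof: you pass to a set $S$ on which the imaginary part $\beta$ is invertible, apply the symmetry axiom to the self-adjoint element $w=b(u-\alpha)$ (the paper's $c(u-a)$), use the factorization $b^{2}(u-\lambda)(u-\bar\lambda)=w^{2}+e_S$ to produce an inverse of $b(u-\lambda)$ w.r.t.\ $S$, and transfer invertibility back to $u-\lambda$ via lemma \ref{lemma_invertible_wrt_S}. The only deviation is bookkeeping: you carry $ie_S$ and invoke lemma \ref{lemma_invertible_wrt_S}(1) for the left/right inverses, where the paper simply premultiplies $(u-\lambda)(u-\bar\lambda)$ by $e_S c^{2}$.
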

\begin{proof}
Let $u\in\Alg$ be self-adjoint and $\lambda\in\GenC$. Write $\lambda = a+ bi$ with $a$, $b$ $\in\GenR$. Suppose that $b\ne 0$. Then there exists $S\subseteq (0,1)$ with $e_S\ne 0$ such that $b$ is invertible w.r.t.\ $S$, i.e., there exists $c\in\GenR$ such that $bc=e_S$. Then
\[
e_S c^2 (u-\lambda )(u-\bar\lambda )=e_S c^2[(u-a)^2+b^2]=e_S[c^2(u-a)^2 + 1].
\]
As $u=u^*$, also $c(u-a)=(c(u-a))^*$, so $c^2(u-a)^2+1$ is invertible. Since $u-\lambda$ and $u-\bar \lambda$ commute, $u-\lambda$ is invertible w.r.t.\ $S$ by lemma \ref{lemma_invertible_wrt_S}. Hence $\lambda\notin\spec(u)$.
\end{proof}

\begin{cor}
Let $\Alg$ be a faithful symmetric Colombeau $*$-algebra and $m\in\Hom(\Alg,\GenC)$. Then $m$ is hermitian.
\end{cor}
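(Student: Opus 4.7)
The plan is to verify condition (2) of Lemma \ref{lemma_hermitian_mlf}, namely that $m(u)\in\GenR$ for every self-adjoint $u\in\Alg$, and then conclude via that lemma that $m$ is hermitian.

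First I would observe that, since $\Smlf\Alg=\Hom(\Alg,\GenC)$ by the definition of the macro, the functional $m$ lies in $\Smlf\Alg$. Hence for any $u\in\Alg$ we have $m(u)\in\functspec(u)$ directly from the definition of the functional spectrum. Lemma \ref{lemma_fsp_sub_sp} then gives the inclusion $\functspec(u)\subseteq\spec(u)$, so $m(u)\in\spec(u)$ for every $u\in\Alg$.

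Next I would specialize to self-adjoint elements. If $u\in\Alg$ satisfies $u^*=u$, then Theorem \ref{thm_sp_sub_GenR_in_symm_algebra} (which is available because $\Alg$ is faithful and symmetric) yields $\spec(u)\subseteq\GenR$. Combining this with the previous step gives $m(u)\in\GenR$ for every self-adjoint $u$, which is exactly condition (2) of Lemma \ref{lemma_hermitian_mlf}. By the equivalence established in that lemma, $m$ is hermitian.

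There is essentially no obstacle here: once Theorem \ref{thm_sp_sub_GenR_in_symm_algebra} is in hand, the corollary is an immediate three-line chase through the inclusions $m(u)\in\functspec(u)\subseteq\spec(u)\subseteq\GenR$ followed by an invocation of the equivalence $(2)\iff(1)$ in Lemma \ref{lemma_hermitian_mlf}. The only point to double-check is that $m$ really qualifies as an element of $\Smlf\Alg$ so that the functional spectrum interpretation applies, but this is built into the definition used in the paper.
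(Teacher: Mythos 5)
Your argument is correct and is essentially identical to the paper's own proof: both verify condition (2) of Lemma \ref{lemma_hermitian_mlf} via the chain $m(u)\in\functspec(u)\subseteq\spec(u)\subseteq\GenR$ for self-adjoint $u$, using Lemma \ref{lemma_fsp_sub_sp} and Theorem \ref{thm_sp_sub_GenR_in_symm_algebra}. Your observation that $\Hom(\Alg,\GenC)=\Smlf\Alg$ is indeed how the paper's notation is set up, so no gap there.
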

\begin{proof}
Let $u\in\Alg$ be self-adjoint. By lemma \ref{lemma_fsp_sub_sp} and theorem \ref{thm_sp_sub_GenR_in_symm_algebra}, $\functspec(u)\subseteq\spec(u)\subseteq\GenR$. By lemma \ref{lemma_hermitian_mlf}, this means that each $m\in\Hom(\Alg,\GenC)$ is hermitian.
\end{proof}

\section{Colombeau $C^*$-algebras}
\begin{df}
A \defstyle{Colombeau $C^*$-algebra} $(\Alg,\pseudonorm{.})$ is a Banach $\GenC$-algebra that is also a Colombeau $*$-algebra and for which
\[\pseudonorm{u^* u}=\pseudonorm{u}^2,\quad\forall u\in\Alg.\]
\end{df}
As for classical $C^*$-algebras, this implies that $\pseudonorm{u^*}=\pseudonorm u$, $\forall u\in\Alg$; hence $*$: $\Alg\to\Alg$ is continuous and the set of self-adjoint elements of $\Alg$ is closed.

\begin{prop}\label{prop_Cstar_elementary}
Let $(\Alg,\pseudonorm{.})$ be a Colombeau $C^*$-algebra.
\begin{enumerate}
\item If $u\in\Alg$ is normal, then $\pseudonorm{u^n}=\pseudonorm{u}^n$, for each $n\in\N$.
\item If $u\in\Alg$ is unitary, then $\pseudonorm{u}=1$ and $\sharpnorm{\lambda}=1$, $\forall\lambda\in\spec(u)$.
\end{enumerate}
\end{prop}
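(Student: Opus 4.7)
For part (1), I would follow the classical C*-algebra strategy, adapted to the ultra-pseudo-norm. The plan is: first establish the identity for self-adjoint elements, then leverage normality to reduce the general case to the self-adjoint one. If $u$ is self-adjoint, the C*-identity gives $\pseudonorm{u^2}=\pseudonorm{u^*u}=\pseudonorm{u}^2$, and since $u^k$ is again self-adjoint, an easy induction yields $\pseudonorm{u^{2^k}}=\pseudonorm{u}^{2^k}$ for every $k\in\N$. For an arbitrary $n$, I would pick $k$ with $2^k\ge n$ and combine submultiplicativity with this equality along the factorization $u^{2^k}=u^n\cdot u^{2^k-n}$, which forces $\pseudonorm{u^n}\ge \pseudonorm{u}^n$; together with the trivial submultiplicative inequality this gives equality (the case $\pseudonorm{u}=0$ is handled by faithfulness of the ultra-pseudo-norm, which forces $u=0$).

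To pass from self-adjoint to normal, I would use the identity
\[
\pseudonorm{u^n}^2 = \pseudonorm{(u^n)^*u^n} = \pseudonorm{(u^*)^n u^n} = \pseudonorm{(u^*u)^n},
\]
where the last equality uses that $u$ and $u^*$ commute, so the factors in $(u^*)^n u^n$ may be reordered into $(u^*u)^n$. Since $u^*u$ is self-adjoint, the self-adjoint case of part (1) applies to it, giving $\pseudonorm{(u^*u)^n}=\pseudonorm{u^*u}^n=\pseudonorm{u}^{2n}$ via the C*-identity. Taking square roots yields $\pseudonorm{u^n}=\pseudonorm{u}^n$.

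For part (2), unitarity gives $u^*u=1$, so $\pseudonorm{u}^2=\pseudonorm{u^*u}=\pseudonorm{1}=1$, hence $\pseudonorm{u}=1$. For the spectrum, let $\lambda\in\spec(u)$. By Theorem \ref{thm_bounded_spectrum}, $\sharpnorm{\lambda}\le \pseudonorm{u}=1$. Since $u$ is invertible (with $\inv{u}=u^*$), part (2) of Theorem \ref{thm_poly_spec_calc} ensures that $\spec(u)$ consists of invertible elements of $\GenC$ and that $\inv{\lambda}\in\spec(\inv u)=\spec(u^*)$; by Theorem \ref{thm_bounded_spectrum} again, $\sharpnorm{\inv\lambda}\le \pseudonorm{u^*}=\pseudonorm{u}=1$ (using the elementary identity $\pseudonorm{u^*}=\pseudonorm{u}$ already noted after the definition of Colombeau $C^*$-algebra). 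Submultiplicativity of the sharp norm on $\GenC$ gives $1=\sharpnorm{1}=\sharpnorm{\lambda\inv\lambda}\le\sharpnorm{\lambda}\sharpnorm{\inv\lambda}\le \sharpnorm{\lambda}\le 1$, forcing $\sharpnorm{\lambda}=1$.

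The only step that requires some care is the commutation in the normal case — making sure that the rearrangement $(u^*)^n u^n=(u^*u)^n$ is carried out via the commutativity of $u$ and $u^*$, not glossed over — and the need to invoke that elements of $\spec(u)$ are invertible in $\GenC$ (from Theorem \ref{thm_poly_spec_calc}(2)) before one can talk about $\inv\lambda$ in the unitary argument. Everything else is a direct translation of the classical proof, since the C*-identity, submultiplicativity, and continuity of the involution transfer verbatim to the ultra-pseudo-norm setting.
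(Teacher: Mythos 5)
Your proof is correct, and part (2) is essentially the paper's own argument; in part (1), however, you take a genuinely different route. The paper works with the limit $s(u)=\lim_{n\to\infty}\pseudonorm{u^n}^{1/n}$ (which exists by submultiplicativity), proves $s(u)=\pseudonorm{u}$ for self-adjoint $u$ using only the dyadic powers $u^{2^k}$, and for normal $u$ estimates $s(u^*u)=\lim_n\pseudonorm{(u^*)^n u^n}^{1/n}\le s(u^*)s(u)=s(u)^2$, concluding from $\pseudonorm{u}^2=\pseudonorm{u^*u}=s(u^*u)$ and $s(u)\le\pseudonorm{u^n}^{1/n}\le\pseudonorm{u}$. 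You instead upgrade the self-adjoint case to arbitrary exponents via the factorization $u^{2^k}=u^n\,u^{2^k-n}$ (the division by the real number $\pseudonorm{u}^{2^k-n}$ is harmless, and $\pseudonorm{u}=0$ forces $u=0$ because $\pseudonorm{.}$ is an ultra-pseudo-norm, not merely a seminorm), and then transfer to normal $u$ through the purely algebraic chain $\pseudonorm{u^n}^2=\pseudonorm{(u^n)^*u^n}=\pseudonorm{(u^*u)^n}=\pseudonorm{u^*u}^n=\pseudonorm{u}^{2n}$, where the middle step correctly uses commutation of $u$ and $u^*$. Your version avoids any limiting argument and is more self-contained; the paper's version only needs the self-adjoint case for dyadic powers and lets the subadditive-limit quantity (the same one underlying proposition \ref{prop_spectral_radius_bound}) do the bookkeeping. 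Both adaptations use exactly the inputs that survive in the ultra-pseudo-norm setting: the $C^*$-identity, submultiplicativity, $\pseudonorm{u^*}=\pseudonorm{u}$, and, for the spectral statement in part (2), theorems \ref{thm_bounded_spectrum} and \ref{thm_poly_spec_calc}(2) together with submultiplicativity of $\sharpnorm{.}$ on $\GenC$, all of which you invoke correctly.
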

\begin{proof}
(1) Since $\pseudonorm{.}$ is submultiplicative, $s(u):=\lim_{n\to\infty}\pseudonorm{u^n}^{1/n}$ exists (as in \cite[Ch.~VI, Problem 11]{ReedSimon}) and $s(u)\le \pseudonorm{u}$, for each $u\in\Alg$. If $u$ is self-adjoint, then $\pseudonorm{u^{2^n}}=\pseudonorm{u}^{2^n}$, for each $n\in\N$, hence $s(u)=\pseudonorm u$. If $u$ is normal, then
\[s(u^* u)=\lim_{n}\pseudonorm{{u^*}^n u^n}^{1/n}\le s(u^*)s(u) = s(u)^2,\]
and $\pseudonorm{u}^2=\pseudonorm{u^* u} = s(u^* u)\le s(u)^2\le \pseudonorm{u}^2$, hence $s(u)=\pseudonorm{u}$. The result follows, since $s(u)\le \pseudonorm{u^n}^{1/n}\le \pseudonorm{u}$, for each $n\in\N$.\\
(2) Since $1=\pseudonorm 1=\pseudonorm{u^* u}=\pseudonorm{u}^2$, $\pseudonorm u =\pseudonorm{u^*} = 1$. Let $\lambda\in\spec(u)$. By theorem \ref{thm_bounded_spectrum}, $\sharpnorm\lambda\le 1$. As $\inv u= u^*$, $\inv\lambda\in\spec(u^*)$ by theorem \ref{thm_poly_spec_calc}, hence also $\sharpnorm{\inv\lambda}\le 1$. So $1=\sharpnorm{\lambda\inv\lambda} \le \sharpnorm{\lambda}\sharpnorm{\inv\lambda}\le 1$, hence $\inv{\sharpnorm{\lambda}}=\sharpnorm{\inv\lambda}\le 1$, and $\sharpnorm\lambda= 1$.
\end{proof}
\begin{cor}
Let $\Alg$ be a Colombeau $C^*$-algebra. If $u\in\Alg$ is normal and $u^n=0$, for some $n\in\N$, then $u=0$.
\end{cor}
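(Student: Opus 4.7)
The plan is to apply Proposition \ref{prop_Cstar_elementary}(1) directly. Since $u$ is normal, that proposition gives $\pseudonorm{u^n} = \pseudonorm{u}^n$ for every $n\in\N$. If $u^n = 0$ for some $n\in\N$, then $\pseudonorm{u}^n = \pseudonorm{u^n} = \pseudonorm{0} = 0$, so $\pseudonorm{u} = 0$. Because $\pseudonorm{.}$ is an ultra-pseudo-norm (not merely a seminorm) on the Banach $\GenC$-algebra $\Alg$, this forces $u = 0$.

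There is essentially no obstacle here: the content of the corollary is entirely packaged in the normality estimate $\pseudonorm{u^n}=\pseudonorm{u}^n$, which was the nontrivial step in the preceding proposition. The corollary simply records the consequence for nilpotent normal elements.
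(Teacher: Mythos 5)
Your proof is correct and is exactly the argument the paper intends: the corollary is stated as an immediate consequence of Proposition \ref{prop_Cstar_elementary}(1), and your deduction $\pseudonorm{u}^n=\pseudonorm{u^n}=0$, hence $\pseudonorm{u}=0$ and $u=0$ by the definition of an ultra-pseudo-norm, is the standard (and the paper's implicit) route.
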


We recall the Cauchy-Schwarz inequality for $\GenC$-modules \cite{GV_Hilbert}:
\begin{prop}\label{prop_C-S}
Let $\Gen$ be a $\GenC$-module provided with a $\GenC$-sesquilinear map $\inner{.}{.}$: $\Gen\times\Gen\to\GenC$ such that $\inner{u}{v} = \overline{\inner{v}{u}}$ and $\inner{u}{u}\ge 0$ (in $\GenR$), for each $u,v\in\Gen$. Then $\abs{\inner{u}{v}}^2\le \inner{u}{u}\inner{v}{v}$, for each $u,v\in\Gen$.
\end{prop}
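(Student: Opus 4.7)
The plan is to adapt the classical polarization argument: expand $\inner{u + \lambda v}{u + \lambda v} \ge 0$ for a carefully chosen $\lambda \in \GenC$ and read off the desired inequality. Write $A := \inner{u}{u}$, $B := \inner{u}{v}$, $C := \inner{v}{v}$, with $A, C \ge 0$ in $\GenR$. Sesquilinearity yields
\[
0 \le \inner{u + \lambda v}{u + \lambda v} = A + \bar\lambda B + \lambda \overline{B} + \abs{\lambda}^2 C.
\]
Classically, the optimal choice $\lambda = -B/C$ would give $\abs{B}^2 \le AC$ at once, but in our setting $C$ need not be invertible in $\GenR$, so the substitution must first be regularized.

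For each $n \in \N$ I would set $s_n := \caninf^n \in \GenR$, so that $C + s_n \ge \caninf^n$ is invertible in $\GenR$ by lemma \ref{lemma_invertible_wrt_S}(4), and then take $\lambda_n := -B/(C+s_n) \in \GenC$. Substituting $\lambda = \lambda_n$ into the inequality above and multiplying through by the positive invertible $(C+s_n)^2$ gives $A(C+s_n)^2 - \abs{B}^2 (C + 2 s_n) \ge 0$, which rearranges to $(C + 2 s_n)(AC - \abs{B}^2) + A s_n^2 \ge 0$. Dividing by $C + 2s_n \ge 2 s_n$ (invertible) and using $A \ge 0$ yields
\[
AC - \abs{B}^2 \ge -\frac{A s_n^2}{C + 2 s_n} \ge -\frac{A \caninf^n}{2} \quad \text{in } \GenR, \quad \text{for every } n\in\N.
\]

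The final step is to conclude $AC \ge \abs{B}^2$ from this family of inequalities, and this is where I expect the only real subtlety. It reduces to the claim that if $D \in \GenR$ and a moderate $a \in \GenR$ satisfy $D \le a \caninf^n$ for every $n \in \N$, then $D \le 0$. I would unwind this at the representative level: moderateness gives $a_\eps \le \eps^{-N}$ for small $\eps$ and some fixed $N$, and each assumed inequality provides a representative estimate $D_\eps \le \eps^{n-N} + \eta^{(n)}_\eps$ with $\eta^{(n)}$ negligible; then, given any prescribed $m \in \N$, taking $n := m + N + 1$ forces $D_\eps \le \eps^m$ for $\eps$ sufficiently small, which is exactly $D \le 0$ in $\GenR$. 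Applied to $D = \abs{B}^2 - AC$ this finishes the proof. The perturbation $s_n$ is the key algebraic idea; the harder part is the diagonal interplay of the ``for every $n$'' and ``for small $\eps$'' quantifiers that is characteristic of this non-Archimedean setting.
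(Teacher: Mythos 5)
Your argument is correct, and the steps that need care in $\GenR$ all check out: the expansion of $\inner{u+\lambda v}{u+\lambda v}$ is valid for generalized scalars $\lambda\in\GenC$ (you implicitly fix conjugate-linearity in the second slot; with the opposite convention the roles of $B$ and $\overline B$ swap, which is harmless); multiplying and dividing inequalities by the nonnegative invertible elements $(C+s_n)^2$ and $C+2s_n\ge 2\caninf^n$ is order-compatible; the algebraic rearrangement to $(C+2s_n)(AC-\abs{B}^2)+As_n^2\ge 0$ is exact; and your closing lemma (if $D\le a\caninf^n$ for every $n$ with $a$ moderate, then $D\le 0$) is correctly proved by the diagonal choice $n=m+N+1$. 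For comparison: the paper itself contains no argument at this point; it only remarks that the proof of \cite[Prop.\ 2.2]{GV_Hilbert} carries over verbatim once one notes that definiteness ($\inner{u}{u}=0\implies u=0$) is never used there. So your proposal is best viewed not as a different route but as a self-contained worked-out one: the $\caninf^n$-perturbation of $C$ is precisely the device needed because the classically optimal $\lambda=-B/C$ fails when $C$ is not invertible in $\GenR$ (and, unlike the classical case, $C$ noninvertible does not force $B=0$), and the subsequent removal of the perturbation handles the non-Archimedean quantifier interplay explicitly. A minor shortening of your last step is available if you wish: $\{x\in\GenR: x\le 0\}$ is closed in the sharp topology and $\tfrac{A}{2}\caninf^n\to 0$ sharply, so $AC-\abs{B}^2\ge -\tfrac{A}{2}\caninf^n$ for all $n$ already gives $AC-\abs{B}^2\ge 0$; but your representative-level argument is equally valid.
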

\begin{proof}
The proof of \cite[Prop.\ 2.2]{GV_Hilbert} also holds if the scalar product $\inner{.}{.}$ does not necessarily satisfy $\inner{u}{u}=0 \implies u=0$.
\end{proof}

Recall that a Hilbert $\GenC$-module $\mathcal H$ is a $\GenC$-module provided with an inner product $\inner{.}{.}$ satisfying the properties of proposition \ref{prop_C-S} together with $\inner{u}{u}=0 \implies u=0$, $\forall u\in\mathcal H$, such that it is complete (and hence a Banach $\GenC$-module) for the ultra-pseudo-norm $\pseudonorm{u}:=\sharpnorm[]{\sqrt{\inner{u}{u}}}$. We also denote $\norm{u}:=\sqrt{\inner{u}{u}}\in\GenR$.
\begin{df}
For a Hilbert $\GenC$-module $\mathcal H$, we denote $\mathcal B(\mathcal H):= \{T\in \cLin(\mathcal H): T$ has an adjoint$\}$.
\end{df}
It is not known if $\mathcal B(\mathcal H) = \cLin(\mathcal H)$ (cf.\ the conjecture in \cite[\S 4]{GV_Hilbert}).

\begin{prop}\label{prop_B(H)_is_C_star_algebra}
Let $\mathcal H$ be a Hilbert $\GenC$-module. Then $\mathcal B(\mathcal H)$ is a Colombeau $C^*$-algebra.
\end{prop}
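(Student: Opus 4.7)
The plan is to verify, in turn, each ingredient of the definition of a Colombeau $C^*$-algebra. I equip $\mathcal B(\mathcal H)$ with the pseudonorm inherited from $\cLin(\mathcal H)$ (which is a Banach $\GenC$-algebra by proposition \ref{prop_cLin_of_Banach_module}, as the Hilbert $\GenC$-module $\mathcal H$ is in particular a Banach $\GenC$-module) and define $T^*$ for $T\in\mathcal B(\mathcal H)$ as the unique element of $\cLin(\mathcal H)$ satisfying $\inner{Tu}{v}=\inner{u}{T^*v}$ for all $u,v\in\mathcal H$; uniqueness comes from $\inner{w}{v}=0$ ($\forall v$) implying $w=0$, which holds because $\inner{.}{.}$ is definite on a Hilbert $\GenC$-module. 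The $*$-algebra axioms $T^{**}=T$, $(S+T)^*=S^*+T^*$, $(\lambda T)^*=\bar\lambda T^*$ and $(ST)^*=T^*S^*$ then follow from the sesquilinearity of $\inner{.}{.}$, essentially as in the classical case, and in particular $\mathcal B(\mathcal H)$ is a sub-$\GenC$-algebra of $\cLin(\mathcal H)$ containing $1$.

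Next I would prove the $C^*$ identity. By submultiplicativity, $\pseudonorm{T^*T}\le \pseudonorm{T^*}\pseudonorm{T}$. For the reverse direction, I apply proposition \ref{prop_C-S} to obtain $\abs{\inner{x}{y}}\le \norm{x}\norm{y}$ in $\GenR$, and take sharpnorm on both sides using that the sharpnorm is submultiplicative, to get $\sharpnorm{\inner{x}{y}}\le \pseudonorm{x}\pseudonorm{y}$. Then for any $u\in\mathcal H$,
\[
\pseudonorm{Tu}^2=\sharpnorm{\inner{Tu}{Tu}}=\sharpnorm{\inner{T^*Tu}{u}}\le \pseudonorm{T^*Tu}\pseudonorm{u}\le \pseudonorm{T^*T}\pseudonorm{u}^2,
\]
so $\pseudonorm{T}^2\le \pseudonorm{T^*T}$ by proposition \ref{prop_lin_maps_on_normed_modules}. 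Combining the two inequalities yields $\pseudonorm{T}\le \pseudonorm{T^*}$, and by symmetry $\pseudonorm{T^*}=\pseudonorm{T}$, hence $\pseudonorm{T^*T}=\pseudonorm{T}^2$.

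It remains to show that $(\mathcal B(\mathcal H),\pseudonorm{.})$ is complete, which I do by showing that $\mathcal B(\mathcal H)$ is closed in $\cLin(\mathcal H)$ and then invoking lemma \ref{lemma_closed_subalgebra_in_Banach_algebra}. Let $T_n\in \mathcal B(\mathcal H)$ with $T_n\to T$ in $\cLin(\mathcal H)$. Since $T_n-T_m\in \mathcal B(\mathcal H)$, the identity $\pseudonorm{T_n^*-T_m^*}=\pseudonorm{(T_n-T_m)^*}=\pseudonorm{T_n-T_m}$ just established shows that $(T_n^*)$ is Cauchy in the complete space $\cLin(\mathcal H)$ (proposition \ref{prop_lin_maps_on_normed_modules}) and so converges to some $S$. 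Continuity of $\inner{.}{.}$ (another consequence of Cauchy-Schwarz) gives $\inner{Tu}{v}=\lim_n\inner{T_nu}{v}=\lim_n\inner{u}{T_n^*v}=\inner{u}{Sv}$, whence $T\in\mathcal B(\mathcal H)$ with $T^*=S$.

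The main point that requires care is the $C^*$ identity, and within it the inequality $\sharpnorm{\inner{x}{y}}\le \pseudonorm{x}\pseudonorm{y}$: one must make sure the sharpnorm can be taken inside the Cauchy--Schwarz inequality in $\GenR$ and that it behaves submultiplicatively on non-negative generalized numbers. The remaining verifications are straightforward transpositions of the classical Hilbert-space argument, relying only on the algebraic and ultrametric features of the sharpnorm already recorded in the preliminaries.
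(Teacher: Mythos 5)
Your proposal is correct and follows essentially the same route as the paper's proof: use proposition \ref{prop_cLin_of_Banach_module} to get the Banach $\GenC$-algebra $\cLin(\mathcal H)$, check the $*$-algebra axioms, derive $\pseudonorm{Tu}^2=\sharpnorm{\inner{u}{T^*Tu}}\le\pseudonorm{u}\pseudonorm{T^*Tu}$ from proposition \ref{prop_C-S} to obtain $\pseudonorm{T^*T}=\pseudonorm{T}^2$ and $\pseudonorm{T^*}=\pseudonorm{T}$, and conclude completeness by showing $\mathcal B(\mathcal H)$ is closed in $\cLin(\mathcal H)$ via convergence of the adjoints and continuity of the inner product. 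You merely make explicit a few details the paper leaves implicit (uniqueness of the adjoint, the Cauchy property of $(T_n^*)$ from the isometry of $*$, and the monotonicity/submultiplicativity of the sharp norm), all of which are sound.
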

\begin{proof}
By proposition \ref{prop_cLin_of_Banach_module}, $\cLin(\mathcal H)$ is a Banach $\GenC$-algebra. If $T$, $S\in\cLin(\mathcal H)$ have an adjoint, then also $T+S$, $\lambda T$ ($\lambda\in\GenC$), $TS$, $T^*$ have an adjoint and $\mathcal B(\mathcal H)$ is a Colombeau $*$-algebra. As for classical Hilbert spaces, by proposition \ref{prop_C-S},
\begin{multline*}
\pseudonorm{T}^2 = \sup_{\pseudonorm{u} = 1}\pseudonorm{Tu}^2 = \sup_{\pseudonorm{u} = 1}\sharpnorm{\norm{Tu}^2} = \sup_{\pseudonorm{u} = 1}\sharpnorm{\inner{u}{T^* T u}}\\
\le \sup_{\pseudonorm{u} = 1}\pseudonorm{u}\pseudonorm{T^* T u} \le \pseudonorm{T^* T},
\end{multline*}
whence $\pseudonorm{T^*}=\pseudonorm{T}$ and $\pseudonorm{T^* T}=\pseudonorm{T}^2$. So if for each $n\in\N$, $T_n\in\mathcal B(\mathcal H)$ and $T_n\to T\in\cLin(\mathcal H)$, then also $\lim_{n\to\infty}T_n^* \in \cLin(\mathcal H)$ exists by completeness of $\cLin(\mathcal H)$. By continuity of $\inner{.}{.}$, $T^* = \lim_{n\to\infty}T_n^*$. Thus $\mathcal B(\mathcal H)$ is a $C^*$-subalgebra of $\cLin(\mathcal H)$.
\end{proof}

If $B$ is a clasical $C^*$-algebra, then for $u=[(u_\eps)_\eps]\in\Gen_B$, $u^*:=[(u_\eps^*)_\eps]$ is well-defined and turns $\Gen_B$ into a Colombeau $C^*$-algebra with $\norm{u^*u}=\norm{u}^2$ and $\norm{u}=\norm{u^*}$, $\forall u\in\Alg$.

\begin{prop}\label{prop_G_B(H)_is_C_star_subalgebra}
If $H$ is a (classical) Hilbert space and $\cLin(H)$ is the $C^*$-algebra of continuous $\C$-linear operators on $H$, then $\Gen_{\cLin(H)}$ is (up to isomorphism) a $C^*$-subalgebra of $\mathcal \cLin(\Gen_H)$.
\end{prop}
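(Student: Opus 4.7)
The statement should read that $\Gen_{\cLin(H)}$ is (up to isomorphism) a $C^*$-subalgebra of $\mathcal B(\Gen_H)$, since membership in $\mathcal B(\Gen_H)$ requires the existence of an adjoint. My plan is to take $\Gen_H$ as a Hilbert $\GenC$-module under the componentwise inner product $\inner{u}{v} = [(\inner{u_\eps}{v_\eps})_\eps]$, and then use proposition \ref{prop_G_L(B)_is_Banach_subalgebra} as the starting point: it already identifies $\Gen_{\cLin(H)}$ with a closed Banach $\GenC$-subalgebra of $\cLin(\Gen_H)$ via the map $T=[(T_\eps)_\eps]\mapsto\big(u\mapsto [(T_\eps u_\eps)_\eps]\big)$, and shows that the ultra-pseudo-norms agree.

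Next I would introduce the componentwise involution $T^*:=[(T_\eps^*)_\eps]$ on $\Gen_{\cLin(H)}$. This is well-defined because the classical $C^*$-identity gives $\norm{T_\eps^*}_{\cLin(H)}=\norm{T_\eps}_{\cLin(H)}$, so moderate nets map to moderate nets and negligible nets to negligible nets. The involution axioms (1)--(4) of the definition of Colombeau $*$-algebra are then inherited from those for $\cLin(H)$ componentwise. To check that $T^*$ is really the adjoint of $T$ when $T$ is regarded as an element of $\cLin(\Gen_H)$, for $u=[(u_\eps)_\eps]$, $v=[(v_\eps)_\eps]\in\Gen_H$ I would compute
\[
\inner{Tu}{v}=[(\inner{T_\eps u_\eps}{v_\eps})_\eps]=[(\inner{u_\eps}{T_\eps^* v_\eps})_\eps]=\inner{u}{T^*v},
\]
so $T\in\mathcal B(\Gen_H)$ and its adjoint in $\mathcal B(\Gen_H)$ coincides with $T^*$. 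Hence the embedding from proposition \ref{prop_G_L(B)_is_Banach_subalgebra} lands inside $\mathcal B(\Gen_H)$ and intertwines the two $*$-operations.

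Finally I would verify the $C^*$-identity in $\Gen_{\cLin(H)}$ by a direct componentwise calculation:
\[
\pseudonorm{T^* T}=\sharpnorm[\big]{\norm{T_\eps^* T_\eps}_{\cLin(H)}}=\sharpnorm[\big]{\norm{T_\eps}_{\cLin(H)}^{2}}=\sharpnorm[\big]{\norm{T_\eps}_{\cLin(H)}}^{2}=\pseudonorm{T}^{2}.
\]
Together with the fact that $\Gen_{\cLin(H)}$ is complete and closed under the involution, this shows it is a Colombeau $C^*$-algebra, and the preceding embedding exhibits it as a $C^*$-subalgebra of $\mathcal B(\Gen_H)$.

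The main point of care (rather than a serious obstacle) is the identification of the componentwise involution with the $\mathcal B(\Gen_H)$-adjoint; everything else reduces to applying $\sharpnorm{.}$ to the classical $C^*$-identities valid for each $\eps$. In particular, no appeal to the open conjecture $\mathcal B(\Gen_H)=\cLin(\Gen_H)$ is needed.
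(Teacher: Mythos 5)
Your argument is correct and follows essentially the same route as the paper: the Banach embedding from proposition \ref{prop_G_L(B)_is_Banach_subalgebra}, the componentwise involution $T^*=[(T_\eps^*)_\eps]$ making $\Gen_{\cLin(H)}$ a Colombeau $C^*$-algebra via $\sharpnorm{\cdot}$ applied to the classical $C^*$-identities, and the observation that the embedding preserves $*$. Your additional verification that the componentwise $T^*$ is the adjoint of $T$ in the Hilbert $\GenC$-module sense, so that the image actually lies in $\mathcal B(\Gen_H)$, is a sound and useful clarification that the paper leaves implicit in the phrase ``clearly also preserves the involution''.
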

\begin{proof}
As $\cLin(H)$ is a $C^*$-algebra, $\Gen_{\cLin(H)}$ is a Colombeau $C^*$-algebra with ultra-pseudonorm $\pseudonorm T=\sharpnorm[\big]{\norm{T_\eps}}$, for $T=[(T_\eps)_\eps]\in\Gen_{\cLin(H)}$. By proposition \ref{prop_G_L(B)_is_Banach_subalgebra}, $\Gen_{\cLin(H)}$ is isomorphic with a Banach $\GenC$-subalgebra of $\mathcal \cLin(\Gen_H)$. The isomorphism of proposition \ref{prop_G_L(B)_is_Banach_subalgebra} clearly also preserves the involution $*$.
\end{proof}

\section{Counterexamples}
The following example is in contrast with the situation in commutative Banach $\C$-algebras (e.g., \cite[\S 3.2]{Kadison}):
\begin{ex}\label{ex_empty_spectrum_wrt_S}
There exists $M\idealproper\Gen_{\Cnt([0,1])}$ maximal w.r.t.\ $M\cap\GenC = \{0\}$ and $u\in\Alg := \Gen_{\Cnt([0, 1])}/M$ (which is a faithful commutative Banach $\GenC$-algebra by proposition \ref{prop_closed_ideal_in_Banach_algebra} and corollary \ref{cor_closure_of_GenC_proper_ideal}) such that $\resolv_\Alg(u)=\GenC$. Consequently, we also have:
\begin{enumerate}
\item $\spec_\Alg(u)=\emptyset$.
\item $M\ne \Ker m$, for each $m\in\Smlf{\Gen_{\Cnt([0,1])}}$.
\item $\functspec_\Alg(u) = \emptyset$.
\item $\functspec_\Alg(v) \ne \spec_\Alg(v)$, for some $v\in\Alg$.
\end{enumerate}
\end{ex}
\begin{proof}
Let $\phi\in\Cnt[\infty](\C)$ with compact support and with $\phi(0)=1$ and $\phi(z)\ge 0$, for each $z\in\C$. For each $a=[(a_\eps)_\eps]\in\GenC$, let $u_a:=\big[\big(\phi(\frac{x-a_\eps}{\eps})\big)_\eps\big]\in\Gen_{\Cnt([0,1])}$. Let $I\idealproper\Gen_{\Cnt([0,1])}$ be the ideal generated by $\{u_a: a\in\GenC\}$.
Let $u\in I$. As $u$ is a finite $\Gen_{\Cnt([0,1])}$-linear combination of some of the $u_a$, it has a representative $(u_\eps)_\eps$ for which the Lebesgue measure of the support of $u_\eps$ is of order $\eps$. On the other hand, let $u\in\GenC\setminus\{0\}\subseteq \Gen_{\Cnt([0,1])}$. Then $u$ is invertible w.r.t.\ $S$, for some $S\subseteq(0,1)$ with $e_S\ne 0$. For each representative $(u_\eps)_\eps\in\Mod_{\Cnt([0,1])}$ of $u$, there exists $m\in\N$ such that $\inf_{x\in[0,1]}\abs{u_\eps(x)}\ge\eps^m$, for small $\eps\in S$ by lemma \ref{lemma_invertible_wrt_S}. Thus $I\cap\GenC =\{0\}$. By Zorn's lemma, there exists $M\idealproper\Gen_{\Cnt([0,1])}$, maximal w.r.t.\ $M\cap\GenC =\{0\}$ with $I\subseteq M$.
Let $u=\id + M\in\Alg$ where $\id=[(\id_{[0,1]})_\eps]\in\Gen_{\Cnt([0,1])}$ and $\id_{[0,1]}$ is the identity map on $[0,1]$. Let $\lambda=[(\lambda_\eps)_\eps]\in\GenC$. For $x\in [0,1]$,
\[\abs[\Big]{\phi\Big(\frac{x-\lambda_\eps}{\eps}\Big)-\phi(0)}\le \eps^{-1}\abs{x - \lambda_\eps}\sup_{z\in\C}\abs{\nabla\phi(z)},\]
so $\phi(\frac{x-\lambda_\eps}{\eps})\ge \eps$, as soon as $\abs{x - \lambda_\eps}\le\eps^2$ and $\eps$ small enough. Hence
\[
\inf_{x\in [0,1]}
\abs{x-\lambda_\eps}^2 + \phi\Big(\frac{x-\lambda_\eps}{\eps}\Big)\ge\eps^4,
\]
for small $\eps$, and $\abs{\id -\lambda}^2 + u_\lambda$ is invertible in $\Gen_{\Cnt([0,1])}$ by proposition \ref{prop_invertible_in_Gencnt}. As $u_\lambda\in M$, there exists $v\in\Gen_{\Cnt([0,1])}$ such that $(\id -\lambda)(\overline{\id -\lambda})v=1$ modulo $M$. Hence $u-\lambda$ is invertible in $\Alg$. Consequence 2 follows by corollary \ref{cor_Kerm_char}. Consequence 3 follows by lemma \ref{lemma_fsp_sub_sp}. Hence $\Smlf\Alg = \emptyset$, and consequence 4 follows since $\functspec_\Alg(0)=\emptyset\ne \{0\} = \spec_\Alg(0)$.
\end{proof}

For general Colombeau $C^*$-algebras, even if they are subalgebras of $\Gen_B$ (with $B$ a $C^*$-algebra), almost nothing of the classical spectral theory (e.g., \cite[\S 4.1]{Kadison}) remains valid:
\begin{ex}\label{ex_spec_Cstar_subalgebra}
There exists a commutative $C^*$-algebra $B$, a Colombeau $C^*$-subalgebra $\Alg$ of $\Gen_B$ and a self-adjoint $u\in \Alg$ such that
\begin{enumerate}
\item $\spec_\Alg(u)\ne\spec_{\Gen_B}(u)$.
\item $\spec_\Alg(u)\not\subseteq\{\lambda\in\GenC: \abs{\lambda}\le\norm{u}\}$.
\item $\spec_\Alg(u)\not\subseteq\GenR$.
\item $\Alg$ is not a symmetric Colombeau $*$-algebra.
\end{enumerate}
\end{ex}
\begin{proof}
Let $B=\Cnt([0,1])$, $u = [(\id)_\eps]\in\Gen_B$ where $\id$ is the identity map on $[0,1]$, and $\Alg=\overline{\GenC[u]}$. For $\lambda\in\C\setminus [0,1]$, $\id-\lambda$ is invertible in $B$, hence also $(u-\lambda)^{-1} = [((\id -\lambda)^{-1})_\eps]$ in $\Gen_B$. Suppose that $S\subseteq (0,1)$ with $e_S\ne 0$ and $u-\lambda$ is invertible w.r.t.\ $S$ in $\Alg$. Then $(u-\lambda)^{-1}e_S\in\Alg$ by lemma \ref{lemma_invertible_wrt_S}, and there exists $w\in\GenC[u]$ such that $\norm{w-(u - \lambda)^{-1}e_S}\le\caninf$. As $w=\sum_{j=0}^m a_j u^j$, for some $m\in\N$ and $a_j\in\GenC$, there exists a sequence of polynomials $p_n\in\C[x]$ of degree at most $m$ such that $\norm{p_n - (\id - \lambda)^{-1}}\to 0$. Since a finite-dimensional subspace of a Banach space is closed (e.g, \cite[Cor.~1.5.4]{Kadison}), $(\id - \lambda)^{-1}\in\C[x]$, a contradiction. Hence $\C\setminus [0,1] \subseteq \spec_\Alg(u)\setminus \spec_{\Gen_B}(u)$. The last assertion follows by theorem \ref{thm_sp_sub_GenR_in_symm_algebra}.
\end{proof}

\section{The $\GenC$-algebra $\Gensharp X$}\label{section_Gensharp}
In spite of the counterexamples in the previous section, for some Banach subalgebras of $\Gen_B$ ($B$ a Banach $\C$-algebra), the spectrum behaves to a large extent as in the classical theory. In this section, we briefly investigate the structure of such an algebra, which will provide an explanation why the spectrum in this algebra behaves well (see propositions \ref{prop_Gensharp_is_intersection_of_strictly_internal} and \ref{prop_GenB_invertible}, corollary \ref{cor_spec_Gensharp}).

Let $X$ be a compact metric space. Let $\widetilde X:= X^{(0,1)}/_{\sim}$, where
$(x_\eps)_\eps\sim (y_\eps)_\eps$ iff $(d(x_\eps,y_\eps))_\eps\in\Null_\R$ (this definition coincides with the definition of the internal subset $\widetilde K = [(K)_\eps]\subset\GenR^d$ when $K\csub\R^d$). Then $d$: $\widetilde X\times\widetilde X\to\GenR$: $d([(x_\eps)_\eps],[(y_\eps)_\eps]):=[(d(x_\eps,y_\eps)_\eps)]$ is a well-defined map that extends the metric $d$ on $X$ and $D$: $\widetilde X\times\widetilde X\to \R$: $D(\tilde x, \tilde y):= \sharpnorm{d(\tilde x, \tilde y)}$ defines an ultrametric on $\widetilde X$. The corresponding topology on $\widetilde X$ is called sharp topology. Also the (non-Hausdorff) topology on $X^{(0,1)}$ defined by the corresponding pseudometric $D$: $X^{(0,1)}\times X^{(0,1)}\to \R$: $D((x_\eps)_\eps, (y_\eps)_\eps):= \sharpnorm{d(x_\eps,y_\eps)}$ is called sharp topology.

Let $u\in\Gencnt{X}$. If $u(x) = u(y)$ in $\GenC$, for each $x$, $y$ $\in X^{(0,1)}$ with $x\sim y$, then $u$ defines a pointwise map $\widetilde X\to\GenC$ by means of $u(\tilde x):=[(u_\eps(x_\eps))_\eps]$ (with $u=[(u_\eps)_\eps]$, $\tilde x=[(x_\eps)_\eps]$, definition independent of representatives).

\begin{prop}\label{prop_sharp_cnt}
Let $X$ be a compact metric space and $u\in\Gencnt{X}$. Then the following are equivalent:
\begin{enumerate}
\item $u$ defines a map $\widetilde X\to\GenC$
\item $(\forall n\in\N)(\exists m\in\N)(\exists \eta_m>0)(\forall\eps\le\eta_m)$
\[(\forall x,y\in X) (d(x,y)\le\eps^m\implies\abs{u_{\eps}(x)-u_\eps(y)}\le \eps^n)
\]
\item $u$: $X^{(0,1)}\to\GenC$ is continuous (for the sharp topologies on $X^{(0,1)}$, resp.\ $\GenC$)
\item $u$ defines a continuous map $\widetilde X\to\GenC$ (for the sharp topologies on $\widetilde X$, resp.\ $\GenC$).
\end{enumerate}
\end{prop}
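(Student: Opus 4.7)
The plan is to prove the cycle $(2)\Rightarrow(3)\Rightarrow(4)\Rightarrow(1)\Rightarrow(2)$. The first three implications are short unpackings of the definitions of the sharp topology and of well-definedness of point values; the last, done by contrapositive, is where the real content lies.

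For $(2)\Rightarrow(3)$ I would fix $a=(a_\eps)_\eps\in X^{(0,1)}$ and $n\in\N$, take the $m$ and $\eta_m$ supplied by (2), and choose $r>0$ small enough (e.g.\ $r=e^{-m-1}$) that $D(x,a)<r$ forces $d(x_\eps,a_\eps)\le\eps^m$ for small $\eps$; then (2) gives $|u_\eps(x_\eps)-u_\eps(a_\eps)|\le\eps^n$ for small $\eps\le\eta_m$, hence $\sharpnorm{u(x)-u(a)}\le e^{-n}$, proving continuity in the sharp (pseudo-)topologies. For $(3)\Rightarrow(4)$, since $(\GenC,\sharpnorm{.})$ is Hausdorff and the relation $x\sim y$ on $X^{(0,1)}$ is exactly $D(x,y)=0$, a continuous $u$ automatically identifies $\sim$-equivalent points; this yields (1) and a well-defined factorisation $\bar u\colon\widetilde X\to\GenC$, and because the quotient map $\pi\colon X^{(0,1)}\to\widetilde X$ sends $D$-balls onto $D$-balls (so is both continuous and open), continuity of $u=\bar u\circ\pi$ transfers to $\bar u$. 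Finally $(4)\Rightarrow(1)$ is immediate.

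The real work is $(1)\Rightarrow(2)$, treated contrapositively. If (2) fails, some $n\in\N$ admits, for every $m\in\N$ and every $\eta>0$, an $\eps\le\eta$ together with $x,y\in X$ satisfying $d(x,y)\le\eps^m$ and $|u_\eps(x)-u_\eps(y)|>\eps^n$. I would recursively extract a strictly decreasing sequence $\eps_m\searrow 0$ and points $x_m,y_m\in X$ realising these inequalities at $\eps_m$, fix a base point $x_0\in X$, and define $(x_\eps)_\eps,(y_\eps)_\eps\in X^{(0,1)}$ to agree with $(x_m,y_m)$ at $\eps=\eps_m$ and with $(x_0,x_0)$ elsewhere. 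Then $d(x_\eps,y_\eps)$ vanishes off $\{\eps_m:m\in\N\}$ and satisfies $d(x_{\eps_m},y_{\eps_m})\le\eps_m^m\le\eps_m^k$ once $m\ge k$, so $(d(x_\eps,y_\eps))_\eps\in\Null_\R$ and $(x_\eps)\sim(y_\eps)$; on the other hand $|u_{\eps_m}(x_{\eps_m})-u_{\eps_m}(y_{\eps_m})|>\eps_m^n\ge\eps_m^{n+1}$ prevents $(u_\eps(x_\eps)-u_\eps(y_\eps))_\eps$ from being negligible, so $u((x_\eps)_\eps)\ne u((y_\eps)_\eps)$ in $\GenC$, contradicting (1).

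The main obstacle is precisely this last construction: one has to align the two nets so that they coincide everywhere outside a single decreasing null sequence of $\eps$-values, in order to keep the relation $\sim$ intact while still carrying the per-$m$ witnesses of non-negligibility supplied by the negation of (2). The slight asymmetry between $\eps_m^m$ (small of arbitrarily high order along the extracted sequence) and $\eps_m^n$ (bounded below by a fixed order) is exactly what makes both requirements compatible. Once this bookkeeping is handled, the remaining implications are essentially formal.
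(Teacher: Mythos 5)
Your proposal is correct and follows essentially the same route as the paper: the paper proves the cycle $(1)\Rightarrow(2)\Rightarrow(3)\Rightarrow(1)$ (with $(4)$ handled as the ``clear'' steps), and its key step $(1)\Rightarrow(2)$ is exactly your contrapositive construction along a decreasing null sequence $(\eps_m)_m$, with the two nets made equal off that sequence. Your $(3)\Rightarrow(4)$ merely packages the paper's $(3)\Rightarrow(1)$ argument ($D(x,y)=0$ plus continuity forces $u(x)=u(y)$) together with the routine factorisation through the quotient, so the content is the same.
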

\begin{proof}
$(1)\implies (2)$: if $(2)$ doesn't hold, then we can find $n\in\N$, a decreasing sequence $(\eps_m)_{m\in\N}$ tending to $0$ and $x_{\eps_m}$, $y_{\eps_m}$ $\in X$ such that $d(x_{\eps_m},y_{\eps_m})\le\eps_m^m$ and $\abs{u_{\eps_m}(x_{\eps_m})-u_{\eps_m}(y_{\eps_m})}> \eps_m^n$, for each $m\in\N$. Let $x_\eps=y_\eps\in X$ arbitrary for $\eps\in (0,1)\setminus \{\eps_m:m\in\N\}$. For $x=(x_\eps)_\eps$, $y=(y_\eps)_\eps\in X^{(0,1)}$, we then have $x\sim y$, but $u(x)\ne u(y)$.\\
$(2)\implies (3)$: let $r\in\R^+$. For $n\ge -\ln r$, we find $m\in\N$ such that for $x=(x_\eps)_\eps, y=(y_\eps)_\eps \in X^{(0,1)}$, if $\sharpnorm{d(x,y)}<e^{-m}$, hence $d(x_\eps,y_\eps)\le\eps^m$, for sufficiently small $\eps$, then $\abs{u_\eps(x_\eps)-u_\eps(y_\eps)}\le\eps^n$, for sufficiently small $\eps$, so $\sharpnorm{u(x)-u(y)}\le e^{-n}\le r$.\\
$(3)\implies(1)$: if $x$, $y$ $\in X^{(0,1)}$ and $x\sim y$, then $\sharpnorm{d(x,y)}\le\delta$, $\forall\delta\in\R^+$. By continuity, $\sharpnorm{u(x)-u(y)}\le r$, $\forall r\in\R^+$; hence $u(x)=u(y)$ in $\GenC$.\\
$(1)\&(3)\implies(4)$, $(4)\implies(1)$: clear.
\end{proof}

\begin{df}
Let $X$ be a compact metric space. Then we denote by $\Gensharp X$ the set of all elements of $\Gencnt{X}$ that satisfy the equivalent properties of the previous proposition. We call $\Gensharp X$ the algebra of sharply continuous generalized functions on $X$.
\end{df}

\begin{prop}\label{prop_Gensharp_Banach}
Let $X$ be a compact metric space. Then $\Gensharp X =\bigcap_{x, y\in X^{(0,1)}, x\sim y}\{u\in\Gencnt X: u(x) = u(y)\}$ is a Banach $\GenC$-subalgebra of $\Gencnt X$.
\end{prop}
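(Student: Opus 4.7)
The plan is to prove both the set-theoretic identity and the Banach $\GenC$-subalgebra statement by reducing everything to continuity and linearity of point evaluations.

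First, the equality $\Gensharp X = \bigcap_{x,y\in X^{(0,1)},\, x\sim y}\{u\in\Gencnt X: u(x)=u(y)\}$ is essentially a restatement of condition~(1) of proposition~\ref{prop_sharp_cnt}: $u\in\Gensharp X$ means that $u$ induces a well-defined map $\widetilde X\to\GenC$, which by the very definition of $\widetilde X$ amounts to the implication $x\sim y \implies u(x)=u(y)$, for all $x,y\in X^{(0,1)}$. This step is immediate and requires no calculation.

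Second, I would show that for each fixed pair $x,y\in X^{(0,1)}$, the set $N_{x,y}:=\{u\in\Gencnt X: u(x)=u(y)\}$ is a closed sub-$\GenC$-algebra of $\Gencnt X$ containing $1$. Algebraic closure is an immediate consequence of the fact that the point-value map $\Gencnt X\to\GenC$: $u\mapsto u(x)$ is a unital $\GenC$-algebra morphism (componentwise operations commute with evaluation at $(x_\eps)_\eps$); hence $N_{x,y}$ is the kernel of the $\GenC$-linear map $u\mapsto u(x)-u(y)$, and is stable under sums, products, scalar multiples and contains $1$. For closedness, I would observe that for any $x=(x_\eps)_\eps\in X^{(0,1)}$ and $u=[(u_\eps)_\eps]\in\Gencnt X$,
\[
\sharpnorm{u(x)} = \sharpnorm[\big]{\abs{u_\eps(x_\eps)}} \le \sharpnorm[\Big]{\sup_{z\in X}\abs{u_\eps(z)}} = \pseudonorm{u},
\]
so point evaluation is $\GenC$-linear and bounded, therefore continuous; consequently $u\mapsto u(x)-u(y)$ is continuous and $N_{x,y}$ is closed.

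Third, intersecting over all pairs $x\sim y$ preserves all of these properties: $\Gensharp X$ is a closed sub-$\GenC$-algebra of $\Gencnt X$. Since $\Cnt(X)$ is a Banach $\C$-algebra (sup-norm), $\Gencnt X$ is a Banach $\GenC$-algebra, so lemma~\ref{lemma_closed_subalgebra_in_Banach_algebra} lets me conclude that $\Gensharp X$, equipped with the restriction of the ultra-pseudo-norm of $\Gencnt X$, is itself a Banach $\GenC$-algebra. There is no real obstacle here; the only mild subtlety worth a sentence is that $N_{x,y}$ must be checked to contain $1$, which is clear because the constant function $1$ has identical values at any two points of $X^{(0,1)}$.
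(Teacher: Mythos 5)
Your proposal is correct and follows essentially the same route as the paper: the equality is read off from condition (1) of proposition \ref{prop_sharp_cnt}, and $\Gensharp X$ is then a closed sub-$\GenC$-algebra of the Banach $\GenC$-algebra $\Gencnt X$, hence complete. The paper merely asserts closedness without detail, so your explicit observation that point evaluation satisfies $\sharpnorm{u(x)}\le\pseudonorm{u}$ (making each $N_{x,y}$ the kernel of a continuous $\GenC$-linear map) is a welcome, correct filling-in of that step.
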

\begin{proof}
The equality holds by proposition \ref{prop_sharp_cnt}. Hence $\Gensharp X$ is a closed (and thus complete) subset of $\Gencnt X$, and also a sub-$\GenC$-algebra of $\Gencnt X$.
\end{proof}

\begin{lemma}\label{lemma_strictly_internal_Gencnt}
Let $X$ be a compact metric space. For $x = (x_\eps)_\eps$, $y=(y_\eps)_\eps\in X^{(0,1)}$,
\[
\{u\in\Gencnt X: u(x) = u(y)\} = [(\{u\in\Cnt(X): u(x_\eps)=u(y_\eps)\})_\eps].
\]
\end{lemma}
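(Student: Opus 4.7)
The plan is to prove the two inclusions separately, the first being immediate from the definitions and the second relying on a modification of representatives using a cut-off function.

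For the inclusion $\supseteq$, suppose $u \in [(\{v \in \Cnt(X) : v(x_\eps) = v(y_\eps)\})_\eps]$. By definition of internal subsets, $u$ has a representative $(u_\eps)_\eps$ such that $u_\eps(x_\eps) = u_\eps(y_\eps)$ for small $\eps$. Then $u(x) = [(u_\eps(x_\eps))_\eps] = [(u_\eps(y_\eps))_\eps] = u(y)$ in $\GenC$.

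For the inclusion $\subseteq$, the idea is: given $u = [(u_\eps)_\eps] \in \Gencnt X$ with $u(x) = u(y)$, i.e.\ with $(u_\eps(x_\eps) - u_\eps(y_\eps))_\eps \in \Null_\C$, I want to perturb $u_\eps$ by a negligible amount so that the perturbed function agrees at $x_\eps$ and $y_\eps$. Concretely, for each $\eps$ with $x_\eps \ne y_\eps$ I take the Urysohn-type cut-off
\[
\phi_\eps(t) := \max\Bigl(0,\; 1 - \frac{d(t, x_\eps)}{d(x_\eps, y_\eps)}\Bigr), \quad t \in X,
\]
and for $\eps$ with $x_\eps = y_\eps$ I set $\phi_\eps := 0$. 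Then $\phi_\eps \in \Cnt(X)$ with $0 \le \phi_\eps \le 1$, $\phi_\eps(x_\eps) = 1$ (or irrelevant in the degenerate case), and $\phi_\eps(y_\eps) = 0$. Define
\[
\tilde u_\eps(t) := u_\eps(t) - \phi_\eps(t) \bigl( u_\eps(x_\eps) - u_\eps(y_\eps)\bigr).
\]

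Then $\sup_{t \in X} |\tilde u_\eps(t) - u_\eps(t)| \le |u_\eps(x_\eps) - u_\eps(y_\eps)|$, which is negligible by assumption, so $(\tilde u_\eps - u_\eps)_\eps \in \Null_{\Cnt(X)}$ and $(\tilde u_\eps)_\eps$ is also a representative of $u$. A direct check shows $\tilde u_\eps(x_\eps) = u_\eps(x_\eps) - (u_\eps(x_\eps) - u_\eps(y_\eps)) = u_\eps(y_\eps) = \tilde u_\eps(y_\eps)$. Hence $u \in [(\{v \in \Cnt(X) : v(x_\eps) = v(y_\eps)\})_\eps]$.

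The only subtlety is to make sure the cut-off $\phi_\eps$ really lies in $\Cnt(X)$ and is uniformly bounded, which is immediate from the formula; there is no issue with moderateness since the perturbation is controlled in sup norm by a negligible net, and the case $x_\eps = y_\eps$ is handled trivially. Thus no obstacle of substance arises beyond this standard construction.
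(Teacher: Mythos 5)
Your proof is correct and follows essentially the same route as the paper: both modify the representative by a negligible continuous perturbation built from $d(\cdot,x_\eps)/d(x_\eps,y_\eps)$ (the paper uses $\min\bigl(1,\tfrac{d(t,x_\eps)}{d(x_\eps,y_\eps)}\bigr)$ to shift the value at $y_\eps$, you use the mirror-image cut-off $\max\bigl(0,1-\tfrac{d(t,x_\eps)}{d(x_\eps,y_\eps)}\bigr)$ to shift the value at $x_\eps$), with the degenerate case $x_\eps=y_\eps$ handled trivially in both. Nothing further is needed.
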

\begin{proof}
Let $u=[(u_\eps)_\eps]\in\Gencnt X$ with $u(x)=u(y)$, i.e., $(u_\eps(x_\eps) - u_\eps(y_\eps))_\eps\in\Null_{\C}$. Let $n_\eps(x):= (u_\eps(x_\eps) - u_\eps(y_\eps))\min\big(1, \frac{d(x,x_\eps)}{d(x_\eps,y_\eps)}\big)$, if $x_\eps\ne y_\eps$ and $n_\eps:= 0$, otherwise. Then $(n_\eps)_\eps\in\Null_{\Cnt(X)}$, and $u = [(u_\eps + n_\eps)_\eps]$ with $(u_\eps + n_\eps)(x_\eps) = (u_\eps + n_\eps)(y_\eps)$, $\forall \eps$. The converse inclusion holds by definition.
\end{proof}

This suggests the following definition:
\begin{df}
Let $B$ be a Banach $\C$-algebra. Then $\Alg\subseteq\Gen_B$ is a \defstyle{strictly internal subalgebra} of $\Gen_B$ iff $\Alg=[(A_\eps)_\eps]$ where $A_\eps$ are Banach subalgebras of $B$, for each $\eps$. Such a representative of $\Alg$ will be called a \defstyle{strict} representative.
\end{df}
\begin{rem}
Since $[(A_\eps)_\eps] = [(\overline A_\eps)_\eps]$ for internal subsets of $\Gen_B$ \cite[Cor.~2.2]{OV_internal}, any $\Alg=[(A_\eps)_\eps]$ where $A_\eps$ are subalgebras (with $1$) of $B$, is a strictly internal subalgebra. Since every internal set of $\Gen_B$ is closed \cite[Prop.~2.3]{OV_internal}, any internal subalgebra $\Alg$ of $\Gen_B$ is a Banach $\GenC$-algebra.
\end{rem}

\begin{prop}\label{prop_Gensharp_is_intersection_of_strictly_internal}
Let $X$ be a compact metric space. Then $\Gensharp X$ is an intersection of strictly internal subalgebras of $\Gencnt X$.
\end{prop}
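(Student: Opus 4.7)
The statement is almost immediate once Proposition \ref{prop_Gensharp_Banach} and Lemma \ref{lemma_strictly_internal_Gencnt} are combined, so the plan is simply to glue the two and verify one small algebraic check.

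First, I would invoke Proposition \ref{prop_Gensharp_Banach} to write
\[
\Gensharp X = \bigcap_{\substack{x,y\in X^{(0,1)} \\ x\sim y}} \{u\in\Gencnt X : u(x) = u(y)\}.
\]
It therefore suffices to show that for each fixed pair $x=(x_\eps)_\eps$, $y=(y_\eps)_\eps\in X^{(0,1)}$ with $x\sim y$, the set $\{u\in\Gencnt X : u(x) = u(y)\}$ is a strictly internal subalgebra of $\Gencnt X$.

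Second, I would apply Lemma \ref{lemma_strictly_internal_Gencnt}, which represents this set as the internal set $[(A_\eps)_\eps]$ with $A_\eps := \{u\in\Cnt(X) : u(x_\eps) = u(y_\eps)\}$. The remaining step is the routine verification that each $A_\eps$ is a Banach subalgebra (with $1$) of $\Cnt(X)$: it is the kernel of the continuous linear functional $u\mapsto u(x_\eps)-u(y_\eps)$, hence a closed linear subspace; it contains $1$ since $1(x_\eps)=1=1(y_\eps)$; and it is closed under multiplication since $(uv)(x_\eps)=u(x_\eps)v(x_\eps)=u(y_\eps)v(y_\eps)=(uv)(y_\eps)$ whenever $u,v\in A_\eps$. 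Thus $[(A_\eps)_\eps]$ is a strict representative, and $\{u\in\Gencnt X : u(x) = u(y)\}$ is a strictly internal subalgebra of $\Gencnt X$.

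Combining the two displays exhibits $\Gensharp X$ as an intersection (indexed by the pairs $(x,y)\in X^{(0,1)}\times X^{(0,1)}$ with $x\sim y$) of strictly internal subalgebras of $\Gencnt X$, which is what we wanted. There is no genuine obstacle here; the only content beyond quoting the two preceding results is the three-line check that pointwise-equality-at-two-points defines a unital, multiplicatively closed, norm-closed subspace of $\Cnt(X)$.
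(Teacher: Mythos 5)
Your proposal is correct and follows exactly the paper's own route: the proof there simply cites Proposition \ref{prop_Gensharp_Banach} together with Lemma \ref{lemma_strictly_internal_Gencnt}. Your additional check that each $A_\eps=\{u\in\Cnt(X): u(x_\eps)=u(y_\eps)\}$ is a closed unital subalgebra of $\Cnt(X)$ is the (correct) routine verification the paper leaves implicit.
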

\begin{proof}
By proposition \ref{prop_Gensharp_Banach} and lemma \ref{lemma_strictly_internal_Gencnt}.
\end{proof}

\section{Strictly internal Banach $\GenC$-algebras}\label{section_internal_Banach_algebra}
We now study the properties of strictly internal Banach $\GenC$-algebras, as introduced in the previous section.

\begin{lemma}\label{lemma_GenB_invertible}
Let $B$ be a Banach $\C$-algebra and $\Alg$ a strictly internal subalgebra of $\Gen_B$ with strict representative $(A_\eps)_\eps$. Let $u=[(u_\eps)_\eps]\in\Alg$ with $u_\eps\in A_\eps$, for small $\eps$. Let $S\subseteq (0,1)$ with $e_S\ne 0$. Then $u$ is invertible w.r.t.\ $S$ in $\Alg$ iff both the following conditions hold:
\begin{enumerate}
\item $u_\eps$ is invertible in $A_\eps$, for small $\eps\in S$;
\item there exists $N\in\N$ such that $\norm{\inv u_\eps}\le\eps^{-N}$, for small $\eps\in S$.
\end{enumerate}
\end{lemma}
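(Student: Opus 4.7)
The plan is to prove both directions by using the structure of $\Alg$ as an internal set with closed subalgebra representatives (we may assume each $A_\eps$ is closed in $B$, since $[(A_\eps)_\eps] = [(\overline{A_\eps})_\eps]$ by the remark).

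For the easy direction $(\Leftarrow)$, I would assume (1) and (2), and construct an explicit inverse w.r.t.\ $S$. Define $v_\eps := \inv{u_\eps} \in A_\eps$ for those small $\eps\in S$ for which (1) and (2) hold, and $v_\eps := 0 \in A_\eps$ otherwise. By (2), $(v_\eps)_\eps\in\Mod_B$, and since every component lies in $A_\eps$, $v := [(v_\eps)_\eps]\in\Alg$. For small $\eps\in S$ we have $u_\eps v_\eps = v_\eps u_\eps = 1_B = \chi_S(\eps)\cdot 1_B$, while for small $\eps\notin S$ both products are $0 = \chi_S(\eps)\cdot 1_B$. Hence $uv = vu = e_S\cdot 1$ in $\Alg$, so $u$ is invertible w.r.t.\ $S$.

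For the converse $(\Rightarrow)$, suppose $v = [(v_\eps)_\eps]\in\Alg$ with $v_\eps\in A_\eps$ for small $\eps$ satisfies $uv = vu = e_S\cdot 1$. Then $(u_\eps v_\eps - \chi_S(\eps)\cdot 1_B)_\eps$ and $(v_\eps u_\eps - \chi_S(\eps)\cdot 1_B)_\eps$ are negligible. Since $\chi_S(\eps)=1$ for $\eps\in S$, for all sufficiently small $\eps\in S$ we have $\norm{u_\eps v_\eps - 1_B} \le 1/2$ and $\norm{v_\eps u_\eps - 1_B}\le 1/2$. By the classical Neumann series argument in $B$, both $u_\eps v_\eps$ and $v_\eps u_\eps$ are invertible in $B$, with inverses of norm $\le 2$ given as convergent series $\sum_{k=0}^\infty (1_B - u_\eps v_\eps)^k$, resp.\ $\sum_{k=0}^\infty (1_B - v_\eps u_\eps)^k$. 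Here is the key point where closedness of $A_\eps$ is used: since $u_\eps, v_\eps, 1_B\in A_\eps$, the partial sums lie in $A_\eps$, so the limits $(u_\eps v_\eps)^{-1}$ and $(v_\eps u_\eps)^{-1}$ lie in $A_\eps$ as well. Hence $u_\eps$ admits the right inverse $v_\eps(u_\eps v_\eps)^{-1}\in A_\eps$ and the left inverse $(v_\eps u_\eps)^{-1}v_\eps\in A_\eps$, so $u_\eps$ is invertible in $A_\eps$ with $\inv{u_\eps} = v_\eps(u_\eps v_\eps)^{-1}$, giving (1).

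For (2), the same formula yields $\norm{\inv{u_\eps}} \le \norm{v_\eps}\cdot\norm{(u_\eps v_\eps)^{-1}} \le 2\norm{v_\eps}$, and moderateness of $v$ gives $\norm{v_\eps}\le \eps^{-M}$ for some $M\in\N$ and all small $\eps$, so $\norm{\inv{u_\eps}}\le \eps^{-N}$ for small $\eps\in S$ with $N := M+1$. I expect the main subtlety is precisely the passage from invertibility in $B$ to invertibility in $A_\eps$, which relies on interpreting "Banach subalgebra" as closed (equivalently, invoking the remark to reduce to closed representatives) so that the Neumann series inverse stays inside $A_\eps$; everything else is a direct componentwise translation.
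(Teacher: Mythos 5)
Your proof is correct and follows essentially the same route as the paper: the reverse direction is the same componentwise construction of $v_\eps:=\inv u_\eps$ on small $\eps\in S$ and $0$ elsewhere, and the forward direction is the paper's argument verbatim (negligibility of $u_\eps v_\eps-1$, Neumann series inside the complete algebra $A_\eps$ giving $\norm{\inv{(u_\eps v_\eps)}}\le 2$, then the one-sided inverses $v_\eps\inv{(u_\eps v_\eps)}$ and $\inv{(v_\eps u_\eps)}v_\eps$ in $A_\eps$, with the moderate bound on $\norm{\inv u_\eps}$ coming from moderateness of $v$). The only superfluous step is the appeal to $[(A_\eps)_\eps]=[(\overline{A_\eps})_\eps]$: a strict representative consists of Banach subalgebras by definition, so the $A_\eps$ are already complete.
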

\begin{proof}
$\Rightarrow$: let $uv = vu = e_S$, with $u=[(u_\eps)_\eps]$, $v=[(v_\eps)_\eps]\in\Alg$. W.l.o.g., $u_\eps, v_\eps\in A_\eps$, $\forall\eps$. Let $n_\eps:= u_\eps v_\eps - 1\in A_\eps$, for $\eps\in S$ and $n_\eps=0$ otherwise. Then $(n_\eps)_\eps\in \Null_B$. In particular, for small $\eps\in S$, $\norm{n_\eps} \le 1/2$, whence $\inv{(u_\eps v_\eps)}\in A_\eps$ exists, (since $A_\eps$ are Banach $\C$-algebras) and $\norm{\inv{(u_\eps v_\eps)}} \le \frac{1}{1-\norm{n_\eps}} \le 2$. Then $v_\eps \inv{(u_\eps v_\eps)}\in A_\eps$ is a right inverse for $u_\eps$, for small $\eps\in S$. Similarly, $\inv{(v_\eps u_\eps)}v_\eps\in A_\eps$ is a left inverse for $u_\eps$, for small $\eps\in S$. Hence $u_\eps$ is invertible in $A_\eps$, for small $\eps\in S$.\\
$\Leftarrow$: let $v_\eps:=\inv u_\eps\in A_\eps$, if $\eps\in S$ sufficiently small, and $v_\eps=0$ otherwise. Then $v:=[(v_\eps)_\eps]\in \Alg$ and $u v = v u = e_S$.
\end{proof}

\begin{cor}\label{cor_GenB_strictly_noninvertible}
Let $B$ be a Banach $\C$-algebra and $\Alg$ a strictly internal subalgebra of $\Gen_B$ with strict representative $(A_\eps)_\eps$. Let $u=[(u_\eps)_\eps]\in\Alg$ with $u_\eps\in A_\eps$, for small $\eps$. Then $u$ is strictly non-invertible in $\Alg$ iff
\[(\forall n\in\N) (\exists \eta_n\in (0,1)) (\forall \eps\le\eta_n) (u_\eps \text{ is not invertible in }A_\eps \text{ or } \norm[]{\inv u_\eps}\ge \eps^{-n}).\]
\end{cor}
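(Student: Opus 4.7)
The statement is a direct unpacking of Lemma \ref{lemma_GenB_invertible} applied to the definition of strict non-invertibility, namely that $u$ fails to be invertible w.r.t.\ every $S\subseteq(0,1)$ with $e_S\ne 0$. The plan is to prove both directions by contraposition.

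For the forward implication ($\Rightarrow$), I would assume the displayed condition fails. Then there is some $n\in\N$ such that for every $\eta\in(0,1)$, one can find $\eps\le\eta$ with $u_\eps$ invertible in $A_\eps$ and $\norm{\inv u_\eps}<\eps^{-n}$. Extracting a strictly decreasing sequence $\eps_k\to 0$ with this property, I set $S:=\{\eps_k:k\in\N\}$. Since $S$ has accumulation at $0$, the characteristic function $\chi_S$ is not eventually $0$ near $\eps=0$, hence $e_S\ne 0$. Both conditions of Lemma \ref{lemma_GenB_invertible} are then satisfied on $S$ (with $N=n$), so $u$ is invertible w.r.t.\ $S$, contradicting strict non-invertibility.

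For the backward implication ($\Leftarrow$), I would assume $u$ is invertible w.r.t.\ some $S\subseteq(0,1)$ with $e_S\ne 0$. By Lemma \ref{lemma_GenB_invertible}, there is some $N\in\N$ such that for small $\eps\in S$, $u_\eps$ is invertible in $A_\eps$ with $\norm{\inv u_\eps}\le\eps^{-N}$. Applying the assumed condition with $n:=N+1$ produces $\eta_{N+1}\in(0,1)$ such that for all $\eps\le\eta_{N+1}$, either $u_\eps$ is not invertible in $A_\eps$ or $\norm{\inv u_\eps}\ge\eps^{-(N+1)}$. Since $e_S\ne 0$ forces $S$ to contain elements arbitrarily close to $0$, I can pick $\eps\in S$ that is both $\le\eta_{N+1}$ and small enough for the Lemma bound to apply; for such $\eps$, $u_\eps$ is invertible with $\norm{\inv u_\eps}\le\eps^{-N}<\eps^{-(N+1)}$, contradicting the choice of $\eta_{N+1}$.

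There is no real obstacle; the only point that requires a moment of care is the equivalence $e_S\ne 0 \iff S$ has accumulation at $0$, which is immediate from the definition of $e_S$ and the fact that $\chi_S$ is $\{0,1\}$-valued. Otherwise the proof is a clean bookkeeping exercise combining the definition of strict non-invertibility with Lemma \ref{lemma_GenB_invertible}.
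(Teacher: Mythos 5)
Your proposal is correct and follows essentially the same route as the paper: the forward direction is proved by contradiction, extracting a decreasing sequence $\eps_k\to 0$ on which both conditions of Lemma \ref{lemma_GenB_invertible} hold and taking $S$ to be that sequence, while the backward direction is a direct application of the same lemma (which you merely spell out in more detail via the choice $n=N+1$). No gaps; the remark that $e_S\ne 0$ exactly when $S$ contains points arbitrarily close to $0$ is the same observation the paper uses implicitly.
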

\begin{proof}
$\Rightarrow$: If the conclusion does not hold, we find $N\in\N$ and a decreasing sequence $(\eps_n)_{n\in\N}$ tending to $0$ such that $u_{\eps_n}$ is invertible in $A_{\eps_n}$ and $\norm[]{\inv u_{\eps_n}}\le \eps_n^{-N}$, for each $n\in\N$. By lemma \ref{lemma_GenB_invertible}, $u$ is invertible w.r.t.\ $T:=\{\eps_n: n\in\N\}$, contradicting the hypotheses.\\
$\Leftarrow$: by lemma \ref{lemma_GenB_invertible}.
\end{proof}

\begin{prop}\label{prop_GenB_invertible}
Let $B$ be a Banach $\C$-algebra and $\Alg$ be an intersection of strictly internal subalgebras of $\Gen_B$. Let $u\in\Alg$.
\begin{enumerate}
\item Let $S\subseteq(0,1)$ with $e_S\ne 0$. If $\norm{u}e_S\ll 1$ in $\GenR$, then $1-u$ is invertible w.r.t.\ $S$ in $\Alg$.
\item $\spec_{\Alg}(u)\subseteq\{\lambda\in\GenC: \abs{\lambda}\le \norm{u}\}$.
\end{enumerate}
\end{prop}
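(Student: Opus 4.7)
Write $\Alg = \bigcap_{i\in I}\Alg_i$ with each $\Alg_i$ a strictly internal subalgebra of $\Gen_B$, say $\Alg_i = [((A_i)_\eps)_\eps]$ for Banach subalgebras $(A_i)_\eps$ of $B$. Since every internal set is closed in $\Gen_B$, $\Alg$ is a closed sub-$\GenC$-algebra of $\Gen_B$ and hence itself a Banach $\GenC$-algebra by lemma \ref{lemma_closed_subalgebra_in_Banach_algebra}; so the Neumann-type tools of \S 3 are available in $\Alg$.

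For (1), my plan is to produce an inverse of $1-e_S u$ in $\Gen_B$ that automatically lies in every $\Alg_i$. Unwinding $\norm{u}e_S\ll 1$ yields $m\in\N$ with $\norm{u_\eps}\le 1-\eps^m$ for small $\eps\in S$ (for any representative of $u$). For each $i$, choose a representative $(u_{i,\eps})_\eps$ of $u$ with $u_{i,\eps}\in (A_i)_\eps$; then $\norm{u_{i,\eps}}\le 1-\eps^m/2$ for small $\eps\in S$, and since $(A_i)_\eps$ is a Banach $\C$-subalgebra of $B$, the classical Neumann series gives $(1-u_{i,\eps})^{-1}\in (A_i)_\eps$ of norm at most $2\eps^{-m}$ for small $\eps\in S$. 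Lemma \ref{lemma_GenB_invertible} then says $1-u$ is invertible w.r.t.\ $S$ in $\Alg_i$, and lemma \ref{lemma_invertible_wrt_S}(2) converts this into invertibility of $1-e_S u$ in $\Alg_i$. But inverses in the ambient algebra $\Gen_B$ are unique, so the element $w:=(1-e_S u)^{-1}\in\Gen_B$ lies in every $\Alg_i$, hence in $\Alg$, and a final application of lemma \ref{lemma_invertible_wrt_S}(2) yields (1).

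For (2), I argue by contrapositive. Assume $\abs{\lambda}\not\le\norm{u}$; unwinding the order on $\GenR$, some representatives and some $n\in\N$ make $S:=\{\eps\in(0,1): \abs{\lambda_\eps}>\norm{u_\eps}+\eps^n\}$ accumulate at $0$, so $e_S\ne 0$. Fix $M\in\N$ with $\abs{\lambda_\eps}\le\eps^{-M}$ and define $\tilde\mu\in\GenC$ by $\tilde\mu_\eps:=\lambda_\eps^{-1}$ for $\eps\in S$ (legitimate since $\abs{\lambda_\eps}\ge\eps^n$ there) and $\tilde\mu_\eps:=0$ otherwise. Then $\tilde\mu\lambda=e_S$, and for $\eps\in S$
\[
\abs{\tilde\mu_\eps}\norm{u_\eps}\le \frac{\abs{\lambda_\eps}-\eps^n}{\abs{\lambda_\eps}}\le 1-\eps^{n+M},
\]
while $\tilde\mu_\eps u_\eps=0$ for $\eps\notin S$, so $\norm{\tilde\mu u}\ll 1$. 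Applying part (1) with $(0,1)$ in place of $S$, $1-\tilde\mu u$ is invertible in $\Alg$. Using $\tilde\mu e_{\co S}=0$ and $\tilde\mu\lambda=e_S$, a direct computation yields
\[
(\tilde\mu-e_{\co S})\bigl(e_S(u-\lambda)+e_{\co S}\bigr) = \bigl(e_S(u-\lambda)+e_{\co S}\bigr)(\tilde\mu-e_{\co S}) = -(1-\tilde\mu u),
\]
so $e_S(u-\lambda)+e_{\co S}$ is invertible in $\Alg$ with inverse $-(\tilde\mu-e_{\co S})(1-\tilde\mu u)^{-1}$. Lemma \ref{lemma_invertible_wrt_S}(2) then gives invertibility of $u-\lambda$ w.r.t.\ $S$ in $\Alg$, contradicting $\lambda\in\spec_{\Alg}(u)$.

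The main obstacle is the translation between the generalized-number order on $\GenR$ and the quantitative $\eps$-by-$\eps$ Banach-algebra estimates: strict internality is precisely what makes the classical Neumann bound $\norm{(1-u_\eps)^{-1}}\le 1/(1-\norm{u_\eps})$ available inside each $(A_i)_\eps$, and the passage from $\Alg_i$-inverses to a simultaneous inverse in $\bigcap_i\Alg_i$ relies on uniqueness of the inverse in $\Gen_B$. In part (2) the delicate step is choosing $S$ and the partial inverse $\tilde\mu$ of $\lambda$ so that the hypothesis of part (1) holds globally on $(0,1)$ rather than merely on $S$, which is what lets a single application of part (1) close the argument.
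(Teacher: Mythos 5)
Your proposal is correct and follows essentially the same route as the paper: the strictly internal case of (1) via the classical Neumann series on representatives together with lemma \ref{lemma_GenB_invertible}, the intersection case via uniqueness of the (partial) inverse in $\Gen_B$, and (2) by locating $S$ with $\abs{\lambda}e_S\ge\norm{u}e_S+\caninf^m e_S$, inverting $\lambda$ on $S$ and reducing to part (1) through lemma \ref{lemma_invertible_wrt_S}. The only cosmetic difference is that you make the partial inverse $\tilde\mu$ vanish off $S$ so that part (1) can be applied with $(0,1)$ and then verify the inverse of $e_S(u-\lambda)+e_{\co S}$ explicitly, whereas the paper applies part (1) w.r.t.\ $S$ itself and cites lemma \ref{lemma_invertible_wrt_S} directly.
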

\begin{proof}
(1) Let first $\Alg$ be strictly internal with strict representative $(A_\eps)_\eps$ and $u=[(u_\eps)_\eps]$ with $u_\eps\in A_\eps$, $\forall \eps$. There exists $m\in\N$ such that $\norm{u_\eps}\le 1 - \eps^m$, for small $\eps\in S$. Hence $(1-u_\eps)^{-1}\in A_\eps$ exists and $\norm{(1-u_\eps)^{-1}}\le \frac{1}{1-\norm{u_\eps}}\le\eps^{-m}$, for small $\eps\in S$. By lemma \ref{lemma_GenB_invertible}, $1-u$ is invertible w.r.t.\ $S$ in $\Alg$.\\
Now let $\Alg=\bigcap_{i\in I} \Alg_i$, with $\Alg_i$ strictly internal, for each $i$. Let $u\in\Alg$. By the previous case, there exists an inverse $v_i\in \Alg_i$ w.r.t.\ $S$ of $1-u$. By lemma \ref{lemma_invertible_wrt_S}, $v_i e_S$ does not depend on $i$, and thus is an inverse w.r.t.\ $S$ of $1-u$ in $\Alg$.\\
(2) Let $\lambda\in\GenC$ with $\abs{\lambda}\nleq \norm{u}$. Then there exists $S\subseteq(0,1)$ with $e_S\ne 0$ and $m\in\N$ such that $\abs{\lambda}e_S\ge\norm{u}e_S + \caninf^m e_S$. By lemma \ref{lemma_invertible_wrt_S}, $\lambda$ is invertible w.r.t.\ $S$, say $\lambda\mu=e_S$. Hence $\norm{\mu u}e_S \le (1 - \abs{\mu}\caninf^m) e_S\ll 1$. By part 1, $1 - \mu u$ is invertible w.r.t.\ $S$ in $\Alg$. Hence also $\lambda - u$ is invertible w.r.t.\ $S$ in $\Alg$ by lemma \ref{lemma_invertible_wrt_S}, and $\lambda\notin \spec_\Alg(u)$.
\end{proof}

\begin{prop}\label{prop_spec_GenB_internal}
Let $B$ be a Banach $\C$-algebra and $\Alg$ a strictly internal subalgebra of $\Gen_B$. Let $u\in\Alg$.
\begin{enumerate}
\item $\resolv_\Alg(u)$ is the union of an increasing sequence of internal subsets of $\GenC$.
\item $\spec_\Alg(u)$ is the intersection of a decreasing sequence of internal subsets of $\GenC$.
\item If $C\subseteq \resolv_{\Gen_B}(u)$ and $C$ is internal, then $C\cap \resolv_\Alg(u)$ and $C\cap \spec_\Alg(u)$ are internal.
\end{enumerate}
\end{prop}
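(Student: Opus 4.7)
The plan is to work with a strict representative $(A_\eps)_\eps$ of $\Alg$ and a fixed representative $(u_\eps)_\eps$ of $u$ with $u_\eps \in A_\eps$ for small $\eps$, and to construct the sequences of internal subsets explicitly in terms of the inverse norms $\|(u_\eps-\lambda)^{-1}\|_{A_\eps}$. For $N, n \in \N$, I would set
\[
R_{N,\eps} := \{\lambda\in\C : u_\eps-\lambda \text{ invertible in }A_\eps,\ \|(u_\eps-\lambda)^{-1}\|_{A_\eps} \le \eps^{-N}\},
\]
\[
S_{n,\eps} := \{\lambda\in\C : u_\eps-\lambda \text{ not invertible in }A_\eps \text{ or } \|(u_\eps-\lambda)^{-1}\|_{A_\eps} \ge \eps^{-n}\},
\]
and $R_N:=[(R_{N,\eps})_\eps]$, $S_n:=[(S_{n,\eps})_\eps]$. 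These are internal, and $(R_N)_N$ is increasing, $(S_n)_n$ decreasing.

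Part~1 is then a direct rewriting of Lemma~\ref{lemma_GenB_invertible} (applied with $S=(0,1)$ to $u-\lambda$): $\lambda\in\resolv_\Alg(u)$ iff some representative $(\lambda_\eps)_\eps$ of $\lambda$ satisfies $\lambda_\eps\in R_{N,\eps}$ for small $\eps$ and some $N$, i.e.\ iff $\lambda\in\bigcup_N R_N$. For Part~3, given internal $C=[(C_\eps)_\eps]\subseteq\resolv_{\Gen_B}(u)$, I would put $C_\eps':=\{\lambda\in C_\eps:u_\eps-\lambda \text{ invertible in }A_\eps\}$ and $D_\eps:=C_\eps\setminus C_\eps'$. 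The key observation is that for $\lambda\in C$, invertibility of $u-\lambda$ in $\Gen_B$ already supplies a moderate bound $\|(u_\eps-\lambda_\eps)^{-1}\|_B\le \eps^{-N}$ by Lemma~\ref{lemma_GenB_invertible} in $\Gen_B$; when $u_\eps-\lambda_\eps$ is also invertible in the subalgebra $A_\eps$, the $A_\eps$-inverse coincides with the $B$-inverse and inherits the same norm bound, so Lemma~\ref{lemma_GenB_invertible} applied in $\Alg$ yields invertibility in $\Alg$. Conversely, the same moderate $B$-bound forces the inverse-norm alternative in the definition of $S_{n,\eps}$ to fail for $n>N$, so membership in $C\cap\spec_\Alg(u)$ reduces to non-invertibility in $A_\eps$. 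This gives $C\cap\resolv_\Alg(u)=[(C_\eps')_\eps]$ and $C\cap\spec_\Alg(u)=[(D_\eps)_\eps]$.

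For Part~2, the inclusion $\spec_\Alg(u)\subseteq\bigcap_n S_n$ is immediate from Corollary~\ref{cor_GenB_strictly_noninvertible} applied to the chosen representative. The reverse inclusion is the delicate one: if $\lambda\in S_{n+1}$, the definition only provides \emph{some} representative $(\lambda_\eps')_\eps$ with $\lambda_\eps'\in S_{n+1,\eps}$ for small $\eps$, and one has to transfer this to the originally chosen representative $(\lambda_\eps)_\eps$. I would argue by contradiction: if $\lambda_\eps\notin S_{n,\eps}$ on a sequence $\eps_k\to 0$, then $u_{\eps_k}-\lambda_{\eps_k}$ is invertible in $A_{\eps_k}$ with inverse norm $<\eps_k^{-n}$, and since $\lambda_\eps-\lambda_\eps'$ is negligible, a Neumann-series perturbation inside the Banach algebra $A_{\eps_k}$ produces an inverse for $u_{\eps_k}-\lambda_{\eps_k}'$ of norm $\le 2\eps_k^{-n}<\eps_k^{-(n+1)}$ for large $k$, contradicting $\lambda_\eps'\in S_{n+1,\eps}$. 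Hence $\lambda_\eps\in S_{n,\eps}$ for small $\eps$ for every $n$, and Corollary~\ref{cor_GenB_strictly_noninvertible} gives $\lambda\in\spec_\Alg(u)$.

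The main obstacle is precisely this representative-shift in Part~2: the sets $S_{n,\eps}$ are not stable under arbitrary negligible perturbations of $\lambda$, so one cannot simply read off membership of the chosen representative. The Neumann-series perturbation in $A_\eps$, together with the slack between the thresholds $\eps^{-n}$ and $\eps^{-(n+1)}$, is what absorbs the perturbation and makes the characterisation $\spec_\Alg(u)=\bigcap_n S_n$ work. Parts~1 and~3 are comparatively routine given Lemma~\ref{lemma_GenB_invertible}.
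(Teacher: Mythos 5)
Your proposal is correct and takes essentially the same route as the paper: your $R_{N,\eps}$, $S_{n,\eps}$ and the part-3 decomposition into $C_\eps\cap\resolv_{A_\eps}(u_\eps)$ and $C_\eps\cap\spec_{A_\eps}(u_\eps)$ coincide with the paper's choices, and everything rests on Lemma~\ref{lemma_GenB_invertible} and Corollary~\ref{cor_GenB_strictly_noninvertible} exactly as there. Your Neumann-series treatment of the representative shift in part~2 merely makes explicit a stability detail the paper leaves implicit; it is not a different method.
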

\begin{proof}
Let $(A_\eps)_\eps$ be a strict repres.\ of $\Alg$ and $u=[(u_\eps)_\eps]$ with $u_\eps\in A_\eps$, for small $\eps$.\\
(1) For $n\in\N$, let $C_{n,\eps}=\{\lambda\in \resolv_{A_\eps}(u_\eps): \norm{(u_\eps - \lambda)^{-1}}\le\eps^{-n}\}$. Then $\bigcup_{n\in\N}[(C_{n,\eps})]=\resolv_\Alg(u)$ by lemma \ref{lemma_GenB_invertible}.\\
(2) For $n\in\N$, let $C_{n,\eps}=\spec_{A_\eps}(u_\eps)\bigcup \{\lambda\in\resolv_{A_\eps}(u_\eps): \norm{(u_\eps - \lambda)^{-1}}\ge \eps^{-n}\}$. Then $\spec_\Alg(u) = \bigcap_{n\in\N}[(C_{n,\eps})_\eps]$ lemma \ref{lemma_GenB_invertible} and by corollary \ref{cor_GenB_strictly_noninvertible}.\\
(3) Let $C=[(C_\eps)_\eps]$. We show that $C\cap\resolv_{\Alg}(u) = [(C_\eps\cap \resolv_{A_\eps}(u_\eps))_\eps]$ and $C\cap\spec_{\Alg}(u) = [(C_\eps\cap \spec_{A_\eps}(u_\eps))_\eps]$.
All four sets are contained in $C\subseteq\resolv_{\Gen_B}(u)$. By lemma \ref{lemma_GenB_invertible}, $(u_\eps - \lambda_\eps)^{-1}\in B$ exist for small $\eps$, and $(\norm{(u_\eps - \lambda_\eps)^{-1}})_\eps$ is moderate, for each $[(\lambda_\eps)_\eps]$ in any of the four sets. Then the equalities follow by lemma \ref{lemma_GenB_invertible} and corollary \ref{cor_GenB_strictly_noninvertible}.
\end{proof}

\begin{df}
Let $B$ be a Banach $\C$-algebra. Let $\Alg$ be a strictly internal subalgebra of $\Gen_B$. For $u\in\Alg$, we define the \defstyle{interior spectrum} of $u$ as
\[
\intspec_{\Alg}(u)=\bigcup_{(A_\eps)_\eps \text{ strict repr.\ of } \Alg} \bigcup_{(u_\eps)_\eps\in A_\eps^{(0,1)} \text{ repr.\ of }u} [(\spec_{A_\eps}(u_\eps))_\eps].
\]
If the algebra is clear from the context, we simply write $\intspec(u)$.
\end{df}

\begin{prop}
Let $B$ be a commutative Banach $\C$-algebra and $\Alg$ a strictly internal subalgebra of $\Gen_B$. Let $u\in\Alg$. Then
\[
\intspec_{\Alg}(u)= \bigcup_{(A_\eps)_\eps\text{ strict rep.\ of }\Alg}[(\spec_{A_\eps} (u_\eps))_\eps],
\]
where $[(\spec_{A_\eps} (u_\eps))_\eps]$ is independent of the representative $(u_\eps)_\eps$ of $u$ with $u_\eps\in A_\eps$, $\forall\eps$.
\end{prop}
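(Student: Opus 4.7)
The statement contains two claims: first, that for each strict representative $(A_\eps)_\eps$ of $\Alg$, the internal set $[(\spec_{A_\eps}(u_\eps))_\eps]$ does not depend on the choice of representative $(u_\eps)_\eps$ of $u$ with $u_\eps\in A_\eps$ for small $\eps$; second, that this independence turns the definition of $\intspec_\Alg(u)$ into the stated expression. The second claim is immediate once the first is established, so the whole plan reduces to proving the independence.

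To prove independence, I would fix a strict representative $(A_\eps)_\eps$ of $\Alg$ and two representatives $(u_\eps)_\eps$, $(v_\eps)_\eps$ of $u$ with $u_\eps, v_\eps \in A_\eps$ for small $\eps$; set $w_\eps := u_\eps - v_\eps$, so that $(w_\eps)_\eps \in \Null_B$. The key classical input is that each $A_\eps$ is a unital commutative Banach $\C$-algebra (being a Banach subalgebra containing $1$ of the commutative algebra $B$), so Gelfand theory applies: the set $M_{A_\eps}$ of characters of $A_\eps$ is nonempty and $\spec_{A_\eps}(a) = \{m(a) : m \in M_{A_\eps}\}$ for every $a \in A_\eps$. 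Since characters are contractive ($\abs{m(a)}\le \norm{a}$), every $\lambda = m(u_\eps) \in \spec_{A_\eps}(u_\eps)$ is matched by $\mu := m(v_\eps) \in \spec_{A_\eps}(v_\eps)$ with $\abs{\lambda - \mu} = \abs{m(w_\eps)} \le \norm{w_\eps}$. This Hausdorff-continuity estimate for the spectrum in commutative Banach algebras is the crucial ingredient.

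Applied pointwise: given $\lambda = [(\lambda_\eps)_\eps] \in [(\spec_{A_\eps}(u_\eps))_\eps]$ with $\lambda_\eps \in \spec_{A_\eps}(u_\eps)$ for small $\eps$, I would select $\mu_\eps \in \spec_{A_\eps}(v_\eps)$ with $\abs{\lambda_\eps - \mu_\eps} \le \norm{w_\eps}$. Since $(\norm{w_\eps})_\eps$ is negligible, $(\mu_\eps)_\eps$ is moderate (being negligibly close to the moderate $(\lambda_\eps)_\eps$) and differs from $(\lambda_\eps)_\eps$ by a null family, so $[(\mu_\eps)_\eps] = \lambda$, proving $\lambda \in [(\spec_{A_\eps}(v_\eps))_\eps]$. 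Symmetry yields the reverse inclusion. The formula of the proposition then follows immediately from the definition of $\intspec_\Alg(u)$, since the inner union over representatives $(u_\eps)_\eps$ of $u$ with $u_\eps\in A_\eps$ collapses to the single set $[(\spec_{A_\eps}(u_\eps))_\eps]$ for each strict representative $(A_\eps)_\eps$.

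The main, and really only, technical point is the Hausdorff-continuity bound $\abs{\lambda - \mu} \le \norm{u_\eps - v_\eps}$. This genuinely requires commutativity: in a noncommutative Banach algebra one only has upper semicontinuity of the spectrum, which is insufficient to lift each $\lambda_\eps \in \spec_{A_\eps}(u_\eps)$ to some $\mu_\eps \in \spec_{A_\eps}(v_\eps)$ within a negligible error. The hypothesis that $B$ (and hence every $A_\eps$) is commutative is precisely what secures this bound via the Gelfand transform.
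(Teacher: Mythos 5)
Your proof is correct and follows essentially the same route as the paper: the whole argument rests on the commutative Hausdorff-continuity estimate that every $\lambda_\eps\in\spec_{A_\eps}(u_\eps)$ lies within $\norm{u_\eps-v_\eps}$ of some $\mu_\eps\in\spec_{A_\eps}(v_\eps)$, which is then transferred to the internal sets using negligibility of $(u_\eps-v_\eps)_\eps$. The only difference is cosmetic: the paper obtains this estimate by citing the classical inclusion $\spec(a)\subseteq\spec(a-b)+\spec(b)\subseteq\overline B(0,\norm{a-b})+\spec(b)$ for commuting elements, whereas you reprove that inclusion directly via Gelfand characters of the commutative algebras $A_\eps$.
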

\begin{proof}
Let $u=[(u_\eps)_\eps]=[(v_\eps)_\eps]$ with $u_\eps, v_\eps\in A_\eps$, $\forall\eps$. Since $\spec_{A_\eps}(u_\eps)\subseteq \spec_{A_\eps}(u_\eps-v_\eps) + \spec_{A_\eps}(v_\eps)\subseteq \overline B(0, \norm{u_\eps-v_\eps}) + \spec_{A_\eps}(v_\eps)$ \cite[3.2.10]{Kadison}, $[(\spec_{A_\eps}(u_\eps))_\eps]\subseteq [(\spec_{A_\eps}(v_\eps))_\eps]$.
\end{proof}

\begin{thm}\label{thm_non_empty_spectra}
Let $B$ be a Banach $\C$-algebra and $\Alg$ a strictly internal subalgebra of $\Gen_B$. Then
\begin{enumerate}
\item $\intspec_{\Alg}(u) \subseteq \spec_{\Alg}(u)$, $\forall u\in\Alg$.
\item $\intspec_{\Alg}(u) \subseteq\functspec_{\Alg}(u) \subseteq\spec_{\Alg}(u)$, $\forall u\in\Alg$, if $B$ is commutative.
\item $\intspec_{\Alg}(u)\ne\emptyset$, $\forall u\in\Alg$.
\end{enumerate}
\end{thm}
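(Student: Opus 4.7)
The plan is to handle the three parts of the theorem separately, relying on the structural lemmas proved for strictly internal subalgebras (in particular lemma~\ref{lemma_GenB_invertible} and corollary~\ref{cor_GenB_strictly_noninvertible}) and on three standard facts from classical Banach-algebra theory: that the spectrum of an element of a unital Banach $\C$-algebra is nonempty and bounded by the norm, that in the commutative case each spectral value of $u_{\eps}$ is realized by a multiplicative $\C$-linear functional $m_{\eps}$, and that any such $m_{\eps}$ satisfies $\abs{m_{\eps}(v)}\le\norm{v}$ for all $v$.

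For part (1), I would pick $\lambda=[(\lambda_{\eps})_{\eps}]\in\intspec_{\Alg}(u)$ with witnessing strict representative $(A_{\eps})_{\eps}$ of $\Alg$ and representative $(u_{\eps})_{\eps}$ of $u$ with $u_{\eps}\in A_{\eps}$ and $\lambda_{\eps}\in\spec_{A_{\eps}}(u_{\eps})$ for small $\eps$. Since $u_{\eps}-\lambda_{\eps}$ is then not invertible in $A_{\eps}$ for small $\eps$, the condition in corollary~\ref{cor_GenB_strictly_noninvertible} is trivially satisfied (the first disjunct always holds), so $u-\lambda$ is strictly non-invertible in $\Alg$, i.e., $\lambda\in\spec_{\Alg}(u)$.

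For part (2), the inclusion $\functspec_{\Alg}(u)\subseteq\spec_{\Alg}(u)$ is lemma~\ref{lemma_fsp_sub_sp}. For $\intspec_{\Alg}(u)\subseteq\functspec_{\Alg}(u)$, again fix $\lambda\in\intspec_{\Alg}(u)$ with witnesses $(A_{\eps})_{\eps}$, $(u_{\eps})_{\eps}$. Since $B$, and hence each $A_{\eps}$, is commutative and unital, classical Gelfand theory provides multiplicative $\C$-linear functionals $m_{\eps}\colon A_{\eps}\to\C$ with $m_{\eps}(u_{\eps})=\lambda_{\eps}$ for small $\eps$ (set $m_{\eps}=0$ elsewhere); each such $m_{\eps}$ satisfies $\abs{m_{\eps}(v)}\le\norm{v}_{B}$ for all $v\in A_{\eps}$. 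Define $m\colon\Alg\to\GenC$ by $m([(v_{\eps})_{\eps}]):=[(m_{\eps}(v_{\eps}))_{\eps}]$, where the representative is chosen with $v_{\eps}\in A_{\eps}$. The contractivity bound immediately gives both moderateness of $(m_{\eps}(v_{\eps}))_{\eps}$ and independence of the choice of representative (since two such representatives differ by an element of $\Null_{B}$). Pointwise verification shows that $m$ is a unital $\GenC$-algebra morphism into $\GenC$, surjective because $m(\mu\cdot 1)=\mu$ for $\mu\in\GenC$, and by construction $m(u)=\lambda$; hence $\lambda\in\functspec_{\Alg}(u)$.

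For part (3), I would fix any strict representative $(A_{\eps})_{\eps}$ of $\Alg$ and any representative $(u_{\eps})_{\eps}$ of $u$ with $u_{\eps}\in A_{\eps}$ for every $\eps$ (adjusting on a negligible set of $\eps$ if necessary). The classical result that the spectrum in a unital Banach $\C$-algebra is nonempty and contained in the closed disc of radius $\norm{u_{\eps}}$ lets us choose, for each $\eps$, some $\lambda_{\eps}\in\spec_{A_{\eps}}(u_{\eps})$ with $\abs{\lambda_{\eps}}\le\norm{u_{\eps}}$. Moderateness of $(u_{\eps})_{\eps}$ then forces moderateness of $(\lambda_{\eps})_{\eps}$, so $\lambda:=[(\lambda_{\eps})_{\eps}]\in\GenC$ belongs to $[(\spec_{A_{\eps}}(u_{\eps}))_{\eps}]\subseteq\intspec_{\Alg}(u)$.

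The only non-routine step is the gluing argument in part~(2); everything else is a direct transfer from classical Banach-algebra theory via the strictness of the representative. The main point to be careful about is representative-independence of $m$, which is where the automatic continuity bound $\norm{m_{\eps}}\le 1$ for multiplicative linear functionals on commutative unital Banach $\C$-algebras is essential.
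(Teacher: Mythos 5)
Your proposal is correct and follows essentially the same route as the paper: part (1) via lemma \ref{lemma_GenB_invertible} (you phrase it through its corollary \ref{cor_GenB_strictly_noninvertible}, which is the same content), part (2) by lifting classical multiplicative functionals $m_\eps$ on the commutative $A_\eps$ with $m_\eps(u_\eps)=\lambda_\eps$ and using $\abs{m_\eps(v)}\le\norm v$ for well-definedness, and part (3) by choosing $\lambda_\eps\in\spec_{A_\eps}(u_\eps)$ with $\abs{\lambda_\eps}\le\norm{u_\eps}$ to get moderateness. Your extra care about representative-independence of $m$ only makes explicit what the paper leaves implicit.
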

\begin{proof}
(1) Let $\lambda\in\intspec_\Alg(u)$. So $\lambda_\eps\in\spec_{A_\eps}(u_\eps)$, $\forall\eps$, where $(A_\eps)_\eps$ is a strict representative of $\Alg$, $\lambda=[(\lambda_\eps)_\eps]$ and $u=[(u_\eps)_\eps]$ with $u_\eps \in A_\eps$, $\forall\eps$. If $\lambda\notin\spec_\Alg(u)$, then there exists $S\subseteq (0,1)$ with $e_S\ne 0$ such that $u-\lambda$ is invertible w.r.t.\ $S$. By lemma \ref{lemma_GenB_invertible}, $u_\eps-\lambda_\eps$ is invertible in $A_\eps$, for small $\eps\in S$, a contradiction.\\
(2) Let $\lambda\in\intspec_\Alg(u)$. So $\lambda_\eps\in\spec_{A_\eps}(u_\eps)$, $\forall\eps$, where $(A_\eps)_\eps$ is a strict representative of $\Alg$, $\lambda=[(\lambda_\eps)_\eps]$ and $u=[(u_\eps)_\eps]$ with $u_\eps \in A_\eps$, $\forall\eps$. As $A_\eps$ are commutative Banach $\C$-algebras, there exist multiplicative $\C$-linear functionals $m_\eps\ne 0$ with $m_\eps(u_\eps)=\lambda_\eps$, $\forall\eps$ (e.g., \cite[3.2.11]{Kadison}). As $\abs{m_\eps(v)}\le\norm{v}$, $\forall v\in A_\eps$, the map $m$: $\Alg\to\GenC$: $m([(v_\eps)_\eps])=[(m_\eps(v_\eps))_\eps]$ is well-defined and is a multiplicative $\GenC$-linear functional with $m(1)=1$. Hence $m\in\Smlf{\Alg}$ and $m(u)=\lambda$. So $\intspec_\Alg(u)\subseteq\functspec_\Alg(u)$. The other inclusion follows by lemma \ref{lemma_fsp_sub_sp}.\\
(3) Let $u=[(u_\eps)_\eps]\in\Alg=[(A_\eps)_\eps]$ with $A_\eps$ Banach $\C$-algebras and $u_\eps\in A_\eps$, $\forall\eps$. Then there exist $\lambda_\eps\in\spec_{A_\eps}(u_\eps)$ with $\abs{\lambda_\eps}\le\norm{u_\eps}$, $\forall\eps$ (e.g., \cite[3.2.3]{Kadison}).
So $(\lambda_\eps)_\eps\in\Mod_\C$, and $\lambda:=[(\lambda_\eps)_\eps]\in\intspec_\Alg(u)$.
\end{proof}
\begin{rem}
It is necessary to consider only strict representatives of $\Alg$ in the definition of $\intspec_\Alg(u)$ in order to have $\intspec_\Alg(u)\subseteq \spec_\Alg(u)$. E.g., let $B=\Cnt([0,1])$, $u(x)=x+1$, $A_\eps=\C[x]$, $\forall\eps$. By Weierstrass' approximation theorem, $\overline{\C[x]}=\Cnt([0,1])$, so $[(A_\eps)_\eps]=\Gen_B$. Hence $0\notin\spec_{\Gen_B}(u)$. But $0\in \spec_{A_\eps}(x+1)$, for each $\eps$.
\end{rem}

\begin{cor}\label{cor_non_empty_spectra}
Let $B$ be a Banach $\C$-algebra and $\Alg$ a sub-$\GenC$-algebra (with $1$) of $\Gen_B$. Then $\spec_\Alg(u)\ne\emptyset$, $\forall u\in\Alg$.
\end{cor}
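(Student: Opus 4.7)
The plan is to reduce the general statement to theorem \ref{thm_non_empty_spectra}, which already gives non-emptiness of the interior spectrum in any strictly internal subalgebra of $\Gen_B$. The key observation is that $\Gen_B$ itself is a strictly internal subalgebra of $\Gen_B$: it has the constant strict representative $(B)_\eps$, so theorem \ref{thm_non_empty_spectra} is directly applicable to $\Gen_B$.

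Concretely, given $u \in \Alg \subseteq \Gen_B$, I would first apply theorem \ref{thm_non_empty_spectra}(3) to the strictly internal algebra $\Gen_B$ (with strict representative $(B)_\eps$) to obtain that $\intspec_{\Gen_B}(u) \ne \emptyset$. Then, by theorem \ref{thm_non_empty_spectra}(1), $\intspec_{\Gen_B}(u) \subseteq \spec_{\Gen_B}(u)$, so $\spec_{\Gen_B}(u) \ne \emptyset$. Finally, since $\Alg$ is a sub-$\GenC$-algebra of $\Gen_B$ (with the same $1$), lemma \ref{lemma_spec_inclusion}(1) yields $\spec_{\Gen_B}(u) \subseteq \spec_{\Alg}(u)$, and therefore $\spec_{\Alg}(u) \ne \emptyset$.

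There is no real obstacle here; the statement is essentially a corollary in the literal sense, packaging together theorem \ref{thm_non_empty_spectra} and lemma \ref{lemma_spec_inclusion}(1). The only point that deserves a brief remark is why $\Gen_B$ qualifies as strictly internal: this is immediate from the definition, taking $A_\eps = B$ for every $\eps$, which are trivially Banach subalgebras of $B$, so that $\Gen_B = [(B)_\eps]$.
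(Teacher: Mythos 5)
Your argument is correct and is exactly the paper's proof: the paper's one-line chain $\emptyset\ne \intspec_{\Gen_B}(u)\subseteq\spec_{\Gen_B}(u)\subseteq\spec_\Alg(u)$ combines theorem \ref{thm_non_empty_spectra} (parts 3 and 1, applied to $\Gen_B=[(B)_\eps]$ as a strictly internal subalgebra of itself) with lemma \ref{lemma_spec_inclusion}(1), just as you do. Your extra remark on why $\Gen_B$ is strictly internal is a harmless clarification of what the paper leaves implicit.
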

\begin{proof}
For $u\in\Alg$, $\emptyset\ne \intspec_{\Gen_B}(u)\subseteq\spec_{\Gen_B}(u)\subseteq\spec_\Alg(u)$.
\end{proof}

The following is a dual statement of theorem \ref{thm_resolv_determines_spec}:
\begin{prop}\label{prop_spec_determines_resolv_internal}
Let $B$ be a Banach $\C$-algebra and $\Alg$ an intersection of strictly internal subalgebras of $\Gen_B$. Let $u\in\Alg$. Then
\begin{align*}
\resolv_\Alg(u) &= \{\lambda\in\GenC: (\forall S\subseteq (0,1) \text{ with } e_S\ne 0) (\lambda e_S \notin \spec_\Alg(u)e_S)\}\\
&= \{\lambda\in\GenC: \abs{\lambda - \mu}\gg 0, \ \forall \mu\in\spec_\Alg(u)\}.
\end{align*}
\end{prop}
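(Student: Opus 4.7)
The second equality on the right-hand side is immediate from lemma \ref{lemma_far_away_set}, applied exactly as in the proof of theorem \ref{thm_resolv_determines_spec}; I focus on the first equality. The inclusion $\subseteq$ works for any faithful $\GenC$-algebra: if $\lambda\in\resolv_\Alg(u)$ and $\mu\in\spec_\Alg(u)$ satisfied $\mu e_S=\lambda e_S$ for some $e_S\ne 0$, then $v:=\inv{(u-\lambda)}e_S$ would yield $(u-\mu)v=v(u-\mu)=e_S$, contradicting the strict non-invertibility of $u-\mu$.

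For $\supseteq$, I plan to reduce to the case where $\Alg$ is itself strictly internal. Writing $\Alg=\bigcap_{i\in I}\Alg_i$ with each $\Alg_i$ strictly internal, lemma \ref{lemma_spec_inclusion}(3) turns $\lambda\notin\resolv_\Alg(u)$ into $\lambda\notin\resolv_{\Alg_i}(u)$ for some $i$. Granted the strictly internal case, this yields $S$ with $e_S\ne 0$ and $\mu\in\spec_{\Alg_i}(u)$ such that $\lambda e_S=\mu e_S$; since $\Alg\subseteq\Alg_i$, lemma \ref{lemma_spec_inclusion}(1) promotes $\mu$ to $\spec_\Alg(u)$, as required.

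In the strictly internal case, I fix a strict representative $(A_\eps)_\eps$ of $\Alg$ and representatives $u=[(u_\eps)_\eps]$ with $u_\eps\in A_\eps$ and $\lambda=[(\lambda_\eps)_\eps]$. Using the characterization in lemma \ref{lemma_GenB_invertible}, the hypothesis $\lambda\notin\resolv_\Alg(u)$ produces a decreasing null sequence $(\eps_n)_n$ such that, for every $n$, either $u_{\eps_n}-\lambda_{\eps_n}$ is non-invertible in $A_{\eps_n}$, or it is invertible with $\norm{(u_{\eps_n}-\lambda_{\eps_n})^{-1}}\ge\eps_n^{-n}$. Put $S:=\{\eps_n:n\in\N\}$ and define $\mu_\eps:=\lambda_\eps$ for $\eps\in S$, and $\mu_\eps$ any element of $\spec_{A_\eps}(u_\eps)$ with $\abs{\mu_\eps}\le\norm{u_\eps}$ otherwise (classical existence, so $(\mu_\eps)_\eps$ is moderate). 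By construction $\mu e_S=\lambda e_S$.

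The crucial verification is $\mu\in\spec_\Alg(u)$, for which I invoke the representation $\spec_\Alg(u)=\bigcap_{N\in\N}[(C_{N,\eps})_\eps]$ from proposition \ref{prop_spec_GenB_internal}(2), where $C_{N,\eps}:=\spec_{A_\eps}(u_\eps)\cup\{z\in\resolv_{A_\eps}(u_\eps):\norm{(u_\eps-z)^{-1}}\ge\eps^{-N}\}$. Fix $N$. For $\eps\notin S$, $\mu_\eps\in\spec_{A_\eps}(u_\eps)\subseteq C_{N,\eps}$; for $\eps=\eps_m$ with $m\ge N$, the dichotomy above gives either $\lambda_{\eps_m}\in\spec_{A_{\eps_m}}(u_{\eps_m})$ or $\norm{(u_{\eps_m}-\lambda_{\eps_m})^{-1}}\ge\eps_m^{-m}\ge\eps_m^{-N}$, so $\mu_{\eps_m}=\lambda_{\eps_m}\in C_{N,\eps_m}$; only the finitely many $\eps_m$ with $m<N$ (all $\ge\eps_{N-1}$) are not covered, whence $\mu_\eps\in C_{N,\eps}$ for every $\eps<\eps_{N-1}$. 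Thus $\mu\in[(C_{N,\eps})_\eps]$ for every $N$, giving $\mu\in\spec_\Alg(u)$. The main subtlety is the resolvent-blow-up case: for a general Banach $\C$-algebra, large $\norm{(u_\eps-\lambda_\eps)^{-1}}$ does not imply $\lambda_\eps$ is close to any point of $\spec_{A_\eps}(u_\eps)$, so one cannot approximate $\lambda$ on $S$ by honest spectral values; proposition \ref{prop_spec_GenB_internal}(2) circumvents this by building resolvent-blow-up points directly into the representation of $\spec_\Alg(u)$.
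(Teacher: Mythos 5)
Your proof is correct and follows essentially the same route as the paper's: reduce to a single strictly internal $\Alg_i$ via lemma \ref{lemma_spec_inclusion}, extract the bad sequence $(\eps_n)_n$ from lemma \ref{lemma_GenB_invertible}, put $S=\{\eps_n: n\in\N\}$ (note $e_S\ne 0$), and interleave $\lambda$ on $S$ with spectral data on $\co S$. The only cosmetic difference is that the paper certifies the interleaved element lies in $\spec_\Alg(u)$ by citing corollary \ref{cor_GenB_strictly_noninvertible} with an arbitrary $\mu\in\spec_\Alg(u)$ (nonempty by corollary \ref{cor_non_empty_spectra}), whereas you construct $\mu$ explicitly from classical spectral values on $\co S$ and check membership through the representation in proposition \ref{prop_spec_GenB_internal}(2), which is itself derived from that corollary.
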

\begin{proof}
The second equality follows from lemma \ref{lemma_far_away_set}. For the first equality:\\
$\subseteq$: if $\lambda e_S = \mu e_S$, for some $\mu\in \spec_\Alg(u)$ and $S\subseteq(0,1)$ with $e_S\ne 0$, then $u-\lambda$ is not invertible w.r.t.\ $S$ in $\Alg$ by lemma \ref{lemma_invertible_wrt_S}.\\
$\supseteq$: first, let $\Alg$ be strictly internal with strict representative $(A_\eps)_\eps$. Let $u=[(u_\eps)_\eps]$ with $u_\eps\in A_\eps$, $\forall\eps$. Let $\lambda=[(\lambda_\eps)_\eps]\in\GenC$. If $u-\lambda$ is not invertible in $\Alg$, then by lemma \ref{lemma_GenB_invertible}, we can find a decreasing sequence $(\eps_n)_{n\in\N}$ tending to $0$ such that $u_{\eps_n} - \lambda_{\eps_n}$ is not invertible in $A_\eps$ or $\norm[]{(u_{\eps_n} - \lambda_{\eps_n})^{-1}}\ge\eps_n^{-n}$, $\forall n\in\N$. Let $S=\{\eps_n: n\in\N\}$. Then $e_S\ne 0$. Let $\mu\in\spec_\Alg(u)$ ($\spec_\Alg(u)\ne\emptyset$ by corollary \ref{cor_non_empty_spectra}). By corollary \ref{cor_GenB_strictly_noninvertible}, it follows that $\lambda e_S + \mu e_{\co S}\in \spec_\Alg(u)$. Hence $\lambda e_S\in\spec_\Alg(u)e_S$.\\
Now let $\Alg=\bigcap_{i\in I}\Alg_i$ with $\Alg_i$ strictly internal, for each $i$ and let $\lambda\in\GenC$ such that for each $S\subseteq(0,1)$ with $e_S\ne 0$, $\lambda e_S\notin\spec_{\Alg}(u) e_S$. As $\spec_{\Alg_i}(u)\subseteq\spec_\Alg(u)$, also $\lambda e_S\notin\spec_{\Alg_i}(u) e_S$ for each $i$. By the previous case and lemma \ref{lemma_spec_inclusion}, $\lambda\in \bigcap_{i\in I}\resolv_{\Alg_i}(u) = \resolv_\Alg(u)$.
\end{proof}

\begin{ex}
Let $H$ be an infinite-dimensional separable Hilbert space and $B=\cLin(H)$ the Banach $\C$-algebra of continuous $\C$-linear operators on $H$. Then there exists $u=[(u_\eps)_\eps]=[(v_\eps)_\eps]\in \Gen_B$ with $[(\spec(u_\eps))_\eps] \ne [(\spec(v_\eps))_\eps]$, $[(\specrad(u_\eps))_\eps] \ne [(\specrad(v_\eps))_\eps]$ and $\sharpnorm{[(\specrad(u_\eps))_\eps]} < \specrad_{\Gen_B}(u)$.
\end{ex}
\begin{proof}
By \cite[\S 3.4]{Aupetit}, there exist $v, v_k\in B$ (for each $k\in\N$) with $\norm{v_k-v}\le e^{-k}$, $\spec(v_k)=\{0\}$, for each $k\in\N$ and there exists $\lambda\in\spec(v)$ with $e^{-2}\le\abs\lambda\le 1$. Let $u:=[(v)_\eps]\in \Gen_B$. Then there exists $\lambda\in[(\spec(v))_\eps]\subseteq \spec(u)$ (theorem \ref{thm_non_empty_spectra}) with $e^{-2}\le\abs\lambda\le 1$. Let $u_\eps:=v_k$, if $1/k\le \eps < 1/(k-1)$ ($k\in\N$, $k>1$). Then $\norm{u_\eps-v}\le e^{-1/\eps}$, for each $\eps$. Hence $(\norm{u_\eps-v})_\eps\in\Null_\R$, and $u=[(u_\eps)_\eps]$. Yet $[(\spec(u_\eps))_\eps]=\{0\}$.
\end{proof}

\subsection{The spectral mapping theorem}
\begin{lemma}\label{lemma_product_of_invertible}
Let $B$ be a Banach $\C$-algebra and $\Alg$ a strictly internal subalgebra of $\Gen_B$. Let $u_1,\dots, u_m\in\Alg$ ($m\in\N$). If $u_1\cdots u_m$ is strictly non-invertible in $\Alg$, then there exists $v\in\interl(\{u_1,\dots,u_m\})$ that is strictly non-invertible in $\Alg$.
\end{lemma}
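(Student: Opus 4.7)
My plan is to work at the level of strict representatives and reduce the statement to a pointwise selection argument in the Banach algebras $A_\eps$, using Corollary \ref{cor_GenB_strictly_noninvertible} to translate strict non-invertibility in both directions. I fix a strict representative $(A_\eps)_\eps$ of $\Alg$ together with representatives $(u_{i,\eps})_\eps\in A_\eps^{(0,1)}$ of $u_i$ ($i=1,\dots,m$), and set $a_\eps:=u_{1,\eps}\cdots u_{m,\eps}$, so that $u_1\cdots u_m=[(a_\eps)_\eps]$. The hypothesis, via Corollary \ref{cor_GenB_strictly_noninvertible}, yields a strictly decreasing sequence $\eta_n\to 0$ such that for every $n\in\N$ and every $\eps\le\eta_n$, either $a_\eps$ is not invertible in $A_\eps$ or $\norm{a_\eps^{-1}}\ge\eps^{-nm}$.

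The key step is a pointwise selection: for each such $\eps$ there exists $i\in\{1,\dots,m\}$ for which $u_{i,\eps}$ is not invertible in $A_\eps$ or $\norm{u_{i,\eps}^{-1}}\ge\eps^{-n}$. To see this, I split cases: if some $u_{j,\eps}$ fails to be invertible in $A_\eps$, take $i=j$; otherwise all $u_{j,\eps}$ are invertible, so $a_\eps$ is invertible in $A_\eps$ with $a_\eps^{-1}=u_{m,\eps}^{-1}\cdots u_{1,\eps}^{-1}$, and submultiplicativity of the norm forces $\prod_{j}\norm{u_{j,\eps}^{-1}}\ge\norm{a_\eps^{-1}}\ge\eps^{-nm}$, whence pigeonhole produces an $i$ with $\norm{u_{i,\eps}^{-1}}\ge\eps^{-n}$.

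Next I assemble the interleaving. For $\eps\in(\eta_{n+1},\eta_n]$ ($n\ge 1$) let $i(\eps)$ be the smallest index provided by the selection step for that $n$, and set $i(\eps):=1$ for $\eps>\eta_1$. Put $S_i:=\{\eps\in(0,1):i(\eps)=i\}$ and $v_\eps:=u_{i(\eps),\eps}\in A_\eps$. Since $\norm{v_\eps}\le\max_i\norm{u_{i,\eps}}$, the family $(v_\eps)_\eps$ is moderate, so $v:=[(v_\eps)_\eps]\in\Alg$; as $\{S_1,\dots,S_m\}$ partitions $(0,1)$, we have $v=\sum_{i=1}^m e_{S_i}u_i\in\interl(\{u_1,\dots,u_m\})$. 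Strict non-invertibility of $v$ follows immediately from the construction: given $N\in\N$, any $\eps\le\eta_N$ lies in a unique $(\eta_{k+1},\eta_k]$ with $k\ge N$, and by construction $v_\eps$ is not invertible in $A_\eps$ or $\norm{v_\eps^{-1}}\ge\eps^{-k}\ge\eps^{-N}$, so Corollary \ref{cor_GenB_strictly_noninvertible} concludes that $v$ is strictly non-invertible in $\Alg$.

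The main obstacle is the selection step. In a noncommutative Banach algebra a product of non-invertible elements can be invertible (e.g.\ $S^{*}S=1$ for the unilateral shift), and when $a_\eps$ is invertible without all factors being invertible one has no formula like $a_\eps^{-1}=u_{m,\eps}^{-1}\cdots u_{1,\eps}^{-1}$ to pigeonhole against; splitting pointwise on whether every $u_{j,\eps}$ is individually invertible, and using the product formula for inverses together with submultiplicativity of the norm only in the subcase where it is available, is the single ingredient that unlocks the subsequent diagonal assembly.
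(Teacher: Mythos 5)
Your proof is correct and follows essentially the same route as the paper's: translate strict non-invertibility of the product via Corollary \ref{cor_GenB_strictly_noninvertible}, select for each small $\eps$ a factor that is non-invertible in $A_\eps$ or has large inverse norm (the paper uses the threshold $\eps^{-n/m}$ against $\eps^{-n}$ where you use $\eps^{-n}$ against $\eps^{-nm}$, which is equivalent), and interleave along the intervals $(\eta_{n+1},\eta_n]$ to conclude by the same corollary. Your write-up merely makes explicit the pointwise case split and pigeonhole that the paper leaves implicit.
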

\begin{proof}
Let $\Alg=[(A_\eps)_\eps]$ and $u_j=[(u_{j,\eps})_\eps]$ with $A_\eps$ closed subalgebras of $B$ and $u_{j,\eps}\in A_\eps$, $\forall\eps$, $\forall j$. Let $n\in\N$. By corollary \ref{cor_GenB_strictly_noninvertible}, $u_{1,\eps}\cdots u_{m,\eps}$ is not invertible in $A_\eps$ or $\norm{(u_{1,\eps}\cdots u_{m,\eps})^{-1}}\ge \eps^{-n}$, for $\eps\le\eta_n$. Hence there exists $j\in\{1,\dots,m\}$ such that $u_{j,\eps}$ is not invertible in $A_\eps$ or $\norm[]{\inv u_{j,\eps}}\ge \eps^{-n/m}$, for $\eps\le\eta_n$. W.l.o.g., $(\eta_n)_n$ decreasingly tends to $0$. For $\eta_{n+1}< \eps\le \eta_n$, let $\eps\in S_j$ iff $j$ is the smallest index with the latter property. Then $e_{S_1} u_1 + \cdots + e_{S_m} u_m\in \interl(\{u_1,\dots,u_m\})$ is strictly non-invertible in $\Alg$ by corollary \ref{cor_GenB_strictly_noninvertible}.
\end{proof}

\begin{thm}\label{thm_poly_spec_calc_internal}
Let $B$ be a Banach $\C$-algebra. Let $\Alg$ be the union of an increasing sequence of strictly internal subalgebras of $\Gen_B$. Let $u\in\Alg$. Let $p(z)\in\GenC[z]$ with $1$ as its leading coefficient. Then $p(\spec_\Alg(u))=\spec_\Alg(p(u))$.
\end{thm}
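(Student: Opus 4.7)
The easy inclusion $p(\spec_\Alg(u)) \subseteq \spec_\Alg(p(u))$ is supplied by theorem~\ref{thm_poly_spec_calc}(1), which applies because $\Alg$, being a sub-$\GenC$-algebra of the faithful algebra $\Gen_B$, is itself faithful. The content of the theorem is the reverse inclusion.

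For this, take $\mu \in \spec_\Alg(p(u))$. Lemma~\ref{lemma_polynomial_eqn} applied to the monic polynomial $p(z) - \mu \in \GenC[z]$ gives roots $\lambda_j = [(\lambda_{j,\eps})_\eps] \in \GenC$ with $p(z) - \mu = (z - \lambda_1)\cdots(z - \lambda_n)$, and identifies the solution set of $p(z) = \mu$ in $\GenC$ as the sharply bounded internal set $\interl(\{\lambda_1,\dots,\lambda_n\}) = [(\{\lambda_{1,\eps},\dots,\lambda_{n,\eps}\})_\eps]$. Since the $\lambda_j$ commute with $u$, $p(u) - \mu = \prod_j(u - \lambda_j)$. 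Writing $\Alg = \bigcup_k \Alg_k$ with $\Alg_k$ strictly internal and increasing, pick $k_0$ with $u \in \Alg_k$ for all $k \geq k_0$. Strict non-invertibility in $\Alg$ is inherited by every subalgebra $\Alg_k$, so $\prod_j(u - \lambda_j)$ is strictly non-invertible in each such $\Alg_k$. As each $\Alg_k$ is itself strictly internal, lemma~\ref{lemma_product_of_invertible} produces an element of $\interl(\{u - \lambda_1, \dots, u - \lambda_n\})$ that is strictly non-invertible in $\Alg_k$; such an element must be of the form $u - \nu_k$ for some $\nu_k \in \interl(\{\lambda_1, \dots, \lambda_n\})$. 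Setting $C_k := \interl(\{\lambda_1,\dots,\lambda_n\}) \cap \spec_{\Alg_k}(u)$, one has $C_k \ne \emptyset$ for every $k \geq k_0$; and by lemma~\ref{lemma_spec_inclusion}(2), $\bigcap_{k \geq k_0} \spec_{\Alg_k}(u) = \spec_\Alg(u)$, so it suffices to find some $\nu \in \bigcap_{k \geq k_0} C_k$.

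The main obstacle is this final upgrade from a nested nonempty family $(C_k)_k$ to a common point. My plan is to exhibit each $C_k$ as a countable intersection of internal subsets of the sharply bounded internal set $\interl(\{\lambda_1,\dots,\lambda_n\})$. Proposition~\ref{prop_spec_GenB_internal}(2) writes $\spec_{\Alg_k}(u) = \bigcap_{m \in \N} D_{k,m}$ as a decreasing intersection of internal subsets of $\GenC$, whence $C_k = \bigcap_m (\interl(\{\lambda_1,\dots,\lambda_n\}) \cap D_{k,m})$ with all factors internal. Enumerating the pairs $(k,m)$ and forming the decreasing sequence of finite intersections $E_i$ yields a nested family of internal subsets of $\interl(\{\lambda_1,\dots,\lambda_n\})$ with $\bigcap_i E_i = \bigcap_{k \geq k_0} C_k$; each $E_i$ is nonempty because it contains the corresponding $C_{k^*}$ for the largest index $k^*$ present in $E_i$ (the family $(C_k)_k$ being decreasing, since $\spec_{\Alg_k}(u)$ decreases as $\Alg_k$ grows). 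It then remains to invoke the countable-nested-intersection property for nonempty sharply bounded internal subsets of $\GenC$: after replacing the representatives of the $E_i$ by their intersections in $\eps$ (which preserves both internalness and the value of $[(\cdot)_\eps]$) one obtains decreasing representatives, and a common point is constructed by the standard diagonalization — pick $u_i \in E_i$ with representative $(u_{i,\eps})_\eps$ living in the shrinking representatives, take $\eta_i \searrow 0$ with $u_{i,\eps}$ living in the $i$-th representative for $\eps \leq \eta_i$, and set $u_\eps = u_{i,\eps}$ for $\eps \in (\eta_{i+1},\eta_i]$; moderateness is free because everything stays inside the sharply bounded representative of $\interl(\{\lambda_1,\dots,\lambda_n\})$ (cf.\ \cite{OV_internal}). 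The resulting $\nu \in \bigcap_k C_k$ lies in $\interl(\{\lambda_1,\dots,\lambda_n\})$, giving $p(\nu) = \mu$, and in $\spec_\Alg(u)$, completing the proof. The technical heart is the justification of this nested-intersection principle; once in hand, all that remains is internal-set bookkeeping.
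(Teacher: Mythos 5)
Up to the last step your argument is the paper's: the easy inclusion via theorem~\ref{thm_poly_spec_calc}, the factorization $p(z)-\mu=(z-\lambda_1)\cdots(z-\lambda_n)$ via lemma~\ref{lemma_polynomial_eqn}, lemma~\ref{lemma_product_of_invertible} applied in each $\Alg_k$ to get $C_k=\interl(\{\lambda_1,\dots,\lambda_n\})\cap\spec_{\Alg_k}(u)\ne\emptyset$, and lemma~\ref{lemma_spec_inclusion}(2) to reduce to finding a point of $\bigcap_k C_k$ --- this is exactly the finite intersection property the paper verifies. The gap is in how you upgrade that to a common point. First, you assert that the factors $\interl(\{\lambda_1,\dots,\lambda_n\})\cap D_{k,m}$, and then the finite intersections $E_i$, are internal; but neither the paper nor the results it imports give you that an intersection of two internal sets is internal, and your diagonalization needs genuine representatives $(E_{i,\eps})_\eps$ of these sets. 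Second, the parenthetical claim that replacing representatives of the nested $E_i$ ``by their intersections in $\eps$'' preserves the value of $[(\cdot)_\eps]$ is false: the nets $(\{0\})_\eps$ and $(\{\eps^{1/\eps}\})_\eps$ represent the same internal set $\{0\}$, yet their $\eps$-wise intersection represents $\emptyset$. Producing nested representatives for a decreasing sequence of internal sets is itself a nontrivial result (it is what the paper cites as \cite[2.9]{OV_internal} elsewhere), and the full ``countable-nested-intersection property'' you flag as the technical heart is precisely the saturation theorem \cite[Thm.~2.12]{OV_internal}; as written, you have neither proved it nor cited it, and your sketch of its proof does not go through.

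The repair is short and is what the paper does: apply the saturation theorem directly to the countable family consisting of the sharply bounded internal set $\{\mu\in\GenC:\ p(\mu)=\lambda\}=\interl(\{\lambda_1,\dots,\lambda_n\})$ together with the internal sets $D_{k,m}$ from proposition~\ref{prop_spec_GenB_internal}(2); the finite intersection property is exactly your observation that each finite subfamily contains $C_{k^*}\ne\emptyset$ for the largest index $k^*$ involved. This avoids ever forming intersections of internal sets or manipulating their representatives, and yields $\nu\in\interl(\{\lambda_1,\dots,\lambda_n\})\cap\bigcap_{k\ge k_0}\spec_{\Alg_k}(u)=\interl(\{\lambda_1,\dots,\lambda_n\})\cap\spec_\Alg(u)$ with $p(\nu)=\mu$, as desired.
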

\begin{proof}
First, let $\Alg$ be strictly internal. Let $\lambda\in \spec_\Alg(p(u))$. By lemma \ref{lemma_polynomial_eqn}, $p(z) - \lambda = (z-z_1)\cdots(z-z_m)$, for some $z_j=[(z_{j,\eps})_\eps]\in\GenC$. By lemma \ref{lemma_product_of_invertible}, there exists $v\in\interl(\{u-z_1,\dots, u-z_m\})$ that is strictly non-invertible in $\Alg$. Hence there exists a partition $\{S_1,\dots,S_m\}$ of $(0,1)$ such that $u - (e_{S_1} z_1 + \cdots + e_{S_m} z_m)$ is strictly non-invertible in $\Alg$, i.e., $e_{S_1} z_1 + \cdots + e_{S_m} z_m\in \spec_\Alg(u)$. Since $p(z_1e_{S_1} + \cdots + z_m e_{S_m})=\lambda$, $\lambda\in p(\spec_\Alg(u))$.\\
Now let $\Alg = \bigcup_{n\in\N}\Alg_n$, with $(\Alg_n)_{n\in\N}$ an increasing sequence of strictly internal subalgebras of $\Gen_B$. Let $m\in\N$ such that $u\in \Alg_m$. Let $\lambda\in \spec_\Alg(p(u))$. By the previous case and lemma \ref{lemma_spec_inclusion}, $\spec_\Alg(p(u))= \bigcap_{n\ge m}\spec_{\Alg_n}(p(u))\subseteq \bigcap_{n\ge m}p(\spec_{\Alg_n}(u))$. By proposition \ref{prop_spec_GenB_internal}, $\spec_{\Alg_n}(u) = \bigcap_{k\in\N} C_{n,k}$, where $C_{n,k}\subseteq\GenC$ are internal, for each $n\ge m$. By lemma \ref{lemma_polynomial_eqn}, $\{\mu\in\GenC: p(\mu)=\lambda\}$ is internal and sharply bounded. Further, since $\spec_{\Alg_n}(u)$ are totally ordered, the family of sets consisting of $\{\mu\in\GenC: p(\mu)=\lambda\}$ and $C_{n,k}$ ($k\in\N$, $n\ge m$), has the finite intersection property. Hence by saturation \cite[Thm.~2.12]{OV_internal}, $\lambda\in p\big(\bigcap_{n\ge m} \spec_{\Alg_n}(u)\big) = p(\spec_\Alg(u))$.
\\
The converse inclusion follows by theorem \ref{thm_poly_spec_calc}.
\end{proof}

\begin{prop}\label{prop_spec_au_GenB}
Let $B$ be a Banach $\C$-algebra and $\Alg$ a strictly internal subalgebra of $\Gen_B$. Let $u\in\Alg$ and $a\in\GenC$. Then $\spec_\Alg(au)=a\spec_\Alg(u)$.
\end{prop}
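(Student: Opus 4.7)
The plan is to reduce this to two earlier results that already do all of the work. Proposition \ref{prop_spec_au} (part 2) shows in the full generality of a faithful $\GenC$-algebra that $a\spec(u)\subseteq\spec(au)\subseteq\overline{a\spec(u)}$, so the containment $a\spec_{\Alg}(u)\subseteq \spec_{\Alg}(au)$ is free. Hence the only thing to prove is the reverse inclusion $\spec_{\Alg}(au)\subseteq a\spec_{\Alg}(u)$, and for this we can invoke Proposition \ref{prop_spec_au}(3): $a\spec(u) = \spec(au)$ holds as soon as $\spec(u)$ is the intersection of a decreasing sequence of internal subsets of $\GenC$.

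So the whole argument is to observe that the hypothesis of Proposition \ref{prop_spec_au}(3) is automatic in the strictly internal setting. Indeed, Proposition \ref{prop_spec_GenB_internal}(2) asserts precisely that when $\Alg$ is a strictly internal subalgebra of $\Gen_B$ and $u\in\Alg$, the spectrum $\spec_{\Alg}(u)$ can be written as $\bigcap_{n\in\N} C_n$ with $C_n$ internal (and by construction of the $C_n$ in that proof, $(C_n)_n$ is decreasing, or can be made decreasing by replacing $C_n$ with $\bigcap_{k\le n} C_k$, which remains internal).

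Applying Proposition \ref{prop_spec_au}(3) to $\Alg$ and $u$ therefore gives $\spec_{\Alg}(au) = a\spec_{\Alg}(u)$, as required. There is no real obstacle here; the content of the statement is entirely carried by Proposition \ref{prop_spec_GenB_internal}(2), which supplies the structural property of $\spec_{\Alg}(u)$ that Proposition \ref{prop_spec_au}(3) needs in order to upgrade the generic inclusion $\spec(au)\subseteq\overline{a\spec(u)}$ to the equality $\spec(au) = a\spec(u)$.
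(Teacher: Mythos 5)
Your route is exactly the paper's: the published proof reads ``By proposition \ref{prop_spec_au}, proposition \ref{prop_spec_GenB_internal} and corollary \ref{cor_non_empty_spectra}'', and your argument reproduces the first two citations. What you have dropped is the third, and it is not decorative: proposition \ref{prop_spec_au} carries the standing hypothesis $\spec_\Alg(u)\ne\emptyset$, which is used essentially in its proof (one interleaves $e_S\lambda$ with $e_{\co S}\mu$ for some $\mu\in\spec(u)$) and which can genuinely fail in a faithful Banach $\GenC$-algebra (example \ref{ex_empty_spectrum_wrt_S}). Indeed, if $\spec_\Alg(u)$ were empty the conclusion you are after would itself be false for $a=0$, since then $a\spec_\Alg(u)=\emptyset$ while $\spec_\Alg(0)=\{0\}$. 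So before invoking parts (2) and (3) of proposition \ref{prop_spec_au} you must verify nonemptiness; here it is supplied by corollary \ref{cor_non_empty_spectra} (equivalently theorem \ref{thm_non_empty_spectra}(3)), because $\Alg$ is a sub-$\GenC$-algebra with $1$ of $\Gen_B$ (and, for the same reason, $\Alg$ is faithful, as proposition \ref{prop_spec_au} also requires). With that one citation added, your proof coincides with the paper's.

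A small further remark: your parenthetical fallback, replacing $C_n$ by $\bigcap_{k\le n}C_k$ and claiming this ``remains internal'', is both unnecessary and unsupported by anything in the paper (internality of finite intersections of internal sets is not established there). It is also not needed: proposition \ref{prop_spec_GenB_internal}(2) already asserts that the sequence can be taken decreasing, and the explicit representatives $C_{n,\eps}$ in its proof are nested by construction.
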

\begin{proof}
By proposition \ref{prop_spec_au}, proposition \ref{prop_spec_GenB_internal} and corollary \ref{cor_non_empty_spectra}.
\end{proof}

\subsection{The spectral radius formula}
\begin{df}(cf.\ \cite{HV_analytic})
Let $\emptyset\ne A\subseteq \GenR^d$. Then $\tGen(A):=\EMod(A)/\Null(A)$, where
\begin{align*}
\EMod(A)=\,&\big\{(u_\eps)_\eps\in\Cnt[\infty](\R^d)^{(0,1)}: (\forall\alpha\in\N^d) \big(\forall [(x_\eps)_\eps]\in A\big)
\big((\partial^\alpha u_\eps(x_\eps))_\eps\in\Mod_{\C}\big)\big\},\\
\Null(A)=\,&\big\{(u_\eps)_\eps\in\Cnt[\infty](\R^d)^{(0,1)}: (\forall\alpha\in\N^d) \big(\forall [(x_\eps)_\eps]\in A\big)
\big((\partial^\alpha u_\eps(x_\eps))_\eps\in\Null_{\C}\big)\big\}.
\end{align*}
(Here $\forall [(x_\eps)_\eps]\in A$ means: for each representative $(x_\eps)_\eps$ of an element of $A$.)\\
The action of $u=[(u_\eps)_\eps]\in\tGen(A)$ on a generalized point $\tilde x= [(x_\eps)_\eps]\in A$ is defined as $u(\tilde x):= [(u_\eps(x_\eps))_\eps]\in\GenC$. If $A$ is open, then $u$ is completely determined by its action on generalized points, and we thus identify elements of $\tGen(A)$ with particular pointwise maps $A\to\GenC$.\\
Let $\Omega$ be an open subset of $\GenC$. Then $\tGen_H(\Omega)$ is the differential algebra consisting of those $u\in\tGen(\Omega)$ with $\bar \partial u := \frac{1}{2}(\partial_x + i \partial_y) u = 0$. For $u\in\tGen_H(\Omega)$ and $\tilde z\in \Omega$, we write $u'(\tilde z): = \partial_x u(\tilde z) = -i\partial_y u(\tilde z)$. Iterated derivatives are denoted by $D^k$ ($k\in\N$).
\end{df}
Properties of analytic generalized functions on generalized domains are studied in \cite{HV_analytic}.

\begin{lemma}\label{lemma_specrad}
Let $B$ be a Banach $\C$-algebra and $u\in\Gen_B$. Let $s\in\R$, $s < -\ln (\specrad_{\Gen_B}(u))$. Then $\{\tilde z\in\GenC: \abs[]{\tilde z}\ge \caninf^s\}\subseteq\resolv_{\Gen_B}(u)$.
\end{lemma}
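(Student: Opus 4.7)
The plan is to invoke Proposition~\ref{prop_spec_determines_resolv_internal} in the algebra $\Gen_{B}$ (which is strictly internal, with strict representative $(B)_{\eps}$), thereby reducing the assertion $\tilde z\in\resolv_{\Gen_{B}}(u)$ to showing $|\tilde z-\mu|\gg 0$ in $\GenR$ for every $\mu\in\spec_{\Gen_{B}}(u)$. Write $\sigma=\specrad_{\Gen_{B}}(u)$ and fix an auxiliary real $s''$ with $s<s''<-\ln\sigma$; this is possible precisely because $s<-\ln\sigma$ by hypothesis. Note that by Corollary~\ref{cor_non_empty_spectra} the spectrum is non-empty, so the reduction is not vacuous.

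The estimate is then a valuation computation. By definition of $\sigma$ as a supremum of sharp norms, every $\mu\in\spec_{\Gen_{B}}(u)$ satisfies $\sharpnorm{\mu}\le\sigma<e^{-s''}$, equivalently $\val(\mu)>s''$. Since $\val$ is representative-independent, any representative $(\mu_{\eps})_{\eps}$ of $\mu$ satisfies $|\mu_{\eps}|\le\eps^{s''}$ for small $\eps$. On the other hand, from $|\tilde z|\ge\caninf^{s}$ we may select a representative $(z_{\eps})_{\eps}$ of $\tilde z$ with $|z_{\eps}|\ge\eps^{s}$ for small $\eps$. The ordinary triangle inequality then gives, for $\eps$ small enough,
\[
|z_{\eps}-\mu_{\eps}|\ge|z_{\eps}|-|\mu_{\eps}|\ge\eps^{s}-\eps^{s''}=\eps^{s}(1-\eps^{s''-s})\ge\tfrac12\eps^{s},
\]
where the last bound uses $s''-s>0$, so $\eps^{s''-s}\to 0$. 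Choosing any $m\in\N$ with $m>s$ (for instance $m=\max(1,\lceil s\rceil+1)$), one also has $\tfrac12\eps^{s}\ge\eps^{m}$ for small $\eps$, hence $|\tilde z-\mu|\ge\caninf^{m}\gg 0$ in $\GenR$; crucially, $m$ depends only on $s$ (not on $\mu$).

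By Proposition~\ref{prop_spec_determines_resolv_internal} this yields $\tilde z\in\resolv_{\Gen_{B}}(u)$, concluding the proof. I do not foresee a serious obstacle here: the argument is essentially the valuation-theoretic fact that a strict gap in $\val$ between $\tilde z$ and each $\mu\in\spec(u)$ propagates to a strict positivity (i.e.\ invertibility in $\GenR$) of $|\tilde z-\mu|$. The two points requiring care are the strict choice $s''>s$ (rather than $s''=-\ln\sigma$, which would not survive representative ambiguity in the bound on $|\mu_{\eps}|$) and the observation that the characterization of $\resolv(u)$ by ``sharp distance'' to $\spec(u)$ is available here precisely because $\Gen_{B}$ is strictly internal.
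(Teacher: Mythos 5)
Your argument is correct and is essentially the paper's own proof: both fix an intermediate exponent $s''$ (the paper's $s'$) with $s<s''<-\ln(\specrad_{\Gen_B}(u))$, bound $\abs{\mu}\le\caninf^{s''}$ for every $\mu\in\spec_{\Gen_B}(u)$, deduce $\abs{\tilde z-\mu}\ge\caninf^{s}-\caninf^{s''}\gg 0$, and conclude via proposition \ref{prop_spec_determines_resolv_internal}. The only difference is that you carry out the estimate on representatives (the paper does it directly in $\GenR$), which changes nothing of substance.
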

\begin{proof}
Let $s'\in\R$ with $s<s'< -\ln(\specrad_{\Gen_B}(u))$. Let $\mu\in\spec_{\Gen_B}(u)$. Since $\sharpnorm{\mu}\le \specrad_{\Gen_B}(u)$, $\abs\mu\le \caninf^{s'}$. So if $\lambda\in\GenC$ and $\abs{\lambda}\ge \caninf^s$, then $\abs{\lambda - \mu}\ge \abs{\lambda} - \abs\mu \ge \caninf^s - \caninf^{s'}\gg 0$. By proposition \ref{prop_spec_determines_resolv_internal}, $\lambda \in\resolv_{\Gen_B}(u)$.
\end{proof}

\begin{prop}\label{prop_analytic_resolvent}
Let $B$ be a Banach $\C$-algebra with topological dual $B'$ and $u\in\Gen_B$. Let $R$: $\resolv_{\Gen_B}(u)\to\Gen_B$: $R(\lambda)=(\lambda - u)^{-1}$. Let $s\in\R$, $s < -\ln (\specrad_{\Gen_B}(u))$. Then $f\comp R\in \tGen_H(\{\tilde z\in\GenC: \abs[]{\tilde z} \gg \caninf^s\})$, for each $f\in\Gen_{B'}$.
\end{prop}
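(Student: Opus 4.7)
The plan is to produce an explicit representative $(\tilde g_\eps)_\eps \in \Cnt[\infty](\R^2)^{(0,1)}$ of $f\comp R$, then verify both conditions needed to put it into $\tGen_H(\Omega)$: moderate bounds on all partial derivatives at each generalized point of $\Omega$, and $\bar\partial$-negligibility at each such point. Fix representatives $(u_\eps)_\eps \in \Mod_B$ of $u$ and $(f_\eps)_\eps \in \Mod_{B'}$ of $f$; on the classical resolvent set $\resolv_B(u_\eps) = \C \setminus \spec_B(u_\eps)$ set
\[
g_\eps(\lambda) := f_\eps\bigl((\lambda - u_\eps)^{-1}\bigr).
\]
Classically $g_\eps$ is holomorphic, with derivatives $g_\eps^{(k)}(\lambda) = (-1)^k k!\, f_\eps\bigl((\lambda - u_\eps)^{-(k+1)}\bigr)$, and $\bar\partial g_\eps \equiv 0$ on its domain.

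The moderateness step is where Lemma~\ref{lemma_specrad} and Lemma~\ref{lemma_GenB_invertible} do the real work. Given $\tilde z = [(z_\eps)_\eps] \in \Omega$, Lemma~\ref{lemma_specrad} places $\tilde z$ in $\resolv_{\Gen_B}(u)$; applying Lemma~\ref{lemma_GenB_invertible} to $\Gen_B$ (viewed as strictly internal with strict representative $(B)_\eps$) yields some $N \in \N$ for which $(z_\eps - u_\eps)^{-1}$ exists in $B$ and $\norm{(z_\eps - u_\eps)^{-1}} \le \eps^{-N}$ for small $\eps$. Combining with the moderateness of $(\norm{f_\eps})_\eps$ gives $\abs{g_\eps^{(k)}(z_\eps)} \le k!\,\norm{f_\eps}\,\norm{(z_\eps - u_\eps)^{-1}}^{k+1}$, which is moderate for every $k$; the Cauchy--Riemann relations then transfer this to arbitrary mixed partials via $\partial_x^a \partial_y^b g_\eps(z_\eps) = i^b g_\eps^{(a+b)}(z_\eps)$, while forcing $\bar\partial g_\eps(z_\eps) = 0$ exactly (not merely negligibly).

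The main technical obstacle is that $\EMod(\Omega)$ requires a sequence in $\Cnt[\infty](\R^2)$, whereas $g_\eps$ is only defined away from $\spec_B(u_\eps)$, and these classical spectra may wander uncontrollably with $\eps$ (as in the final example of Section~\ref{section_internal_Banach_algebra}), so no polynomial-in-$\eps$ safety margin works uniformly. I handle this with a super-polynomial cut-off: with $K_\eps := \spec_B(u_\eps)$, $d_\eps(z)$ the Euclidean distance from $z$ to $K_\eps$, $\delta_\eps := e^{-1/\eps}$, and $\eta_\eps := e^{-2/\eps}$, set
\[
h_\eps(z) := \max\bigl(0,\min(1,(d_\eps(z) - 2\delta_\eps)/\delta_\eps)\bigr),\qquad \chi_\eps := h_\eps * \rho_{\eta_\eps},
\]
where $\rho_{\eta_\eps}$ is a standard $\Cnt[\infty]$ mollifier of scale $\eta_\eps$. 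Then $\chi_\eps \equiv 0$ on an open neighborhood of $K_\eps$, so $\tilde g_\eps := \chi_\eps g_\eps$ (extended by $0$ on $K_\eps$) belongs to $\Cnt[\infty](\R^2)$, and $\chi_\eps \equiv 1$ on the open set $\{d_\eps \ge 3\delta_\eps + \eta_\eps\}$. The classical inequality $d_\eps(z_\eps) \ge 1/\norm{(z_\eps - u_\eps)^{-1}} \ge \eps^N$ dwarfs $3\delta_\eps + \eta_\eps$ for small $\eps$, so $\chi_\eps \equiv 1$ in some neighborhood of $z_\eps$ for small $\eps$, and the moderateness and holomorphy computations of the previous paragraph transfer verbatim from $g_\eps$ to $\tilde g_\eps$ at $z_\eps$.

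Finally, $[(\tilde g_\eps(z_\eps))_\eps] = f\bigl((\tilde z - u)^{-1}\bigr)$ for every $\tilde z \in \Omega$, so $[(\tilde g_\eps)_\eps]$ represents $f \comp R$; and independence of the class in $\tGen_H(\Omega)$ from the choice of representatives and of the cut-off follows from the resolvent identity $(z_\eps - u_\eps')^{-1} - (z_\eps - u_\eps)^{-1} = (z_\eps - u_\eps')^{-1}(u_\eps' - u_\eps)(z_\eps - u_\eps)^{-1}$ plus a Neumann-series bound on $\norm{(z_\eps - u_\eps')^{-1}}$ in terms of $\norm{(z_\eps - u_\eps)^{-1}}$ when $(u_\eps')_\eps$ is another representative of $u$, together with the analogous identities for higher derivatives.
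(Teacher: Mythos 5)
Your proof is correct, but it finishes differently from the paper. The shared core is the same: represent $f\comp R$ by the net $g_\eps=f_\eps\comp R_\eps$ with $R_\eps(\lambda)=(\lambda-u_\eps)^{-1}$, use lemma \ref{lemma_specrad} together with lemma \ref{lemma_GenB_invertible} to get existence and a moderate bound $\norm{(z_\eps-u_\eps)^{-1}}\le\eps^{-N}$ at (representatives of) generalized points, and exploit the classical holomorphy of $g_\eps$. Where you diverge is in how you pass from this $\eps$-wise information to membership in $\tGen_H$: the paper first upgrades to \emph{uniform} classical statements --- $\{z\in\C:\abs z>\eps^s\}\subseteq\resolv_B(u_\eps)$ for small $\eps$, and sup-moderateness of $f_\eps\comp R_\eps$ on $\{\eps^s<\abs z\le\eps^{-k}\}$, both by contradiction arguments constructing bad moderate nets --- and then cites Lemma 4.2 of the generalized-analytic-functions paper to conclude, thereby outsourcing the problem that $g_\eps$ is not globally smooth. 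You instead verify the defining pointwise conditions of $\EMod(\Omega)$ and of $\bar\partial$-negligibility directly (via the derivative formula $g_\eps^{(k)}(\lambda)=(-1)^k k!\,f_\eps((\lambda-u_\eps)^{-(k+1)})$ and the Cauchy--Riemann relations), and you solve the global-smoothness issue by hand with the super-polynomially thin mollified cut-off around $\spec_B(u_\eps)$; since $d(z_\eps,\spec_B(u_\eps))\ge 1/\norm{(z_\eps-u_\eps)^{-1}}\ge\eps^N$ dominates $e^{-1/\eps}$, the cut-off is identically $1$ near every generalized point of the domain, so it contributes nothing to the bounds. This buys a self-contained argument that never leaves the definitions given in this paper (and your final well-definedness paragraph is actually superfluous, since exhibiting one representative whose pointwise action agrees with $f\comp R$ suffices), while the paper's route is shorter on the page and produces the stronger uniform information (a globally defined $R_\eps$ on $\{\abs z>\eps^s\}$ with uniform moderate bounds) in the form expected by the machinery of the cited reference, which is then reused in the proof of the spectral radius formula.
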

\begin{proof}
First, recall that $f$ acts as a (so-called basic $\GenC$-linear) pointwise map $\Gen_B\to\GenC$ \cite[\S 1.1.2]{GV_Hilbert}. Let $u=[(u_\eps)_\eps]$ and $f=[(f_\eps)_\eps]$ with $f_\eps\in B'$, for each $\eps$. Then $\{z\in\C: \abs z > \eps^s\}\subseteq \resolv_{B}(u_\eps)$, for small $\eps$, since supposing the contrary, we can construct $\tilde z\in \GenC\setminus \resolv_{\Gen_B}(u)$ with $\abs[]{\tilde z}\ge \caninf^s$ using lemma \ref{lemma_GenB_invertible} (moderateness can be ensured, since $\abs z\le \norm{u_\eps}$, for each $z\in\spec_B(u_\eps)$), contradicting lemma \ref{lemma_specrad}. Hence $R_\eps$: $\{z\in\C: \abs{z} > \eps^s\}\to B$: $R_\eps(z) := (z-u_\eps)^{-1}$ (for small $\eps$) is well-defined and $[(f_\eps\comp R_\eps(z_\eps))_\eps]=f\comp R(\tilde z)$, for each $\tilde z= [(z_\eps)_\eps]\in\GenC$ with $\abs{z_\eps} > \eps^s$, $\forall\eps$. For each $k\in\N$, there exists $N\in\N$ such that $\sup_{\eps^s < \abs z\le \eps^{-k}} \abs{f_\eps\comp R_\eps(z)}\le\eps^{-N}$, for small $\eps$, since otherwise, we can construct $(z_\eps)_\eps\in\Mod_\C$ with $\abs{z_\eps}>\eps^s$, $\forall \eps$ and $(f_\eps\comp R_\eps(z_\eps))_\eps\notin\Mod_\C$. Since $f_\eps\comp R_\eps$ are analytic (e.g., \cite[Thm.\ VI.5]{ReedSimon}), $f\comp R\in\tGen_H(\{\tilde z\in\GenC: \abs{\tilde z} \gg \caninf^s\})$ by \cite[Lemma 4.2]{HV_analytic}
.
\end{proof}

\begin{lemma}\label{lemma_functional}
Let $B$ be a Banach $\C$-algebra, $u\in\Gen_B$ and $a\in\R^+$. If $\limsup_{n\to\infty}\sqrt[n]{\sharpnorm{f(u^n)}}\le a$, for each $f\in\Gen_{B'}$, then $\limsup_{n\to\infty}\sqrt[n]{\pseudonorm{u^n}}\le a$. 
\end{lemma}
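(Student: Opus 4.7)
The plan is to argue by contraposition: I will show that if $\limsup_{n\to\infty}\sqrt[n]{\pseudonorm{u^n}} > a$, then some $f\in\Gen_{B'}$ also satisfies $\limsup_{n\to\infty}\sqrt[n]{\sharpnorm{f(u^n)}} > a$. Pick real numbers $a<c'<c$ and an infinite subsequence $(n_k)_{k\in\N}$ with $\pseudonorm{u^{n_k}}>c^{n_k}$ for every $k$. Fix a representative $(u_\eps)_\eps$ of $u$. The inequality $\pseudonorm{u^{n_k}}=e^{-\val(\norm{u_\eps^{n_k}})}>c^{n_k}$ forces $\val(\norm{u_\eps^{n_k}})<-n_k\ln c<-n_k\ln c'$, so by the very definition of the valuation, for every $\eta>0$ there exists $\eps\in(0,\eta)$ with $\norm{u_\eps^{n_k}}>\eps^{-n_k\ln c'}$; equivalently, the set of such $\eps$'s accumulates at $0$.

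I now construct $f$ by a diagonal selection. Enumerate pairs $(k,j)\in\N\times\N$ arbitrarily and inductively pick a \emph{pairwise distinct} $\eps_{k,j}\in(0,1/(k+j))$ satisfying $\norm{u_{\eps_{k,j}}^{n_k}}>\eps_{k,j}^{-n_k\ln c'}$; at each step only finitely many values are forbidden while the admissible set accumulates at $0$, so the choice is always possible. Classical Hahn--Banach in $B$ then furnishes $g_{k,j}\in B'$ with $\norm{g_{k,j}}=1$ and $|g_{k,j}(u_{\eps_{k,j}}^{n_k})|=\norm{u_{\eps_{k,j}}^{n_k}}$. Setting $f_\eps:=g_{k,j}$ whenever $\eps=\eps_{k,j}$ (well-defined by distinctness) and $f_\eps:=0$ otherwise, the family $(f_\eps)_\eps$ has norm bounded by $1$, hence is moderate and defines $f:=[(f_\eps)_\eps]\in\Gen_{B'}$.

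To conclude, fix $k$. As $j\to\infty$, the values $\eps_{k,j}\in(0,1/(k+j))$ form a null sequence, and on each of them $|f_{\eps_{k,j}}(u_{\eps_{k,j}}^{n_k})|=\norm{u_{\eps_{k,j}}^{n_k}}>\eps_{k,j}^{-n_k\ln c'}$. Hence the condition ``$|f_\eps(u_\eps^{n_k})|\le\eps^{-n_k\ln c'}$ for small $\eps$'' fails, so $\val(f(u^{n_k}))\le-n_k\ln c'$, whence $\sharpnorm{f(u^{n_k})}\ge(c')^{n_k}$ and $\sqrt[n_k]{\sharpnorm{f(u^{n_k})}}\ge c'$. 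Therefore $\limsup_{n\to\infty}\sqrt[n]{\sharpnorm{f(u^n)}}\ge c'>a$, contradicting the hypothesis. The main subtlety is that a single $f$ must carry information about \emph{all} $u^{n_k}$ simultaneously: for each $k$, detecting that $\val(f(u^{n_k}))$ is small requires knowing $f_\eps(u_\eps^{n_k})$ on a sequence of $\eps$'s going to $0$, which is precisely what the two-parameter indexing $(\eps_{k,j})_{j\in\N}$ provides for each fixed $k$. Everything else is an elementary bookkeeping of valuations and sharp norms.
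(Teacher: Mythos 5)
Your argument is correct, but it takes a genuinely different route from the paper. You argue by contraposition at the level of representatives: from $\limsup_n\sqrt[n]{\pseudonorm{u^n}}>a$ you extract a subsequence $(n_k)$ and scales $\eps_{k,j}\to 0$ where $\norm{u_{\eps_{k,j}}^{n_k}}$ beats $\eps_{k,j}^{-n_k\ln c'}$, and then you assemble a single $f=[(f_\eps)_\eps]\in\Gen_{B'}$ out of classical norm-attaining Hahn--Banach functionals $g_{k,j}\in B'$, placed at pairwise distinct $\eps$'s so that, for each fixed $k$, the failure of moderate decay of $f_\eps(u_\eps^{n_k})$ occurs along a null sequence of $\eps$'s; the bookkeeping with the valuation (using that the set of admissible exponents is a down-set, and that $\val$ and $\sharpnorm{\cdot}$ are representative-independent, with $f$ acting componentwise as a basic map) is sound, and the conclusion $\sqrt[n_k]{\sharpnorm{f(u^{n_k})}}\ge c'>a$ contradicts the hypothesis. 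The paper instead runs the classical spectral-radius argument inside the category of Banach $\GenC$-modules: for fixed $s<-\ln a$ the hypothesis makes the functionals $T_n(f)=f(u^n)/\caninf^{ns}$ pointwise bounded on $\Gen_{B'}$, the Banach--Steinhaus theorem for $\GenC$-modules gives a uniform constant $C_s$, and a Hahn--Banach-type result in $\Gen_{B'}$ (producing $f_n$ with $\pseudonorm{f_n}=1$ and $f_n(u^n)=\norm{u^n}$) converts this into $\pseudonorm{u^n}\le C_s e^{-ns}$ for all $n$. Your proof is more elementary and self-contained (it needs only classical Hahn--Banach on $B$ and the definition of the valuation), at the cost of being tied to the concrete description of $\Gen_{B'}$ by nets of functionals and yielding only the contrapositive; the paper's proof is shorter given the available $\GenC$-module toolkit, produces a direct uniform estimate on $\pseudonorm{u^n}$, and would transfer to more abstract duality settings where no componentwise representatives are available.
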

\begin{proof}
Let $s\in\R$ with $s < -\ln a$. Then $\sup_{n\in\N}\sharpnorm[]{f(u^n)/\caninf^{ns}}<+\infty$. Applying the Banach-Steinhaus theorem \cite[Thm.~3.21]{Garetto2005} to the Banach $\GenC$-module $\Gen_{B'}$ and the continuous $\GenC$-linear functionals $T_n(f)=f(u^n)/\caninf^{ns}$: $\Gen_{B'}\to\GenC$ (for fixed $s$), we find $C_s\in\R^+$ such that for each $n\in\N$ and $f\in\Gen_{B'}$, $\sharpnorm{T_n(f)}\le C_s \pseudonorm{f}$. Choosing $f_n\in\Gen_{B'}$ with $\pseudonorm{f_n}=1$ and $f_n(u^n)=\norm{u^n}$ \cite[Prop.~3.23]{Garetto2005}, we find $\pseudonorm{u^n}\le C_s e^{-ns}$, $\forall n\in\N$. Hence $\limsup_{n\to\infty}\sqrt[n]{\pseudonorm{u^n}}\le e^{-s}$. Letting $s\to -\ln a$, $\limsup_{n\to\infty}\sqrt[n]{\pseudonorm{u^n}}\le a$.
\end{proof}

\begin{thm}\label{thm_specrad_formula}
Let $B$ be a Banach $\C$-algebra and $\Alg$ a Banach sub-$\GenC$-algebra of $\Gen_B$. Let $u\in\Alg$. Then $\specrad_\Alg(u)=\lim_{n\to\infty}\sqrt[n]{\pseudonorm{u^n}}$.
\end{thm}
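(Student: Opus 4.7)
Proposition \ref{prop_spectral_radius_bound} already gives $\specrad_\Alg(u)\le \lim_{n\to\infty}\sqrt[n]{\pseudonorm{u^n}}$, and lemma \ref{lemma_spec_inclusion}(1) applied to $\Alg\subseteq\Gen_B$ yields $\specrad_{\Gen_B}(u)\le\specrad_\Alg(u)$. So I would reduce the theorem to the single inequality
\[
\lim_{n\to\infty}\sqrt[n]{\pseudonorm{u^n}}\le \specrad_{\Gen_B}(u),
\]
after which the three quantities are squeezed equal. By lemma \ref{lemma_functional}, it is enough to show that for every $f\in\Gen_{B'}$ and every real $a>\specrad_{\Gen_B}(u)$,
\[
\limsup_{n\to\infty}\sqrt[n]{\sharpnorm{f(u^n)}}\le a.
\]

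Fix such $a$, set $s:=-\ln a$, and pick $s'\in\R$ with $s<s'<-\ln\specrad_{\Gen_B}(u)$. I would first observe that on the outer region $\{\tilde\lambda:\sharpnorm{\tilde\lambda}>\pseudonorm u\}$ the Neumann series of lemma \ref{lemma_invertible_in_Banach_algebra} gives $R(\tilde\lambda)=(\tilde\lambda-u)^{-1}=\sum_{n\ge 0} u^{n}/\tilde\lambda^{n+1}$, and applying the continuous $\GenC$-linear functional $f$ yields the Laurent-type expansion
\[
\phi(\tilde\lambda):=f\circ R(\tilde\lambda)=\sum_{n\ge 0}\frac{f(u^n)}{\tilde\lambda^{n+1}}.
\]
Proposition \ref{prop_analytic_resolvent} then provides the crucial analytic continuation: $\phi\in\tGen_H\big(\{\tilde\lambda\in\GenC:\abs{\tilde\lambda}\gg\caninf^{s'}\}\big)$.

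The substantive step is to use this analyticity on the generalized annulus to bound the coefficients $f(u^n)$. I would invoke the Cauchy/Laurent coefficient formula for analytic generalized functions from \cite{HV_analytic} applied to $\phi$ on a generalized circle $\abs{\tilde\lambda}=\caninf^{s'}$: since the Neumann expansion is already valid on the outer piece, the coefficient of $\tilde\lambda^{-n-1}$ in any such Laurent representation must be $f(u^n)$ (by uniqueness). The standard contour estimate then gives
\[
\sharpnorm{f(u^n)}\le C\, e^{-ns'}
\]
for some $C\in\R$ independent of $n$, where moderateness of $\phi$ on the circle (which is part of $\phi\in\tGen_H$) supplies the constant $C$, exactly as in the proof of proposition \ref{prop_analytic_resolvent}. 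Taking $n$-th roots gives $\limsup_{n}\sqrt[n]{\sharpnorm{f(u^n)}}\le e^{-s'}<e^{-s}=a$, as required.

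The main obstacle is not in the algebra but in transporting classical complex-analytic arguments into the $\tGen_H$-setting: one needs a Cauchy integral formula (or at least a Laurent-coefficient bound) for generalized analytic functions on a generalized annulus, together with uniqueness, to justify identifying the Laurent coefficients with $f(u^n)$ throughout the annulus after only knowing it on the outer region. Granting the machinery of \cite{HV_analytic}, the rest of the argument is a direct translation of the classical Banach-algebra proof via lemma \ref{lemma_functional} and the Banach-Steinhaus-type bound used in its proof.
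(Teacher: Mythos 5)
Your overall skeleton coincides with the paper's: the squeeze $\specrad_\Alg(u)\le\lim_n\sqrt[n]{\pseudonorm{u^n}}\le\specrad_{\Gen_B}(u)\le\specrad_\Alg(u)$ via proposition \ref{prop_spectral_radius_bound} and lemma \ref{lemma_spec_inclusion}, the reduction through lemma \ref{lemma_functional} to bounding $\sharpnorm{f(u^n)}$ for $f\in\Gen_{B'}$, and the use of proposition \ref{prop_analytic_resolvent} for analyticity of $f\comp R$. Where you diverge is the key analytic step, and that is where the gap lies: you invoke, as a black box, a Cauchy/Laurent coefficient formula with uniqueness for elements of $\tGen_H$ on a generalized annulus, together with the ability to extract from membership in $\tGen_H$ a moderate sup-bound along a generalized circle. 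No such annulus machinery is cited or used anywhere in the paper, and the paper's own proof is structured precisely to avoid needing it: it passes to the inverted variable $\tilde z=1/\tilde\lambda$, forms the power series $V(\tilde z)=\sum_n f(u^n)\tilde z^n$ (analytic on a sharp disc by \cite{HV_analytic}), notes $V=U$ where $U(\tilde z)=\tilde z^{-1}f((\tilde z^{-1}-u)^{-1})$ on the overlap, glues $U$ and $V$ with a cut-off $\chi$ into a single $W\in\tGen_H$ on a larger disc with $W=V$ near $0$, and then applies the Taylor-coefficient (radius-of-convergence) theorems of \cite{HV_analytic} at the single point $0$. So the estimate $\sharpnorm{f(u^n)}\le Ce^{-ns'}$ that you assert as ``a standard contour estimate'' is exactly the content that must be proved in the generalized setting, and citing \cite{HV_analytic} for a Laurent theory on generalized annuli is not justified by anything in the present paper.

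The gap is repairable, but not by the statement of proposition \ref{prop_analytic_resolvent} alone: you would have to descend to representatives (as its proof does), check that for small $\eps$ the classical function $f_\eps\comp R_\eps$ is analytic on $\{\abs z>\eps^{s''}\}$ with a moderate sup-bound there, identify its Laurent coefficients at infinity with $f_\eps(u_\eps^n)$ via the Neumann series and classical uniqueness, and apply the classical Cauchy estimate on circles $\abs z=\eps^{s'}$ — noting also that your circle $\abs{\tilde\lambda}=\caninf^{s'}$ does not lie in the region $\abs{\tilde\lambda}\gg\caninf^{s'}$ furnished by the proposition, so an intermediate exponent $s'<s''<-\ln\specrad_{\Gen_B}(u)$ is needed. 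With those details supplied your route does work and is arguably more direct than the paper's cut-off gluing, but as written the decisive inequality rests on machinery you have only postulated.
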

\begin{proof}
Let first $\Alg=\Gen_B$ and $u\in\Gen_B$. Let $f\in\Gen_{B'}$. By proposition \ref{prop_lin_maps_on_normed_modules}, $\sharpnorm{f(u^n)}\le \pseudonorm f\pseudonorm{u^n}\le\pseudonorm f\pseudonorm u^n$, for each $n$, so $\limsup_{n\to\infty}\sqrt[n]{\sharpnorm{f(u^n)}}\le \pseudonorm{u}$, and by \cite[Lemma 4.16, Thm. 4.20]{HV_analytic}
,
\[
V(\tilde z):=\sum_{n=0}^\infty f(u^n) \tilde z^n \in \tGen_H(\{\tilde z\in\GenC: \sharpnorm{\tilde z} < 1/{\pseudonorm{u}}\}).
\]
Further, if $\sharpnorm{\tilde z}<1/\pseudonorm{u}$, then $\pseudonorm{\tilde z u} < 1$, and $V(\tilde z) = f\big(\sum_{n=0}^\infty u^n \tilde z^n\big)=f((1 - u\tilde z)^{-1})$. Hence $V(\tilde z)$ coincides with $U(\tilde z):=\tilde z^{-1} f((\tilde z^{-1} - u)^{-1})$ as soon as $\tilde z$ is invertible and $\sharpnorm{\tilde z}<1/\pseudonorm{u}$. By proposition \ref{prop_analytic_resolvent} and \cite[Lemma 4.4]{HV_analytic}
, $U \in \tGen_H(\{\tilde z\in\GenC: \caninf^m \ll \abs{\tilde z}\ll \caninf^{-s}\})$, for each $m\in\N$ and $s\in\R$ with $s < -\ln(\specrad_{\Gen_B}(u))$. Let $m\in\N$, $m - 1 > -s$ such that $V\in\tGen_H(\{\tilde z\in\GenC: \abs[]{\tilde z}\ll \caninf^{m-1}\})$. Let $(\chi_\eps)_\eps\in\EMod(\C)$ with $\chi_\eps(z)=1$, if $\abs{z}\le 2\eps^m$ and $\chi_\eps(z)=0$, if $\abs{z}\ge \eps^{m-1}/2$, and let $W :=  U + \chi (V - U)$ where $\chi:=[(\chi_\eps)_\eps]\in\tGen(\GenC)$. Then $W\in\tGen_H(\{\tilde z\in\GenC: \abs[]{\tilde z}\ll \caninf^{-s}\})\subseteq \tGen_H(\{\tilde z\in\GenC: \sharpnorm{\tilde z}< e^s\})$, and $W=V$ on a sharp neighbourhood of $0$. By \cite[Thms.\ 4.20 and 4.24]{HV_analytic}
, $\limsup_{n\to\infty}\sqrt[n]{\sharpnorm{D^n W(0)}}=\limsup_{n\to\infty}\sqrt[n]{\sharpnorm{f(u^n)}}\le e^{-s}$. 
By lemma \ref{lemma_functional}, $\limsup_{n\to\infty}\sqrt[n]{\pseudonorm{u^n}}\le e^{-s}$. Letting $s\to -\ln(\specrad_{\Gen_B}(u))$, $\limsup_{n\to\infty}\sqrt[n]{\pseudonorm{u^n}}\le \specrad_{\Gen_B}(u)$.\\
Now let $\Alg$ be any Banach sub-$\GenC$-algebra of $\Gen_B$ and $u\in\Alg$. By proposition \ref{prop_spectral_radius_bound},
$\specrad_\Alg(u)\le \lim_{n\to\infty}\sqrt[n]{\pseudonorm{u^n}} \le \specrad_{\Gen_B}(u)\le \specrad_{\Alg}(u)$.
\end{proof}
Consequently, for $u\in\Gen_B$, we can drop the index in $\specrad_\Alg(u)$.

\begin{cor}\label{cor_specrad_calculus}
Let $B$ be a Banach $\C$-algebra. Let $u,v\in\Gen_B$ with $uv=vu$. Then $\specrad(u+v) \le\specrad(u)+\specrad(v)$ and $\specrad(uv) \le\specrad(u)\specrad(v)$.
\end{cor}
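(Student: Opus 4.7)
The plan is to apply Theorem~\ref{thm_specrad_formula}, which identifies $\specrad(w)$ with $\lim_{n\to\infty}\sqrt[n]{\pseudonorm{w^n}}$ for any $w\in\Gen_B$, and then to bound $\pseudonorm{(uv)^n}$ and $\pseudonorm{(u+v)^n}$ in terms of $\pseudonorm{u^k}$ and $\pseudonorm{v^k}$ using the commutativity $uv=vu$.

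For the product, commutativity gives $(uv)^n=u^nv^n$, and submultiplicativity of $\pseudonorm{\cdot}$ yields $\pseudonorm{(uv)^n}\le\pseudonorm{u^n}\pseudonorm{v^n}$; extracting $n$-th roots and passing to the limit then gives $\specrad(uv)\le\specrad(u)\specrad(v)$ at once.

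For the sum, I would expand $(u+v)^n$ by the binomial theorem (valid because $u$ and $v$ commute) and combine the ultrametric triangle inequality with submultiplicativity to obtain
\[
\pseudonorm{(u+v)^n}\le\max_{0\le k\le n}\sharpnorm[\big]{\binom{n}{k}}\pseudonorm{u^k}\pseudonorm{v^{n-k}}.
\]
Here the crucial point is that each $\binom{n}{k}$ is a nonzero natural number: a direct computation of the valuation of a constant $m\in\N\setminus\{0\}$ gives $\val(m)=0$, so $\sharpnorm{\binom{n}{k}}\le 1$ and the binomial factor drops out. Given $\delta>0$, Theorem~\ref{thm_specrad_formula} applied to $u$ and to $v$ provides, after absorbing the finitely many initial exponents into a multiplicative constant, a single $C\ge 1$ with $\pseudonorm{u^k}\le C(\specrad(u)+\delta)^k$ and $\pseudonorm{v^k}\le C(\specrad(v)+\delta)^k$ for every $k\ge 0$. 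Substituting, the maximum above is bounded by $C^2\bigl(\max(\specrad(u),\specrad(v))+\delta\bigr)^n$; taking $n$-th roots, letting $n\to\infty$, and then $\delta\to 0$ yields in fact the stronger ultrametric bound $\specrad(u+v)\le\max(\specrad(u),\specrad(v))$, which implies the stated $\specrad(u+v)\le\specrad(u)+\specrad(v)$.

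The main obstacle I anticipate is the careful handling of the binomial coefficients in the ultrametric setting: a naive estimate that loses the bound $\sharpnorm{\binom{n}{k}}\le 1$ (for instance, one imitating the classical argument with an ordinary triangle inequality and $\binom{n}{k}\approx 2^n$) would destroy the argument. Once this observation is secured, the rest is a routine manipulation of the spectral radius formula.
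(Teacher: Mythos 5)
Your proposal is correct and follows essentially the same route as the paper, which simply invokes Theorem~\ref{thm_specrad_formula} together with the classical commuting-elements estimate (as in Aupetit, Cor.~3.2.10): $(uv)^n=u^nv^n$ for the product and the binomial expansion for the sum. Your additional observation that $\sharpnorm{\binom{n}{k}}\le 1$ makes the ultrametric estimate yield the stronger bound $\specrad(u+v)\le\max(\specrad(u),\specrad(v))$ is a valid refinement, but it stays within the same argument.
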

\begin{proof}
By theorem \ref{thm_specrad_formula} and the properties of $\pseudonorm{.}$ (e.g., as in \cite[Cor.~3.2.10]{Aupetit}).
\end{proof}

\subsection{Stability of spectra in subalgebras}
As the path-connected components of $\GenC$ in the topological sense are trivial (they are the singletons), we use an alternative way to define a nontrivial notion:
\begin{df}
Let $C\subseteq\GenC$. We call $a,b\in C$ \defstyle{$\Gencnt{[0,1]}$-path-connected} in $C$ iff there exists $f\in\Gen_{\Cnt([0,1])}$ with $f(0)=a$, $f(1)=b$ and $f(\tau)\in C$, for each $\tau\in [0,1]^{(0,1)}$. We call $f$ a $\Gencnt{[0,1]}$-path connecting $a$ and $b$.
\end{df}
The relation $a \sim b$ iff $a,b$ are $\Gencnt{[0,1]}$-path-connected in $C$ is an equivalence relation. Hence we can call the equivalence classes $\Gencnt{[0,1]}$-path-connected components in $C$.

\begin{lemma}\label{lemma_internal_resolvent_subset}
Let B be a Banach $\C$-algebra and $u=[(u_\eps)_\eps]\in\Gen_B$. Let $\emptyset \ne C\subseteq\GenC$ be a sharply bounded internal set with sharply bounded representative $[(C_\eps)_\eps]$.
\begin{enumerate}
\item  If $C\cap\spec_{\Gen_B}(u)=\emptyset$, then there exist $M\in\N$ and $S\subseteq (0,1)$ with $e_S\ne 0$ such that $u_\eps - z$ is invertible in $B$ and $\norm{(u_\eps - z)^{-1}}\le \eps^{-M}$, $\forall\eps \in S$, $\forall z\in C_\eps$.
\item If $C\subseteq\resolv_{\Gen_B}(u)$, then there exists $M\in\N$ and $\eps_0\in (0,1)$ such that $u_\eps - z$ is invertible in $B$ and $\norm{(u_\eps - z)^{-1}}\le \eps^{-M}$, $\forall\eps \le\eps_0$, $\forall z\in C_\eps$.
\end{enumerate}
\end{lemma}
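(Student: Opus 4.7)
Both parts follow the same strategy: argue by contradiction and build a witness $z=[(z_\eps)_\eps]\in C$ by a diagonal $\eps$-wise selection, then invoke either corollary~\ref{cor_GenB_strictly_noninvertible} (to produce an element of $\spec_{\Gen_B}(u)\cap C$, contradicting part~1's hypothesis) or lemma~\ref{lemma_GenB_invertible} (to contradict invertibility of $u-z$ in $\Gen_B$, contradicting part~2's hypothesis). Throughout, fix a sharply bounded representative $[(C_\eps)_\eps]$ of $C$, so that any pointwise selection $z_\eps\in C_\eps$ produces $(z_\eps)_\eps\in\Mod_\C$ and hence $z=[(z_\eps)_\eps]\in C$. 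View $\Gen_B$ itself as the strictly internal algebra $[(B)_\eps]$ so that lemma~\ref{lemma_GenB_invertible} and corollary~\ref{cor_GenB_strictly_noninvertible} apply.

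For part~1, suppose the conclusion fails. For $M\in\N$ set $G_M:=\{\eps\in(0,1):\forall z\in C_\eps,\ u_\eps-z\text{ is invertible in }B\text{ and }\norm{(u_\eps-z)^{-1}}\le\eps^{-M}\}$. If $e_{G_M}\ne 0$ for some $M$, then $S:=G_M$ would satisfy the conclusion; so by assumption $e_{G_M}=0$ for every $M$, i.e.\ there exists $\eta_M>0$ with $G_M\cap(0,\eta_M)=\emptyset$. Arrange $\eta_M\downarrow 0$. For $\eps\in(\eta_{M+1},\eta_M]$ select $z_\eps\in C_\eps$ witnessing the failure at level $M$, namely $u_\eps-z_\eps$ not invertible or $\norm{(u_\eps-z_\eps)^{-1}}>\eps^{-M}$; for $\eps>\eta_1$ pick $z_\eps\in C_\eps$ arbitrarily. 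Given $n\in\N$, every $\eps\le\eta_n$ lies in some $(\eta_{M+1},\eta_M]$ with $M\ge n$, whence $u_\eps-z_\eps$ is not invertible or $\norm{(u_\eps-z_\eps)^{-1}}>\eps^{-M}\ge\eps^{-n}$. Corollary~\ref{cor_GenB_strictly_noninvertible} then gives that $u-z$ is strictly non-invertible in $\Gen_B$, so $z\in C\cap\spec_{\Gen_B}(u)\ne\emptyset$, contradicting the hypothesis.

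For part~2, suppose the conclusion fails. Then for each $M$ and each $\eps_0>0$ there exist $\eps\le\eps_0$ and $z\in C_\eps$ violating the stated conditions, so by a direct diagonalization we may choose a strictly decreasing sequence $\eps_n\downarrow 0$ and $z_{\eps_n}\in C_{\eps_n}$ with $u_{\eps_n}-z_{\eps_n}$ not invertible or $\norm{(u_{\eps_n}-z_{\eps_n})^{-1}}>\eps_n^{-n}$. For $\eps\notin\{\eps_n:n\in\N\}$ pick $z_\eps\in C_\eps$ arbitrarily; the resulting $z=[(z_\eps)_\eps]$ lies in $C\subseteq\resolv_{\Gen_B}(u)$, so $u-z$ is invertible in $\Gen_B$. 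Lemma~\ref{lemma_GenB_invertible} (applied with $S=(0,1)$) then supplies $N\in\N$ such that $\norm{(u_\eps-z_\eps)^{-1}}\le\eps^{-N}$ for all small $\eps$, contradicting the behavior at $\eps_n$ for any $n>N$.

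The main technical point is the interaction between the pointwise $\eps$-wise selection and the internal/moderate structure: sharp boundedness of $C$ is essential to ensure the patched $(z_\eps)_\eps$ is moderate, and the asymmetry between the two parts reflects exactly the gap between $C\cap\spec_{\Gen_B}(u)=\emptyset$ (which only forbids bad behavior on subsets with $e_S\ne 0$, hence the weaker conclusion in part~1) and the stronger hypothesis $C\subseteq\resolv_{\Gen_B}(u)$ (which forbids bad behavior entirely, yielding a uniform bound for all small $\eps$).
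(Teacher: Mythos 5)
Your proof is correct and follows essentially the same route as the paper: negate the conclusion, make an $\eps$-wise diagonal selection $z_\eps\in C_\eps$ (moderate thanks to the sharply bounded representative, hence $z\in C$), and invoke corollary~\ref{cor_GenB_strictly_noninvertible} in part~1, resp.\ lemma~\ref{lemma_GenB_invertible} in part~2, to reach a contradiction with $C\cap\spec_{\Gen_B}(u)=\emptyset$, resp.\ $C\subseteq\resolv_{\Gen_B}(u)$. The paper only sketches this argument; your write-up supplies the same construction in full detail.
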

\begin{proof}
(1) Suppose that $(\forall m\in\N) (\exists \eps_m\in (0,1))$ $(\forall \eps\le\eps_m)$ $(\exists z\in C_\eps)$ $(u_\eps- z$ is not invertible in $B$ or $\norm{(u_\eps- z)^{-1}}\ge \eps^{-m})$. Then we can construct $\tilde z\in C$ such that $u- \tilde z$ is strictly not invertible in $\Gen_B$ by lemma \ref{lemma_GenB_invertible}.\\
(2) Similar.
\end{proof}

\begin{prop}\label{prop_spec_path_connect}
Let B be a Banach $\C$-algebra and $\Alg$ a strictly internal subalgebra of $\Gen_B$. Let $u\in\Alg$.
\begin{enumerate}
\item Let $\lambda,\mu$ be $\Gencnt{[0,1]}$-path-connected in $\GenC\setminus \spec_{\Gen_B}(u)$. If $\lambda\in\resolv_\Alg(u)$, then $\mu\notin\spec_\Alg(u)$.
\item Let $\lambda,\mu$ be $\Gencnt{[0,1]}$-path-connected in $\resolv_{\Gen_B}(u)$. Then $\lambda\in\spec_\Alg(u)$ iff $\mu\in\spec_\Alg(u)$.
\end{enumerate}
\end{prop}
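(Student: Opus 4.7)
The plan is to reduce each part to the classical fact that, for a closed unital subalgebra $A$ of a Banach $\C$-algebra $B$ and $v\in A$, the set $\resolv_A(v)$ is clopen in $\resolv_B(v)$: openness is standard, and closedness follows because if $\lambda_n\to\lambda$ in $\resolv_B(v)$ with $(v-\lambda_n)^{-1}\in A$, then the inverses converge in $B$ and the limit lies in the closed subalgebra $A$. Connectedness of the classical interval $[0,1]$ will then transport invertibility along the path.

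Take a $\Gencnt{[0,1]}$-path $f=[(f_\eps)_\eps]$ with $f(0)=\lambda$, $f(1)=\mu$, and set $C_\eps:=f_\eps([0,1])$, a compact subset of $\C$. Then $C:=[(C_\eps)_\eps]$ is a sharply bounded internal subset of $\GenC$ and $\{f(\tau):\tau\in[0,1]^{(0,1)}\}=C$ (any $[(z_\eps)_\eps]\in C$ comes from some $\tau_\eps\in[0,1]$ with $f_\eps(\tau_\eps)=z_\eps$). Under Part 1's hypothesis this gives $C\cap\spec_{\Gen_B}(u)=\emptyset$; under Part 2's it gives $C\subseteq\resolv_{\Gen_B}(u)$. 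Apply Lemma~\ref{lemma_internal_resolvent_subset} to obtain $M\in\N$ such that $u_\eps-z$ is invertible in $B$ with $\norm{(u_\eps-z)^{-1}}\le\eps^{-M}$ for $z\in C_\eps$: on some $S$ with $e_S\ne 0$ in Part 1, and for all small $\eps$ in Part 2. Fix a strict representative $(A_\eps)_\eps$ of $\Alg$ and, for each such $\eps$, define $\Gamma_\eps:=\{t\in[0,1]: u_\eps-f_\eps(t)\text{ is invertible in }A_\eps\}$. Because $f_\eps([0,1])\subseteq\resolv_B(u_\eps)$ on the relevant $\eps$, the clopen fact recalled above shows that $\Gamma_\eps\in\{\emptyset,[0,1]\}$.

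For Part 1, $\lambda\in\resolv_\Alg(u)$ yields via Lemma~\ref{lemma_GenB_invertible} that $u_\eps-\lambda_\eps$ is invertible in $A_\eps$ with moderate inverse. Since $f_\eps(0)-\lambda_\eps$ is negligible, a Neumann-series perturbation gives $0\in\Gamma_\eps$ for $\eps\in S$ small, hence $\Gamma_\eps=[0,1]$. The inverse of $u_\eps-f_\eps(1)$ in $A_\eps$ coincides with its inverse in $B$ and so has norm $\le\eps^{-M}$; a second perturbation passes from $f_\eps(1)$ to $\mu_\eps$, and Lemma~\ref{lemma_GenB_invertible} then gives that $u-\mu$ is invertible w.r.t.\ $S$ in $\Alg$, i.e.\ $\mu\notin\spec_\Alg(u)$. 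For Part 2 the clopen dichotomy is available for all small $\eps$: if $\mu\notin\spec_\Alg(u)$, say invertible w.r.t.\ some $S'$, then $1\in\Gamma_\eps$ for $\eps\in S'$ small, so $0\in\Gamma_\eps$, and another perturbation plus Lemma~\ref{lemma_GenB_invertible} gives $\lambda\notin\spec_\Alg(u)$. The converse direction follows by running the argument along the reversed path $t\mapsto f(1-t)$. The main obstacle is purely bookkeeping — the $\eps$-wise endpoint values $f_\eps(0),f_\eps(1)$ agree with $\lambda_\eps,\mu_\eps$ only up to negligible nets, so each invertibility transfer between them requires a short Neumann-series perturbation, which is harmless once the uniform moderate bounds from Lemma~\ref{lemma_internal_resolvent_subset} are in hand.
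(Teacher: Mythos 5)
Your argument is correct, and it reaches the conclusion by a genuinely different mechanism at the central step. Both you and the paper share the same scaffolding: fix a strict representative $(A_\eps)_\eps$, represent the path by $(f_\eps)_\eps$, apply lemma \ref{lemma_internal_resolvent_subset} to the internal set $f([0,1]^{(0,1)})=[(f_\eps([0,1]))_\eps]$ to get the uniform moderate bound $\norm{(u_\eps-z)^{-1}}\le\eps^{-M}$ on the path (for $\eps\in S$ in part 1, for all small $\eps$ in part 2), and use lemma \ref{lemma_GenB_invertible} to translate between invertibility w.r.t.\ $S$ in $\Alg$ and $\eps$-wise invertibility in $A_\eps$ with moderate inverses. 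Where you diverge is in how connectedness of $[0,1]$ is exploited: you invoke the classical fact that $\resolv_{A_\eps}(u_\eps)$ is relatively clopen in $\resolv_B(u_\eps)$ (openness standard, closedness from continuity of inversion in $B$ plus closedness of $A_\eps$), so that $\Gamma_\eps=f_\eps^{-1}(\resolv_{A_\eps}(u_\eps))$ is $\emptyset$ or $[0,1]$ for each relevant $\eps$; the paper instead argues at the generalized level, setting $t_\eps:=\sup\{t:(u_\eps-f_\eps(t))^{-1}\in A_\eps\}$ and deriving a contradiction at the single generalized point $f(\tau)$, $\tau=[(t_\eps)_\eps]$, which by lemma \ref{lemma_GenB_invertible} would be simultaneously invertible and non-invertible w.r.t.\ $S$ (using the two nearby parameters $t_{1,\eps}$, $t_{2,\eps}$). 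Your route front-loads the topology into a standard Banach-algebra lemma and makes the dichotomy per $\eps$ completely classical, at the cost of the (harmless, correctly handled) Neumann-series adjustments between $\lambda_\eps,\mu_\eps$ and $f_\eps(0),f_\eps(1)$ — adjustments the paper avoids by simply feeding the representatives $(u_\eps-f_\eps(0))_\eps$ and $(u_\eps-f_\eps(1))_\eps$ of $u-\lambda$ and $u-\mu$ directly into lemma \ref{lemma_GenB_invertible}; the paper's route stays self-contained within the representative-wise machinery and in effect reproves the relative-clopenness it needs. You also correctly noted that the inverse in $A_\eps$ coincides with the inverse in $B$, so the uniform bound from lemma \ref{lemma_internal_resolvent_subset} is available when applying lemma \ref{lemma_GenB_invertible} at the far endpoint, which is the one place where the bound is genuinely needed.
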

\begin{proof}
(1) Let $(A_\eps)_\eps$ be a strict representative of $\Alg$ and $u=[(u_\eps)_\eps]$ with $u_\eps\in A_\eps$, $\forall\eps$. Let $f=[(f_\eps)_\eps]\in\Gen_{\Cnt([0,1])}$ be a $\Gencnt{[0,1]}$-path in $\GenC\setminus \spec_{\Gen_B}(u)$ connecting $\lambda = f(0)$ and $\mu = f(1)$. Applying lemma \ref{lemma_internal_resolvent_subset} to the internal set $f([0,1]^{(0,1)})$ with sharply bounded representative $[(f_\eps([0,1]))_\eps]$, we find $M\in\N$ and $S\subseteq (0,1)$ with $e_S\ne 0$ such that $\norm{(u_\eps - f_\eps(t))^{-1}}\le\eps^{-M}$, $\forall \eps\in S$ and $t\in [0,1]$. Let $\lambda\in\resolv_\Alg(u)$. By lemma \ref{lemma_GenB_invertible}, $(u_\eps - f_\eps(0))^{-1}\in A_\eps$, for small $\eps\in S$. Suppose $\mu\in\spec_\Alg(u)$. By the same lemma, $(u_\eps - f_\eps(1))^{-1}\notin A_\eps$, for small $\eps\in S$. Hence for small $\eps\in S$, $t_\eps:=\sup\{t\in[0,1]: (u_\eps - f_\eps(t))^{-1}\in A_\eps\}$ exists. Let $t_\eps:=0$, if $\eps\notin S$. Then $\tau:=[(t_\eps)_\eps]\in [0,1]^{(0,1)}$. By continuity of $f_\eps$, we find $t_{1,\eps}$ with $\abs{f_\eps(t_{1,\eps})-f_\eps(t_\eps)}\le\eps^{1/\eps}$ and $(u_\eps-f_\eps(t_{1,\eps}))^{-1}\in A_\eps$ and $t_{2,\eps}$ with $\abs{f_\eps(t_{2,\eps})-f_\eps(t_\eps)}\le\eps^{1/\eps}$ and $(u_\eps-f_\eps(t_{2,\eps}))^{-1}\notin A_\eps$, for small $\eps\in S$. Then $f(\tau)= [(f_\eps(t_{1,\eps}))_\eps] = [(f_\eps(t_{2,\eps}))_\eps]$ and $u-f(\tau)$ would be both invertible and not invertible w.r.t.\ $S$ by lemma \ref{lemma_GenB_invertible}.\\
(2) Similar.
\end{proof}

\begin{prop}\label{prop_stable_spec_subset_GenR}
Let B be a Banach $\C$-algebra and $\Alg$ an intersection of strictly internal subalgebras of $\Gen_B$. Let $u\in\Alg$ such that $\spec_{\Gen_B}(u)\subseteq\GenR$. Then $\spec_{\Alg}(u)=\spec_{\Gen_B}(u)$ and $\resolv_\Alg(u)=\resolv_{\Gen_B}(u)$.
\end{prop}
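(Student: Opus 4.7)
The plan is to establish $\resolv_\Alg(u)=\resolv_{\Gen_B}(u)$ first; the spectrum equality $\spec_\Alg(u)=\spec_{\Gen_B}(u)$ then follows from theorem \ref{thm_resolv_determines_spec}, since the resolvent set determines the spectrum through the same formula in both algebras. The inclusion $\resolv_\Alg(u)\subseteq\resolv_{\Gen_B}(u)$ is immediate: an inverse of $u-\lambda$ in $\Alg\subseteq\Gen_B$ is automatically an inverse in $\Gen_B$. For the reverse, writing $\Alg=\bigcap_{i\in I}\Alg_i$ with each $\Alg_i$ strictly internal and invoking lemma \ref{lemma_spec_inclusion}(3), it suffices to fix one strictly internal $\Alg'=[(A_\eps)_\eps]$ and prove $\resolv_{\Gen_B}(u)\subseteq\resolv_{\Alg'}(u)$.

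Fix $\lambda\in\resolv_{\Gen_B}(u)$, write $\lambda=a+bi$ with $a,b\in\GenR$, and choose representatives $(a_\eps)_\eps$, $(b_\eps)_\eps$ and $(u_\eps)_\eps$ with $u_\eps\in A_\eps$ for small $\eps$. The key construction, in the spirit of the proof of proposition \ref{prop_spec_path_connect}, is the $\Gencnt{[0,1]}$-path given (for large $N\in\N$ to be chosen) by
\[f_\eps(t):=\lambda_\eps+it\eps^{-N}\mathrm{sign}(b_\eps),\quad t\in[0,1],\]
so that $f(0)=\lambda$ and $\mu:=f(1)$ satisfies $\sharpnorm{\mu}\ge e^N$. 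Thanks to the $\eps$-wise sign correction, $|\mathrm{Im}(f_\eps(t))|\ge|b_\eps|$ for all $t\in[0,1]$, so for every $\nu\in\spec_{\Gen_B}(u)\subseteq\GenR$ and $\tau\in[0,1]^{(0,1)}$,
\[|f(\tau)-\nu|^2\ge(a-\nu)^2+b^2=|\lambda-\nu|^2\gg 0.\]
Proposition \ref{prop_spec_determines_resolv_internal} applied to $\Gen_B$ then places $f(\tau)\in\resolv_{\Gen_B}(u)$ for every such $\tau$.

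Next, apply lemma \ref{lemma_internal_resolvent_subset}(2) to the sharply bounded internal set $C=[(f_\eps([0,1]))_\eps]\subseteq\resolv_{\Gen_B}(u)$ to obtain $M\in\N$ and $\eps_0>0$ such that $(u_\eps-z)^{-1}\in B$ exists with $\norm{(u_\eps-z)^{-1}}\le\eps^{-M}$ for all $\eps\le\eps_0$ and all $z\in f_\eps([0,1])$. Choosing $N$ so large that $\eps^{-N}\ge 2\norm{u_\eps}$ eventually, the Neumann series for $(u_\eps-\mu_\eps)^{-1}$ converges inside the closed subalgebra $A_\eps$. For each small $\eps$, the set $\{t\in[0,1]:(u_\eps-f_\eps(t))^{-1}\in A_\eps\}$ is clopen in $[0,1]$ — open by a local Neumann series centered at an $A_\eps$-invertible point, closed by continuity of inversion in $B$ together with the closedness of $A_\eps$ — contains $t=1$, and hence equals $[0,1]$ by connectedness. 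Specializing to $t=0$, $(u_\eps-\lambda_\eps)^{-1}\in A_\eps$ for small $\eps$ with moderate norm, so lemma \ref{lemma_GenB_invertible} delivers $\lambda\in\resolv_{\Alg'}(u)$.

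The main obstacle is keeping the path in $\resolv_{\Gen_B}(u)$ throughout while still reaching a point of large sharp norm. The naive choice $f(\tau)=\lambda+i\tau\caninf^{-N}$ is inadequate when $b_\eps$ is negative or changes sign with $\eps$: the imaginary part of $f_\eps(t)$ would cross zero and the curve could meet the (possibly real) spectrum of $u_\eps$. The sign correction $\mathrm{sign}(b_\eps)$ forces the path strictly outward from $\GenR$, securing both the pointwise estimate $|f(\tau)-\nu|\ge|\lambda-\nu|$ and (through lemma \ref{lemma_internal_resolvent_subset}(2)) the uniform resolvent bound on all of $f_\eps([0,1])$ that the per-$\eps$ clopen connectedness argument requires.
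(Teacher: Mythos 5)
Your argument is correct, and it follows the same core construction as the paper's proof: the vertical path $f_\eps(t)=\lambda_\eps\pm i t\eps^{-N}$ with the $\eps$-wise sign correction, the uniform resolvent bound from lemma \ref{lemma_internal_resolvent_subset}, and the transfer of $A_\eps$-invertibility along the path anchored at the large-modulus endpoint. The packaging differs, though. The paper does not redo the per-$\eps$ argument: it cites proposition \ref{prop_spec_path_connect}(1) for the same path, concludes first that $\spec_\Alg(u)\subseteq\spec_{\Gen_B}(u)$, and only then recovers $\resolv_\Alg(u)=\resolv_{\Gen_B}(u)$ from proposition \ref{prop_spec_determines_resolv_internal}; you reverse the order, proving the resolvent equality directly and deducing the spectrum equality from theorem \ref{thm_resolv_determines_spec}. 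Since neither part of proposition \ref{prop_spec_path_connect} literally yields membership in $\resolv_\Alg(u)$, your inlined clopen/connectedness argument in $t\in[0,1]$ (open by a Neumann series at an $A_\eps$-invertible point, closed by continuity of inversion plus closedness of $A_\eps$) is in effect a slightly strengthened, resolvent-level version of that proposition, and it is sound. A second small difference: to keep the path in the resolvent you use the quantitative estimate $\abs{f(\tau)-\nu}\ge\abs{\lambda-\nu}\gg 0$ together with proposition \ref{prop_spec_determines_resolv_internal}, whereas the paper only needs the cruder observation that the path values are either $\lambda$ or non-real, which works with the a priori larger set $\GenC\setminus\spec_{\Gen_B}(u)$; your version is fine because you start from $\lambda\in\resolv_{\Gen_B}(u)$. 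One detail to fix: specify $\mathrm{sign}(b_\eps):=+1$ when $b_\eps\ge 0$ and $-1$ otherwise (as the paper does). With the usual convention $\mathrm{sign}(0)=0$, on the scales where $b_\eps=0$ the path does not move, $f_\eps(1)=\lambda_\eps$ may have small modulus, and both the claim $\sharpnorm{f(1)}\ge e^{N}$ and the Neumann-series anchor at $t=1$ break down; with the $\pm 1$ convention everything goes through.
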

\begin{proof}
First, let $\Alg$ be strictly internal. Let $\lambda=[(\lambda_\eps)_\eps]\in \GenC\setminus\spec_{\Gen_B}(u)$. Let $M\in\N$ such that $\{\tilde z\in\GenC: \abs[]{\tilde z}\ge \caninf^{-M}\} \subseteq \resolv_\Alg(u)$. If the imaginary part of $\lambda_\eps\ge 0$, let $f_\eps(t)=\lambda_\eps + \eps^{-M }t i$. Otherwise, let $f_\eps(t)=\lambda_\eps - \eps^{-M}ti$. Then $f:=[(f_\eps)_\eps]\in\Gen_{\Cnt([0,1])}$ and $f([0,1]^{(0,1)})\subseteq \{\lambda\}\cup \GenC\setminus\GenR\subseteq\GenC\setminus\spec_{\Gen_B}(u)$. Hence $\lambda$, $f(1)$ are $\Gencnt{[0,1]}$-path-connected in $\GenC\setminus\spec_{\Gen_B}(u)$ and $f(1)\in\resolv_\Alg(u)$, since $\abs{f(1)}\ge \caninf^{-M}$. By proposition \ref{prop_spec_path_connect}, $\lambda\in\GenC\setminus\spec_\Alg(u)$. By proposition \ref{prop_spec_determines_resolv_internal}, also $\resolv_\Alg(u) = \resolv_{\Gen_B}(u)$.\\
If $\Alg=\bigcap_{i\in I} A_i$, with $A_i$ strictly internal subalgebras of $\Gen_B$, then $\resolv_\Alg(u) = \bigcap_{i\in I}\resolv_{\Alg_i}(u) = \resolv_{\Gen_B}(u)$ by lemma \ref{lemma_spec_inclusion}. By theorem \ref{thm_resolv_determines_spec}, also $\spec_\Alg(u) = \spec_{\Gen_B}(u)$.
\end{proof}

\section{Strictly internal $C^*$-algebras}
\begin{lemma}\label{lemma_selfadjoint_rep}
Let $B$ be a $*$-algebra and $\Alg=[(A_\eps)_\eps]$ an internal subalgebra of $\Gen_B$ such that $A_\eps$ are $*$-subalgebras of $B$, $\forall\eps$.
\begin{enumerate}
\item Let $u\in \Alg$ be self-adjoint. Then there exists a representative $(u_\eps)_\eps$ of $u$ with $u_\eps\in A_\eps$ self-adjoint, $\forall\eps$.
\item The set of all self-adjoint elements of $\Alg$ is internal.
\end{enumerate}
\end{lemma}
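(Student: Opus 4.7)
The plan is as follows. For part (1), start with any representative $(u_\eps)_\eps$ of $u$ with $u_\eps\in A_\eps$ for small $\eps$ (and $u_\eps = 0$ otherwise). Since each $A_\eps$ is a $*$-subalgebra of $B$, the element $v_\eps := \tfrac12(u_\eps + u_\eps^*)$ lies in $A_\eps$ and is self-adjoint. The key observation is that $v_\eps - u_\eps = \tfrac12(u_\eps^* - u_\eps)$. Because $u = u^*$ in $\Alg$, the family $(u_\eps - u_\eps^*)_\eps$ is negligible in $B$ (this uses that $*$ on $\Gen_B$ is induced componentwise, as recalled just before proposition \ref{prop_G_B(H)_is_C_star_subalgebra}). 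Hence $(v_\eps - u_\eps)_\eps\in\Null_B$, so $(v_\eps)_\eps$ represents $u$ and each $v_\eps$ is a self-adjoint element of $A_\eps$, giving the desired representative.

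For part (2), denote the set of self-adjoint elements of $\Alg$ by $\mathcal S$, and let $A_\eps^{\mathrm{sa}} := \{v\in A_\eps : v = v^*\}$. The claim is that $\mathcal S = [(A_\eps^{\mathrm{sa}})_\eps]$. The inclusion $\mathcal S\subseteq [(A_\eps^{\mathrm{sa}})_\eps]$ is immediate from part (1): any self-adjoint $u$ has a representative lying in $A_\eps^{\mathrm{sa}}$ for (small) $\eps$. For the reverse inclusion, suppose $u\in [(A_\eps^{\mathrm{sa}})_\eps]$ with a representative $(u_\eps)_\eps$ satisfying $u_\eps = u_\eps^*$ in $A_\eps$ for small $\eps$. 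Then $u^* = [(u_\eps^*)_\eps] = [(u_\eps)_\eps] = u$, so $u\in\mathcal S$.

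The only step requiring real care is part (1), and specifically justifying that self-adjointness of $u$ in $\Alg\subseteq\Gen_B$ forces $(u_\eps - u_\eps^*)_\eps\in\Null_B$; but this is exactly the statement that $*$ on $\Gen_B$ is well-defined componentwise, which has been noted. Everything else is bookkeeping with representatives of internal sets, so I do not expect any real obstacle.
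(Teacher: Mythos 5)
Your proposal is correct and follows the paper's own argument: symmetrize the representative via $v_\eps=\tfrac12(u_\eps+u_\eps^*)$, note that $u=u^*$ forces $(u_\eps-u_\eps^*)_\eps\in\Null_B$ so $(v_\eps)_\eps$ still represents $u$, and then identify the self-adjoint elements with the internal set $[(\{v\in A_\eps: v=v^*\})_\eps]$. The paper's proof is just a more compressed version of the same reasoning, so there is nothing to add.
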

\begin{proof}
(1) Let $u=[(u_\eps)_\eps]$ with $u_\eps\in A_\eps$, $\forall\eps$. Then $u=[(\frac{u_\eps + u^*_\eps}{2})_\eps]$ with $\frac{u_\eps + u^*_\eps}{2}\in A_\eps$ self-adjoint, $\forall\eps$.\\
(2) By part~(1), $\{u\in\Alg: u=u^*\} = [(\{u\in A_\eps: u_\eps = u_\eps^*\})_\eps]$.
\end{proof}

There exists no analogue for normal elements of the previous lemma:
\begin{ex}
Let $\cLin(H)$ be the $C^*$-algebra of continuous $\C$-linear operators on a separable Hilbert space $H$. Then there exist self-adjoint $u$, $v\in\Gen_{\cLin(H)}$ that commute, but such that no representatives $(u_\eps)_\eps$ of $u$ and $(v_\eps)_\eps$ of $v$ satisfy $u_\eps v_\eps = v_\eps u_\eps$, $\forall\eps$.
\end{ex}
\begin{proof}
Let $(e_k)_{k\ge 1}$ be an orthonormal basis of $H$. Consider the weighted shifts $S_n(e_k):= \min(k/n,1) e_{k+1}$. Then $\norm{S_n}\le 1$ and $\norm{S_n S_n^* - S_n^* S_n}=1/n^2$, for each $n$. Let $A_n= \mathop\mathrm{Re} S_n$, $B_n = \mathop\mathrm{Im} S_n$. Let $u_\eps:= A_{\lceil\eps^{-1/\eps}\rceil}$ and $v_\eps:= B_{\lceil\eps^{-1/\eps}\rceil}$, for each $\eps\in (0,1)$. Then $u:=[(u_\eps)_\eps]$, $v:=[(v_\eps)_\eps]\in \Gen_{\cLin(H)}$ are self-adjoint and $u + i v$ is normal. Hence $u$, $v$ commute.
In \cite{BO81}, it is shown that there are no $A'_n$, $B'_n$ $\in \cLin(H)$ which commute and such that $\lim_{n\to\infty}\norm{A_n - A'_n} + \norm{B_n - B'_n} = 0$. 
It follows that for any $(u'_\eps)_\eps$, $(v'_\eps)_\eps\in\Mod_{\cLin(H)}$, if $u'_\eps v'_\eps = v'_\eps u'_\eps$, $\forall \eps$, then $\lim_{\eps\to 0} \norm{u_\eps - u'_\eps}+ \norm{v_\eps - v'_\eps} \ne 0$. In particular, $u\ne [(u'_\eps)_\eps]$ or $v\ne [(v'_\eps)_\eps]$.
\end{proof}

\begin{cor}
Let $\cLin(H)$ be the $C^*$-algebra of continuous $\C$-linear operators on a separable Hilbert space $H$. Then there exists a normal $w\in\Gen_{\cLin(H)}$ such that no representative $(w_\eps)_\eps$ of $w$ satisfies $w_\eps w_\eps^* = w_\eps^* w_\eps$, $\forall\eps$.
\end{cor}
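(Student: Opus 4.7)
The plan is to reduce this directly to the preceding example by setting $w := u + iv$, where $u$ and $v$ are the self-adjoint commuting elements of $\Gen_{\cLin(H)}$ produced there.

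First, I would check that $w$ is normal. Since $u^* = u$, $v^* = v$, and $uv = vu$, we have $w^* = u - iv$ and a direct computation gives both $w w^*$ and $w^* w$ equal to $u^2 + v^2$. Thus $w$ is normal in $\Gen_{\cLin(H)}$.

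Next, suppose for contradiction that $(w_\eps)_\eps \in \Mod_{\cLin(H)}$ is a representative of $w$ with $w_\eps w_\eps^* = w_\eps^* w_\eps$ for every $\eps \in (0,1)$. Decompose each $w_\eps$ into its self-adjoint real and imaginary parts:
\[
u'_\eps := \frac{w_\eps + w_\eps^*}{2}, \qquad v'_\eps := \frac{w_\eps - w_\eps^*}{2i},
\]
so that $w_\eps = u'_\eps + i v'_\eps$ with $u'_\eps, v'_\eps \in \cLin(H)$ self-adjoint. Because the involution on $\Gen_{\cLin(H)}$ is defined componentwise, $(u'_\eps)_\eps$ and $(v'_\eps)_\eps$ are moderate representatives of $(w + w^*)/2 = u$ and $(w - w^*)/(2i) = v$, respectively. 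A short calculation shows that normality of $w_\eps$ is equivalent to $u'_\eps v'_\eps = v'_\eps u'_\eps$: expanding $w_\eps w_\eps^* - w_\eps^* w_\eps = 2i(v'_\eps u'_\eps - u'_\eps v'_\eps)$ gives the equivalence.

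Thus we would obtain representatives $(u'_\eps)_\eps$ of $u$ and $(v'_\eps)_\eps$ of $v$ satisfying $u'_\eps v'_\eps = v'_\eps u'_\eps$ for every $\eps$, contradicting the conclusion of the previous example. Hence no representative of $w$ can consist of componentwise normal operators. There is no real obstacle here beyond correctly identifying that componentwise self-adjoint representatives of $u$ and $v$ may be recovered from a hypothetical componentwise normal representative of $w$ via the classical real/imaginary part decomposition; once that is in place, the reduction is immediate.
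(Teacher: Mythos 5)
Your proof is correct and is essentially the paper's argument: the paper also sets $w = u + iv$ with $u,v$ from the preceding example, leaving implicit exactly the reduction you spell out (a componentwise normal representative of $w$ would yield, via real/imaginary parts, componentwise commuting self-adjoint representatives of $u$ and $v$, contradicting that example).
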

\begin{proof}
Let $w= u + iv$ with $u$, $v$ as in the previous example.
\end{proof}

\begin{df}
Let $B$ be a Banach $\C$-algebra. Let $\Alg$ be a strictly internal subalgebra of $\Gen_B$. Let $u,v\in\Alg$. If there exist a strict representative $(A_\eps)_\eps$ of $\Alg$ and representatives $(u_\eps)_\eps$ of $u$ and $(v_\eps)_\eps$ of $v$ such that $u_\eps, v_\eps\in A_\eps$ and $u_\eps v_\eps = v_\eps u_\eps$, $\forall\eps$, then we say that $u$, $v$ \defstyle{commute strictly} in $\Alg$. We will refer to $(u_\eps)_\eps$, $(v_\eps)_\eps$ as commuting representatives of $u$, $v$.\\
Let $B$ be a $*$-algebra. Let $\Alg$ be a strictly internal subalgebra of $\Gen_B$. If $\Alg$ has a strict representative $(A_\eps)_\eps$ and $u\in\Alg$ has a representative $(u_\eps)_\eps$ with $u_\eps\in A_\eps$ and $u_\eps u_\eps^* = u_\eps^* u_\eps$, $\forall\eps$, then we call $u$ \defstyle{strictly normal} in $\Alg$. We will refer to $(u_\eps)_\eps$ as a normal representative of $u$.
\end{df}

\begin{thm}\label{thm_spec_of_normal_elts}
Let $B$ be a $C^*$-algebra. Let $\Alg$ be a strictly internal subalgebra of $\Gen_B$. Let $u\in \Alg$ be strictly normal in $\Alg$. Then $\spec_\Alg(u)=\intspec_\Alg(u)=[(\spec_{A_\eps}(u_\eps))_\eps]$ for each strict representative $(A_\eps)_\eps$ of $\Alg$ and normal representative $(u_\eps)_\eps$ of $u$ in $\Alg$ with $u_\eps\in A_\eps$, for each $\eps$. In particular, $\spec_\Alg(u)$ is internal.
\end{thm}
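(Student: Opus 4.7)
\emph{Plan.} The chain of inclusions
\[
[(\spec_{A_\eps}(u_\eps))_\eps] \subseteq \intspec_\Alg(u) \subseteq \spec_\Alg(u)
\]
is immediate from the definition of $\intspec_\Alg(u)$ and from theorem \ref{thm_non_empty_spectra}(1). Hence the entire theorem reduces to proving the reverse inclusion $\spec_\Alg(u) \subseteq [(\spec_{A_\eps}(u_\eps))_\eps]$, after which the final claim ($\spec_\Alg(u)$ is internal) is automatic.

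Fix a strict representative $(A_\eps)_\eps$ of $\Alg$ and a normal representative $(u_\eps)_\eps$ of $u$ with $u_\eps \in A_\eps$ for each $\eps$, and let $\lambda = [(\lambda_\eps)_\eps] \in \spec_\Alg(u)$. Since $u-\lambda$ is strictly non-invertible in $\Alg$, corollary \ref{cor_GenB_strictly_noninvertible} yields a decreasing sequence $(\eta_n)_{n\in\N}$ tending to $0$ such that for every $\eps \le \eta_n$, either $\lambda_\eps \in \spec_{A_\eps}(u_\eps)$ or $u_\eps - \lambda_\eps$ is invertible in $A_\eps$ with $\norm{(u_\eps-\lambda_\eps)^{-1}} \ge \eps^{-n}$ (in the latter case the inverse in $A_\eps$ coincides with the inverse in $B$, since $A_\eps$ inherits its norm and unit from $B$).

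The key step is to exploit the $C^*$-structure to replace $\lambda_\eps$ by a nearby point of $\spec_{A_\eps}(u_\eps)$ in the second case. Because $u_\eps$ is normal in $B$, so is $u_\eps-\lambda_\eps$, as well as its inverse whenever it exists. The standard $C^*$-identity $\norm{w}=\specrad_B(w)$ for normal $w$, applied to $w=(u_\eps-\lambda_\eps)^{-1}$ and combined with the spectral mapping $\spec_B(w)=\{1/(\mu-\lambda_\eps):\mu\in\spec_B(u_\eps)\}$, gives
\[
\norm{(u_\eps-\lambda_\eps)^{-1}} \;=\; \frac{1}{\mathrm{dist}(\lambda_\eps,\spec_B(u_\eps))}.
\]
Thus, in the second case, $\mathrm{dist}(\lambda_\eps,\spec_B(u_\eps)) \le \eps^n$, and since $\spec_B(u_\eps)$ is compact this distance is attained at some $\mu_\eps \in \spec_B(u_\eps) \subseteq \spec_{A_\eps}(u_\eps)$.

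Now set $\tilde\lambda_\eps := \lambda_\eps$ whenever $\lambda_\eps \in \spec_{A_\eps}(u_\eps)$, and $\tilde\lambda_\eps := \mu_\eps$ otherwise (with $n$ chosen so that $\eta_{n+1} < \eps \le \eta_n$). Then $\tilde\lambda_\eps \in \spec_{A_\eps}(u_\eps)$ for all sufficiently small $\eps$, and $\abs{\tilde\lambda_\eps - \lambda_\eps} \le \eps^n$ on $(\eta_{n+1},\eta_n]$, so $(\tilde\lambda_\eps-\lambda_\eps)_\eps \in \Null_\C$; in particular $(\tilde\lambda_\eps)_\eps \in \Mod_\C$ represents $\lambda$. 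Hence $\lambda \in [(\spec_{A_\eps}(u_\eps))_\eps]$, completing the proof. The main obstacle is the $C^*$-algebra identity relating the norm of the inverse of a normal element to the distance from the spectrum; the rest is representative-level bookkeeping using corollary \ref{cor_GenB_strictly_noninvertible}.
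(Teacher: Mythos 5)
Your proposal is correct and rests on exactly the same key ingredients as the paper's proof: corollary \ref{cor_GenB_strictly_noninvertible} (equivalently lemma \ref{lemma_GenB_invertible}) at the level of representatives, together with the $C^*$-identity $\norm{(u_\eps-\lambda_\eps)^{-1}}=1/d(\lambda_\eps,\spec(u_\eps))$ for normal elements. The only difference is organizational: the paper argues contrapositively, using the distance characterization of membership in internal sets, whereas you argue directly by building a representative $(\tilde\lambda_\eps)_\eps$ of $\lambda$ inside $\spec_{A_\eps}(u_\eps)$ (and you work with $\spec_B(u_\eps)\subseteq\spec_{A_\eps}(u_\eps)$ rather than $\spec_{A_\eps}(u_\eps)$ itself), which amounts to the same argument.
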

\begin{proof}
By theorem \ref{thm_non_empty_spectra}, it is sufficient to show that $\spec_\Alg(u)\subseteq [(\spec_{A_\eps} (u_\eps))_\eps]$. So let $\lambda=[(\lambda_\eps)_\eps]\in\GenC\setminus [(\spec_{A_\eps}(u_\eps))_\eps]$. By \cite[Prop.~2.1]{OV_internal}, $(d(\lambda_\eps, \spec_{A_\eps}(u_\eps)))_\eps\notin\Null_\R$.
So there exist $m\in\N$ and $S\subseteq (0,1)$ with $e_S\ne 0$ such that $d(\lambda_\eps,\spec_{A_\eps}(u_\eps))\ge \eps^m$, $\forall \eps\in S$. By \cite[Thm.~3.3.5]{Aupetit}, $d(\lambda_\eps,\spec_{A_\eps}(u_\eps)) = 1/\specrad((u_\eps-\lambda_\eps)^{-1}) = 1/\norm{(u_\eps-\lambda_\eps)^{-1}}$, since $(u_\eps-\lambda_\eps)^{-1}\in A_\eps$ is normal, $\forall\eps\in S$. Hence $\norm{(u_\eps-\lambda_\eps)^{-1}}\le\eps^{-m}$, $\forall \eps\in S$. By lemma \ref{lemma_GenB_invertible}, $u-\lambda$ is invertible w.r.t.\ $S$ in $\Alg$, and $\lambda\in\GenC\setminus \spec_\Alg(u)$.
\end{proof}
\begin{rem}
In the previous theorem, $u^*\in \Gen_B$ need not belong to $\Alg$. (In fact, if, instead of $u_\eps$ normal, $\forall\eps$, $\big(\frac{\norm[]{(u_\eps-\lambda_\eps)^{-1}}} {\specrad((u_\eps-\lambda_\eps)^{-1})}\big)_\eps$ is moderate, then also $\spec_\Alg(u)=[(\spec_{A_\eps}(u_\eps))_\eps]$.)
\end{rem}

By theorem \ref{thm_poly_spec_calc}, we obtain:
\begin{cor}
Let $B$ be a commutative $C^*$-algebra. Let $\Alg$ be a strictly internal subalgebra of $\Gen_B$. Let $u,v\in\Alg$. Then $\spec_\Alg(u)=\functspec_\Alg(u)$, $\spec_\Alg(u+v)\subseteq \spec_\Alg(u) + \spec_\Alg(v)$ and $\spec_\Alg(uv)\subseteq \spec_\Alg(u) \spec_\Alg(v)$.
\end{cor}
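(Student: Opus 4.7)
The plan is to reduce all three claims to known facts about the functional spectrum by first establishing the identity $\spec_\Alg(u)=\functspec_\Alg(u)$ for every $u\in\Alg$. Once this equality is in hand, the two inclusions about $u+v$ and $uv$ become direct consequences of theorem \ref{thm_poly_spec_calc}(3).

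First, I observe that since $B$ is commutative, every $u\in\Alg$ is strictly normal in $\Alg$: given any strict representative $(A_\eps)_\eps$ of $\Alg$ and any representative $(u_\eps)_\eps$ of $u$ with $u_\eps\in A_\eps$, commutativity of $B$ automatically forces $u_\eps u_\eps^*=u_\eps^* u_\eps$ in $B$. (Note we do not need $u_\eps^*\in A_\eps$; the definition of strict normality only asks for the commutation identity to hold in $B$.) This puts the hypotheses of theorem \ref{thm_spec_of_normal_elts} at our disposal, so $\spec_\Alg(u)=\intspec_\Alg(u)$.

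Next, since the algebras $A_\eps$ of any strict representative inherit commutativity from $B$, theorem \ref{thm_non_empty_spectra}(2) applies and gives
\[
\intspec_\Alg(u)\subseteq\functspec_\Alg(u)\subseteq\spec_\Alg(u).
\]
Combining with the previous step yields the equality $\spec_\Alg(u)=\functspec_\Alg(u)$ for every $u\in\Alg$.

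Finally, the arithmetic inclusions are immediate: applying theorem \ref{thm_poly_spec_calc}(3) to both $u+v$ and $uv$ and substituting the equality $\spec=\functspec$,
\[
\spec_\Alg(u+v)=\functspec_\Alg(u+v)\subseteq\functspec_\Alg(u)+\functspec_\Alg(v)=\spec_\Alg(u)+\spec_\Alg(v),
\]
and analogously $\spec_\Alg(uv)\subseteq\spec_\Alg(u)\spec_\Alg(v)$. The only delicate point is the first paragraph, namely ensuring that the commutativity hypothesis on $B$ suffices to activate theorem \ref{thm_spec_of_normal_elts} despite $\Alg$ not being required to be a $*$-subalgebra; everything after that is bookkeeping.
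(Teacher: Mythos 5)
Your proposal is correct and follows essentially the same route the paper intends: commutativity of $B$ makes every element strictly normal, so theorem \ref{thm_spec_of_normal_elts} gives $\spec_\Alg(u)=\intspec_\Alg(u)$, theorem \ref{thm_non_empty_spectra}(2) upgrades this to $\spec_\Alg(u)=\functspec_\Alg(u)$, and theorem \ref{thm_poly_spec_calc}(3) yields the two inclusions. Your parenthetical observation that strict normality does not require $u_\eps^*\in A_\eps$ is exactly consistent with the paper's definition and its remark following theorem \ref{thm_spec_of_normal_elts}.
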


\begin{prop}\label{prop_Cstar_elementary_GenB}
Let $B$ be a $C^*$-algebra and $\Alg$ a strictly internal subalgebra of $\Gen_B$.
\begin{enumerate}
\item If $u\in\Gen_B$ is normal, then $\specrad(u)=\pseudonorm{u}$. If $u\in\Alg$ is strictly normal in $\Gen_B$, then $\max\{\abs\lambda: \lambda\in\spec_\Alg(u)\}=\norm{u}$.
\item If $u\in\Alg$ is unitary, then $\norm{u}=1$ and $\abs{\lambda}=1$, $\forall\lambda\in\spec_\Alg(u)$.
\item If $u\in\Alg$ is self-adjoint, then $\spec_{\Alg}(u)\subseteq\{\lambda\in\GenR: - \norm{u}\le \lambda\le \norm{u}\}$ and there exists $S\subseteq(0,1)$ such that $(e_S - e_{\co S})\norm{u}\in\spec_{\Alg}(u)$.
\item Every $m\in\Smlf{\Alg}$ is hermitian.
\end{enumerate}
\end{prop}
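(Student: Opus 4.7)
For part (1), the first claim follows immediately by combining Proposition \ref{prop_Cstar_elementary}(1) with the spectral radius formula of Theorem \ref{thm_specrad_formula}: if $u\in\Gen_B$ is normal then $\pseudonorm{u^n}=\pseudonorm u^n$, so $\specrad(u)=\lim_n\sqrt[n]{\pseudonorm{u^n}}=\pseudonorm u$. For the second claim, fix a strict representative $(A_\eps)_\eps$ of $\Alg$ and a normal representative $(u_\eps)_\eps$ of $u$. Since each $u_\eps$ is normal in the $C^*$-algebra $B$ and $A_\eps$ is a $C^*$-subalgebra, classical spectral permanence gives $\max_{\mu\in\spec_{A_\eps}(u_\eps)}\abs{\mu}=\specrad_B(u_\eps)=\norm{u_\eps}$. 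Theorem \ref{thm_spec_of_normal_elts} then yields $\spec_\Alg(u)=[(\spec_{A_\eps}(u_\eps))_\eps]$, and picking $\lambda_\eps\in\spec_{A_\eps}(u_\eps)$ attaining the classical maximum produces $\lambda=[(\lambda_\eps)_\eps]\in\spec_\Alg(u)$ with $\abs\lambda=\norm u$; the reverse bound $\abs\mu\le\norm u$ for all $\mu\in\spec_\Alg(u)$ is Proposition \ref{prop_GenB_invertible}(2).

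For part (2), since $u$ is unitary, $u^*=\inv u$, and the $C^*$-identity applied componentwise in $\GenR$ gives $\norm u^2=\norm{u^*u}=\norm 1=1$, so $\norm u=1$. Now let $\lambda\in\spec_\Alg(u)$. Proposition \ref{prop_GenB_invertible}(2) gives $\abs\lambda\le 1$; moreover $u$ is invertible, so by Theorem \ref{thm_poly_spec_calc}(2) and the formula $\spec(u^*)=\{\bar\mu:\mu\in\spec(u)\}$, the element $\inv\lambda$ equals $\bar\mu$ for some $\mu\in\spec_\Alg(u)$, whence $\abs\lambda^{-1}=\abs\mu\le 1$. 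Combining both inequalities in $\GenR$ (using that $\abs\lambda$ is invertible since $\lambda$ is) gives $\abs\lambda=1$.

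For part (3), by Lemma \ref{lemma_selfadjoint_rep}(1) we pick a representative $(u_\eps)_\eps$ of $u$ with each $u_\eps\in A_\eps$ self-adjoint. Since $u_\eps$ is normal, Theorem \ref{thm_spec_of_normal_elts} applies and $\spec_\Alg(u)=[(\spec_{A_\eps}(u_\eps))_\eps]$. Classical $C^*$-theory gives $\spec_{A_\eps}(u_\eps)\subseteq[-\norm{u_\eps},\norm{u_\eps}]\subseteq\R$, so every representative of every element of $\spec_\Alg(u)$ is real-valued and bounded in modulus by $\norm{u_\eps}$; this yields $\spec_\Alg(u)\subseteq\{\lambda\in\GenR:-\norm u\le\lambda\le\norm u\}$. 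By part (1) there exists $\lambda=[(\lambda_\eps)_\eps]\in\spec_\Alg(u)$ with $\abs\lambda=\norm u$, where each $\lambda_\eps\in\R$ satisfies $\abs{\lambda_\eps}=\norm{u_\eps}$, i.e., $\lambda_\eps\in\{-\norm{u_\eps},\norm{u_\eps}\}$. Setting $S:=\{\eps\in(0,1):\lambda_\eps=\norm{u_\eps}\}$ we obtain $\lambda=(e_S-e_{\co S})\norm u\in\spec_\Alg(u)$. Finally, part (4) is immediate: for any self-adjoint $v\in\Alg$ and any $m\in\Smlf\Alg$, Lemma \ref{lemma_fsp_sub_sp} together with part (3) gives $m(v)\in\functspec_\Alg(v)\subseteq\spec_\Alg(v)\subseteq\GenR$, so by Lemma \ref{lemma_hermitian_mlf}, $m$ is hermitian.

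The main point to handle carefully is ensuring the representatives $u_\eps$ genuinely lie in $A_\eps$ (so that Theorem \ref{thm_spec_of_normal_elts} applies), and invoking classical spectral permanence for $C^*$-subalgebras to identify the maximum of $\abs\cdot$ on $\spec_{A_\eps}(u_\eps)$ with $\norm{u_\eps}$; everything else is a straightforward transfer of the classical $C^*$-identities across the internal representation.
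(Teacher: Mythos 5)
Your treatment of part (1)'s first claim, part (2) and part (4) is correct and coincides with the paper's proof. The genuine problem lies in the second claim of part (1) and in part (3): you invoke Theorem \ref{thm_spec_of_normal_elts} and Lemma \ref{lemma_selfadjoint_rep}(1) at the level of the $A_\eps$, but their hypotheses are not available here. The proposition only assumes that $\Alg=[(A_\eps)_\eps]$ is a strictly internal subalgebra (so the $A_\eps$ are Banach subalgebras of $B$, \emph{not} necessarily $*$-subalgebras) and that $u$ is strictly normal \emph{in $\Gen_B$} (part 1), resp.\ self-adjoint in $\Gen_B$ (part 3). This gives you a normal (resp.\ self-adjoint) representative with values in $B$, but no representative of that kind lying in $A_\eps$; Lemma \ref{lemma_selfadjoint_rep} requires the $A_\eps$ to be $*$-subalgebras, and Theorem \ref{thm_spec_of_normal_elts} requires $u$ to be strictly normal \emph{in $\Alg$}, i.e., a normal representative with $u_\eps\in A_\eps$. (The remark following Theorem \ref{thm_spec_of_normal_elts} stresses exactly this point: $u^*$ need not even belong to $\Alg$.) Likewise, ``spectral permanence for the $C^*$-subalgebra $A_\eps$'' is not available since $A_\eps$ need not be $*$-closed. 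You yourself flag that ``the main point is ensuring the representatives $u_\eps$ genuinely lie in $A_\eps$'', but this cannot be ensured from the stated hypotheses, so your identification $\spec_\Alg(u)=[(\spec_{A_\eps}(u_\eps))_\eps]$ is unjustified, and with it both your proof that $\spec_\Alg(u)\subseteq\GenR$ in (3) and your construction of the maximal spectral value in (1).

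The paper circumvents this as follows. For the existence statements it works entirely at the level of $B$: choosing $\lambda_\eps\in\spec_B(u_\eps)$ with $\abs{\lambda_\eps}=\norm{u_\eps}$ (resp.\ $\lambda_\eps=\pm\norm{u_\eps}$ for self-adjoint $u_\eps\in B$, using Lemma \ref{lemma_selfadjoint_rep} applied to $\Gen_B$ itself) gives an element of $[(\spec_B(u_\eps))_\eps]=\intspec_{\Gen_B}(u)\subseteq\spec_{\Gen_B}(u)\subseteq\spec_\Alg(u)$, and the upper bound $\abs{\lambda}\le\norm{u}$ is Proposition \ref{prop_GenB_invertible}(2); no representative inside $A_\eps$ is needed. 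For the reality of the spectrum in (3) the paper does not use any internal description of $\spec_\Alg(u)$ at all: it applies the holomorphic functional calculus (Theorem \ref{thm_holomorphic_spec_calc}) to $f(z)=e^{iz}$ (after scaling $u$ by a power of $\caninf$ so that $\pseudonorm{u}<1$), notes that $e^{iu}\in\Alg$ is unitary, and uses part (2) to force the imaginary part of every spectral value of $u$ to vanish. If you want to keep your route, you would have to first reduce to $\Gen_B$ (where strict normality and self-adjoint representatives are available) and then transfer back to $\Alg$, which is essentially what the paper's argument does; as written, your steps for (1, second claim) and (3) do not go through.
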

\begin{proof}
(1) The first assertion follows by proposition \ref{prop_Cstar_elementary} and theorem \ref{thm_specrad_formula}, the second assertion by proposition \ref{prop_GenB_invertible} and by the fact that $[(\spec_B(u_\eps))_\eps]\subseteq \spec_{\Gen_B}(u) \subseteq\spec_\Alg(u)$.\\
(2) Since $\norm{u}^2 = \norm{u u^*} = 1$ in $\GenR$ and $\norm{u}\ge 0$, we find $\norm{u}=1$. Let $\lambda\in\spec_\Alg(u)$. By proposition \ref{prop_GenB_invertible}, $\abs\lambda\le 1$. By theorem \ref{thm_poly_spec_calc}, $\inv\lambda\in\spec(\inv u)=\spec(u^*)$, hence also $\abs{\inv\lambda}\le 1$, i.e., $\abs\lambda\ge 1$.\\
(3) Let $f(z)=e^{iz}=\sum_{n=0}^\infty (iu)^n/n!$. By theorem \ref{thm_holomorphic_spec_calc}, $f(u)$ exists, for $u\in\Alg$ with $\pseudonorm{u}<1$ and $f(\spec(u))\subseteq \spec(f(u))$. Now $f(u)^*=\sum_{n=0}^\infty (-iu)^n/n!=e^{-iu}$ if $u$ is self-adjoint. By multiplication of the power series, it follows that $f(u)$ is unitary. Let $\lambda=[(\lambda_\eps)_\eps]=\mu + i\nu \in\spec(u)$. By the convergence of power series, since $\sharpnorm{\lambda}<1$, $\sum_{n=0}^\infty (i\lambda)^n/n! = [(e^{i\lambda_\eps})_\eps]=:e^{i\lambda}$. By part 2, $1 = \abs[]{e^{i\lambda}} = e^{-\nu}$. Hence also $e^{\nu}=1$. Since $1=e^{\pm\nu}\ge 1 \pm\nu$, $\nu = 0$ and $\lambda\in\GenR$.
If $\pseudonorm{u}\ge 1$, then $\pseudonorm{u\caninf^m}<1$ for some $m\in\N$. Hence $\spec(u) =\caninf^{-m}\spec(u\caninf^m) \subseteq\GenR$. The first assertion follows by proposition \ref{prop_GenB_invertible}. By lemma \ref{lemma_selfadjoint_rep}, there exists $S\subseteq (0,1)$ with $(e_S - e_{\co S})\norm u\in \intspec_{\Gen_B}(u)\subseteq\spec_\Alg(u)$.\\
(4) If $u\in\Alg$ is self-adjoint, then $\functspec_\Alg(u)\subseteq \spec_\Alg(u)\subseteq \GenR$ by part~(3). The result follows by lemma \ref{lemma_hermitian_mlf}.
\end{proof}

\begin{thm}\label{thm_spec_of_Cstar_subalgebra}
Let B be a $C^*$-algebra and $\Alg$ an intersection of strictly internal subalgebras of $\Gen_B$. Let $u,u^*\in\Alg$. Then $\spec_{\Alg}(u) = \spec_{\Gen_B}(u)$ and $\resolv_{\Alg}(u) = \resolv_{\Gen_B}(u)$.
\end{thm}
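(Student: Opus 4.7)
My plan is to reduce the statement to the self-adjoint case already settled by proposition \ref{prop_stable_spec_subset_GenR}, using the standard $C^*$-trick that invertibility of $u-\lambda$ is detected by invertibility of the two positive-like elements $(u-\lambda)^*(u-\lambda)$ and $(u-\lambda)(u-\lambda)^*$, both of which are self-adjoint and (crucially) belong to $\Alg$ because the hypothesis gives us $u^*\in\Alg$. Once $\resolv_\Alg(u)=\resolv_{\Gen_B}(u)$ is established, the identity for the spectra will follow by theorem \ref{thm_resolv_determines_spec}, since $\resolv_{\Gen_B}(u)\ne\emptyset$ by lemma \ref{lemma_resolv_set}.

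First I would prove $\resolv_\Alg(u)=\resolv_{\Gen_B}(u)$. The inclusion $\subseteq$ is immediate since an inverse in a subalgebra is automatically an inverse in the larger algebra. For the reverse inclusion, fix $\lambda\in\resolv_{\Gen_B}(u)$, so $u-\lambda$ is invertible in $\Gen_B$. Then $(u-\lambda)^*=u^*-\bar\lambda$ is also invertible in $\Gen_B$, and applying lemma \ref{lemma_invertible_wrt_S}(3) to $u_1=u-\lambda$, $u_2=(u-\lambda)^*$ shows that both products $(u-\lambda)(u-\lambda)^*$ and $(u-\lambda)^*(u-\lambda)$ are invertible in $\Gen_B$. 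These two elements are self-adjoint and, since $u,u^*\in\Alg$, they lie in $\Alg$. Because $\Gen_B$ is itself (trivially) strictly internal, proposition \ref{prop_Cstar_elementary_GenB}(3) gives $\spec_{\Gen_B}(v)\subseteq\GenR$ for every self-adjoint $v\in\Gen_B$, so proposition \ref{prop_stable_spec_subset_GenR} applies to each of these two products and yields that they are in fact invertible in $\Alg$. Applying lemma \ref{lemma_invertible_wrt_S}(3) inside $\Alg$ now gives that $u-\lambda$ and $(u-\lambda)^*$ are both invertible in $\Alg$; in particular $\lambda\in\resolv_\Alg(u)$.

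For the spectra, I would note that $\Alg$ is a Banach $\GenC$-algebra (as an intersection of closed subalgebras, each internal set being closed), so lemma \ref{lemma_resolv_set} ensures $\resolv_\Alg(u)\ne\emptyset$ and $\resolv_{\Gen_B}(u)\ne\emptyset$. Theorem \ref{thm_resolv_determines_spec} then expresses $\spec_\Alg(u)$ purely in terms of $\resolv_\Alg(u)$ (via the condition $\lambda e_S\notin\resolv(u)e_S$ for every $S$ with $e_S\ne 0$), and similarly for $\spec_{\Gen_B}(u)$. Since the two resolvent sets are equal by the previous paragraph, the two spectra are equal as well.

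The only mildly subtle point is making sure to use the right form of lemma \ref{lemma_invertible_wrt_S}(3): the standard classical $C^*$-argument would just say ``self-adjoint positive elements have real spectrum, so stability follows'', but here we must carefully route the argument through the two-sided statement of lemma \ref{lemma_invertible_wrt_S}(3), since we have no general polar decomposition. The rest is bookkeeping.
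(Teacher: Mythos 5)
Your proposal is correct and follows essentially the same route as the paper: reduce to the self-adjoint product(s) $(u-\lambda)(u-\lambda)^*$, whose $\Gen_B$-spectrum is real by proposition \ref{prop_Cstar_elementary_GenB}(3), transfer invertibility from $\Gen_B$ to $\Alg$ via proposition \ref{prop_stable_spec_subset_GenR}, and deduce equality of the spectra from theorem \ref{thm_resolv_determines_spec}. The only cosmetic difference is the final algebraic step: the paper works with the single product $uu^*$ and recovers the inverse explicitly as $u^{-1}=u^*\big((uu^*)^{-1}\big)^{*}(uu^*)^{-1}$-style bookkeeping (namely $v=u^*v^*v\in\Alg$), whereas you use both products together with lemma \ref{lemma_invertible_wrt_S}(3); both variants are valid.
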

\begin{proof}
By proposition \ref{prop_Cstar_elementary_GenB}, $\spec_{\Gen_B}(uu^*) \subseteq\GenR$. By proposition \ref{prop_stable_spec_subset_GenR}, $\resolv_\Alg(uu^*) = \resolv_{\Gen_B}(uu^*)$. First, let $u$ be invertible in $\Gen_B$ with inverse $v\in\Gen_B$. Then also $(uu^*)^{-1} = v^* v\in \Gen_B$, so $0\in \resolv_{\Gen_B}(uu^*)=\resolv_\Alg(uu^*)$. Hence $uu^*$ is invertible in $\Alg$. By unicity of inverses, $v^* v\in \Alg$. Then also $v = u^* v^* v\in \Alg$, and $u$ is invertible in $\Alg$.\\
Now let $\lambda\in\resolv_{\Gen_B}(u)$. Since $u-\lambda$, $(u-\lambda)^*\in\Alg$, the previous reasoning shows that $\lambda\in\resolv_\Alg(u)$. Hence $\resolv_\Alg(u) = \resolv_{\Gen_B}(u)$. By theorem \ref{thm_resolv_determines_spec}, also $\spec_{\Alg}(u) = \spec_{\Gen_B}(u)$.
\end{proof}

By proposition \ref{prop_spec_Gencbd}, proposition \ref{prop_Gensharp_is_intersection_of_strictly_internal} and theorem \ref{thm_spec_of_Cstar_subalgebra}, we obtain:
\begin{cor}\label{cor_spec_Gensharp}
Let $X$ be a compact metric space. Let $u\in\Gensharp{X}$. Then $\spec(u)= \{u(\tilde x): \tilde x\in \widetilde X\}$.
\end{cor}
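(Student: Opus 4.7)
The plan is to apply the chain of three cited results to the algebra $\Gensharp X$ viewed as a sub-$\GenC$-algebra of $\Gen_{\Cnt(X)} = \Gencnt X$, taking $B = \Cnt(X)$ as the ambient classical $C^*$-algebra (with complex conjugation as involution and the sup-norm). The result is a direct composition once compatibility with each hypothesis is verified.

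First, I would note that proposition \ref{prop_Gensharp_is_intersection_of_strictly_internal} already gives the structural input: $\Gensharp X$ is an intersection of strictly internal subalgebras of $\Gencnt X = \Gen_B$. Hence the ambient framework of theorem \ref{thm_spec_of_Cstar_subalgebra} applies, provided we can verify $u^* \in \Gensharp X$ for every $u \in \Gensharp X$. This is a short check: the involution on $\Cnt(X)$ is pointwise complex conjugation, so if $u = [(u_\eps)_\eps]$ satisfies the sharp-continuity condition of proposition \ref{prop_sharp_cnt}(2), then so does $u^* = [(\overline{u_\eps})_\eps]$ (the condition only involves the moduli $\abs{u_\eps(x) - u_\eps(y)}$, which are invariant under conjugation). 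So both $u$ and $u^*$ lie in $\Gensharp X$, and theorem \ref{thm_spec_of_Cstar_subalgebra} yields
\[
\spec_{\Gensharp X}(u) = \spec_{\Gencnt X}(u).
\]

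Next I would apply proposition \ref{prop_spec_Gencbd} to the right-hand side, giving
\[
\spec_{\Gencnt X}(u) = \{u(x) : x \in X^{(0,1)}\}.
\]
The final step is to observe that by the very definition of $\Gensharp X$ (proposition \ref{prop_sharp_cnt}(1) and proposition \ref{prop_Gensharp_Banach}), for $u \in \Gensharp X$ the point value $u(x)$ depends only on the class $\tilde x \in \widetilde X$ of $x \in X^{(0,1)}$. Since every $\tilde x \in \widetilde X$ admits a representative in $X^{(0,1)}$ and conversely every $x \in X^{(0,1)}$ determines a class $\tilde x \in \widetilde X$, the two sets
\[
\{u(x) : x \in X^{(0,1)}\} \quad\text{and}\quad \{u(\tilde x) : \tilde x \in \widetilde X\}
\]
coincide, yielding the claimed equality.

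There is no substantial obstacle here; the work has been done in the cited propositions. The only point that requires a line of verification is the stability of $\Gensharp X$ under the involution, and this is essentially automatic from the form of the sharp-continuity condition.
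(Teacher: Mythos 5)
Your proposal is correct and follows essentially the same route as the paper, which simply cites propositions \ref{prop_spec_Gencbd} and \ref{prop_Gensharp_is_intersection_of_strictly_internal} together with theorem \ref{thm_spec_of_Cstar_subalgebra}. The only difference is that you make explicit two points the paper leaves implicit, namely that $\Gensharp X$ is stable under the involution (so that $u,u^*\in\Gensharp X$ as required by theorem \ref{thm_spec_of_Cstar_subalgebra}) and that $\{u(x):x\in X^{(0,1)}\}=\{u(\tilde x):\tilde x\in\widetilde X\}$; both checks are correct.
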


\subsection{Order structure}
\begin{df}
Let $\Alg$ be a faithful Colombeau $C^*$-algebra and $u\in \Alg$. Then $u$ is \defstyle{positive} (notation: $u\ge 0$) if $u$ is self-adjoint and $\spec_\Alg(u)\subseteq [0,\infty)\sptilde = \{\lambda\in\GenR: \lambda\ge 0\}$. We denote $\Alg^+ :=\{u\in\Alg : u\ge 0\}$.
\end{df}

Example \ref{ex_spec_Cstar_subalgebra} shows that $u^* u$ need not be positive, for an element $u$ of a general Colombeau $C^*$-subalgebra of $\Gen_B$ ($B$ a $C^*$-algebra).

Let $B$ be a $C^*$-algebra. For an intersection of strictly internal Colombeau $C^*$-subalgebras $\Alg$ of $\Gen_B$,
$\Alg^+=\Gen_B^+\cap\Alg$ by theorem \ref{thm_spec_of_Cstar_subalgebra}.
\begin{lemma}\label{lemma_positive_rep}
Let $B$ be a $C^*$-algebra and let $\Alg=[(A_\eps)_\eps]$ be a strictly internal subalgebra of $\Gen_B$ with the property that $A_\eps$ is a $C^*$-algebra, $\forall\eps$. Let $u\in\Alg$. Then $u\in\Alg^+$ iff there exists a representative $(u_\eps)_\eps$ of $u$ with $u_\eps\in A_\eps^+$, $\forall\eps$.
\end{lemma}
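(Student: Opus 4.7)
My plan is to reduce the equivalence to theorem \ref{thm_spec_of_normal_elts}, which identifies $\spec_\Alg(u)$ with $[(\spec_{A_\eps}(u_\eps))_\eps]$ for strictly normal elements, and then to repair the representative on the necessity side by the continuous functional calculus in each $A_\eps$.

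For the easy direction $(\Leftarrow)$, suppose $u=[(u_\eps)_\eps]$ with $u_\eps\in A_\eps^+$ for every $\eps$. Then each $u_\eps$ is self-adjoint, so $u^*=u$ and $u$ is strictly normal in $\Alg$; theorem \ref{thm_spec_of_normal_elts} yields $\spec_\Alg(u)=[(\spec_{A_\eps}(u_\eps))_\eps]$. Because $\spec_{A_\eps}(u_\eps)\subseteq [0,\infty)$ for every $\eps$, any $\lambda\in\spec_\Alg(u)$ admits a representative with non-negative entries and is therefore $\ge 0$ in $\GenR$, so $u\in\Alg^+$.

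For the necessity $(\Rightarrow)$, I first invoke lemma \ref{lemma_selfadjoint_rep}(1) to choose a self-adjoint representative $(u_\eps)_\eps$ with $u_\eps\in A_\eps$ for every $\eps$; theorem \ref{thm_spec_of_normal_elts} again gives $\spec_\Alg(u)=[(\spec_{A_\eps}(u_\eps))_\eps]$. The crucial step is to establish that for every $n\in\N$ there exists $\eps_n>0$ such that $\spec_{A_\eps}(u_\eps)\subseteq[-\eps^n,\infty)$ whenever $\eps\le\eps_n$. If this were to fail, I could pick a sequence $\eps_k\to 0$ together with $\lambda_{\eps_k}\in\spec_{A_{\eps_k}}(u_{\eps_k})$ satisfying $\lambda_{\eps_k}<-\eps_k^n$, and fill in the remaining indices by any $\lambda_\eps\in\spec_{A_\eps}(u_\eps)$ (non-empty classically and moderate since $\abs{\lambda_\eps}\le\norm{u_\eps}$). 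The resulting $\lambda:=[(\lambda_\eps)_\eps]$ lies in $\spec_\Alg(u)$, yet no null perturbation of this representative can lift $\lambda_{\eps_k}$ above $-\eps_k^n/2$ for large $k$, so $\lambda$ fails to satisfy $\lambda\ge 0$ in $\GenR$, contradicting $u\in\Alg^+$.

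With the uniform lower bound in hand, I would apply the continuous functional calculus to $f(x)=\max(x,0)$ inside the $C^*$-algebra $A_\eps$, producing the positive part $u'_\eps:=(u_\eps)_+\in A_\eps^+$ for every $\eps$. Since $u_\eps$ is self-adjoint, $\norm{u'_\eps-u_\eps}=\norm{(u_\eps)_-}=\max(0,-\min\spec_{A_\eps}(u_\eps))\le\eps^n$ for all $\eps\le\eps_n$, whence $(u'_\eps-u_\eps)_\eps\in\Null_B$ and $(u'_\eps)_\eps$ is the desired representative. The principal obstacle is the extraction of the uniform lower spectral bound from the abstract positivity of $\spec_\Alg(u)$; once that is secured, the functional-calculus correction in each fixed $A_\eps$ is a standard $C^*$-algebraic manoeuvre.
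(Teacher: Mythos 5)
Your proof is correct and follows essentially the same route as the paper: both directions rest on lemma \ref{lemma_selfadjoint_rep} and theorem \ref{thm_spec_of_normal_elts}, and your ``crucial step'' is precisely the extraction (left implicit in the paper) that the lower spectral bounds of the $u_\eps$ are only negligibly negative. The sole difference is cosmetic: the paper repairs the representative by adding a negligible real scalar $a_\eps$ to $u_\eps$, whereas you take the positive part $(u_\eps)_+$ via the continuous functional calculus in $A_\eps$; both give a negligible perturbation and hence a representative in $A_\eps^+$.
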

\begin{proof}
$\Rightarrow$: by lemma \ref{lemma_selfadjoint_rep}, there exists a representative $(u_\eps)_\eps$ with $u_\eps\in A_\eps$ and $u_\eps= u_\eps^*$, $\forall\eps$. By theorem \ref{thm_spec_of_normal_elts}, $[(\spec(u_\eps))_\eps]=\spec(u)\subseteq [0,\infty)\sptilde$. Since $\spec(u_\eps)\subseteq\R$, $\forall\eps$, we can find a negligible net of real constants $(a_\eps)_\eps$ such that $\spec(u_\eps + a_\eps)\subseteq [0,\infty)$, $\forall\eps$.\\
$\Leftarrow$: by theorem \ref{thm_spec_of_normal_elts}, $\spec(u) = [(\spec(u_\eps))_\eps]\subseteq [([0,\infty))_\eps]=[0,\infty)\sptilde$.
\end{proof}

\begin{prop}\label{prop_positivity_elementary}
Let $B$ be a $C^*$-algebra and $\Alg=\Gen_B$. Then
\begin{enumerate}
\item $\Alg^+ =\{v\in\Alg: v=v^*$ and $\norm[\big]{v - \norm{v}}\le \norm{v}\}$. In particular, $\Alg^+$ is closed.
\item If $u\in\Alg^+$ and $\lambda\in\GenR$ with $\lambda\ge 0$, then $\lambda u\in \Alg^+$.
\item If $u,v\in\Alg^+$, then $u+v\in\Alg^+$.
\item If $u, -u\in\Alg^+$, then $u=0$.
\item If $u\in\Alg^+$ is invertible, then also $\inv u\in\Alg^+$.
\end{enumerate}
\end{prop}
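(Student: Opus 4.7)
The proof reduces each part to working at the level of representatives via Lemma \ref{lemma_positive_rep} (applicable since $\Gen_B = [(B)_\eps]$ is strictly internal with each factor a $C^*$-algebra), combined with classical facts about positive elements in $C^*$-algebras.

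I would handle parts (2)--(5) first, since each amounts to an elementary observation at representatives. For (2), choose a positive representative $(u_\eps)_\eps$ of $u$, and replace any representative $(\lambda_\eps)_\eps$ of $\lambda$ by $(\max(\lambda_\eps, 0))_\eps$, which still represents $\lambda$ because $\lambda \ge 0$; then each $\lambda_\eps u_\eps$ is positive in $B$. For (3), the sum of two positive representatives is positive. For (4), positive representatives $(u_\eps)_\eps$ of $u$ and $(v_\eps)_\eps$ of $-u$ satisfy $(u_\eps + v_\eps)_\eps \in \Null_B$, while the classical inequality $0 \le u_\eps \le u_\eps + v_\eps$ in $B$ yields $\norm{u_\eps} \le \norm{u_\eps + v_\eps}$, forcing $u = 0$. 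For (5), Lemma \ref{lemma_GenB_invertible} gives that a positive representative $(u_\eps)_\eps$ of $u$ consists of elements invertible in $B$ for small $\eps$ with $\norm{\inv u_\eps}$ moderate; positivity together with invertibility in $B$ classically implies $\inv u_\eps \ge 0$, so $(\inv u_\eps)_\eps$ is a positive representative of $\inv u$.

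Part (1) is the one requiring real work. The forward inclusion is direct: for $v \in \Alg^+$ with positive representative $(v_\eps)_\eps$, we have $v = v^*$, and $\spec(v_\eps) \subseteq [0, \norm{v_\eps}]$ classically gives $\norm{v_\eps - \norm{v_\eps} \cdot 1} \le \norm{v_\eps}$, whence $\norm{v - \norm{v}} \le \norm{v}$ in $\GenR$. For the reverse inclusion, given $v = v^*$ and $\norm{v - \norm{v}} \le \norm{v}$, I pick a self-adjoint representative $(v_\eps)_\eps$ by Lemma \ref{lemma_selfadjoint_rep}. Unfolding the inequality in $\GenR$ gives, for each $n \in \N$, a threshold $\eta_n > 0$ such that $\norm{v_\eps - \norm{v_\eps} \cdot 1} \le \norm{v_\eps} + \eps^n$ for $\eps \le \eta_n$; classically this forces $\spec(v_\eps) \subseteq [-\eps^n, 2\norm{v_\eps} + \eps^n]$, so $v_\eps + \eps^n \cdot 1 \ge 0$ on $(0, \eta_n]$. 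A standard diagonal argument with $\eta_n \searrow 0$ produces $n(\eps) \to \infty$ with $v_\eps + \eps^{n(\eps)} \cdot 1 \ge 0$ for all small $\eps$; since $(\eps^{n(\eps)} \cdot 1)_\eps \in \Null_B$, this gives a positive representative of $v$, and $v \in \Alg^+$ by Lemma \ref{lemma_positive_rep}. Closedness of $\Alg^+$ follows at once from the characterization, since self-adjointness and the displayed norm inequality define closed conditions in the Banach $\GenC$-module $\Alg$.

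The only nontrivial technical step is the extraction in (1) of a pointwise-positive representative from a norm inequality that holds merely modulo negligibles; the diagonal choice of $n(\eps)$ is the standard Colombeau device for such upgrades, so no deeper obstacle arises.
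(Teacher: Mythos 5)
Your argument is correct, but it takes a genuinely different route from the paper. The paper proves the proposition \emph{intrinsically}, transplanting the classical Kadison proof to the generalized setting and substituting the generalized spectral ingredients for the classical ones: proposition \ref{prop_Cstar_elementary_GenB} (self-adjoint elements of $\Gen_B$ have real spectrum with $\max\abs{\lambda}=\norm{u}$) for parts 1 and 4, proposition \ref{prop_spec_au_GenB} for part 2, theorem \ref{thm_poly_spec_calc} for part 5, with part 3 then following from the norm characterization in part 1 as in the classical case. You instead reduce every statement to an $\eps$-wise classical fact via lemma \ref{lemma_positive_rep} (together with lemmas \ref{lemma_selfadjoint_rep} and \ref{lemma_GenB_invertible}), and your only nontrivial step --- the diagonal extraction of a pointwise positive representative from the inequality $\norm[\big]{v-\norm{v}}\le\norm{v}$, which holds only up to negligible nets --- is executed correctly; your treatment of parts 2--5 at the level of representatives is sound (in part 2 the replacement of $\lambda_\eps$ by $\max(\lambda_\eps,0)$ is legitimate precisely because $\lambda\ge 0$, and in part 5 uniqueness of inverses identifies $[(\inv u_\eps)_\eps]$ with $\inv u$), and your closedness claim is justified since $v\mapsto\norm{v}$ is sharply continuous and the order $\le$ on $\GenR$ is closed under sharp convergence. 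What each approach buys: your representative-wise proof is more elementary once lemma \ref{lemma_positive_rep} is available, but that lemma itself rests on theorem \ref{thm_spec_of_normal_elts}, so the spectral machinery is not really avoided, and the argument is tied to the strictly internal structure of $\Gen_B$; the paper's intrinsic proof follows the classical template and is the form that transfers to other Colombeau $C^*$-algebras in which the corresponding spectral facts (real spectra of self-adjoint elements, $\spec(au)=a\spec(u)$, $\spec(\inv u)=\inv{(\spec(u))}$) can be established, without any recourse to representatives.
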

\begin{proof}
As for classical $C^*$-algebras \cite[4.2.1--4.2.2]{Kadison}, using proposition \ref{prop_Cstar_elementary_GenB} for parts 1 and 4, proposition \ref{prop_spec_au_GenB} for part 2 and theorem \ref{thm_poly_spec_calc} for part 5.
\end{proof}

\begin{lemma}\label{lemma_norm_of_selfadjoint_product}
Let $B$ be a $C^*$-algebra. Let $u,v\in\Gen_B$ be self-adjoint. Then $\norm{uvu}\le\norm{v u^2}$.
\end{lemma}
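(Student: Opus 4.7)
My plan is to reduce the inequality to its classical counterpart in $B$, applied at each fixed $\eps$, and then lift it via the componentwise definition of the $\GenR$-valued norm on $\Gen_B$.

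First, I choose self-adjoint representatives. Since $u=u^*$ and $v=v^*$ in $\Gen_B=[(B)_\eps]$, lemma \ref{lemma_selfadjoint_rep} (applied to $\Gen_B$, viewed trivially as a strictly internal subalgebra of itself) yields representatives $(u_\eps)_\eps$ of $u$ and $(v_\eps)_\eps$ of $v$ with $u_\eps=u_\eps^*$ and $v_\eps=v_\eps^*$ in $B$ for every $\eps\in(0,1)$. This is the crucial preparation: only with on-the-nose self-adjointness does the classical $C^*$-identity apply at each $\eps$.

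Next, I run the classical argument $\eps$-wise. For each $\eps$, $w_\eps:=u_\eps v_\eps u_\eps\in B$ is self-adjoint, so iterating the $C^*$-identity gives $\norm{w_\eps^{2^n}}=\norm{w_\eps}^{2^n}$ for all $n\in\N$; by Gelfand's formula this forces $\norm{w_\eps}=r_B(w_\eps)$, where $r_B$ denotes the $B$-spectral radius. The standard Banach-algebra identity $\spec_B(ab)\cup\{0\}=\spec_B(ba)\cup\{0\}$, applied with $a=u_\eps$ and $b=v_\eps u_\eps$, yields $r_B(u_\eps v_\eps u_\eps)=r_B(v_\eps u_\eps^2)\le\norm{v_\eps u_\eps^2}$. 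Chaining these gives the pointwise inequality
\[
\norm{u_\eps v_\eps u_\eps}\le\norm{v_\eps u_\eps^2}\qquad\text{for every }\eps\in(0,1).
\]

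Finally, unwinding the definition $\norm{x}=[(\norm{x_\eps})_\eps]\in\GenR$ for $x\in\Gen_B$, and observing that $(u_\eps v_\eps u_\eps)_\eps$ is a representative of $uvu$ and $(v_\eps u_\eps^2)_\eps$ one of $vu^2$ (componentwise multiplication), the pointwise inequality immediately lifts to $\norm{uvu}\le\norm{vu^2}$ in $\GenR$. There is no real obstacle; the only subtlety is the need for genuinely self-adjoint representatives, which lemma \ref{lemma_selfadjoint_rep} supplies. An intrinsic Colombeau proof via proposition \ref{prop_Cstar_elementary_GenB}(1) would require comparing $\spec_{\Gen_B}(uvu)$ and $\spec_{\Gen_B}(vu^2)$, but since these generalized spectra may differ on their non-invertible parts, the $\eps$-wise route is markedly cleaner.
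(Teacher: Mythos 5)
Your proof is correct and is essentially the paper's own argument: choose self-adjoint representatives via lemma \ref{lemma_selfadjoint_rep}, then use $\norm{u_\eps v_\eps u_\eps}=\specrad_B(u_\eps v_\eps u_\eps)=\specrad_B(v_\eps u_\eps^2)\le\norm{v_\eps u_\eps^2}$ for each $\eps$ and lift the componentwise inequality to $\GenR$. The paper merely compresses the $\eps$-wise spectral-radius step into a citation of \cite[Prop.\ 3.2.8]{Kadison}, which you have spelled out.
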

\begin{proof}
By lemma \ref{lemma_selfadjoint_rep}, there exist representatives $(u_\eps)_\eps$ of $u$ and $(v_\eps)_\eps$ of $v$ with $u_\eps,v_\eps\in B$ self-adjoint, $\forall\eps$. Then $\norm{u_\eps v_\eps u_\eps} = \specrad_B(u_\eps v_\eps u_\eps) = \specrad_B(v_\eps u_\eps^2)\le \norm{v_\eps u_\eps^2}$, $\forall\eps$ (e.g., \cite[Prop.\ 3.2.8]{Kadison}).
\end{proof}

\begin{prop}\label{prop_sqrt}
Let $B$ be a $C^*$-algebra and let $\Alg$ be an intersection of strictly internal subalgebras $\Alg_i=[(A_{i,\eps})_\eps]$ ($i\in I$) of $\Gen_B$ with $A_{i,\eps}$ a $C^*$-algebra, $\forall i$, $\eps$. Then:
\begin{enumerate}
\item If $u\in\Alg^+$, then there exists a unique $v\in\Alg^+$ such that $u = v^2$.
\item $\Alg^+ = \{u^* u : u\in\Alg\}$.
\item If $u\in\Alg^+$, $v\in\Alg$, then $v^* u v\in \Alg^+$.
\end{enumerate}
\end{prop}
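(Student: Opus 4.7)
The plan is to reduce everything to the classical positive-square-root construction in each $C^*$-algebra $A_{i,\eps}$ and paste the results back together using the lemmas already established. For part~(1), fix $u\in\Alg^+$. For each index $i$, lemma~\ref{lemma_positive_rep} provides a representative $(u_\eps)_\eps$ of $u$ with $u_\eps\in A_{i,\eps}^+$; set $v_{i,\eps}:=u_\eps^{1/2}$, the classical positive square root, which lies in $A_{i,\eps}$ and satisfies $\norm{v_{i,\eps}}=\norm{u_\eps}^{1/2}$, so $(v_{i,\eps})_\eps\in\Mod_B$. Two distinct positive representatives of $u$ yield square roots differing by a negligible net, thanks to the classical Hölder-type bound $\norm{a^{1/2}-b^{1/2}}\le\norm{a-b}^{1/2}$ valid for $a,b\ge 0$ in a $C^*$-algebra. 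The resulting $v_i:=[(v_{i,\eps})_\eps]\in\Alg_i$ is independent of $i$, since the classical square root is intrinsic to the ambient $C^*$-algebra $B$ and does not depend on which $C^*$-subalgebra contains $u_\eps$. Hence $v:=v_i$ lies in $\bigcap_i\Alg_i=\Alg$, and a second application of lemma~\ref{lemma_positive_rep} places $v$ in $\Alg^+$, with $v^2=u$ by construction. Uniqueness follows from the same Hölder estimate: if $w\in\Alg^+$ satisfies $w^2=u$, pick via lemma~\ref{lemma_positive_rep} a positive representative $(w_\eps)_\eps\subset A_{i,\eps}^+$ of $w$; then $(w_\eps^2)_\eps$ represents $u$, and $w_\eps=(w_\eps^2)^{1/2}$ must agree with $v_\eps=u_\eps^{1/2}$ modulo negligibles.

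For part~(2), the inclusion $\Alg^+\subseteq\{u^*u:u\in\Alg\}$ is immediate from~(1): write $a=v\cdot v=v^*v$ with $v=a^{1/2}$ self-adjoint. For the reverse inclusion, first note that $\Alg$ is stable under the involution, because every $A_{i,\eps}$ is a $*$-subalgebra of $B$, so a representative $(u_\eps)_\eps\subset A_{i,\eps}$ of $u$ gives a representative $(u_\eps^*)_\eps\subset A_{i,\eps}$ of $u^*$, hence $u^*\in\Alg_i$ for every $i$. Given $u\in\Alg$, the element $u^*u$ is self-adjoint and lies in $\Alg$, while pointwise $u_\eps^*u_\eps\ge 0$ in $B$; viewing $\Gen_B=[(B)_\eps]$ as strictly internal with $B$ itself, lemma~\ref{lemma_positive_rep} yields $u^*u\in\Gen_B^+$. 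Theorem~\ref{thm_spec_of_Cstar_subalgebra} then transfers the spectrum, giving $\spec_\Alg(u^*u)=\spec_{\Gen_B}(u^*u)\subseteq\{\lambda\in\GenR:\lambda\ge 0\}$, so $u^*u\in\Alg^+$. Part~(3) is then a one-line consequence: factor $u=w^2=w^*w$ via~(1) with $w=u^{1/2}\in\Alg^+$ self-adjoint; then $v^*uv=(wv)^*(wv)\in\Alg^+$ by~(2) applied to $wv\in\Alg$.

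The main obstacle I expect is the operator-norm continuity of the square root on the positive cone: the independence of representative, the independence of $i$, and the uniqueness statement in part~(1) all rely on the estimate $\norm{a^{1/2}-b^{1/2}}\le\norm{a-b}^{1/2}$ for $a,b\ge 0$ in a $C^*$-algebra, a classical consequence of the operator monotonicity of $t\mapsto\sqrt{t}$. Since the Colombeau framework precludes a direct continuous functional calculus on individual elements of $\Alg$, this estimate must be imported from the classical theory and applied pointwise in $\eps$; the rest of the proof is bookkeeping with lemma~\ref{lemma_positive_rep} and theorem~\ref{thm_spec_of_Cstar_subalgebra}.
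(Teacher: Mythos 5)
Your proposal is correct, but the uniqueness argument in part (1) follows a genuinely different route from the paper's. The paper first treats a single strictly internal algebra and proves uniqueness of the square root \emph{inside} the generalized framework: for invertible $u$ it manipulates norms (using lemma \ref{lemma_norm_of_selfadjoint_product} and parts (2)--(3) together with proposition \ref{prop_positivity_elementary}) to show $v-w$ and $w-v$ are both positive, and for general $u$ it regularizes to $u+\caninf^n$, where only a commuting functional-calculus estimate $\norm[]{\sqrt{v_\eps^2+\eps^n}-v_\eps}\le\sqrt{2\eps^n}$ is needed, and passes to the limit; the intersection case is then handled by identifying the square root with the one in $\Gen_B$. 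You instead import the classical operator-monotonicity inequality $\norm{a^{1/2}-b^{1/2}}\le\norm{a-b}^{1/2}$ for positive elements of $B$ and apply it $\eps$-wise, which settles independence of the representative, independence of $i$, and uniqueness in one stroke; this is shorter and perfectly legitimate (the inequality is standard), whereas the paper's argument buys a proof that never needs this non-commuting estimate and stays closer to the order-theoretic machinery it has already set up. Your existence construction and parts (2)--(3) are essentially the paper's.

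One small repair: lemma \ref{lemma_positive_rep} is stated only for strictly internal algebras, so your ``second application'' of it to the intersection $\Alg$ does not directly yield $v\in\Alg^+$ (nor does $v\in\Alg_i^+$ for every $i$ suffice, since $\spec_{\Alg_i}(v)\subseteq\spec_\Alg(v)$ goes the wrong way). The correct step is the one you in fact use in part (2): $v$ has a positive representative in $B$, hence $v\in\Gen_B^+$ by lemma \ref{lemma_positive_rep} applied to $\Gen_B$, and since $v=v^*\in\Alg$, theorem \ref{thm_spec_of_Cstar_subalgebra} gives $\spec_\Alg(v)=\spec_{\Gen_B}(v)\subseteq\{\lambda\in\GenR:\lambda\ge 0\}$, i.e.\ $v\in\Alg^+=\Gen_B^+\cap\Alg$. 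With that one-line adjustment the argument is complete.
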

\begin{proof}
First, let $\Alg=[(A_\eps)_\eps]\subseteq \Gen_B$ be strictly internal with $A_\eps$ a $C^*$-algebra, $\forall\eps$.\\
(1, existence): let $u=[(u_\eps)_\eps]$. By lemma \ref{lemma_positive_rep}, w.l.o.g.\ $u_\eps\in A_\eps^+$, $\forall\eps$. By the classical theory, there exist $v_\eps\in A_\eps^+$ with $u_\eps = v_\eps^2$ and $\norm{v_\eps}=\sqrt{\norm[]{u_\eps}}$, $\forall\eps$. Hence $v:=[(v_\eps)_\eps]\in\Gen_B$, $u=v^2$ and by lemma \ref{lemma_positive_rep}, $v\in\Alg^+$.\\
(2, $\subseteq$): by part 1 (existence).\\
(2, $\supseteq$): let $u = [(u_\eps)_\eps]\in\Alg$ with $u_\eps\in A_\eps$, $\forall\eps$. Then $(u_\eps^* u_\eps)_\eps$ is a normal representative of $u^* u$. Hence by theorem \ref{thm_spec_of_normal_elts}, $\spec(u^* u) = [(\spec(u_\eps^* u_\eps))]\subseteq[([0,\infty))_\eps] = [0,\infty)\sptilde$.\\
(3) By part 1 (existence), there exists $w\in \Alg^+$ with $u = w^2$. Then $v^* u v = (wv)^* (w v)\in \Alg^+$ by part 2.\\
(1, unicity): Let first $u$ be invertible in $\Alg$. Let $v,w\in \Alg^+$ with $v^2 = w^2 = u$. By part 1 (existence), there exists $x\in\Alg^+$ with $w = x^2$. Then also $v,w,x$ are invertible in $\Alg$ and $\inv v, \inv w, \inv x\in \Alg^+$. Further, $\norm{v\inv w}^2=\norm{(v\inv w)^*v\inv w} = \norm{\inv w v^2 \inv w} = 1$. By lemma \ref{lemma_norm_of_selfadjoint_product}, $\norm{\inv x v \inv x}\le \norm{v(\inv x)^2} = \norm{v \inv w} = 1$. Hence $1- \inv x v\inv x\in\Alg^+$. By part 3, also $x (1- \inv x v\inv x) x = w - v\in \Alg^+$. By symmetry, also $v-w\in\Alg^+$. Hence $v = w$ by proposition \ref{prop_positivity_elementary}(5). We denote the unique $v$ by $\sqrt{u}$.\\
Now let $u\in\Alg^+$ arbitrary. By multiplying with $\caninf^a$ (for a suitable $a\in\R$), we may assume that $\norm{u}\le 1/2$. Let $v\in\Alg^+$ with $u = v^2$. Since $\spec_\Alg(u + \caninf^n)= \caninf^n + \spec_\Alg(u)$, $0\in\resolv_\Alg(u + \caninf^n)$ by proposition \ref{prop_spec_determines_resolv_internal} for $n\in\N$. Hence $\sqrt{u+\caninf^n}$ is uniquely determined by the previous case. Let $v=[(v_\eps)_\eps]$ with $v_\eps\in A_\eps^+$, $\forall\eps$. Then $\sqrt{u + \caninf^n} = [(\sqrt{v_\eps^2 + \eps^n})_\eps]$, for $n\in\N$. For small $\eps$, $\norm{v_\eps}\le 1$, so by the classical theory (e.g., \cite[Thm.\ 4.1.6]{Kadison}), $\norm[]{\sqrt{v_\eps^2 + \eps^n} - v_\eps}=\specrad(\sqrt{v_\eps^2 + \eps^n} - v_\eps) = \sup_{t\in \spec(v_\eps)\subseteq [0,1]}\abs{\sqrt{t^2 + \eps^n} - t}\le \max(\sup_{t^2\in [0,\eps^n]} \sqrt{t^2 + \eps^n}, \sup_{t^2\in [\eps^n,1], s\in [0,\eps^n]}\eps^n \abs[\big]{\frac{1}{2\sqrt{t^2 + s}}}) = \sqrt{2\eps^n}$. Hence $v=\lim_{n\to\infty} \sqrt{u+\caninf^n}$, and $v$ is uniquely determined.\\
Now let $\Alg = \bigcap_{i\in I}\Alg_i$ with $\Alg_i=[(A_{i,\eps})_\eps]$ and $A_{i,\eps}$ a $C^*$-algebra, $\forall i$, $\eps$.\\
(1) Let $\sqrt u$ denote the unique element in $\Gen_B^+$ such that $u=\sqrt{u}^2$. By the previous case, $\sqrt{u}\in\Alg_i$, for each $i$. Hence $\sqrt u\in \Alg\cap\Gen_B^+ = \Alg^+$. Any element $v\in\Alg^+$ with $u=v^2$ also belongs to $\Gen_B^+$, so equals $\sqrt u$ by unicity in $\Gen_B$.\\
(2, $\supseteq$): let $u\in\Alg$. By the previous case, $u^* u\in\Alg\cap\Gen_B^+=\Alg^+$.\\
(2, $\subseteq$) and (3) now follow as in the previous case.
\end{proof}
\begin{cor}
Let $B$ be a $C^*$-algebra. If $\Alg$ is an intersection of strictly internal Colombeau $C^*$-subalgebras of $\Gen_B$, then $\Alg$ is symmetric.
\end{cor}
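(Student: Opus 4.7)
The plan is to show directly that $1+uu^{*}$ is invertible in $\Alg$ for each $u\in\Alg$, using the order structure and the characterization of the resolvent set in terms of the spectrum available under our hypothesis.

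First, since $\Alg$ is an intersection of strictly internal Colombeau $C^{*}$-subalgebras of $\Gen_{B}$, proposition \ref{prop_sqrt}(2) applies: $uu^{*}=(u^{*})^{*}u^{*}\in\Alg^{+}$. In particular $uu^{*}$ is self-adjoint with $\spec_{\Alg}(uu^{*})\subseteq[0,\infty)\sptilde$.

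Next, I would apply proposition \ref{prop_spec_determines_resolv_internal}, which holds precisely for intersections of strictly internal subalgebras of $\Gen_{B}$, at the scalar $\lambda=-1$. It suffices to verify that $\abs{-1-\mu}\gg 0$ in $\GenR$ for every $\mu\in\spec_{\Alg}(uu^{*})$. Since $\mu\ge 0$ in $\GenR$, one has $-1-\mu\le -1$, hence $\abs{-1-\mu}=1+\mu\ge 1=\caninf^{0}$, so $\abs{-1-\mu}\gg 0$. Therefore $-1\in\resolv_{\Alg}(uu^{*})$, i.e., $uu^{*}-(-1)=1+uu^{*}$ is invertible in $\Alg$. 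By the definition of symmetry, this shows that $\Alg$ is symmetric.

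There is no real obstacle here; the work has already been done upstream. The whole argument is a one-line consequence of two facts: positivity of $uu^{*}$ (proposition \ref{prop_sqrt}(2)), which encodes the classical $C^{*}$-identity through the strict-internal representation, and the dual characterization of the resolvent set in proposition \ref{prop_spec_determines_resolv_internal}, which is where the hypothesis that $\Alg$ is an intersection of strictly internal subalgebras is essential — without it, $-1\notin\spec_{\Alg}(uu^{*})$ would only give that $1+uu^{*}$ is not strictly non-invertible, which is weaker than invertibility in $\Alg$.
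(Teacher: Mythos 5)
Your overall strategy --- establish that $uu^*$ (equivalently $u^*u$) is positive in $\Alg$ and then obtain $-1\in\resolv_\Alg(uu^*)$ from proposition \ref{prop_spec_determines_resolv_internal} --- is exactly the paper's, and the second half of your argument is fine: the estimate $\abs{-1-\mu}=1+\mu\ge 1\gg 0$ for $\mu\in\spec_\Alg(uu^*)\subseteq[0,\infty)\sptilde$, and your closing remark explaining why the dual characterization of the resolvent (and not mere non-membership of $-1$ in the spectrum) is needed, are both correct.

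The gap is in the first step. Proposition \ref{prop_sqrt} is stated for an intersection of strictly internal subalgebras $\Alg_i=[(A_{i,\eps})_\eps]$ \emph{with each $A_{i,\eps}$ a $C^*$-algebra}, whereas the corollary only assumes that each $\Alg_i$ is strictly internal (so it admits some strict representative by closed subalgebras of $B$) and is a Colombeau $C^*$-subalgebra of $\Gen_B$, i.e.\ is $*$-invariant at the level of $\Gen_B$. These hypotheses are not interchangeable: $*$-invariance of the internal set $[(A_{i,\eps})_\eps]$ does not yield a strict representative by $*$-closed subalgebras of $B$ (being negligibly close to $A_{i,\eps}$ and to $A_{i,\eps}^*$ does not put an element negligibly close to $A_{i,\eps}\cap A_{i,\eps}^*$; the almost-commuting-operators example in the paper illustrates how such representative-level repairs can fail). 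Moreover, the positivity of $uu^*$ \emph{in} $\Alg$ is precisely the nontrivial content here: knowing only $\spec_{\Gen_B}(uu^*)\subseteq[0,\infty)\sptilde$ would not suffice, since spectra can grow when passing to subalgebras (example \ref{ex_spec_Cstar_subalgebra}). The repair is the paper's route: apply proposition \ref{prop_sqrt}(2) in $\Gen_B$ itself (strictly internal with $C^*$ strict representative $A_\eps=B$) to get $uu^*\in\Gen_B^+$, then invoke theorem \ref{thm_spec_of_Cstar_subalgebra} (legitimate because $uu^*$ and $(uu^*)^*=uu^*$ lie in $\Alg$; alternatively proposition \ref{prop_stable_spec_subset_GenR}, since the spectrum in $\Gen_B$ is real) to conclude $\spec_\Alg(uu^*)=\spec_{\Gen_B}(uu^*)\subseteq[0,\infty)\sptilde$, hence $uu^*\in\Alg^+$, and then finish exactly as you do.
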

\begin{proof}
Let $u\in \Alg$. By theorem \ref{thm_spec_of_Cstar_subalgebra} and proposition \ref{prop_sqrt}, $u^* u\in \Alg\cap \Gen_B^+ = \Alg^+$. Hence $-1 \in \resolv_\Alg(u^* u)$ by proposition \ref{prop_spec_determines_resolv_internal}.
\end{proof}

\subsection{Positive linear functionals}
\begin{df}
Let $\Alg$ be a faithful Colombeau $C^*$-algebra. Let $f$: $\Alg\to\GenC$ be a $\GenC$-linear functional. Then $f$ is called \defstyle{positive} if $f(u)\ge 0$ (in $\GenR$), for each $u\in\Alg^+$. If additionally $f(1)=1$, then $f$ is called a \defstyle{state}.
\end{df}
Clearly, if $\Alg^+=\{u^*u: u\in\Alg\}$, then $f$ is positive iff $f(u^*u)\ge 0$, for each $u\in\Alg$.

\begin{prop}\label{prop_positive_is_hermitian}
Let $\Alg$ be a faithful Colombeau $C^*$-algebra with $\Alg^+=\{u^*u: u\in\Alg\}$. Let $f$ be a positive $\GenC$-linear functional on $\Alg$. Then
\begin{enumerate}
\item $f(u^*)=\overline{f(u)}$, for each $u\in\Alg$.
\item $\abs{f(v^* u)}^2\le f(u^* u) f(v^* v)$, for each $u,v\in\Alg$.
\end{enumerate}
\end{prop}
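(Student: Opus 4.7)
The classical polarization trick transfers to this setting with only bookkeeping, since the hypothesis $\Alg^+=\{u^*u:u\in\Alg\}$ ensures that every element of the form $w^*w$ is positive and hence $f(w^*w)\in\GenR$.

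For part (1), I would first observe that $1\in\Alg^+$. Indeed $1^*=1$, and using lemma \ref{lemma_invertible_wrt_S}(4) together with the fact that $c\in\GenC$ is strictly non-invertible iff $c=0$, one sees that $1-\lambda$ is strictly non-invertible in $\Alg$ iff $\lambda=1$; thus $\spec_\Alg(1)=\{1\}\subseteq[0,\infty)\sptilde$. Consequently $f(1)\in\GenR$, and by the hypothesis $f(w^*w)\in\GenR$ for every $w\in\Alg$. Applied to $w=1+\lambda u$ for arbitrary $u\in\Alg$ and $\lambda\in\GenC$, expansion gives
\[
f(1)+\lambda f(u)+\overline\lambda f(u^*)+\abs{\lambda}^2 f(u^*u)\in\GenR.
\]
As $f(1)$ and $\abs{\lambda}^2 f(u^*u)$ already lie in $\GenR$, I conclude $\lambda f(u)+\overline\lambda f(u^*)\in\GenR$ for every $\lambda\in\GenC$. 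Specialising to $\lambda=1$ gives $f(u)+f(u^*)\in\GenR$, so the imaginary parts of $f(u)$ and $f(u^*)$ are opposite; specialising to $\lambda=i$ gives $i(f(u)-f(u^*))\in\GenR$, so the real parts of $f(u)$ and $f(u^*)$ agree. Combining both, $f(u^*)=\overline{f(u)}$.

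For part (2), once (1) is available, I would apply the Cauchy--Schwarz inequality of proposition \ref{prop_C-S} to the $\GenC$-sesquilinear form $B(u,v):=f(v^*u)$. Positivity of $f$ yields $B(u,u)=f(u^*u)\ge 0$ in $\GenR$, and part (1) applied to $v^*u$ gives $\overline{B(v,u)}=\overline{f(u^*v)}=f((u^*v)^*)=f(v^*u)=B(u,v)$. All hypotheses of proposition \ref{prop_C-S} are met, yielding $\abs{B(u,v)}^2\le B(u,u)B(v,v)$, which is exactly the required inequality.

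There is no real obstacle: the only subtle point is checking $1\in\Alg^+$ so that $f(1)\in\GenR$ can be used as a real constant in the polarization identity, and this is handled by the spectrum computation above. Everything else is the transcription of the standard classical argument, with the Cauchy--Schwarz step delegated to proposition \ref{prop_C-S}.
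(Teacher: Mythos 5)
Your proof is correct and follows essentially the same route as the paper: polarization with $w=1+\lambda u$ at $\lambda=1$ and $\lambda=i$ (the paper expands $f((u+1)^*(u+1))$ and $f((u+i)^*(u+i))$, which is the same computation), followed by proposition \ref{prop_C-S} applied to $\inner{u}{v}:=f(v^*u)$. The only superfluous step is your spectral verification that $1\in\Alg^+$: the hypothesis $\Alg^+=\{u^*u:u\in\Alg\}$ gives this immediately since $1=1^*1$, so $f(1)\in\GenR$ needs no separate argument.
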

\begin{proof}
(1) Since $f((u+1)^*(u+1))\in\GenR$, it follows that $f(u) + f(u^*)\in\GenR$, hence $\Im f(u) + \Im f(u^*) = 0$. Since $f((u+i)^*(u+i))$ $\in\GenR$, it follows that $i f(u) - i f(u^*)\in\GenR$, hence $\Re f(u) - \Re f(u^*) = 0$.\\
(2) By part 1, $\inner{u}{v}:=f(v^*u)$ defines a $\GenC$-sesquilinear map on the $\GenC$-module $\Alg$ with $\inner{u}{v} = \overline{\inner{v}{u}}$ and $\inner{u}{u}\ge 0$ (in $\GenR$), for each $u,v\in\Alg$. The result follows by proposition \ref{prop_C-S}.
\end{proof}

We recall the following fact about $\GenR$ \cite[Lemma 2.19]{GV_Hilbert}:
\begin{lemma}\label{lemma_scratch}
Let $a,b,c\in\wt{\R}$ with $a,b,c\ge 0$. If $b\le a$ and $ab\le ac$ then $b\le c$.
\end{lemma}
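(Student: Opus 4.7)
The plan is to work at the level of representatives and reduce the statement to a bookkeeping on exponents of $\eps$. Choose nonnegative representatives $(a_\eps)_\eps, (b_\eps)_\eps, (c_\eps)_\eps$ of $a,b,c$ respectively (possible since $a,b,c\ge 0$, e.g.\ by replacing with $\max(\cdot,0)$). Then $b\le a$ unpacks as: for every $k\in\N$ there exists $\eps_0$ with $b_\eps-a_\eps\le\eps^k$ for $\eps\le\eps_0$; similarly $ab\le ac$ unpacks as: for every $k\in\N$ eventually $a_\eps b_\eps-a_\eps c_\eps\le\eps^k$. To prove $b\le c$ it suffices to show that for every $m\in\N$, $b_\eps-c_\eps\le 2\eps^m$ for small $\eps$.

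Fix $m\in\N$. Using moderateness of $a$, pick $N\in\N$ with $a_\eps\le\eps^{-N}$ for small $\eps$, and set $M:=m+N$. From the two hypotheses, choose $\eps_0$ so small that $b_\eps-a_\eps\le\eps^m$ and $a_\eps b_\eps-a_\eps c_\eps\le\eps^{m+M}$ for all $\eps\le\eps_0$. Now I would split on the size of $a_\eps$: if $a_\eps\ge\eps^M$, then $a_\eps(b_\eps-c_\eps)_+\le\eps^{m+M}$ gives $(b_\eps-c_\eps)_+\le\eps^{m+M}/a_\eps\le\eps^m$; if instead $a_\eps<\eps^M$, then $b_\eps\le a_\eps+\eps^m\le\eps^M+\eps^m\le 2\eps^m$ and $c_\eps\ge 0$ yields $(b_\eps-c_\eps)_+\le b_\eps\le 2\eps^m$. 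In either case $(b_\eps-c_\eps)_+\le 2\eps^m$, as required.

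The only mildly delicate point is the coupling of the two inequalities through the exponent $M$: one needs $M$ large enough that $\eps^{m+M}/a_\eps\le\eps^m$ whenever $a_\eps\ge\eps^M$, which forces the choice $M\ge m$; and one simultaneously needs the moderateness bound $a_\eps\le\eps^{-N}$ only to guarantee that the ``small $a_\eps$'' regime $a_\eps<\eps^M$ is nonvacuous and useful—but in fact this bound is not strictly needed in the case split above, since the two cases exhaust all possibilities regardless of $N$. Thus the proof is essentially a one-line case split after unpacking, and the main (minor) obstacle is simply arranging the exponents so that both branches yield the same target bound $\eps^m$.
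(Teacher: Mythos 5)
Your proof is correct. Note that the paper itself offers no proof of this lemma: it is merely recalled from \cite[Lemma 2.19]{GV_Hilbert}, so there is nothing to compare against line by line; your representative-level argument is the natural direct one and it stands on its own. The key step checks out: fixing $m$, splitting on $a_\eps\ge\eps^M$ versus $a_\eps<\eps^M$ gives $(b_\eps-c_\eps)_+\le\eps^{m+M}/a_\eps\le\eps^m$ in the first case and, using $b_\eps\le a_\eps+\eps^m\le\eps^M+\eps^m\le 2\eps^m$ together with $c_\eps\ge 0$, the bound $2\eps^m$ in the second; since $m$ is arbitrary and the constant $2$ is harmless, $b\le c$ follows. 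Your closing observation is also right: the moderateness bound on $a$ plays no role, and one may simply take $M=m$, so the hypothesis $b\le a$ and the nonnegativity of $c$ are all that the ``small $a_\eps$'' branch needs. The only preparatory points worth making explicit are the standard ones you already flagged: nonnegative representatives exist (replace $a_\eps$ by $\max(a_\eps,0)$, which differs from $a_\eps$ by a negligible net when $a\ge 0$), and the order relations unpack, for such representatives, exactly as you stated.
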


\begin{prop}\label{prop_positive_functional_char}
Let $B$ be a $C^*$-algebra and let $\Alg=[(A_\eps)_\eps]$ be a strictly internal subalgebra of $\Gen_B$ with the property that $A_\eps$ is a $C^*$-algebra, $\forall\eps$. Let $f$ be a $\GenC$-linear functional on $\Alg$. Then $f$ is positive iff $f(1)\ge 0$ (in $\GenR$) and $\abs{f(u)}\le f(1)\norm{u}$, $\forall u\in\Alg$ (in particular, $f$ is continuous, cf.\ \cite[Prop.\ 1.7]{GV_Hilbert}).
\end{prop}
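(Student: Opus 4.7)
My plan closely follows the classical characterization of positive linear functionals on $C^*$-algebras, adapted carefully for the non-Archimedean order structure on $\GenR$. For the direction $(\Rightarrow)$: $f(1)\ge 0$ is immediate since $1\in\Alg^+$. To obtain the norm bound, I use Proposition~\ref{prop_sqrt}(2) (applicable because $\Alg=[(A_\eps)_\eps]$ with $A_\eps$ a $C^*$-algebra for each $\eps$) to write $\Alg^+ = \{w^* w:w\in\Alg\}$; then Proposition~\ref{prop_positive_is_hermitian} shows $f$ is hermitian and satisfies the Cauchy--Schwarz estimate $\abs{f(u)}^2 \le f(1)\,f(u^* u)$. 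Since $u^* u$ is positive and normal, Proposition~\ref{prop_Cstar_elementary_GenB}(1) gives $\norm{u^* u} = \norm{u}^2$, so its spectrum lies in $[0,\norm{u}^2]$ and $\norm{u}^2 - u^* u\in\Alg^+$; applying $f$ yields $f(u^* u)\le\norm{u}^2 f(1)$. Combining, $\abs{f(u)}^2\le f(1)^2\norm{u}^2$, from which Lemma~\ref{lemma_scratch} applied with $a=\abs{f(u)}+f(1)\norm{u}$, $b=\abs{f(u)}$, $c=f(1)\norm{u}$ extracts $\abs{f(u)}\le f(1)\norm{u}$.

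For $(\Leftarrow)$, I mimic the classical two-step strategy. First, I show $f(u)\in\GenR$ for every self-adjoint $u\in\Alg$. Writing $f(u)=a+bi$ with $a,b\in\GenR$, I test the hypothesis on $u+i\tilde t$ for $\tilde t\in\GenR$ (crucially generalized, not merely real). Since $u$ is self-adjoint, $(u+i\tilde t)^*(u+i\tilde t)=u^2+\tilde t^2\in\Alg^+$ as a sum of positives (Proposition~\ref{prop_positivity_elementary}(3)), and the $C^*$ identity together with $\norm{u^2+\tilde t^2}=\norm{u}^2+\tilde t^2$ (the maximum of the shifted positive spectrum, via Proposition~\ref{prop_Cstar_elementary_GenB}(1)) yields $\abs{f(u+i\tilde t)}^2\le f(1)^2(\norm{u}^2+\tilde t^2)$. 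Expanding gives $2bf(1)\tilde t \le K := f(1)^2\norm{u}^2-a^2-b^2$ for every $\tilde t\in\GenR$, with $K\ge 0$ from the case $\tilde t=0$. Testing $\tilde t=\pm\caninf^{-n}$ produces the pair of non-positive quantities $\pm bf(1) - K\caninf^n/2$; since the product of two non-positive elements of $\GenR$ is non-negative, multiplying them yields $(bf(1))^2\le K^2\caninf^{2n}/4$, and as $n\to\infty$ the sharp norm of the right-hand side tends to $0$, forcing $bf(1)=0$.

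The main obstacle is upgrading $bf(1)=0$ to $b=0$, since $f(1)\ge 0$ need not be invertible in the non-Archimedean ring $\GenR$. I resolve it via a level-set argument: on any $S\subseteq(0,1)$ with $f(1)e_S=0$, the hypothesis forces $e_S f\equiv 0$ (since $\abs{e_S f(v)}\le e_S f(1)\norm{v}=0$), giving $e_S b=0$; on the level sets $T_n=\{\eps : f(1)_\eps \ge \eps^n\}$ of a non-negative representative of $f(1)$, $f(1)$ is invertible with respect to $T_n$, so $bf(1)=0$ directly yields $be_{T_n}=0$. A saturation/interleaving argument (in the spirit of Lemma~\ref{lemma_spec_resolv_interleave}) then patches these partial vanishings to conclude $b=0$. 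With hermiticity secured, the second step runs classically: for $u\in\Alg^+$ with $\norm{u}\le 1$, $\spec(1-u)\subseteq[0,1]$ by spectral shift, so $1-u\in\Alg^+$ with $\norm{1-u}\le 1$ (Proposition~\ref{prop_Cstar_elementary_GenB}(1)); the hypothesis yields $\abs{f(1)-f(u)}\le f(1)$, and squaring with $f(u)\in\GenR$ gives $f(u)(2f(1)-f(u))\ge 0$, which combined with $\abs{f(u)}\le f(1)$ and a representative-wise analysis (on the zero set of $f(1)_\eps$ one has $f(u)_\eps=0$; on the positive set, $f(u)_\eps\ge 0$) delivers $f(u)\ge 0$. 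The case of general $u\in\Alg^+$ reduces to $\norm{u}\le 1$ by scaling $u\mapsto\caninf^k u$ for $k$ large enough that $\norm{\caninf^k u}\le 1$ in $\GenR$, using that $\caninf^k$ is invertible and non-negative so $\caninf^k f(u)\ge 0$ implies $f(u)\ge 0$.
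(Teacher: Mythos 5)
Your forward direction is essentially the paper's argument (Cauchy--Schwarz via Proposition~\ref{prop_sqrt} and Proposition~\ref{prop_positive_is_hermitian}, then $f(u^*u)\le f(1)\norm{u^*u}$; your use of Lemma~\ref{lemma_scratch} with $a=\abs{f(u)}+f(1)\norm u$ in place of the paper's square root is a valid variant). The backward direction, however, has a genuine gap at exactly the non-Archimedean point the proposition turns on. Testing on $u+i\tilde t$ with $\tilde t=\pm\caninf^{-n}$ only gives you $b\,f(1)=0$, and your proposed upgrade to $b=0$ by decomposing $(0,1)$ into the level sets $T_n=\{\eps: f(1)_\eps\ge\eps^n\}$ and sets where $f(1)e_S=0$, followed by a ``saturation/interleaving'' patching, does not work as stated: interleaving (Lemma~\ref{lemma_spec_resolv_interleave}) only handles \emph{finite} partitions, and countable patching fails in $\GenC$ --- from $b\,e_{T_n}=0$ for every $n$ you cannot conclude $b\,e_{\bigcup_n T_n}=0$ (indeed one can have $e_{T_n}=0$ for every $n$ while $e_{\bigcup_n T_n}\ne 0$, e.g.\ $T_n=(1/n,1)$), and the complement of $\bigcup_n T_n$ need not carry an idempotent killing $f(1)$. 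So the ``patching'' step is the missing idea, not a routine verification.

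The conclusion is nevertheless true, and there are short repairs. The paper never passes through $\beta f(1)=0$: it keeps the inequality $2f(1)\abs{\beta}\le f(1)^2\norm{u}^2\caninf^{\,n}$ and cancels the (possibly non-invertible) factor $f(1)$ by Lemma~\ref{lemma_scratch}, using the domination $\abs{\beta}\le\abs{f(u)}\le f(1)\norm{u}\le\caninf^{-N}f(1)$ supplied by the hypothesis; letting $n\to\infty$ then gives $\beta=0$ directly. Alternatively, starting from your $b\,f(1)=0$, the same domination gives $b^2\le\caninf^{-N}f(1)\abs{b}=\caninf^{-N}\abs{b\,f(1)}=0$, hence $b=0$ since $\GenR$ has no nonzero nilpotents; or argue by contradiction: if $b\ne0$ it is invertible w.r.t.\ some $S$, then $b\,f(1)=0$ forces $f(1)e_S=0$, whence $e_Sf\equiv0$ by the hypothesis and $e_Sb=0$, a contradiction. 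For the final positivity step your route ($1-u\in\Alg^+$ for $\norm u\le1$, squaring, scaling by $\caninf^{k}$) can be made to work, but the parenthetical claim that $f(u)_\eps=0$ on the zero set of $f(1)_\eps$ is only true up to negligible modification and needs care; the paper's one-line argument $f(u)=f(1)\norm u-f(\norm u-u)\ge f(1)\bigl(\norm u-\norm[\big]{\norm u-u}\bigr)\ge0$, via Proposition~\ref{prop_positivity_elementary}(1), avoids the normalization and the case analysis altogether.
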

\begin{proof}
$\Rightarrow$: by propositions \ref{prop_sqrt} and \ref{prop_positive_is_hermitian}(2), with $v=1$, $\abs{f(u)}^2\le f(1) f(u^* u)$, $\forall u\in\Alg$. Further, for $u\in\Alg$, $\norm{u^* u} - u^* u$ is self-adjoint and $\spec(\norm{u^*u} - u^*u) = \norm{u^* u} - \spec(u^* u)\subseteq [0,\infty)\sptilde$ by proposition \ref{prop_Cstar_elementary_GenB}. Hence $f(\norm{u^*u}-u^* u)\ge 0$, and $f(u^*u)\le f(1) \norm{u^*u}$. Thus $\abs{f(u)}^2\le f(1)^2 \norm{u}^2$, and $\abs{f(u)}\le f(1) \norm{u}$ (since $\sqrt{\,.\,}$ is well-defined on $\{\tilde x\in\GenR: x\ge 0\}$).\\
$\Leftarrow$: let first $u\in\Alg$ be self-adjoint. We show that $f(u)\in\GenR$. Let $f(u)=\alpha + i\beta$, $\alpha,\beta\in\GenR$. Let $v:= t - iu$, $t\in\GenR$. Then $\norm{v}^2 = \norm{v^* v} = \norm{t^2 + u^2}\le t^2 + \norm{u}^2$ and $\abs{f(v)}^2 = \abs{tf(1) - if(u)}^2 = t^2 f(1)^2 + 2 t f(1) \beta + \alpha^2 + \beta^2$. Hence $f(1)^2\norm{v}^2\le f(1)^2 t^2 + f(1)^2\norm{u}^2 \le \abs{f(v)}^2 - 2tf(1)\beta + f(1)^2\norm{u}^2 \le f(1)^2\norm{v}^2 - 2tf(1)\beta + f(1)^2\norm{u}^2$, and $2tf(1)\beta \le f(1)^2\norm{u}^2$. Choosing $t=\caninf^{-n}\mathop{\mathrm{sgn}}(\beta)$ ($n\in\N$), we find $2 f(1)\abs{\beta}\le f(1)^2\norm{u}^2\caninf^n$. Since $\abs\beta\le \abs{f(u)}\le f(1) \norm{u}\le \caninf^{-N}f(1)$, for some $N\in\N$, we can apply lemma \ref{lemma_scratch} and obtain $2\abs{\beta}\le f(1)\norm{u}^2\caninf^n$. As $n$ is arbitrary, $\beta = 0$.\\
Now let $u\in\Alg^+$. Since $f(\norm{u} - u)\in\GenR$, $f(u) = f(1) \norm{u} - f(\norm{u} - u)\ge f(1) \norm{u} - f(1)\,\norm[\big]{\norm{u} - u}\ge 0$ by proposition \ref{prop_positivity_elementary}(1).
\end{proof}

\begin{prop}\label{prop_state_with_norm_prop}
Let $B$ be a $C^*$-algebra. Let $\Alg$ be a Colombeau $C^*$-subalgebra of $\Gen_B$. Let $u\in\Alg$. Then there exists a state $s$ on $\Alg$ with $s(u^*u)=\norm{u}^2$.
\end{prop}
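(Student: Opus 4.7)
The plan is to construct the state componentwise from classical states on $B$, using the elementary but well-known fact that for each $\eps$ there exists a state $s_\eps$ on $B$ such that $s_\eps(u_\eps^* u_\eps) = \norm{u_\eps}^2$ (any Hahn--Banach extension of the appropriate functional from the $C^*$-subalgebra generated by $u_\eps^* u_\eps$). Having chosen such $s_\eps$ for each $\eps$, I would set
\[
s\colon \Gen_B \to \GenC, \qquad s\big([(v_\eps)_\eps]\big) := [(s_\eps(v_\eps))_\eps],
\]
and then restrict $s$ to $\Alg$. Well-definedness is the first step: since $\abs{s_\eps(v_\eps)} \le \norm{v_\eps}$ for all $\eps$, the net $(s_\eps(v_\eps))_\eps$ is moderate whenever $(v_\eps)_\eps \in \Mod_B$, and negligible whenever $(v_\eps)_\eps \in \Null_B$.

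Next I would verify the three defining properties. $\GenC$-linearity of $s$ is immediate from the linearity of each $s_\eps$. The normalization $s(1) = 1$ holds since $s_\eps(1)=1$ for each $\eps$. The norm identity is a one-line computation using the choice of $s_\eps$: $s(u^*u) = [(s_\eps(u_\eps^* u_\eps))_\eps] = [(\norm{u_\eps}^2)_\eps] = \norm{u}^2$ in $\GenR$.

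The step requiring slightly more care is positivity. Let $w \in \Alg^+$; then $w = w^*$ and $\spec_\Alg(w) \subseteq [0,\infty)\sptilde$. By lemma~\ref{lemma_spec_inclusion}(1), $\spec_{\Gen_B}(w) \subseteq \spec_\Alg(w) \subseteq [0,\infty)\sptilde$, so $w \in \Gen_B^+$. Applying lemma~\ref{lemma_positive_rep} to the strictly internal subalgebra $\Gen_B = [(B)_\eps]$, I obtain a representative $(w_\eps)_\eps$ of $w$ with $w_\eps \in B^+$ for every $\eps$. Since each $s_\eps$ is a classical state, $s_\eps(w_\eps) \ge 0$, hence $s(w) = [(s_\eps(w_\eps))_\eps] \ge 0$ in $\GenR$.

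There is no real obstacle; the only point one must be alert to is the use of lemma~\ref{lemma_positive_rep} to pass from positivity of $w$ in $\Alg$ (a spectral condition) to the availability of a pointwise positive representative, which is what makes the componentwise construction cooperate with the order structure. Restricting $s$ to $\Alg$ then yields the desired state on $\Alg$.
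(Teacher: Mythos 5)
Your construction is the paper's: the same componentwise choice of classical states $s_\eps$ with $s_\eps(u_\eps^*u_\eps)=\norm{u_\eps}^2$, assembled (via the bound $\abs{s_\eps(v)}\le\norm{v}$) into a well-defined $\GenC$-linear functional on $\Gen_B$ and then restricted to $\Alg$; the linearity, normalization and norm identity are handled identically. The only divergence is the positivity check: the paper applies proposition \ref{prop_positive_functional_char} to $\Gen_B$ (positivity follows from $s(1)\ge 0$ and $\abs{s(v)}\le s(1)\norm{v}$) and then restricts, whereas you verify positivity directly on $\Alg^+$ by combining the spectral inclusion $\spec_{\Gen_B}(w)\subseteq\spec_\Alg(w)$ of lemma \ref{lemma_spec_inclusion} with the positive-representative lemma \ref{lemma_positive_rep} applied to $\Gen_B=[(B)_\eps]$. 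Both routes are correct. Yours has the merit of making explicit the inclusion $\Alg^+\subseteq\Gen_B^+$, which the paper's final restriction step uses tacitly (note that proposition \ref{prop_positive_functional_char} cannot be applied to $\Alg$ itself, since $\Alg$ is not assumed strictly internal); the cost is that you invoke lemma \ref{lemma_positive_rep}, and hence theorem \ref{thm_spec_of_normal_elts}, where the paper's norm-based criterion suffices.
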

\begin{proof}
Let $u=[(u_\eps)_\eps]$. By the classical theory, there exist states $s_\eps$ on $B$ with $s_\eps(u_\eps^* u_\eps)=\norm{u_\eps}_B^2$, $\forall\eps$. Since $\abs{s_\eps(v)}\le \norm{v}_B$, for each $v\in B$, $(s_\eps)_\eps$ defines a basic $\GenC$-linear functional \cite[\S 1.1.2]{GV_Hilbert} $s$ on $\Gen_B$, with $s(1)=1$, $s(u^* u) = \norm{u}^2$ and $\abs{s(v)}\le \norm{v}$, $\forall v\in\Gen_B$. By proposition \ref{prop_positive_functional_char}, $s$ is a state on $\Gen_B$, and the restriction of $s$ to $\Alg$ is a state on $\Alg$.
\end{proof}

We could now proceed to prove an analogue of the Gelfand-Neumark theorem (e.g., \cite[Thm.\ 4.5.6]{Kadison}) for Colombeau $C^*$-subalgebras of $\Gen_B$ ($B$ a classical $C^*$-algebra) along the lines of the classical proof and show that such an algebra is isomorphic (as a Colombeau $C^*$-algebra) with a Colombeau $C^*$-subalgebra of $\mathcal B(\mathcal H)$, for some Hilbert $\GenC$-module $\mathcal H$. It is easier to obtain this result using the classical Gelfand-Neumark theorem on the representatives:
\begin{prop}\label{prop_GNS}
Let $B$ be a $C^*$-algebra and let $\Alg$ be a Colombeau $C^*$-subalgebra of $\Gen_B$. Then $\Alg$ is isomorphic (as a Colombeau $C^*$-algebra) with a Colombeau $C^*$-subalgebra of $\Gen_{\cLin(H)}$ (hence, in particular, of $\mathcal B(\Gen_H)$), for some Hilbert space $H$.
\end{prop}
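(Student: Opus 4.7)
The plan is to lift the classical Gelfand--Neumark representation of $B$ at the level of $\eps$-representatives and then verify that the induced map on Colombeau algebras is a Colombeau $C^*$-isomorphism onto its image. More precisely, by the classical Gelfand--Neumark theorem there exist a Hilbert space $H$ and an isometric $*$-monomorphism $\pi\colon B\to\cLin(H)$. I would then define
\[
\wt\pi\colon \Gen_B\to\Gen_{\cLin(H)}\colon [(u_\eps)_\eps]\mapsto [(\pi(u_\eps))_\eps].
\]
The first step is to check that $\wt\pi$ is well defined and injective: since $\pi$ is isometric, $(u_\eps)_\eps\in\Mod_B$ iff $(\pi(u_\eps))_\eps\in\Mod_{\cLin(H)}$ and $(u_\eps)_\eps\in\Null_B$ iff $(\pi(u_\eps))_\eps\in\Null_{\cLin(H)}$, so $\wt\pi$ maps $\Gen_B$ bijectively onto $\{[(T_\eps)_\eps]\in\Gen_{\cLin(H)}:T_\eps\in\pi(B) \text{ for small }\eps\}$.

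Next, since $\pi$ is a $*$-algebra homomorphism that preserves $1$, the componentwise definition makes $\wt\pi$ a unital $\GenC$-algebra homomorphism that commutes with the involution. Isometry of $\pi$ gives $\norm{\pi(u_\eps)}=\norm{u_\eps}$ for all $\eps$, hence $\pseudonorm{\wt\pi(u)}=\sharpnorm{\norm{\pi(u_\eps)}}=\sharpnorm{\norm{u_\eps}}=\pseudonorm u$, so $\wt\pi$ is isometric for the ultra-pseudo-norms. This shows $\wt\pi$ is an isomorphism of Colombeau $C^*$-algebras between $\Gen_B$ and its image in $\Gen_{\cLin(H)}$.

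Restricting $\wt\pi$ to $\Alg\subseteq\Gen_B$ produces a Colombeau $C^*$-subalgebra $\wt\pi(\Alg)\subseteq\Gen_{\cLin(H)}$ isomorphic with $\Alg$. To conclude the ``in particular'' statement, I invoke proposition \ref{prop_G_B(H)_is_C_star_subalgebra}, which says $\Gen_{\cLin(H)}$ is (up to isomorphism) a Colombeau $C^*$-subalgebra of $\cLin(\Gen_H)$; since by proposition \ref{prop_B(H)_is_C_star_algebra} the image in fact consists of operators having adjoints, it lies inside $\mathcal B(\Gen_H)$. Composing the two isomorphisms gives the desired embedding of $\Alg$ into $\mathcal B(\Gen_H)$.

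There is no real obstacle here, as promised in the remark preceding the proposition: the only minor point to be careful about is that $\wt\pi$ must be verified to be injective (so that $\wt\pi(\Alg)$ is genuinely isomorphic with $\Alg$), but this follows immediately from the isometry of $\pi$ via the characterization of $\Null_B$. Everything else, including submultiplicativity of the norm, continuity of the involution, and completeness of the image, is inherited from the ambient $\Gen_{\cLin(H)}$ and from the fact that $\wt\pi$ is an isometric $*$-homomorphism.
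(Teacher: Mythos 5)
Your proposal is correct and follows essentially the same route as the paper: apply the classical Gelfand--Neumark theorem to $B$, lift the isometric $*$-morphism componentwise to $\Gen_B\to\Gen_{\cLin(H)}$ (a basic, isometric $*$-morphism), restrict to $\Alg$, and invoke proposition \ref{prop_G_B(H)_is_C_star_subalgebra} for the embedding into $\mathcal B(\Gen_H)$. Your additional verifications (well-definedness, injectivity via preservation of $\Null_B$, completeness of the image) are exactly the routine checks the paper leaves implicit.
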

\begin{proof}
By the classical Gelfand-Neumark theorem, there exists a Hilbert space $H$ and an isometric $*$-isomorphism $T$: $B\to\cLin(H)$. By \cite[1.1.2]{GV_Hilbert}, $T$ defines a so-called basic (hence continuous) $\GenC$-linear map $\Gen_B\to \Gen_{\cLin(H)}$ by means of $T([(u_\eps)_\eps]):= [(T(u_\eps))_\eps]$. By proposition \ref{prop_G_B(H)_is_C_star_subalgebra}, $\Gen_{\cLin(H)}$ is a Colombeau $C^*$-subalgebra of $\mathcal B(\Gen_H)$ and it is straightforward to check that $T$ is an isometric $*$-morphism. In particular, this also holds for the restriction of $T$ to $\Alg$.
\end{proof}

\end{document}